\def\api{\ifmmode\pi\else\textpi\fi}
\author{Clifton Cunningham}
\address{University of Calgary}
\email{clifton@automorphic.ca}
\thanks{Clifton Cunningham gratefully acknowledges the support of NSERC Discovery Grant RGPIN-2020-05220}
\author[A. Fiori]{Andrew Fiori}
\address{University of Lethbridge}
\email{andrew.fiori@uleth.ca}
\thanks{Andrew Fiori thanks and acknowledges both the University of Lethbridge for their financial support as well as the support of NSERC Discovery Grant RGPIN-2020-05316.}
\author[Q. Zhang]{Qing Zhang}
\address{Korea Advanced Institute of Science and Technology}
\email{qingzhang0@gmail.com}
\thanks{Qing Zhang would like to thank the support NSFC grant 11801577, a postdoc fellowship from the Pacific Institute for the Mathematical Sciences (PIMS)}
\theoremstyle{plain}
      \newtheorem{theorem}{Theorem}[section]
      \newtheorem{proposition}[theorem]{Proposition}
      \newtheorem{lemma}[theorem]{Lemma}
      \newtheorem{corollary}[theorem]{Corollary}
      \theoremstyle{definition}
      \newtheorem{definition}{Definition}
      \newtheorem{remark}[theorem]{Remark}
      \newtheorem{conjecture}{Conjecture}
      \newtheorem*{conjecture*}{Conjecture}
\renewcommand{\o}[1]{\buildrel _{\circ} \over {#1}}
\newcommand{\ZZ}{{\mathbb{Z}}}
\newcommand{\CC}{{\mathbb{C}}}
\newcommand{\QQ}{{\mathbb{Q}}}
\newcommand{\RR}{{\mathbb{R}}}
\DeclareMathOperator{\GL}{GL}
\DeclareMathOperator{\Sym}{Sym}
\DeclareMathOperator{\SL}{SL}
\DeclareMathOperator{\PGL}{PGL}
\DeclareMathOperator{\Sp}{Sp}
\DeclareMathOperator{\GSpin}{GSpin}
\DeclareMathOperator{\SO}{SO}
\DeclareMathOperator{\GSO}{GSO}
\newcommand{\Lgroup}[1]{{\hskip-2 pt \,^L\hskip-1pt{#1}}}
\newcommand{\dualgroup}[1]{{\widehat{#1}}}
\DeclareMathOperator{\Lift}{Lift}
\newcommand{\Frob}{{\operatorname{Fr}}}
\DeclareMathOperator{\ord}{ord}
\DeclareMathOperator{\id}{id}
\DeclareMathOperator{\trace}{trace}
\DeclareMathOperator{\Ad}{Ad}
\newcommand{\abs}[1]{{\vert #1 \vert}}
\newcommand{\ceq}{{\, :=\, }}
\newcommand{\tq}{{\ \vert\ }}
\newcommand{\iso}{{\ \cong\ }}
\DeclareMathOperator{\diag}{diag}
\newcommand{\Perv}{\mathsf{Per}}
\newcommand{\Loc}{\mathsf{Loc}}
\newcommand{\Rep}{\mathsf{Rep}}
\newcommand{\Ev}{\operatorname{\mathsf{E}\hskip-1pt\mathsf{v}}}
\newcommand{\NEvs}{\operatorname{\mathsf{N}\hskip-1pt\mathsf{E}\hskip-1pt\mathsf{v}\hskip-1pt\mathsf{s}}}
\newcommand{\IC}{{\mathcal{I\hskip-1pt C}}}
\newcommand{\Ft}{\operatorname{\mathsf{F\hskip-1pt t}}}
\newcommand{\labitem}[2]{
\def\@itemlabel{\textbf{#1}}
\item
\def\@currentlabel{#1}\label{#2}}
\newcommand{\1}{{\mathbbm{1}}}
\newcommand{\Ind}{\text{Ind}}
\newcommand{\C}{\mathbb{C}}
\newcommand{\R}{\mathbb{R}}
\newcommand{\Aut}{\text{Aut}}
\newcommand{\res}{{\operatorname{res}}}
\newcommand{\Lie}{\operatorname{Lie}}
 \newcommand{\BC}{{\mathbb {C}}}
 \newcommand{\CO}{{\mathcal {O}}}
\newcommand{\ABV}{{\mbox{\raisebox{1pt}{\scalebox{0.5}{$\mathrm{ABV}$}}}}}
\newcommand{\Fr}{{\rm {Fr}}}
     \newcommand{\ra}{\rightarrow}
\newcommand{\da}{{\hat{\alpha}}}
\newcommand{\db}{{\hat{\beta}}}
\newcommand{\wh}{\widehat}
\newcommand{\St}{\mathrm{St}}
\newcommand{\wpair}[1]{\left\{{#1}\right\}}
\newcommand{\wt}{\widetilde} 
\newcommand{\bpm}{\begin{pmatrix}}
\newcommand{\epm}{\end{pmatrix}}
\theoremstyle{plain}
\begin{document}

\setcounter{tocdepth}{2}

%\title{Arthur packets for unipotent representations of $G_2(F)$}
\title{Toward the endoscopic classification of unipotent representations of $p$-adic $G_2$}

\date{\today}                                           % Activate to display a given date or no date

\maketitle

\begin{abstract}
We begin this paper by reviewing the Langlands correspondence for unipotent representations of the exceptional group of type $G_2$ over a $p$-adic field $F$ and present it in an explicit form. 
Then we compute all ABV-packets, as defined in \cite{CFMMX} following ideas from Vogan's 1993 paper {\it The local Langlands Conjecture}, and prove that these packets satisfy properties derived from the expectation that they are generalized A-packets. 
We attach distributions to ABV-packets for $G_2$ and its endoscopic groups and study a geometric endoscopic transfer of these distributions. 
This paper builds on earlier work by the same authors.
\end{abstract}

\tableofcontents

\setcounter{section}{-1}

\section{Introduction}

\subsection{Background and motivation}

Local Arthur packets are finite sets $\Pi_\psi(G(F))$ of irreducible admissible representations of a $p$-adic $G(F)$,  
characterized by endoscopic character identities for symplectic and quasi-split orthogonal groups $G$ (and their inner twists over $F$) in \cite{Arthur:book}.
Arthur packets generalize tempered $L$-packets and they remedy certain drawbacks of $L$-packets associated with stable distributions.
The description of Arthur packets in \cite{Arthur:book}, relies heavily on the trace formula.
These notions have been extended to some other groups, including unitary groups in \cite{Mok:Unitary},
 but not yet to exceptional groups; however, some candidates for local Arthur packets for $G_2(F)$ were proposed in \cite{GGJ}, \cite{GG} and \cite{Gan-Savin}.
In this paper we use the microlocal geometry of the moduli space of unramified Langlands parameters to calculate ABV-packets, as defined in \cite{CFMMX} building on ideas from \cite{Vogan:Langlands}, for all unipotent representations of $G_2(F)$ and show that these packets are compatible with endoscopy in a very precise sense.

Vogan's work \cite{Vogan:Langlands} shows that there are advantages to studying the representation theory of $G(F)$ together with that of its pure inner forms $G^\delta(F)$ simultaneously; in this way one passes from $L$-packets $\Pi_\phi(G(F))$ to "pure" $L$-packets $\Pi^\text{pure}_\phi(G)$ and from Arthur packets $\Pi_\psi(G(F))$ to "pure"  Arthur packets $\Pi^\text{pure}_\psi(G)$.  
As explained in \cite{CFMMX}, when Arthur's work is viewed from this perspective it determines a function $\Pi^\text{pure}_\psi(G)\to \Rep(A_\psi)$ where $A_\psi$ is the component group of $Z_{\dualgroup{G}}(\psi)$.
%Although there are no pure inner forms of $G_2(F)$, its endoscopic groups do admit pure inner forms.
% and this turn out to be quite important to the study of ABV-packets for $G_2(F)$.

A geometric construction of Arthur packets for any $p$-adic group is proposed in \cite{CFMMX} building on ideas introduced by \cite{Vogan:Langlands} and also by an adaptation of ideas from \cite{ABV}. In fact, this geometric construction is more general: for any Langlands parameter $\phi$ for $G(F)$, \cite{CFMMX} constructs a group $A_\phi^{\ABV}$, and proposes a packet $\Pi_{\phi}^{\ABV}(G)$ of irreducible representations of $G(F)$ and its pure inner forms, called the ABV-packet of $\phi$, and a natural map $\Pi_\phi^{\ABV}(G) \to \Rep(A_\phi^{\ABV})$. If $\phi$ happens to be of Arthur type, then $A_\phi^{\ABV}=A_\psi$ for the associated Arthur parameter $\psi$.
The main conjecture of \cite{CFMMX} says that when $\phi$ is of Arthur type then $\Pi_\phi^{\ABV}(G)=\Pi^\text{pure}_\psi(G)$ and the map $\Pi_\phi^{\ABV}(G)\to \Rep(A_\phi^{\ABV})$ should be the same as $\Pi^\text{pure}_\psi(G)\to \Rep(A_\psi)$ when $G$ is a classical group, where $\psi$ is the associated Arthur parameter. The construction in \cite{CFMMX} proposes a concrete way to compute Arthur packets for classical groups, as illustrated by the numerous examples in \cite{CFMMX}, and also provides a method to generalize Arthur packets to exceptional groups and non-Arthur type local Langlands parameters.
% which, at this moment, seems unreachable using the trace formula method. 

\subsection{Fundamental properties of ABV-packets and packet coefficients}
%Let $G_2$ be the connected reductive group of type $G_2$ over a $p$-adic field $F$, necessarily split over $F$ since $H^1(F,\Aut(G_2(F))) =1$ for $p$-adic fields $F$.
To give some details of our results, we recall that, in \cite{Lusztig:Classification1} Lusztig provided a version of the local Langlands correspondence for unipotent representations and showed that the unipotent representations are classified by unramified parameters. Using that, in this paper we compute the ABV-packets defined in \cite{CFMMX},  for each unramified local Langlands parameter $\phi$ of $G_2(F)$, and thus we obtain a decomposition
\[
\Pi(G_2(F))_\text{unip}
=
\mathop{\bigcup}\limits_{\phi \in \Phi(G_2(F))_\text{unr}} \Pi^\ABV_{\phi}(G_2(F)),
\]
where $\Pi(G_2(F))_\text{unip}$ is the set of isomorphism classes of irreducible unipotent representations of $G_2(F)$. For every ABV-packet $\Pi^\ABV_{\phi}(G_2(F))$, the general, canonical, geometric constructions in \cite{CFMMX} also give a group $A^\ABV_\phi$ and a function
\[
\begin{array}{rcl}
\Pi^\ABV_\phi(G_2(F)) &\to& \Rep(A^\ABV_\phi) \\
\pi &\mapsto& \langle \ , \pi\rangle
\end{array}
\]
and thus a function $\langle \ , \ \rangle :  A^\ABV_\phi \times \Pi^\ABV_\phi(G_2(F)) \to {\bar\QQ}$; we refer to this function as the {\it ABV-packet coefficients} for $\phi$.
%If $\phi$ is of Arthur type then $A_\phi^\ABV = A_\psi \ceq \pi_0(Z_{\dualgroup{G}_2}(\psi))$, where $\phi = \phi_\psi$ for a (unique) Arthur parameter $\psi$.
%All this is a special case of the results of \cite{CFMMX} combined with \cite{Lusztig:Classification1}.
%

In this paper we argue that ABV-packet coefficients provide a canonical way to extend the notion of Arthur packets to all unipotent representations of the exceptional $p$-adic group $G_2$, including those that are not of Arthur type.
For instance, in Theorem~\ref{thm:coefficients} we show that the ABV-packet coefficients extend the local Langlands correspondence for unipotent representations of $G_2(F)$:
for all unramified Langlands parameters $\phi : W'_F \to \Lgroup{G_2}$, the following diagram commutes
%\begin{enumerate}
%\item[\ref{G2:LLC}]
\[
\begin{tikzcd}
\Pi^\ABV_\phi(G_2(F)) \arrow{rr}{} && \widehat{A^\ABV_\phi} \\
\Pi_\phi(G_2(F)) \arrow[>->]{u} \arrow{rr}{{}}[swap]{\mathrm{LLC}} && \arrow{u} \widehat{A_\phi}
\end{tikzcd}
\]
where $\widehat{A^\ABV_\phi}$ (resp. $\widehat{A_\phi}$) denotes the set of characters of irreducible representations of $A^\ABV_\phi$ (resp. $A_\phi$).
Here the local Langlands correspondence is normalized such that if $\pi\in \Pi_\phi(G_2(F))$ is generic or spherical then the corresponding representation of $A_\phi$ is trivial.
%\end{enumerate}
We give other fundamental properties of ABV-packet coefficients in Theorem~\ref{thm:coefficients}.
For instance, Part \ref{G2:Aubert} of Theorem~\ref{thm:coefficients} shows that for every unramified Langlands parameter $\phi$ there is an unramified Langlands parameter ${\hat \phi}$, with the same infinitesimal parameter as ${\hat \phi}$, such that the Aubert involution defines a bijection 
\[
\Pi^\ABV_{\phi}(G_2(F)) \to \Pi^\ABV_{\hat \phi}(G_2(F)).
\]

\subsection{Distributions attached to ABV-packets}

We study distributions defined by ABV-packet coefficients.
To do this we introduce an algebraic group $\mathcal{S}^\ABV_\phi$ contained in $Z_{\dualgroup{G}}(\phi)$ such that $A^\ABV_\phi = \pi_0(\mathcal{S}^\ABV_\phi)$ and $\mathcal{S}^\ABV_\phi = Z_{\dualgroup{G_2}}(\psi)$ if $\phi$ is of Arthur type $\psi$.
For $s\in \mathcal{S}^\ABV_\phi$, we set
\begin{equation}\label{eqn:introTheta}
\Theta^{}_{\phi,s} \ceq \sum_{\pi \in\Pi^\ABV_{\phi_\psi}(G_2(F))} (-1)^{\dim(\phi)-\dim(\pi)} \langle s,\pi\rangle\ \Theta_{\pi},
\end{equation}
as in Definition~\ref{def:Thetaphis}, where we identify $s$ with its image under $\mathcal{S}^\ABV_\phi \to A^\ABV_\phi$ and where $\Theta_{\pi}$ is the Harish-Chandra distribution character determined by the admissible representation $\pi$.
The terms $\dim(\phi)$ and $\dim(\pi)$ are defined in Section~\ref{ssec:VC}.
%\iffalse%%%%%%%%%%%%%%%%
We show that if $\phi$ is of Arthur type $\psi$ then
$
(-1)^{\dim(\phi)-\dim(\pi)}\langle s,\pi\rangle= \langle s_\psi\, s,\pi\rangle ,
$
where $s_\psi$ is the image of $\psi(1,-1)\in Z_{\dualgroup{G}_2}(\psi)$; % in $A_\psi = A^\ABV_{\phi}$,
in this case, Equation~\eqref{eqn:introTheta} takes the more familiar form
\[
\Theta^{}_{\psi,s} =  \sum_{\pi \in\Pi^\ABV_{\phi_\psi}(G_2(F))}  \langle s_\psi s,\pi\rangle\ \Theta_{\pi}.
\]
%\fi%%%%%%%%%%%%%%

In Theorem~\ref{thm:stable} we prove that if $\pi$ is an irreducible unipotent representation of $G_2(F)$ then $\Theta_\pi$ can be expressed in terms of the distributions $\Theta^{}_{\phi,s}$, letting $\phi$ range over unramified $L$-parameters with the same infinitesimal parameter as $\pi$ and letting $s$ range over representatives for the components of $\mathcal{S}^\ABV_\phi$.
This is an important part of the endoscopic classification promised in the title of this paper. 
We also show that the span of the distributions $\Theta^{}_{\phi,s}$, as $s$ ranges over $\mathcal{S}^\ABV_\phi$, coincides with the span of the distributions $\Theta_\pi$, as $\pi$ ranges over $\Pi^\ABV_\phi$, if and only if $\Pi^\ABV_\phi(G_2(F)) \to \widehat{A^\ABV_\phi}$ is bijective. 
In this theorem we also show that if $\Theta^{}_{\phi}\ceq \Theta^{}_{\phi,1}$ is stable when $\phi$ is elliptic (see Section~\ref{ssec:stable}), then $\Theta^{}_{\phi}$ is stable for all $\phi$. 

\subsection{Lifting stable distributions from endoscopic groups}

We show the ABV-packet coefficients $\langle \ , \ \rangle $ are compatible with the theory of endoscopy in the following sense.
Let $(G,s,\xi)$ be an endoscopic triple for $G_2$.
Unlike the group $G_2$, the endoscopic groups $G$ may admit pure inner forms. 
Recall that a pure inner form for $G$ is a cocycle $\delta \in Z^1(F,G)$ and that $\delta$ determines an inner form $G^\delta$ of $G$. 
We calculate the ABV-packets and ABV-packet coefficients for the endoscopic group $G$.
We generalize the definition of the distributions above Equation~\eqref{eqn:introTheta} from $G_2$ to the pure inner forms of endoscopic groups $G$ for $G_2$.
%We prove the appropriately modified versions of \ref{intro:LLC}, \ref{intro:open}, \ref{intro:tempered}, \ref{intro:spherical}, \ref{intro:basis} and \ref{intro:stable} for $G$ and its pure inner forms.
%
In Definition~\ref{def:lifting} we introduce a "lifting" of stable distributions attached to ABV-packets for $G(F)$ to invariant distributions on $G_2(F)$ and prove Theorem~\ref{thm:lifting}:
if $\phi : W'_F\to \Lgroup{G}$ is an unramified Langlands parameter that is $\xi$-conormal, in the sense of Definition~\ref{def:xi-conormal}, then
\begin{equation}\label{eqn:introlifting}
\Lift^{}_{(G,s,\xi)} \Theta^{G}_{\phi} = \Theta^{}_{\xi\circ \phi,s},
\end{equation}
for all $s\in \mathcal{S}^\ABV_\phi$, for $\mathcal{S}^\ABV_\phi$ given in Definition~\ref{def:SABV}.
We show that if $\phi$ is of Arthur type then it is always $\xi$-conormal.
Since $G(F)$ is also an endoscopic group for the inner form $G^\delta(F)$ we also define "lifting" from stable distribution attached to ABV-packets for $G(F)$ to invariant distributions on $G^\delta(F)$ and, in Theorem~\ref{thm:lifting}, we  show that if $\phi : W'_F\to \Lgroup{G}$ is an unramified Langlands parameter that is relevant to $G^\delta(F)$, in the sense of \cite{Borel:Automorphic}, then the lift of $ \Theta^{G}_{\phi}$ is $e(\delta)\Theta^{G^\delta}_{\phi}$, where $e(\delta)$ is the Kottwitz sign.
%To illustrate Theorem~\ref{thm:lifting} we calculate all the lifts to $G_2(F)$ of the stable distributions attached to unramified Langlands parameters for the endoscopic group $\PGL_3(F)$.
Conjecture~\ref{conjecture:transfer} predicts that $\Theta^{}_{\xi\circ \phi,s}$ is the Langlands-Shelstad transfer of the stable distribution $\Theta^{G}_{\phi}$.

%\subsection{Endoscopic classification}

All this comes together in Theorem~\ref{thm:EC}, in which we show that ABV-packet coefficients are uniquely characterized by the properties given in this paper. 
This result also presents the endoscopic classification of unipotent representations of $G_2(F)$ promised in the title of this paper, stated here only for tempered representations to simplify the exposition. 
Let $\pi$ be a tempered unipotent representation of $G_2(F)$ with Langlands parameter $\phi$.
Every $s \in \mathcal{S}_\psi$ determines an endoscopic triple $(G,s,\xi)$ and a factorization $\phi = \xi\circ\phi^s$, for a Langlands parameter $\phi^s$ for $G$. 
In this case, Theorem~\ref{thm:EC} gives
\begin{equation}\label{eqn:introEC}
\Theta_{\pi} 
= \sum_{(G,s,\xi)} (-1)^{\dim(\phi^s)-\dim(\pi)} \frac{\overline{\langle s,\pi\rangle}}{\abs{Z_{A_\phi}(s)}} \ \Lift^{}_{(G,s,\xi)} \Theta^{G}_{\phi^s},
\end{equation}
where the sum is taken over equivalence classes of endoscopic triples $(G,s,\xi)$ with $s \in \mathcal{S}^\ABV_\phi$ and where we identify $s$ with its image under $\mathcal{S}^\ABV_\phi \to A^\ABV_\phi$ in the calculation of $Z_{A_\phi}(s)$.
This result is generalized from tempered unipotent representations to a wider class of unipotent representations in Theorem~\ref{thm:EC}.

\subsection{Next steps}

Before we can declare that the ABV-packets of this paper are indeed generalized Arthur packets for $G_2(F)$, the following two issues must be addressed:
{\it (i)} we must show that $\Theta^{G}_{\phi}$ is a stable distribution for every unramified Langlands $\phi$ of $G(F)$ of Arthur type, where $G(F)$ is an endoscopic group for $G_2(F)$; and
{\it (ii)} we must show that, if $(G,s,\xi)$ is an endoscopic triple for $G_2(F)$, then
\[
\Theta^{G}_{\phi}(f^s) = \Theta^{}_{\xi\circ \phi,s}(f)
\]
where $f^s\in \mathcal{H}(G(F))$ is the Langlads-Shelstad transfer of $f\in \mathcal{H}(G_2(F))$; in other words, we must show that $\Theta^{}_{\xi\circ \phi,s}$ is the endoscopic lift of $\Theta^{G}_{\phi}$.
In fact, it is a conjecture of Vogan that {\it (i)} is true for all Langlands parameters, not just those of Arthur type. This is elementary for unramified parameters except when $G=G_2$ and, as we show in Theorem~\ref{thm:stable}, in that case it is sufficient to show that $\Theta_{\phi}$ is stable for the four elliptic, tempered, unipotent $L$-packets in Table~\ref{table:elliptic}; see also Remark~\ref{rem:stability}. We are working on a proof of this statement.
We will also prove a generalization of {\it (ii)} that asks only that $\phi$ is $s$-conormal, as in Definition~\ref{def:s-conormal}, again generalizing from the case of Langlands parameters of Arthur type.

\subsection{Geometry and arithmetic}

ABV-packet coefficients are defined by a microlocal analysis of the moduli space of unramified Langlands parameters, following  \cite{CFMMX}, which builds on \cite{Vogan:Langlands}.
As in \cite{CFMMX} and \cite{CFZ:cubics}, we define the \emph{infinitesimal parameter} of a Langlands parameter $\phi : W'_F \to \Lgroup{G_2}$ is $\lambda_\phi : W_F \to \Lgroup{G_2}$ to be $\lambda_\phi(w)=\phi(w,\diag(|w|^{1/2},|w|^{-1/2}))$. 
There is a natural moduli space for Langlands parameters with the same infinitesimal parameter as $\phi$, introduced by Vogan and used extensively in \cite{CFMMX} and \cite{CFZ:cubics}.
This moduli space is denoted by $V_{\lambda_\phi}$ in this paper and it naturally carries an action of the reductive group $H_{\lambda_\phi} \ceq Z_{\dualgroup{G_2}}(\lambda_\phi)$; it is, in particular, a prehomogeneous vector space.
In this paper we review the bijection between simple objects in the category $\Perv_{H_{\lambda_\phi}}(V_{\lambda_\phi})$ equivariant perverse sheaves on $V_{\lambda_\phi}$ and irreducible admissible representations of $G_2(F)$ for which the Langlands parameter has infinitesimal parameter $\lambda_\phi$.

We show that the Aubert involution of admissible representations appears as the Fourier transform of equivariant perverse sheaves on the other side of this bijection.
We also show how to recognize when a unipotent representation is tempered, Arthur type, unitary, generic, spherical or supercuspidal purely in terms of the geometry of the corresponding simple equivariant perverse sheaf.
To explain this, we say that $\phi$ is \emph{open} (resp. \emph{closed}) if the $Z_{\dualgroup{G_2}}(\lambda_\phi)$-orbit of $\phi$ is open (resp. closed) in $V_{\lambda_\phi}$.
If $\pi$ is tempered then its Langlands parameter is open but the converse is not true; in this sense, the notion of open parameters generalizes the notion of parameters bounded upon restriction to $W_F$.
We remark that a parameter $\phi : W'_F \to \Lgroup{G_2}$ is closed if and only if it is trivial on the unipotent part of $\SL_2(\CC)$ appearing in $W'_F \ceq W_F \times \SL_2(\CC)$.
Returning to the claim that ABV-packet coefficients provide an extension of the Langlands correspondence and Arthur packets, we use these notions to prove fundamental properties of ABV-packet coefficients in Theorem~\ref{thm:coefficients}; for example, in Part~\ref{G2:open} of Theorem~\ref{thm:coefficients} we show that if $\phi: W'_F \to \Lgroup{G_2}$ is open or closed then $\Pi^\ABV_\phi(G_2(F)) \to \widehat{A^\ABV_\phi}$ is bijective. In \cite{CFZ} we found examples when $\Pi^\ABV_\phi(G_2(F)) \to \widehat{A^\ABV_\phi}$ is not bijective. 
In Part~\ref{G2:spherical} of Theorem~\ref{thm:coefficients} we show that a representation $\pi  \in \Pi_{\phi}(G_2(F))$ is spherical if and only if $\phi$ is closed and $\langle \ , \pi \rangle $ is the trivial representation of $A^\ABV_\phi$, and in this case, the only ABV-packet that contains $\pi$ is $\Pi^\ABV_{\phi}(G_2(F))$.

\subsection{Notation}\label{ssec:notation}

Throughout this paper $F$ denotes a $F$-adic field, $\varphi$ a fixed uniformizer in the ring $\mathcal{O}_F$ of integers of $F$ and $q$ denotes the cardinality of the residue field of $F$; we place no lower bound on $q$.

We will use "polar notation" $\chi =\mu\nu^{a}$ for complex characters $\chi$ of $F^\times$, where $\nu : F^\times \to \CC^\times$ is the unramified character defined by $\nu(\varpi) = q$ and where $\mu : F^\times \to \CC^\times$ is unitary and $a\in \RR$. When subscripts are necessary, this notation becomes $\chi_i =\mu_i\nu^{a_i}$. We will use the notation $\theta_n$ for a fixed, unitary character of $F^\times$ of order $n$.

Similarly, we use the "$q$-polar notation" $z  = u q^a$ for complex numbers $z\in \CC^\times$ where $u$ is unitary and $a\in \RR$.

By abuse of notation, we also denote a fixed primitive complex $n$-th root-of-unity by $\theta_n$ and write $\vartheta_n$ for the corresponding character of $\langle \theta_n \rangle \ceq \{ 1, \theta_n, \ldots, \theta_n^{n-1}\}$, so $\vartheta_n(\theta_n) = \theta_n$.
When the order $n$ is understood by context, we will write $\vartheta$ for this character.

We write $\varepsilon$ for the sign character of $S_3$ and $\varrho$ for the reflection representation of $S_3$.

For a split group $G$ over $F$ with a fixed Borel $B$ and torus $T$ over $F$, we set 
$I^{G}(\sigma)\ceq\Ind_{B(F)}^{G(F)}(\sigma)$, where $\sigma$ is a representation of $T(F)$, inflated to $B(F)$; we write $J^{G}(\sigma)$ for the Langlands quotient $I^{G}(\sigma)$ when this induced representation is a standard module.
More generally, if $P$ is a parabolic subgroup of $G$ with Levi subgroup $M$, we set
$I_{P}^{G}(\sigma)\ceq\Ind_{P(F)}^{G(F)}(\sigma)$; we write $J_{P}^{G}(\sigma)$ for the Langlands quotient of $I_{P}^{G}(\sigma)$ when this induced representation is a standard module.

For an algebraic group $G$ over $F$, denote by $1_G$ the trivial representation of $G(F)$ and $\St_G$ the Steinberg representation of $G(F)$.  

For representations of $G_2(F)$, we follow the notations from \cite{Muic} except that the Steinberg representation of $\GL_2(F)$ is denoted by $\St_{\GL_2}$ instead of $\delta(1)$ as in \cite{Muic}. 

There are 4 cuspidal unipotent representations of $G_2(\mathbb{F}_q)$: they appear as $G_2[1]$, $G_2[-1]$, $G_2[\theta_3]$ and $G_2[\theta^2_3]$ in \cite{Carter}*{p.460}. For a cuspidal unipotent representation $\sigma$ of $G_2(\mathbb{F}_q)$, we use the shorthand $I_0(\sigma)$ for $\mathrm{cInd}_{G_2(\CO_F)}^{G_2(F)}(\sigma)$.
These supercuspidal representations also appear in \cite{CO}, where they are denoted by $v_6 =I_0(G_2[1])$, $v_7=I_0(G_2[-1])$, $v_8 = I_0(G_2[\theta_3])$ and $v_9 = I_0(G_2[\theta^2_3])$.
The last two of these also appear in \cite{Savin}*{Section 4}, where they are denoted by $\pi'[\nu]$ and $\pi'[\nu^2]$.

Throughout this paper, we use the notation $\1_X$ for the constant sheaf on $X$, or simply $\1$ if the support $X$ is understood.

\subsection{Acknowledgements}

The authors gratefully acknowledge and thank Goran Mui\'{c} for explaining his work on admissible representations of $G_2(F)$ and Maartin Solleveld for helpful comments on the first draft this paper.
The authors would also like to thank all the members of the \href{http://automorphic.ca}{Voganish Project}, especially Ahmed Moussaoui, Jerrod Smith and Geoff Vooys.

%%%%%%%%%%%%%%%%%%%%%%%%%%%%%%%%%%%%%%%%%%%%%

%%%%%%%%%%%%%%%%%%%%%%%%%%%%%%%%%%%%

\section{Geometry of the moduli space of Langlands parameters for $G_2$}\label{sec:PHV}

\subsection{Roots for $G_2$ and $\dualgroup{G_2}$}\label{ssec:notations}

Let $T$ be a fixed maximal split torus of $G_2$; the corresponding set of roots are denoted by $R\ceq R(G_2,T)$. We write $W$ for the Weyl group for this root system $R$.
Let ${\gamma_1},{\gamma_2} : T \rightarrow F^\times$ be a choice simple roots of $G_2$ with ${\gamma_1}$ short and ${\gamma_2}$ long so that the positive roots $R$ are
\[
{\gamma_1}, {\gamma_2}, {\gamma_1}+{\gamma_2}, 2{\gamma_1}+{\gamma_2}, 3{\gamma_1}+{\gamma_2}, 3{\gamma_1}+2{\gamma_2}.
\]
The coroots are denoted  $R^\vee\ceq R(G_2,T)^\vee$, with ${\gamma_1^\vee},{\gamma_2^\vee} : F^\times \to T$ the coroots of ${\gamma_1}$ and ${\gamma_2}$.
%Following \cite{Muic}, 
We fix the isomorphism $T \rightarrow F^\times \times F^\times$ by 
 \[ t \mapsto ((2{\gamma_1}+{\gamma_2})(t),({\gamma_1}+{\gamma_2})(t)). \]
  We use the notation $m: F^\times \times F^\times \to T$ for the inverse of this isomorphism.
We have
 \[ {\gamma_1^\vee}(a)=m(a,a^{-1}),\qquad\text{\ and\ }\qquad {\gamma_2^\vee}(a)=m(1,a) . \]

We denote by $\dualgroup{G_2}$ the dual group of $G_2$ over $\mathbb{C}$ and $\dualgroup{T}$ the dual torus and let ${\hat R} \ceq R(\dualgroup{G_2},\dualgroup{T})$ be the roots of $\dualgroup{G_2}$ with the usual identification of coroots of $G_2$ with roots of $\dualgroup{G_2}$. We again write $W$ for the Weyl group for the root system ${\hat R}$.
Denote by ${\hat\gamma}_1\in {\hat R}$ (resp. ${\hat\gamma}_2\in {\hat R}$) the image of ${\gamma_1^\vee}$ (resp. ${\gamma_2^\vee}$) under this identification. 
Then $\wh G_2$ is a complex reductive group of type $G_2$, with simple roots ${{\hat\gamma}_1},{{\hat\gamma}_2}$.
where ${{\hat\gamma}_1}$ is a long root of $\wh G_2$ and ${{\hat\gamma}_2}$ is a short root of $\wh G_2$. 
%We will denote by $\dualgroup{T}$ the maximal torus of $\wh G_2$.
%
As above, we fix an isomorphism $\dualgroup{T}\rightarrow \mathbb{C}^\times\times\mathbb{C}^\times $ by 
 \[ t\mapsto (({\hat\gamma_1}+2{\hat\gamma_2})(t),({\hat\gamma_1}+{\hat\gamma_2})(t)) \]
and we write $\wh m : \mathbb{C}^\times\times\mathbb{C}^\times \to \dualgroup{T}$ for the inverse of this isomorphism.
Note that we have 
\begin{equation}\label{eqn:rootactionontorus}
{{\hat\gamma}_1} (\wh m(x,y))=x^{-1}y^2, \qquad \mathrm{\ and\ }\qquad  {{\hat\gamma}_2}(\wh m(x,y))=xy^{-1}.
\end{equation}
We will denote by ${{\hat\gamma}_1}^\vee, {{\hat\gamma}_2}^\vee :\mathbb{C}^\times \rightarrow \dualgroup{T}$ the coroots of ${{\hat\gamma}_1}$ and ${{\hat\gamma}_2}$. 
We have again that
\[ {{\hat\gamma}_1}^\vee(a)={\wh m}(1,a),\quad {{\hat\gamma}_2}^\vee(a)={\wh m}(a,a^{-1}) . \] 

Observe that under the identification $R(G_2,T)= R(\dualgroup{G_2},\dualgroup{T})^\vee$, we have 
${\gamma_1} = {{\hat\gamma}_1}^\vee$,
${\gamma_2} = {{\hat\gamma}_2}^\vee$,
$({{\hat\gamma}_1} + {{\hat\gamma}_2})^\vee = 3{{\hat\gamma}_1}^\vee + {{\hat\gamma}_2}^\vee$, 
$({{\hat\gamma}_1} + 2{{\hat\gamma}_2})^\vee = 3{{\hat\gamma}_1}^\vee + 2{{\hat\gamma}_2}^\vee$, 
$({{\hat\gamma}_1} + 3{{\hat\gamma}_2})^\vee = {{\hat\gamma}_1}^\vee + {{\hat\gamma}_2}^\vee$ and 
$(2{{\hat\gamma}_1} + 3{{\hat\gamma}_2})^\vee = 2{{\hat\gamma}_1}^\vee + {{\hat\gamma}_2}^\vee$.

\subsection{Moduli spaces of Langlands parameters}\label{ssec:infcases}

%In this section, we will give a classification of equivalence classes of unipotent representations of $G_2(F)$. Since the result is not very well-known like the previous cases, we give more details here. 
%By \cite{Lusztig:Classification1}, unipotent representations are classified by unramified local Langlands parameters. 

Set $W'_F \ceq W_F \times \SL_2(\CC)$. 
Given a Langlands parameter $\phi:W_F'\to \Lgroup{G_2}$, recall that its {\it infinitesimal parameter} $\lambda_\phi:W_F\to \Lgroup{G_2}$ is defined by $\lambda_\phi(w)=\phi(w,\diag(|w|^{1/2},|w|^{-1/2}))$. 

For a given infinitesimal parameter $\lambda : W_F \to \Lgroup{G_2}$, there are only a finite number of $\dualgroup{G_2}$-conjugacy classes of Langlands parameters $\phi$ with $\lambda=\lambda_\phi$. 
As explained in \cite{CFMMX} more generally, each unramified infinitesimal parameter $\lambda: W_F\to \Lgroup{G_2}$ determines a prehomogeneous vector space
\[
V_{\lambda}=\{x\in \Lie\dualgroup{G_2} \tq \Ad(\lambda(w))x =q^{|w|} x, \forall w\in W_F\},
\]
equipped with an action of the group 
\[
H_\lambda \ceq\{g\in \dualgroup{G_2} \tq \lambda(w)g\lambda(w)^{-1}=g, \forall w\in W_F\}.
\] 
acting via the adjoint action of $\dualgroup{G_2}$ on $\Lie\dualgroup{G_2}$.
Using this action we can assume that $\lambda(\Frob)\in \Lgroup{T} $ and define
\[
{\hat R}_\lambda \ceq \{ {\hat\gamma}\in R(\dualgroup{G},\dualgroup{T}) \tq \Lie U_{\hat\gamma} \subseteq V_\lambda \},
\]
where $U_{\hat\gamma}\subset \dualgroup{G_2}$ is the root subgroup for ${\hat\gamma}$.
Then $V_\lambda$ is a moduli space for all Langlands parameters $\phi$ with $\lambda_\phi = \lambda$ and $X_\lambda \ceq \dualgroup{G_2}\times_{H_\lambda} V_\lambda$ is a moduli space for all Langlands parameters $\phi$ for which $\lambda_\phi$ is $\dualgroup{G_2}$-conjugate to $\lambda$. Note that $\dualgroup{G_2}$ acts naturally on $X_\lambda$ while $H_\lambda$ acts on $V_\lambda$.

These notions lead to the following classification of unramified infinitesimal parameters $\lambda$ for $G_2(F)$.

\begin{proposition}\label{prop:infcases}
For every unramified $\lambda:W_F\to \Lgroup{G_2}$, the set ${\hat R}_\lambda$ is $W$-conjugate to one of the following subsets of ${\hat R}$, also pictured in Table~\ref{table:infcases}:
\begin{multicols}{2}
\begin{enumerate}

\labitem{0}{infcase:0}%{infcase:trivial} 
${\hat R}_\lambda =\emptyset$;

\labitem{1}{infcase:1-short}%{infcase:short}
${\hat R}_\lambda =\{\hat\gamma_1+2\hat\gamma_2\}$;

\labitem{2}{infcase:2-long}%{infcase:long}
${\hat R}_\lambda =\{2\hat\gamma_1+3\hat\gamma_2\}$;

\labitem{3}{infcase:3}%{infcase:2pi/6}
${\hat R}_\lambda =\{\hat\gamma_1,2\hat\gamma_1+3\hat\gamma_2 \}$;

\labitem{4}{infcase:4-D2}%{infcase:3pi/6-D2}
${\hat R}_\lambda=\{\hat\gamma_1,\hat\gamma_1+2\hat\gamma_2 \}$;

\labitem{5}{infcase:5}%{infcase:4pi/6}
${\hat R}_\lambda =\{\hat\gamma_2,\hat\gamma_1+\hat\gamma_2\}$; 

\labitem{6}{infcase:6-A2}%{infcase:4pi/6-A2} 
${\hat R}_\lambda=\{\hat\gamma_1, \hat\gamma_1+3\hat\gamma_2\}$;

\labitem{7}{infcase:7-reg}%{infcase:5pi/6-reg}
${\hat R}_\lambda=\{\hat\gamma_1,\hat\gamma_2 \}$;

\labitem{8}{infcase:8-sub}%{infcase:sub-G2}
${\hat R}_\lambda=\{ \hat\gamma_1, \hat\gamma_1+\hat\gamma_2, \hat\gamma_1+2\hat\gamma_2, \hat\gamma_1+3\hat\gamma_2\}$.

\end{enumerate}
\end{multicols}
Each case above determines a moduli space $V_\lambda$ of unramified Langlands parameters with infinitesimal parameter $\lambda$.
\end{proposition}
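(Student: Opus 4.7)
The plan is to reduce this classification to an explicit finite combinatorial enumeration. Since $\lambda$ is unramified, it is determined by $\lambda(\Frob) \in \Lgroup{G_2}$. After conjugating by $\dualgroup{G_2}$, I may assume $s \ceq \lambda(\Frob) \in \dualgroup{T}$, so that $\hat R_\lambda = \{\hat\gamma \in \hat R : \hat\gamma(s) = q\}$. Writing $s = \wh m(x,y)$ and applying the formulas of Subsection~\ref{ssec:notations}, the six equations $\hat\gamma(s) = q$ (for $\hat\gamma \in \hat R^+$) become the Laurent-monomial equations
\[
y^2/x = q,\quad x/y = q,\quad y = q,\quad x = q,\quad x^2/y = q,\quad xy = q,
\]
for $\hat\gamma_1$, $\hat\gamma_2$, $\hat\gamma_1+\hat\gamma_2$, $\hat\gamma_1+2\hat\gamma_2$, $\hat\gamma_1+3\hat\gamma_2$, $2\hat\gamma_1+3\hat\gamma_2$ respectively.

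The key structural observation is that any positive linear dependence $\sum c_i \hat\alpha_i = 0$ with $\hat\alpha_i \in \hat R_\lambda$ and $c_i > 0$ forces $q^{\sum c_i} = 1$, which is impossible since $q > 1$; hence $\hat R_\lambda$ is contained in an open half-space and may, after $W$-conjugation, be assumed to lie in $\hat R^+$. A second constraint: if $\hat\gamma, \hat\delta \in \hat R_\lambda$ and $\hat\gamma+\hat\delta \in \hat R$, then $(\hat\gamma+\hat\delta)(s) = q^2 \neq q$, so $\hat\gamma+\hat\delta \notin \hat R_\lambda$.

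I would then stratify by $|\hat R_\lambda|$. The cases $|\hat R_\lambda| \in \{0,1\}$ are immediate, giving \ref{infcase:0}, \ref{infcase:1-short}, \ref{infcase:2-long} (the last two distinguished by root length, which is a $W$-invariant). For $|\hat R_\lambda| = 2$, I would pass through the $\binom{6}{2} = 15$ unordered pairs of positive roots, sort them into $W$-orbits by the invariants (length-pair and angle), discard pairs whose defining equations force a third root into $\hat R_\lambda$, and check that the remaining orbits realize exactly \ref{infcase:3}--\ref{infcase:6-A2}. For $|\hat R_\lambda| \geq 3$ the equations over-determine $(x,y)$: imposing $\hat\gamma_1(s) = \hat\gamma_2(s) = q$ forces $(x,y) = (q^3, q^2)$, at which the remaining roots evaluate to $q^2, q^3, q^4, q^5$, giving \ref{infcase:7-reg}; imposing the four equations of \ref{infcase:8-sub} forces $(x,y) = (q,q)$, at which the remaining two roots evaluate to $1$ and $q^2$, giving \ref{infcase:8-sub}. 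A direct inspection rules out $|\hat R_\lambda| > 4$.

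The main obstacle is the careful bookkeeping in the $|\hat R_\lambda| = 2$ case: one must separate the four $W$-orbits of candidate pairs, verify that each candidate's defining system has a solution $(x,y)$ for which no third root accidentally evaluates to $q$, and ensure that no orbit is missed or double-counted.
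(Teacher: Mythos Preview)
Your approach is sound and in fact more explicit than the paper's. The paper's own proof is a high-level sketch: it organizes by $\dim V_\lambda = |\hat R_\lambda|$, asserts that in dimension~$1$ the only $W$-invariant is root length (Cases~\ref{infcase:1-short},~\ref{infcase:2-long}), lists without verification the five possibilities in dimension~$2$ (Cases~\ref{infcase:3}--\ref{infcase:7-reg}), and declares that $\dim V_\lambda > 2$ forces $\dim V_\lambda = 4$ and Case~\ref{infcase:8-sub}. Your half-space argument (no positive dependence because $q>1$) and the observation that $\hat\gamma,\hat\delta \in \hat R_\lambda \Rightarrow \hat\gamma+\hat\delta \notin \hat R_\lambda$ supply exactly the combinatorial scaffolding that the paper omits.

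There is, however, a bookkeeping slip in your stratification. Case~\ref{infcase:7-reg} is the pair $\{\hat\gamma_1,\hat\gamma_2\}$, which has $|\hat R_\lambda| = 2$; your own computation confirms this (at $(x,y)=(q^3,q^2)$ the remaining four positive roots evaluate to $q^2,q^3,q^4,q^5 \neq q$). So Case~\ref{infcase:7-reg} belongs in the size-$2$ analysis, not under $|\hat R_\lambda| \geq 3$, and the surviving $W$-orbits of pairs should read \ref{infcase:3}--\ref{infcase:7-reg}, not \ref{infcase:3}--\ref{infcase:6-A2}. Concretely, there are seven $W$-orbits of unordered pairs of positive roots (classified by angle in $\{30^\circ,60^\circ,90^\circ,120^\circ,150^\circ\}$ together with the length-pair when the angle is $60^\circ$ or $120^\circ$); five of them give Cases~\ref{infcase:3}--\ref{infcase:7-reg}, and the two discarded orbits (angle $30^\circ$, and angle $60^\circ$ with both roots short) each force two further roots into $\hat R_\lambda$, yielding four collinear root-heads $W$-conjugate to Case~\ref{infcase:8-sub}. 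That last follow-up is what your $|\hat R_\lambda| \geq 3$ paragraph should contain; as written it does not actually rule out a three-element $\hat R_\lambda$, since checking only the pair $\{\hat\gamma_1,\hat\gamma_2\}$ does not cover all triples.
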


\begin{table}[tp]
\caption{The classification of unramified infinitesimal parameters $\lambda$ for $G_2(F)$ in terms of the subsets ${\hat R}_\lambda$ appearing in Proposition~\ref{prop:infcases}. Black root vectors with a halo appear in ${\hat R}_\lambda$ and so the corresponding root spaces form $V_{\lambda}$. Vectors in black without a halo determine root groups appearing in the reductive group $H_\lambda$ acting on $V_\lambda$ }
\label{table:infcases}
\begin{center}

\includegraphics[height=4.6cm,width=4cm]{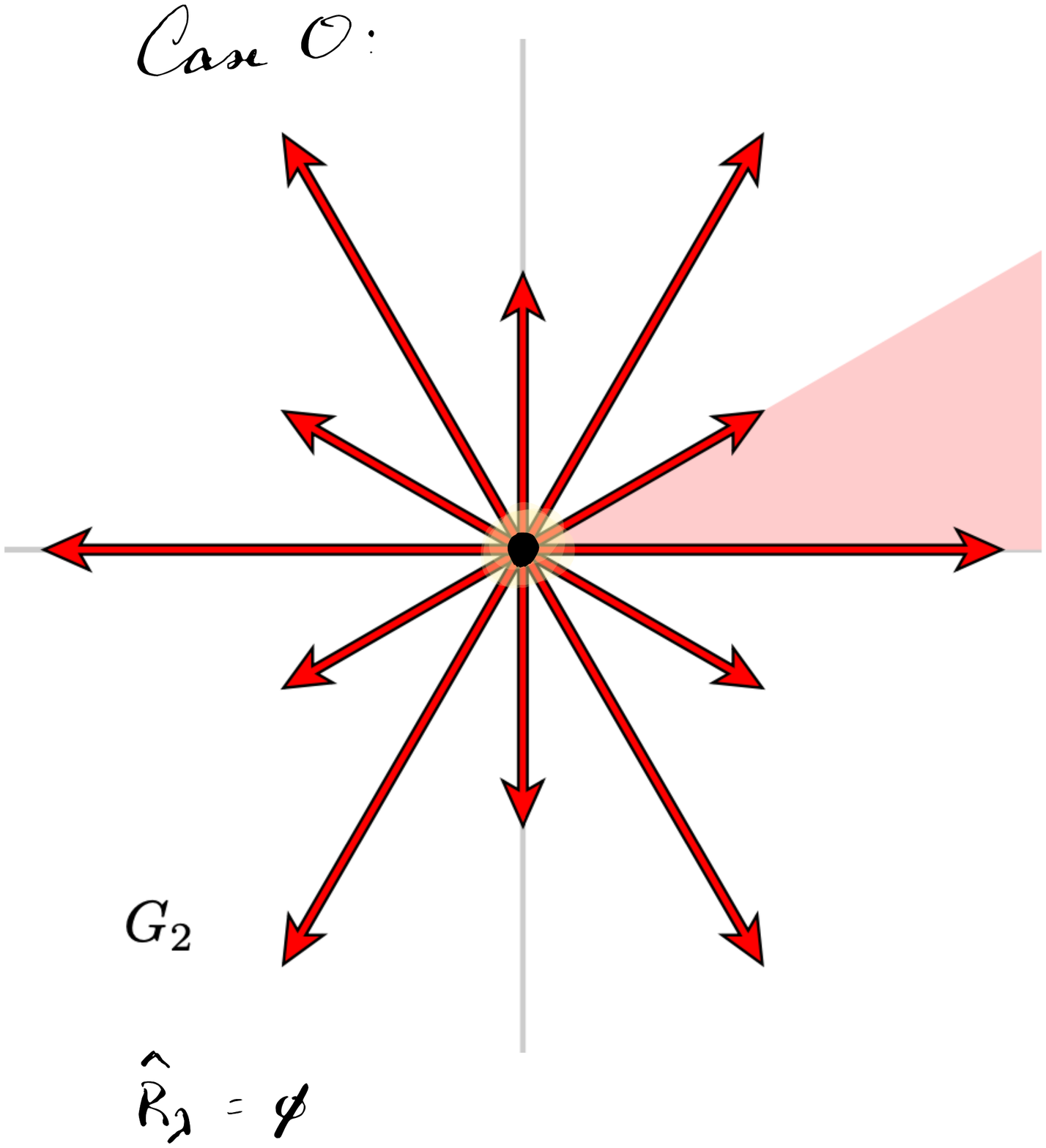}
\includegraphics[height=4.6cm,width=4cm]{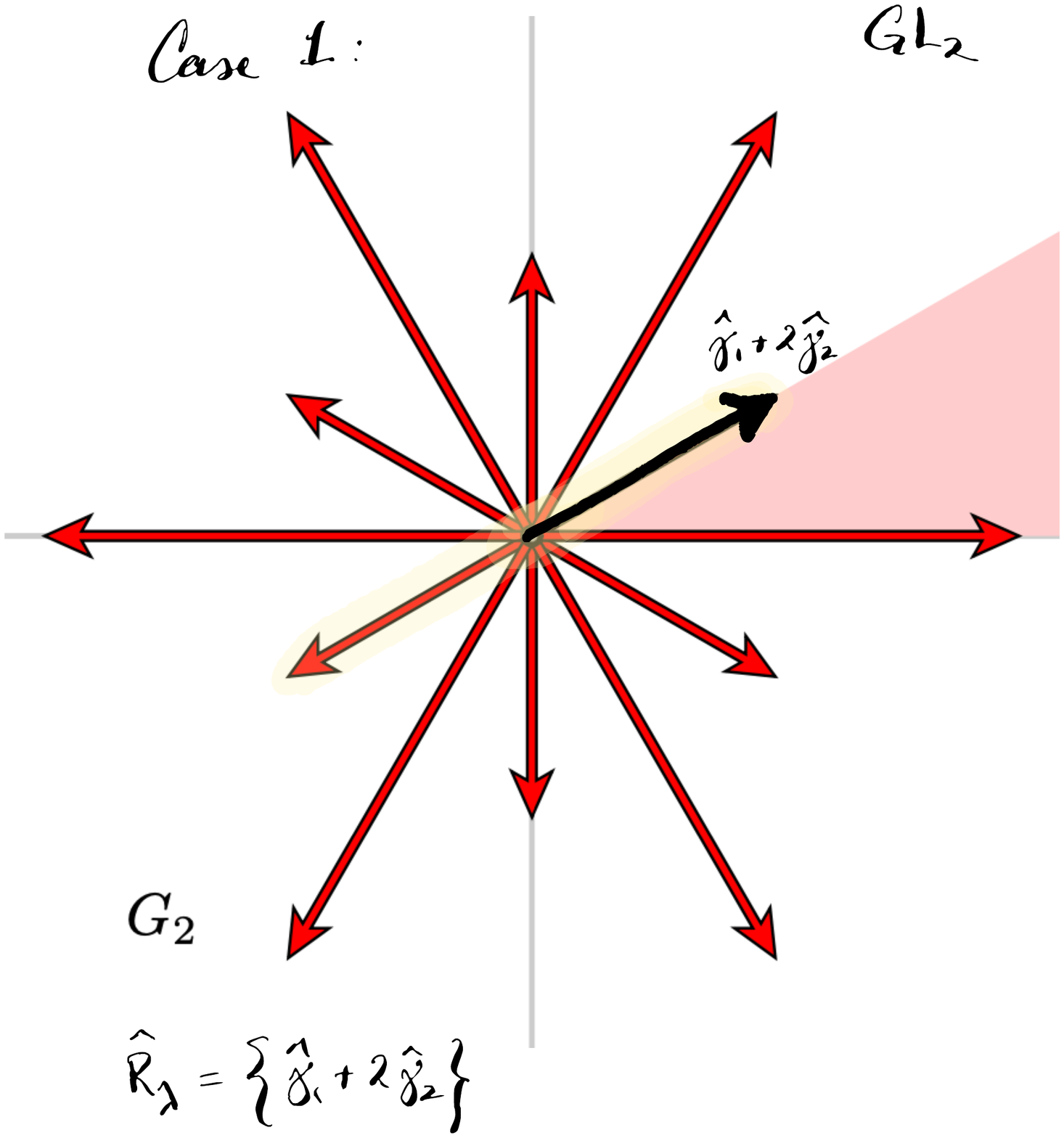}
\includegraphics[height=4.6cm,width=4cm]{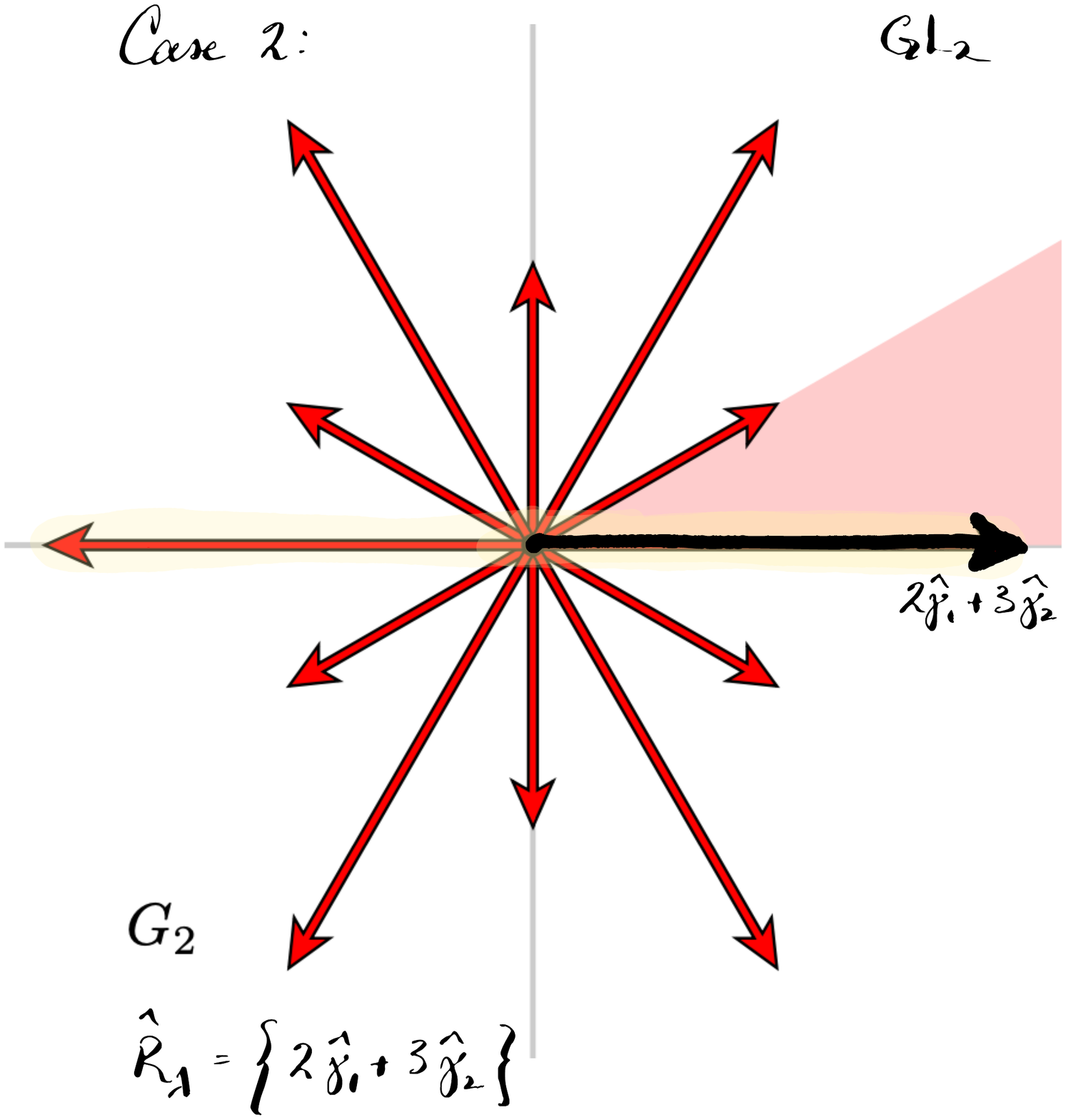}

\includegraphics[height=4.6cm,width=4cm]{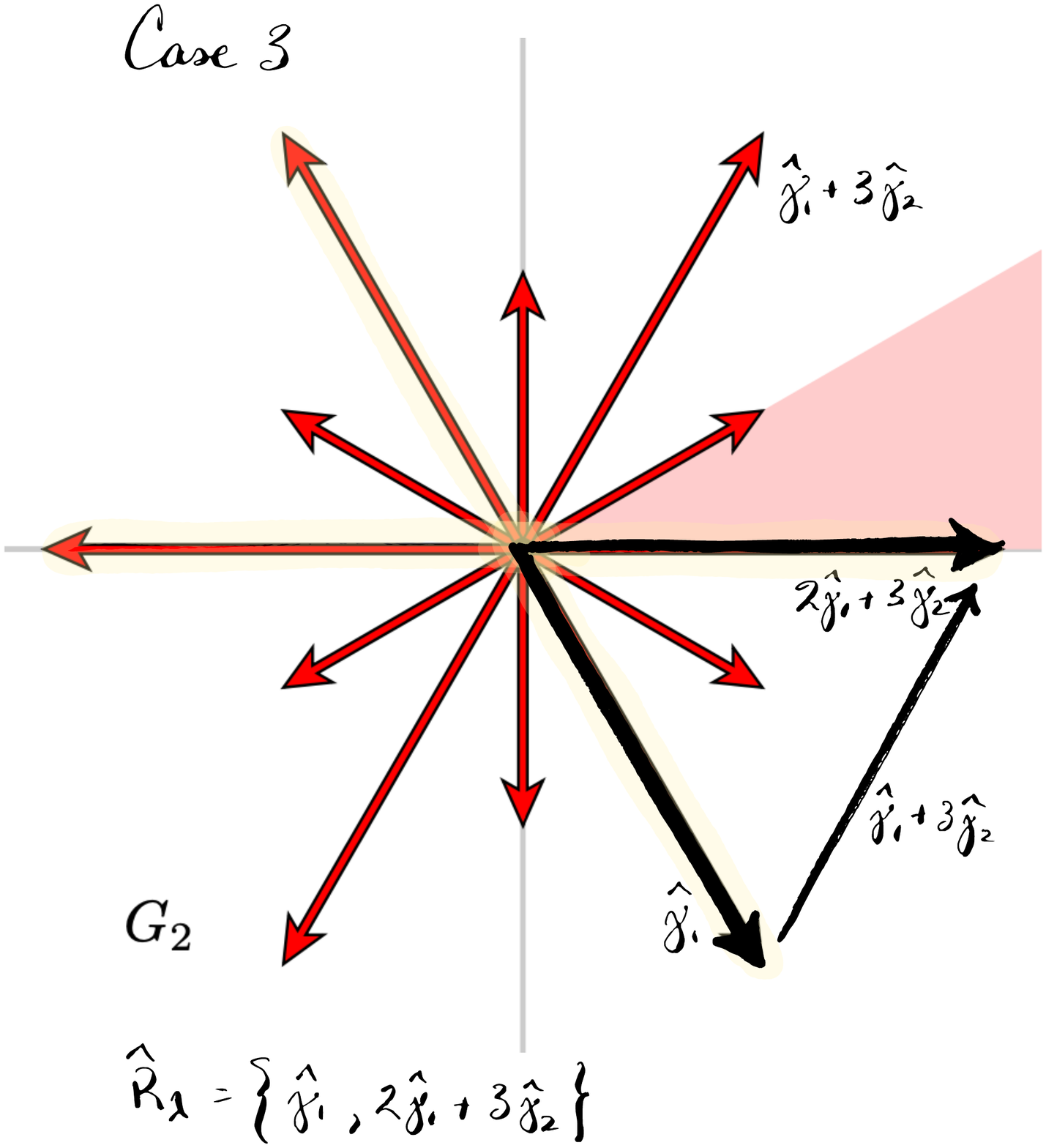}
\includegraphics[height=4.6cm,width=4cm]{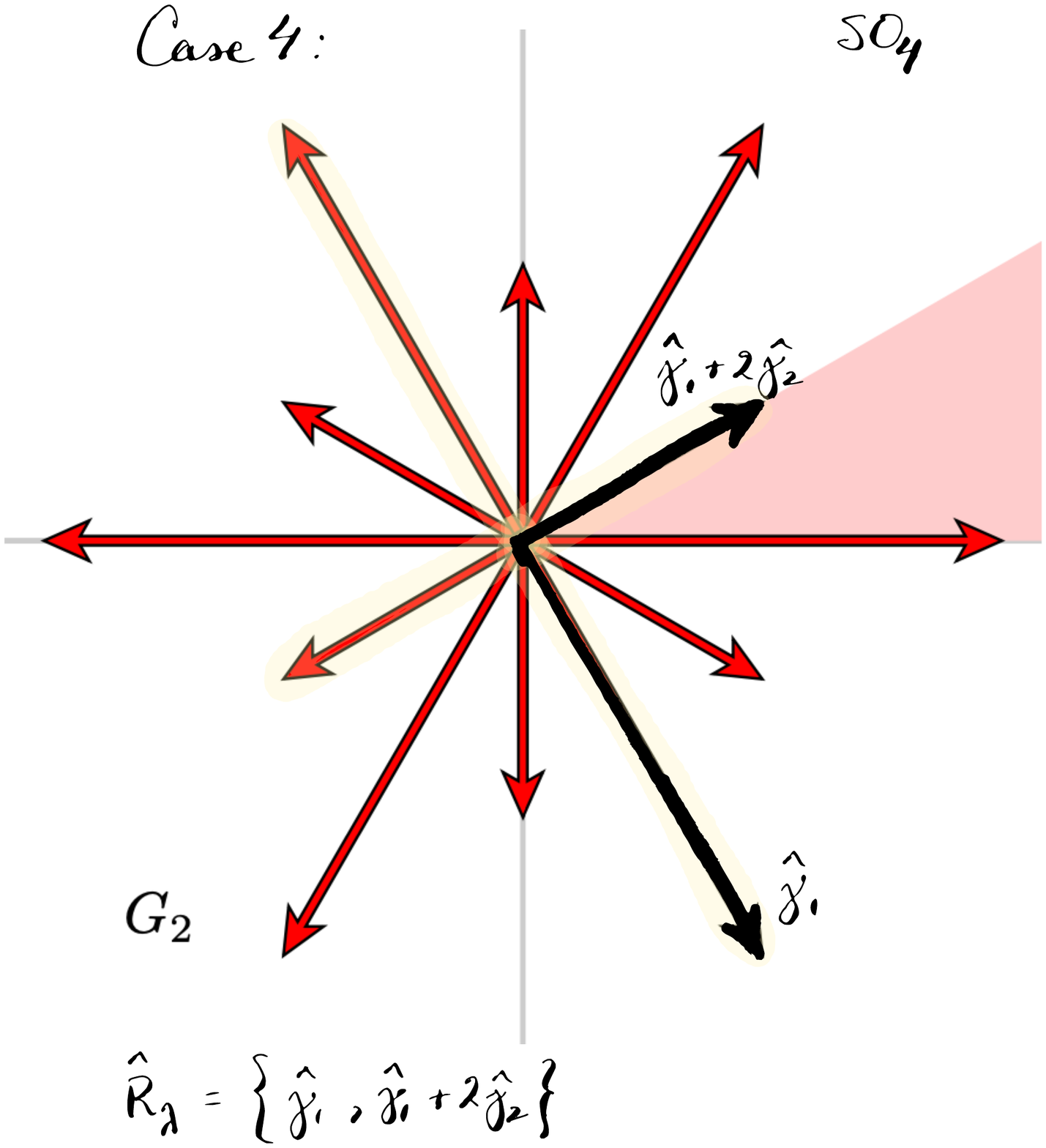}
\includegraphics[height=4.6cm,width=4cm]{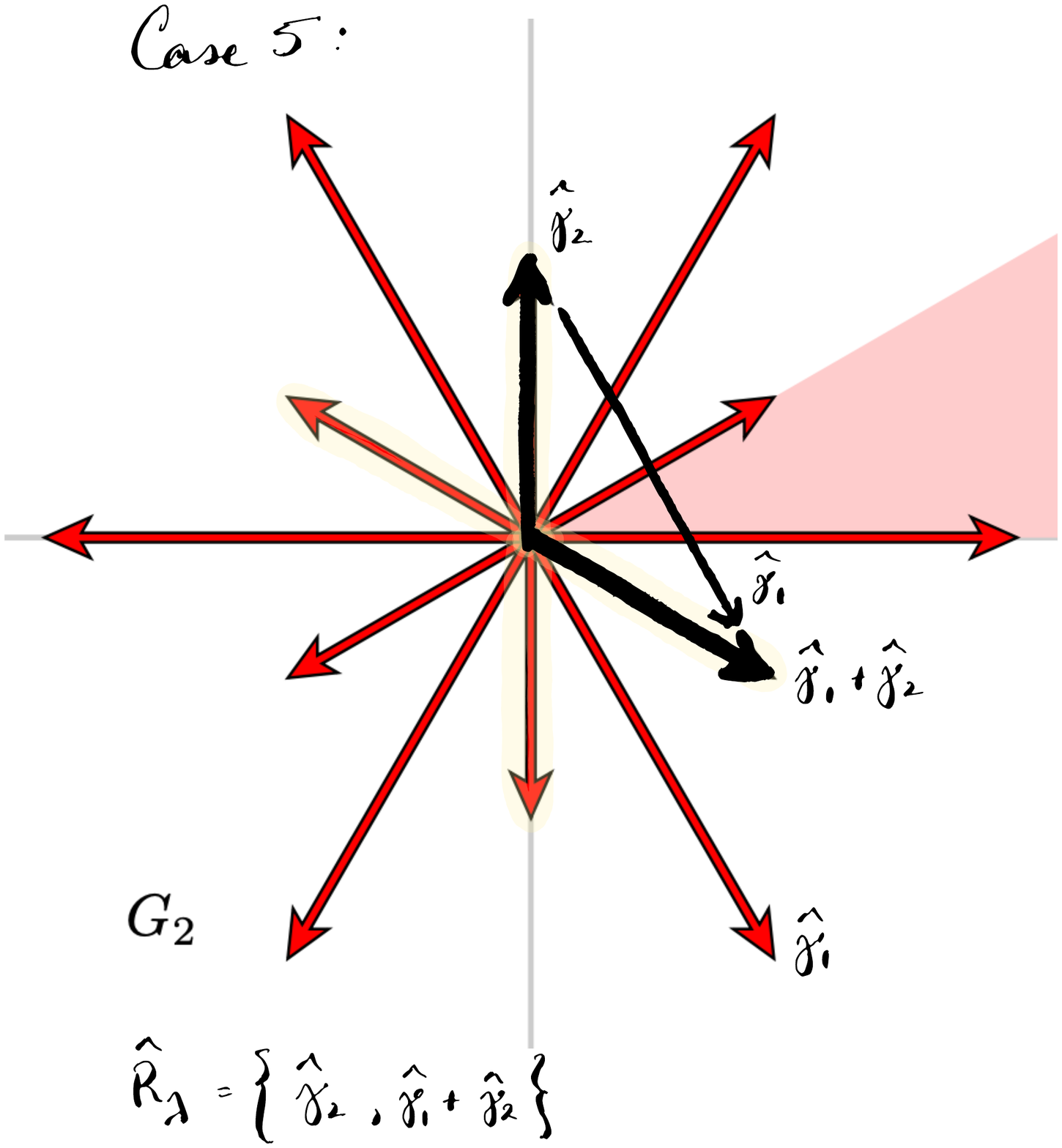}

\includegraphics[height=4.6cm,width=4cm]{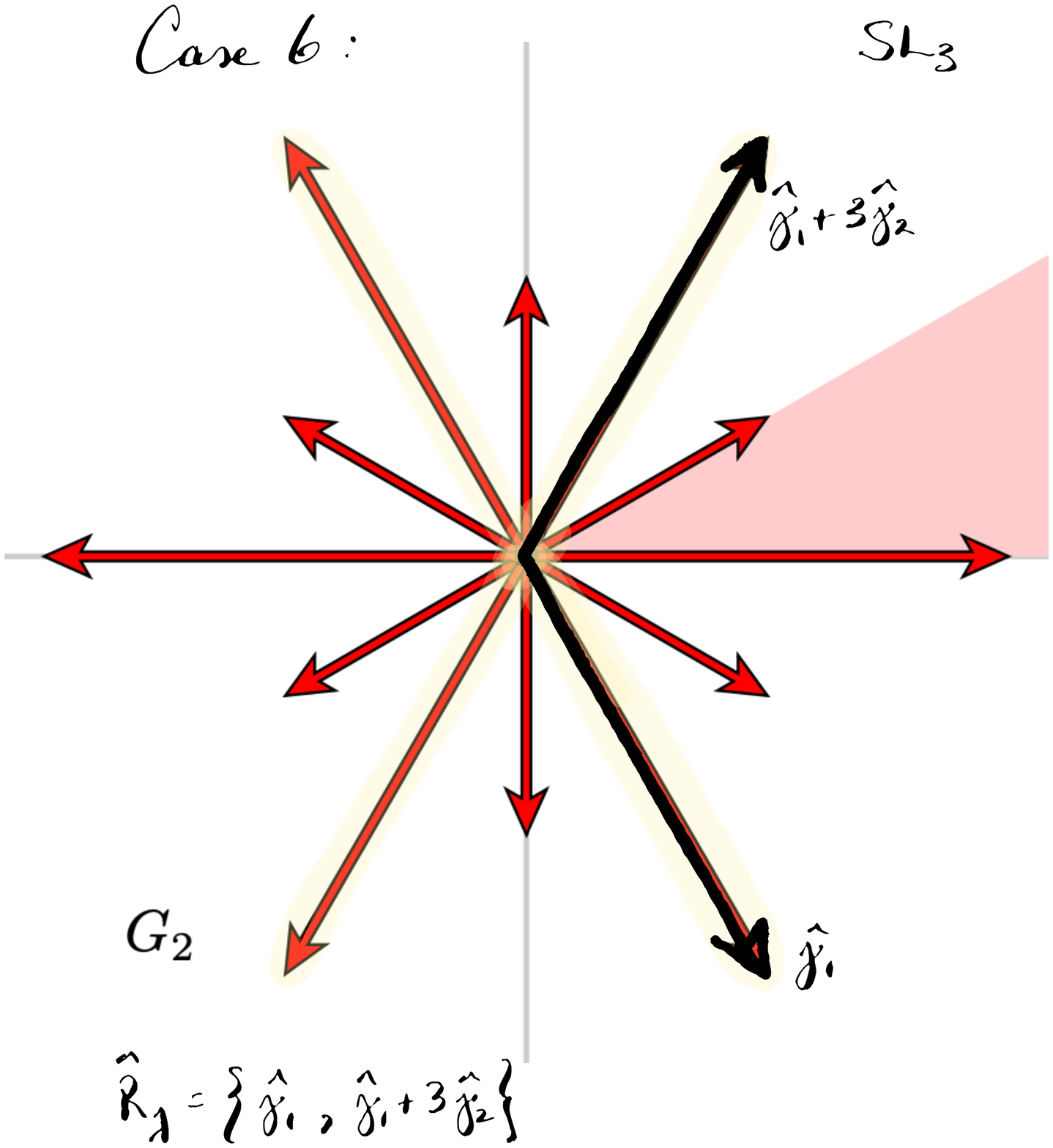}
\includegraphics[height=4.6cm,width=4cm]{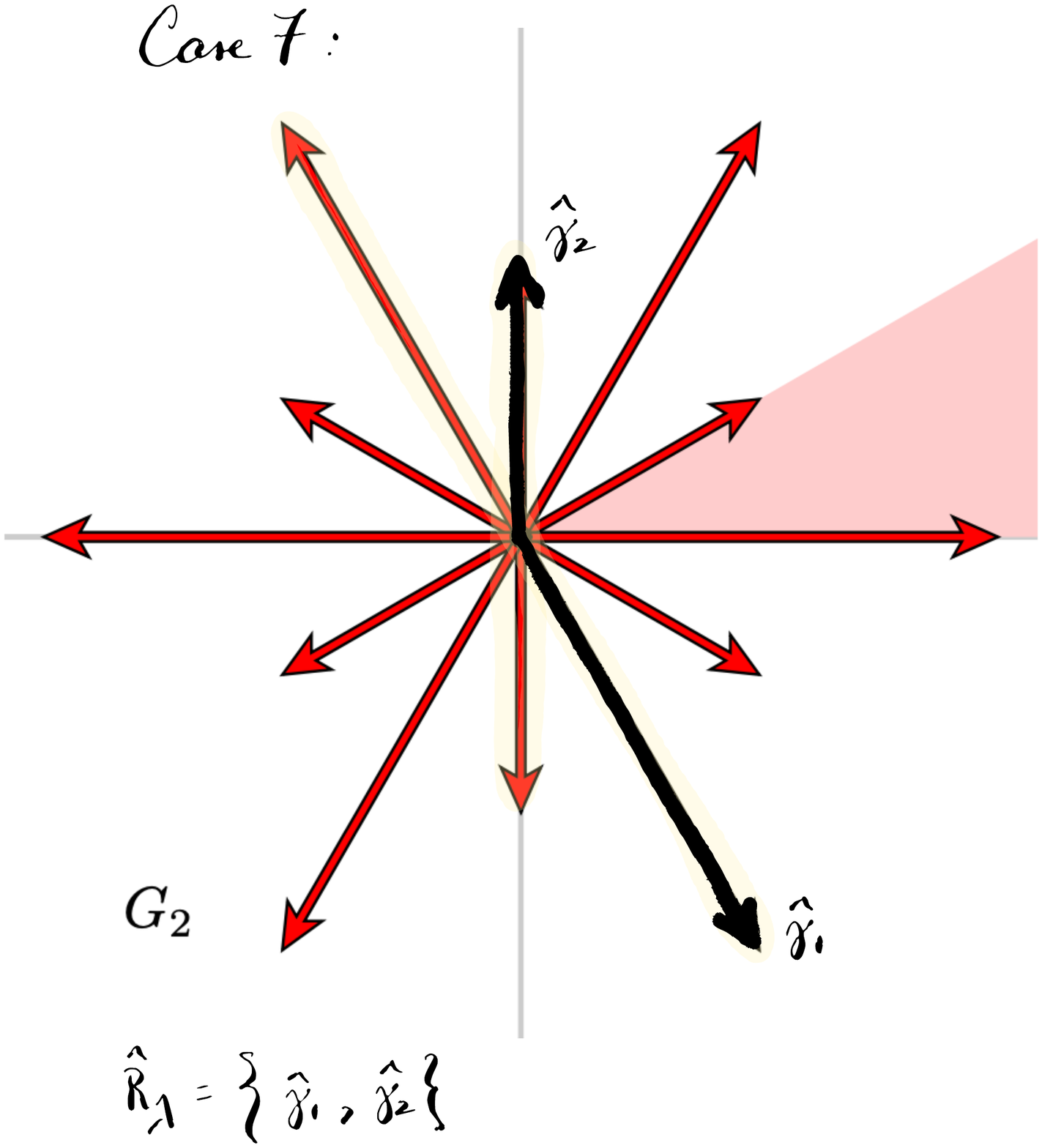}
\includegraphics[height=4.6cm,width=4cm]{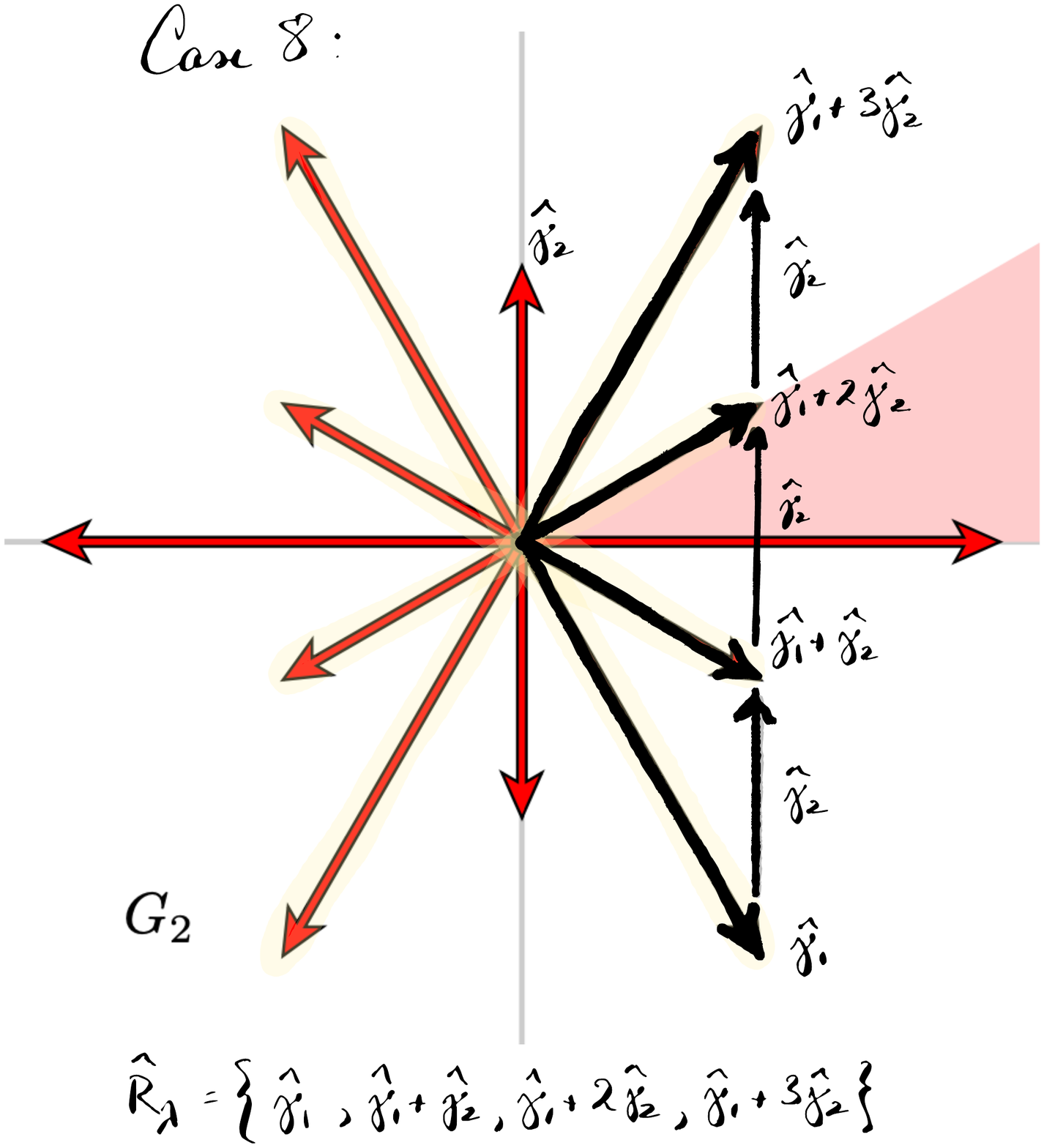}

\end{center}
\end{table}%

\begin{proof}
The possibilities for $\lambda$ are classified by the associated prehomogeneous vector spaces $V_\lambda$ as they appear in $\Lie\dualgroup{G_2}$ up to $\dualgroup{G}_2$-conjugacy.

Case~\ref{infcase:0} describes all $\lambda$ for which $V_\lambda=0$.
These are the $\lambda$ for which ${\hat\gamma}(\lambda(\Frob)) \ne q$ for every root ${\hat\gamma}\in {\hat R}$.
In this case the group $H_{\lambda}$ is $\dualgroup{G_2}$ or one of the subgroups $\SL_3(\C)$, $\SO_4(\C)$,  $\GL_{2}^{{\hat\gamma}_1}(\CC)$, $\GL_{2}^{{\hat\gamma}_2}(\CC)$ or $\dualgroup{T}$, where $U_{\hat\gamma} \subset \GL_{2}^{\hat\gamma}$.

Cases~\ref{infcase:1-short} and \ref{infcase:2-long} describe all $\lambda$ for which $\dim V_\lambda=1$; in these cases, $V_\lambda = \mathrm{Span}\{X_{\hat\gamma}\}$ for some root ${\hat \gamma}$ of $\dualgroup{G}_2$ and $H_\lambda$ is either $\dualgroup{T}$ or $\GL_2^{\hat\gamma'}(\CC)$ for a root ${\hat\gamma'}$ perpendicular to ${\hat\gamma}$.
Up to $\dualgroup{G}_2$-conjugacy, there are only two cases: either ${\hat\gamma}$ is short or long; in Case~\ref{infcase:1-short} we consider the short root ${\hat \gamma} = {{\hat\gamma}_1}+2{{\hat\gamma}_2}$, to which ${{\hat\gamma}_1}$ is perpendicular, and in Case~\ref{infcase:2-long} we consider the long root ${\hat\gamma} = 2{{\hat\gamma}_1}+3{{\hat\gamma}_2}$, to which ${{\hat\gamma}_2}$ is perpendicular.

Cases~\ref{infcase:3}, \ref{infcase:4-D2}, \ref{infcase:5}, \ref{infcase:6-A2}  and \ref{infcase:7-reg} describe all $\lambda$ for which $\dim V_\lambda=2$; in these cases, $V_\lambda = \mathrm{Span}\{X_{\widehat\gamma}, X_{\widehat\gamma'}\}$ for roots ${\widehat \gamma}$ and $X_{\hat\gamma'}$ of $\dualgroup{G}_2$ and $H_\lambda$ is either $\dualgroup{T}$ or $\GL_2^{\hat\gamma_0}$ for a root ${\hat\gamma_0}$ such that ${\widehat\gamma'} = {\widehat\gamma} + {\widehat\gamma_0}$.

If $\dim V_\lambda >2$ there is only one possibility for $\lambda$ and this is Case~\ref{infcase:8-sub}, treated in \cite{CFZ:cubics}.
When $\dim V_\lambda >2$ this forces $\dim V_\lambda=4$ and that 
\[
V_\lambda = \mathrm{Span}\{X_{\widehat\gamma}, X_{{\widehat\gamma} + {\widehat\gamma_0}}, X_{{\widehat\gamma} + 2{\widehat\gamma_0}}, X_{{\widehat\gamma} + 3{\widehat\gamma_0}} \}
\]
for roots $\hat\gamma$ and ${\widehat\gamma_0}$.
The group that acts on $V_\lambda$ in this case is $H_\lambda = \GL_2^{\widehat\gamma_0}$.
There are exactly six such $V_\lambda$, all $\dualgroup{G}_2$-conjugate to $\mathrm{Span}\{X_{{{\hat\gamma}_1}}, X_{{{\hat\gamma}_1}+{{\hat\gamma}_2}}, X_{{{\hat\gamma}_1}+2{{\hat\gamma}_2}}, X_{{{\hat\gamma}_1}+3{{\hat\gamma}_2}} \}$, in which case $H_\lambda = \GL_2^{{{\hat\gamma}_2}}$.
\iffalse%%%%%
We have ${{\hat\gamma}_1}(\lambda(\Frob)) = q$, $({{\hat\gamma}_1}+{{\hat\gamma}_2})(\lambda(\Frob)) = q$, $({{\hat\gamma}_1}+2{{\hat\gamma}_2})(\lambda(\Frob)) = q$ and $({{\hat\gamma}_1}+3{{\hat\gamma}_2})(\lambda(\Frob)) = q$; in other words, $u_1=u_2=1$ and $a_1=a_2=1$.
In this case $\lambda(\Frob)=\wh m(q,q)$, $\dim V_\lambda =4$ and $H_\lambda = \GL_2^{{{\hat\gamma}_2}}$.
There are 4 orbits and 4 Langlands parameters.
This case was studied in \cite{CFZ:cubics}. 
\fi%%%%%%
\end{proof}

\subsection{Prehomogeneous vector spaces}

This classification of infinitesimal parameters $\lambda$ for $G_2$ by the associated prehomogeneous vector spaces $H_\lambda\times V_\lambda \to V_\lambda$ given in Proposition~\ref{prop:infcases} may be simplified further by examining the categories $\Perv_{H_\lambda}(V_\lambda)$ that arise as $\lambda$ goes through the cases appearing there.

%\todo[inline]{There are 9 discriminant/invariant polynomials lurking in the background of this classification.  They show up again in when we lift Langlands parameters. I think we should make them explicit. We can talk about this.}

\begin{proposition}\label{prop:PHV}
If $\lambda : W_F \to \Lgroup{G_2}$ is an unramified infinitesimal parameter for $G_2(F)$ then the category $\Perv_{H_\lambda}(V_\lambda)$ is equivalent to $\Perv_{H}(V)$ where $H\times V\to V$ is one of the following five prehomogeneous vector spaces.

\begin{enumerate}%[leftmargin=20pt,labelindent=0pt,labelwidth=12pt,itemindent=8pt]
\labitem{P0}{geocase-0}
$V=0$ and $H$ is an algebraic group, not necessarily connected (trivial action);
\labitem{P1}{geocase-1}
$V=\mathbb{A}^1$ with $H= \GL_1$-action given by $t\cdot x= tx$ (scalar multiplication);
\labitem{P2}{geocase-2}
$V=\mathbb{A}^2$ with $H= \GL_2$-action $h\cdot x= det(h)^n hx$, for non-negative integer $n$ (twisted matrix multiplication);
\labitem{P3}{geocase-toric}
$V=\mathbb{A}^2$ with $H= \GL_1^2$-action $(t_1,t_2)\cdot(x_1,x_2) = (t_1x_1,t_1t_2^nx_2)$, for positive integer $n$ (toric variety);
\labitem{P4}{geocase-sub}
$V=\mathbb{A}^4$ with $H= \GL_2$-action $h\cdot x=(\det^{-1}\otimes\Sym^3)(h)(x) = \det(h)^{-1}\Sym^3(h)x$ (normalized Symmetric cube).
\end{enumerate}
\end{proposition}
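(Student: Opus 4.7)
The plan is to verify the equivalence case by case using the nine-case classification in Proposition~\ref{prop:infcases}. For each case we use the explicit formulas ${{\hat\gamma}_1}(\wh m(x,y)) = x^{-1}y^2$ and ${{\hat\gamma}_2}(\wh m(x,y)) = xy^{-1}$ from Section~\ref{ssec:notations} to write the $H_\lambda$-action on $V_\lambda$ as a linear representation, and then match it with one of the five model actions. The main categorical tool is the standard fact that a surjective group homomorphism $H_1\to H_2$ with connected kernel, together with an equivariant isomorphism $V_1 \to V_2$, induces an equivalence $\Perv_{H_2}(V_2) \simeq \Perv_{H_1}(V_1)$; we use this throughout to reduce to a normalized model.

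Case \ref{infcase:0} is immediate since $V_\lambda = 0$, matching model \ref{geocase-0}. The one-dimensional Cases \ref{infcase:1-short} and \ref{infcase:2-long} reduce to model \ref{geocase-1}: in each case the $H_\lambda$-action on $V_\lambda$ factors through a character $\chi : H_\lambda \to \GL_1$ whose kernel is connected. When $H_\lambda = \dualgroup{T}$ the kernel is a one-dimensional subtorus, and when $H_\lambda = \GL_2^{\hat\gamma'}$ a direct comparison shows that $\chi$ equals $\det$ up to sign, so the kernel is $\SL_2^{\hat\gamma'}$. Case \ref{infcase:8-sub} reduces to model \ref{geocase-sub} by the identification of $V_\lambda$ with the representation $\det^{-1}\otimes \Sym^3$ of $\GL_2^{{\hat\gamma}_2}$ carried out in \cite{CFZ:cubics}.

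The two-dimensional cases require the most work. When $H_\lambda = \dualgroup{T}$ (Cases \ref{infcase:4-D2}, \ref{infcase:6-A2} and \ref{infcase:7-reg}), the pair of characters by which $\dualgroup{T}$ acts on $V_\lambda$ defines a map $\dualgroup{T} \to \GL_1^2$, which one verifies is an isomorphism; a change of coordinates on the target then puts the action into the form of model \ref{geocase-toric} for an appropriate positive integer $n$. When $H_\lambda = \GL_2^{\hat\gamma'}$ (Cases \ref{infcase:3} and \ref{infcase:5}), the $\hat\gamma'$-string of roots contained in ${\hat R}_\lambda$ has length two, so under $\SL_2^{\hat\gamma'}$ the space $V_\lambda$ is the standard irreducible two-dimensional representation; computing how the center of $\GL_2^{\hat\gamma'}$ acts on $V_\lambda$ and comparing with the central character of $\det^n\otimes \mathrm{std}$ selects the correct twist, matching model \ref{geocase-2}.

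The main obstacle is that different choices of parameter $n$ in the models \ref{geocase-2} and \ref{geocase-toric} give genuinely inequivalent equivariant perverse categories, because the component groups of generic stabilizers depend on $n$. Consequently the central character and weight-lattice primitivity computations in the two-dimensional cases must be done with care to pick the unique twist for which the structural map $H_\lambda \to H$ is either an isomorphism or has connected kernel; otherwise the reduction would fail to give an equivalence of categories.
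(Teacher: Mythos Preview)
Your proposal is correct and follows essentially the same approach as the paper: both proofs first isolate the key categorical tool (an epimorphism $H' \to H$ with connected kernel induces an equivalence $\Perv_H(V) \simeq \Perv_{H'}(V)$, which the paper justifies via \cite{BBD}*{Proposition 4.2.5}), and then run through the nine cases of Proposition~\ref{prop:infcases}, matching each to one of the five models with the same values of $n$ you indicate. Your closing remark about why the particular value of $n$ matters is a helpful gloss that the paper omits, but otherwise the two arguments coincide.
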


\begin{proof}
We begin the proof with a general observation.
Suppose $H$ acts on $V$ and let $u : H'\to H$ be an epimorphism of algebraic groups with connected kernel.
Let $H'$ act on $V$ through $u$. Then $\Perv_{H}(V)$ and $\Perv_{H'}(V)$ are equivalent.
To see this, recall that an $H$-equivariant perverse sheaf on $V$ is a perverse sheaf $\mathcal{F}$ equipped with an isomorphism $\mu : m^*\mathcal{F} \to p^*\mathcal{F}$ satisfying a so-called cocycle condition. 
Consider the functor $\Perv_{H}(V) \to \Perv_{H'}(V)$ defined on objects by $(\mathcal{F},\mu) \mapsto (\mathcal{F},(u\times\id)^*\mu)$ and on arrows by $\varphi \mapsto \varphi$. 
This functor is essentially surjective. 
%\todo{Check this.}
Since $u\times\id : H'\times V\to H\times V$ is smooth with connected fibres of equal dimension $\dim \ker u$ the functor $(u\times\id)^*$ is full and faithful by, for example, \cite{BBD}*{Proposition 4.2.5}. Thus, $\Perv_{H}(V)$ and $\Perv_{H'}(V)$ are equivalent.
We refer to this equivalence as an instance of base-change, below, and in this case we will say that the prehomogeneous vector spaces $H\times V\to V$ and $H'\times V\to V$ are equivalent.

The cases below all refer to Proposition~\ref{prop:infcases}.
\begin{enumerate}%[leftmargin=15pt,labelindent=10pt,labelwidth=22pt,itemindent=27pt]

\item[(Case \ref{infcase:0})]
The prehomogeneous vector space in Proposition~\ref{prop:infcases}, Case~\ref{infcase:0} is $V=0$ for a connected group acting on $0$, which is an instance of the prehomogeneous vector space \ref{geocase-0}. By the base-change argument above, this is equivalent to \ref{geocase-0} for trivial group $H$.

\item[(Case \ref{infcase:1-short})]
There are two group actions appearing in Proposition~\ref{prop:infcases}, Case~\ref{infcase:1-short}, either $H_{\lambda_{\ref{infcase:1-short}}}= \dualgroup{T}$ or $H_{\lambda_{\ref{infcase:1-short}}}= \GL_2$. 
In the former case, observe that ${\hat\gamma_1}+2{\hat\gamma_2} : \dualgroup{T} \to {\GL_1}$ is an epimorphism of algebraic groups with connected fibre so, by the paragraph above,  $\Perv_{H_{\lambda_{\ref{infcase:1-short}}}}(V_{\lambda_{\ref{infcase:1-short}}})$ is equivalent to $\Perv_{{\GL_1}}(\mathbb{A}^1)$ for the prehomogeneous vector space \ref{geocase-1}.
In the latter case, observe that $\det : \dualgroup{T} \to {\GL_1}$ is an epimorphism of algebraic groups with connected fibre so, by the base-change argument above  $\Perv_{H_{\lambda_{\ref{infcase:1-short}}}}(V_{\lambda_{\ref{infcase:1-short}}})$ is again equivalent to $\Perv_{{\GL_1}}(\mathbb{A}^1)$ for the prehomogeneous vector space  \ref{geocase-1}.

\item[(Case \ref{infcase:2-long})]
In Proposition~\ref{prop:infcases}, Case~\ref{infcase:2-long} there are two group actions to consider: either $H_{\lambda_{\ref{infcase:2-long}}}= \dualgroup{T}$ or $H_{\lambda_{\ref{infcase:2-long}}}= \GL_2$.
Here, $V_{\lambda_{\ref{infcase:2-long}}}= \mathbb{A}^1$. 
If $H_{\lambda_{\ref{infcase:2-long}}}= \dualgroup{T}$ then the action is $h.x= \det(h)x$. 
Using the base-change argument above we have $\Perv_{H_{\lambda_{\ref{infcase:2-long}}}}(V_{\lambda_{\ref{infcase:2-long}}}) = \Perv_{{\GL_1}}(\mathbb{A}^1)$ since $2{\hat\gamma_1}+3{\hat\gamma_2} : \dualgroup{T} \to {\GL_1}$ is an epimorphism of algebraic groups with connected fibre. If $H_{\lambda_{\ref{infcase:2-long}}}= \GL_2$ then the action is $h\cdot x = \det(h)x$.
Then, using the base-change argument above, $\Perv_{H_{\lambda_{\ref{infcase:2-long}}}}(V_{\lambda_{\ref{infcase:2-long}}}) = \Perv_{{\GL_1}}(\mathbb{A}^1)$ since $\det : \GL_2 \to {\GL_1}$ is an epimorphism of algebraic groups with connected fibre. 
Thus, $\Perv_{H_{\lambda_{\ref{infcase:2-long}}}}(V_{\lambda_{\ref{infcase:2-long}}})$ is equivalent $\Perv_{H}(V)$ for the prehomogeneous vector space \ref{geocase-1}.

\item[(Case \ref{infcase:3})]
The prehomogeneous vector space in Proposition~\ref{prop:infcases}, Case \ref{infcase:3} is $V_{\lambda_{\ref{infcase:3}}}= \mathbb{A}^2$ and $H_{\lambda_{\ref{infcase:3}}}= \GL_2$; this is equivalent to the prehomogeneous vector space \ref{geocase-2} for $n=1$.

\item[(Case \ref{infcase:4-D2})]
The prehomogeneous vector space in Proposition~\ref{prop:infcases}, Case \ref{infcase:4-D2} is $V_{\lambda_{\ref{infcase:4-D2}}}= \mathbb{A}^2$ and $H_{\lambda_{\ref{infcase:4-D2}}} = \dualgroup{T}$; this is equivalent to the prehomogeneous vector space \ref{geocase-toric} for $n=2$.

\item[(Case \ref{infcase:5})]
The prehomogeneous vector space in Proposition~\ref{prop:infcases}, Case \ref{infcase:5} is $V_{\lambda_{\ref{infcase:5}}}= \mathbb{A}^2$ and $H_{\lambda_{\ref{infcase:5}}}= \GL_2$; this is equivalent to the prehomogeneous vector space \ref{geocase-2} for $n=0$.

\item[(Case \ref{infcase:6-A2})]
The prehomogeneous vector space in Proposition~\ref{prop:infcases}, Case \ref{infcase:6-A2} is $V_{\lambda_{\ref{infcase:6-A2}}}= \mathbb{A}^2$ and $H_{\lambda_{\ref{infcase:6-A2}}} = \dualgroup{T}$; this is equivalent to the prehomogeneous vector space \ref{geocase-toric} for $n=3$.

\item[(Case \ref{infcase:7-reg})]
The prehomogeneous vector space in Proposition~\ref{prop:infcases}, Case \ref{infcase:7-reg} is $V_{\lambda_{\ref{infcase:7-reg}}}= \mathbb{A}^2$ and $H_{\lambda_{\ref{infcase:7-reg}}} = \dualgroup{T}$; this is equivalent to the prehomogeneous vector space \ref{geocase-toric} for $n=1$.

\item[(Case \ref{infcase:8-sub})]
The prehomogeneous vector space in Proposition~\ref{prop:infcases}, Case \ref{infcase:8-sub} is $V_{\lambda_{\ref{infcase:8-sub}}}= \mathbb{A}^4$ and $H_{\lambda_{\ref{infcase:8-sub}}} = \GL_2$; as explained in \cite{CFZ:cubics}*{Proposition~3.1}, this is equivalent to the prehomogeneous vector space \ref{geocase-sub}.
Category $\Perv_{\GL_2}(\det^{-1}\otimes\Sym^3)$ appeared in \cite{CFZ:cubics}.\qedhere
\end{enumerate}
\end{proof}

\subsection{The geometry of the moduli space of Langlands parameters}\label{ssec:geometry}

In this section we make a microlocal study of the five prehomogeneous vector spaces appearing in Proposition~\ref{prop:PHV}. In Section~\ref{ssec:LA-packets} we use this to determine the ABV-packets for $G_2(F)$.

Let $H\times V \to V$ be a prehomogeneous vector space.
Let $V^*$ be the dual vector space to $V$ and let $H\times V^* \to V^*$ be the adjoint action. Then $H\times H$ acts on $T^*(V) = V\times V^*$ by $(h,k)\cdot (x,y) \ceq (h\cdot x, k\cdot y)$ and $T^*(V)$ is also a prehomogeneous vector space. 
Now let $[ \, \ ] : V\times V^* \to \Lie H$ be the momentum map and consider the conormal variety  
\[
\Lambda \ceq \{ (x,y)\in T^*(V),\ [x,y]=0\}.
\]
This Lagrangian subspace $\Lambda \subseteq T^*(V)$ carries the diagonal $H$-action $h\cdot(x,y)\ceq (h\cdot x,h\cdot y)$. 
This is a bundle of vector spaces with group actions: for each $x\in V$, the fibre 
\[
\Lambda_x = \{ y\in V^*,\ [x,y]=0\}
\]
caries a natural action of $Z_{H}(x)$. 
It is not always the case that $\Lambda_x$ is a prehomogeneous vector space.

\begin{proposition}
If $V$ is one of the 5 prehomogeneous vector spaces appearing in Proposition~\ref{prop:PHV} then $\Lambda$ is a bundle of prehomogeneous vector spaces.
\end{proposition}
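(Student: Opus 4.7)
The plan is to verify the claim case by case, running through the five prehomogeneous vector spaces listed in Proposition~\ref{prop:PHV}. For each case I fix a set of $H$-orbit representatives $x \in V$, compute the fibre $\Lambda_x = \{y \in V^{*} : [x,y] = 0\}$ in coordinates via the momentum map, identify $Z_H(x)$ explicitly, and then exhibit an open $Z_H(x)$-orbit in $\Lambda_x$. The key structural observation that guides the computation is that for each $H$-orbit $C \subset V$ with $x \in C$, one has $\Lambda_x = (T_x C)^\perp$, so $\dim \Lambda_x = \dim V - \dim C$; in particular, on the open orbit $\Lambda_x = 0$ automatically.

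Cases \ref{geocase-0}, \ref{geocase-1} and \ref{geocase-2} are essentially immediate: in each of these the nonzero locus of $V$ is a single $H$-orbit, so for $x \ne 0$ we have $\Lambda_x = 0$ (trivially prehomogeneous), while for $x = 0$ the fibre $\Lambda_0 = V^{*}$ carries the dual $H$-action, which is prehomogeneous by direct inspection (the same type of action as the original, scaled and dualized). This disposes of three of the five cases with no real computation.

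Case \ref{geocase-toric} is the toric case: $H = \GL_1^2$ acts on $\mathbb{A}^2$ through the characters $(1,0)$ and $(1,n)$, giving four orbits, namely the origin, each of the two punctured coordinate axes, and the open orbit where both coordinates are nonzero. For each of these four orbit representatives I would compute $[x,y]$ directly in coordinates, obtaining that $\Lambda_x$ is $V^{*}$, a coordinate line in $V^{*}$, the other coordinate line, or $0$, respectively, and that the induced $Z_H(x)$-action on $\Lambda_x$ is, in each instance, a nontrivial scaling by a subtorus whose characters are nonzero on $\Lambda_x$; an open orbit is then visible by inspection.

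Case \ref{geocase-sub} is the one that requires nontrivial input: the normalized symmetric cube of $\GL_2$ on $\mathbb{A}^4$. The orbit decomposition of $V$, the tangent spaces to each $H$-orbit, and the resulting conormal fibres $\Lambda_x$ together with the stabilizers $Z_H(x)$, are worked out in detail in \cite{CFZ:cubics}. I would quote those computations and, for each orbit, read off an open $Z_H(x)$-orbit in $\Lambda_x$ from that analysis.

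The main obstacle is case \ref{geocase-sub}, both because its orbit structure is the richest of the five and because $\Lambda_x$ is genuinely nontrivial over orbits of intermediate dimension; however, the burden there is discharged by the existing work in \cite{CFZ:cubics}, so that the proof as a whole reduces to combining that reference with the elementary coordinate computations sketched above for cases \ref{geocase-0}--\ref{geocase-toric}.
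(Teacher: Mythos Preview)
Your proposal is correct and follows essentially the same approach as the paper: a case-by-case verification in which cases \ref{geocase-0}--\ref{geocase-2} are immediate, case \ref{geocase-toric} is handled by direct coordinate computation of the conormal fibres over the four toric orbits, and case \ref{geocase-sub} is deferred to the computations in \cite{CFZ:cubics}. The paper carries this out in Section~\ref{ssec:geometry}, where it explicitly identifies each $\Lambda_x$ and its open $Z_H(x)$-orbit just as you outline.
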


When the conormal variety $\Lambda$ to $V$ is a bundle of prehomogeneous vector spaces, we write $\Lambda_x^\text{sreg}$ for the open dense $Z_H(x)$-orbit in $\Lambda_x$ and $\Lambda^\text{reg}_{C}$ for the open dense $H$-orbit in $\Lambda_C$, for each $H$-orbit $C\subseteq V$. 

In this section we study the conormal varieties to the prehomogeneous vector spaces appearing in Proposition~\ref{prop:PHV}. In each case we:
\begin{itemize}[leftmargin=15pt]
\item find all $H$-orbits $C\subseteq V$ and all dual $H$-orbits $C^*\subseteq V^*$;
\item calculate each equivariant fundamental group $A_C \ceq \pi_0(Z_{H}(x))$, for $x\in C$;
\item enumerate the simple objects in $\Perv_{H}(V)$;
\item find the Fourier transform of each simple equivariant perverse sheaf on $V$;
\item show that the conormal variety $\Lambda$ is a bundle of prehomogeneous vector spaces;
\item calculate the equivariant fundamental group $A^\ABV_{C}$ of $\Lambda_{C}^\text{sreg}$, for each $H$-orbit $C$ in $V$;
\item calculate $\NEvs_{C_j} \IC(\mathcal{L}_{C_i}) \in \Loc_{H}(\Lambda_{C_j}^\text{sreg})$ for every simple $\IC(\mathcal{L}_{C_i})$ in $\Perv_{H}(V)$, where $\NEvs$ is defined in \cite{CFMMX}*{Section 7.10}.
\end{itemize}
These geometric calculations, and their applications to representation theory, follow the ideas presented in the examples treated in \cite{CFMMX}*{Part II} and in \cite{CFZ:cubics}. 

\begin{enumerate}%[leftmargin=20pt,labelindent=0pt,labelwidth=12pt,itemindent=8pt]
\item[(\ref{geocase-0})]
%Let $H$ be a connected algebraic group acting on the $0$-dimensional vector space $V=0$.
$\Perv_{H}(0)$ is the category of finite dimensional representations of $\pi_0(H)$. 
Then $\Lambda = 0$ and the functor $\NEvs$ is the identity. 

\item[(\ref{geocase-1})]
For the action of $\GL_1$ on $\mathbb{A}^1$ given by scalar multiplication there are two ${\GL_1}$-orbits: $C_{-} = \{0\}$ and its complement, $C_{1}$.
Then $\Perv_{{\GL_1}}(\mathbb{A}^1)$ has two simple objects, $\IC(\1_{C_0}) = \1_{C_0}^![0]$, the  extension-by-zero of $\1_{C_0}$ and $\IC(\1_{C_1}) = \1_{\mathbb{A}^1}[1]$. 
%\todo{Insert a description of the categories?}
The conormal varieties are given as follows.
First observe that $\Lambda_{0} = T_{0}(\mathbb{A}^1) \iso \mathbb{A}^1$ and $\Lambda_{0}^\text{sreg} \iso \{ y\in \mathbb{A}^1 \tq y\ne 0\}$; note that this is a single orbit under the $\GL_1$-action. 
If $x\in C_1$ then $\Lambda_x = \{ (x,y) \tq xy =0 \} =\{ (x,0)\} $ which itself is a $\GL_1$-orbit.
In this way we see that $T^*_{C_0}(\mathbb{A}^1)^\text{sreg}$ and $T^*_{C_1}(\mathbb{A}^1)^\text{sreg}$ are both isomorphic, as $\GL_1$-spaces, to $C_1$ and consequently, equivariant local systems on these spaces are naturally identified with finite-dimensional vector spaces.
The functor $\NEvs : \Perv_{\GL_1}(\mathbb{A}^1) \to \Loc_{\GL_1}(T^*_{\GL_1}(\mathbb{A}^1)_\text{sreg})$ is given by the following table:
\[
\begin{array}{c| cc | c}
\Perv_{\GL_1}(\mathbb{A}^1) & \Loc_{\GL_1}(\Lambda_{C_0}^\text{sreg}) & \Loc_{\GL_1}(\Lambda_{C_1}^\text{sreg}) & \text{Fourier} \\
\hline\hline
\IC(\1_{C_0}) & \1_{\Lambda_{C_0}^\text{sreg}} & 0 & \IC(\1_{C_1}) \\
\IC(\1_{C_1}) & 0 & \1_{\Lambda_{C_1}^\text{sreg}} & \IC(\1_{C_0})
\end{array}
\]
\item[(\ref{geocase-2})]
For the twisted matrix multiplication action of $\GL_2$ on $\mathbb{A}^2$, again there are only two $H$-orbits, $C_0=\{(0,0)\}$ and the complement $C_1$ and only two simple objects in $\Perv_{\GL_2}(\mathbb{A}^2)$ are $\IC(\1_{C_0}) = \1_{C_0}^![0]$ and $\IC(\1_{C_1}) = \1_{\mathbb{A}^2}[2]$. 
Again, $T^*_{C_0}(\mathbb{A}^2)^\text{sreg}$ and $T^*_{C_1}(\mathbb{A}^2)^\text{sreg}$ are both isomorphic, as $\GL_2$-spaces, to ${\bar C_1}$, which is a single $\GL_2$-orbit and also has trivial equivariant fundamental group.
The functor $\NEvs : \Perv_{\GL_2}(\mathbb{A}^2) \to \Loc_{\GL_2}(T^*_{\GL_2}(\mathbb{A}^2)_\text{sreg})$ is given by
\[
\begin{array}{c| cc | c }
\Perv_{\GL_2}(\mathbb{A}^2) & \Loc_{\GL_2}(\Lambda_{C_0}^\text{sreg}) & \Loc_{\GL_2}(\Lambda_{C_1}^\text{sreg}) & \text{Fourier} \\
\hline\hline
\IC(\1_{C_0}) & \1_{\Lambda_{C_0}^\text{sreg}} & 0 & \IC(\1_{C_1})\\
\IC(\1_{C_1}) & 0 & \1_{\Lambda_{C_1}^\text{sreg}} &\IC(\1_{C_0}) 
\end{array}
\]
\item[(\ref{geocase-toric})]
Consider the action of $H= \GL_1^2$ on $V=\mathbb{A}^2$ by $(t_1,t_2).(x_1,x_2) = (t_1x_1,t_1t_2^nx_2)$ for positive integer $n$.
There are four $H$-orbits in this case: $C_0= \{(0,0)\}$, $C_1=\{ (x_1,0)\tq x_1\ne 0\}$, $C_2=\{ (0,x_2)\tq x_2\ne 0\}$ and the open orbit $C_3 = \{ (x_1,x_2)\tq x_1x_2\ne 0\}$.
The equivariant fundamental groups of these orbits are trivial with the exception of $C_3$, for which $A_{C_3}$ is the group of $n$-th roots-of-unity.
There are $4+ (n-1)$ simple objects in this category: $\IC(\1_{C_0}) = \1_{C_0}^!$, $\IC(\1_{C_1})= \1_{\bar C_1}^![1]$, $\IC(\1_{C_2}) = \1_{\bar C_2}^![1]$, $\IC(\1_{C_3}) = \1_{\mathbb{A}^2}[2]$ and $\IC(\vartheta_{C_3})$, where $\vartheta$ is a non-trivial character of the group of $n$th roots-of-unity. 
For $x\in C_0$ or $C_3$ we have $\Lambda_x \iso {\bar C_3}$ which has a unique open $\GL_1^2$-orbit.
For $x\in C_1$, $\Lambda_x = \{ ((x_1,0),(y_1,y_2)) \in T^*(V)\tq x_1y_1=0 \} = \{ ((x_1,0),(0,y_2)) \tq y_2\} \iso {\bar C_2}$ which also has a unique $\GL_1^2$-orbit; likewise for $x\in C_2$, $\Lambda_x \iso {\bar C_1}$.
It follows that $\Lambda_{C}$ has a unique open $H$-orbit, $\Lambda_{C}^\text{sreg}$, for every $H$-orbit $C\subset V$.
A simple calculation shows that the equivariant fundamental group of $\Lambda_{C}^\text{sreg}$ is the group $\langle \theta_n \rangle$ of $n$th roots-of-unity, for every $H$-orbit $C\subset V$.
The functor $\NEvs$ is given by the following table, where the last row is removed if $n=1$ and duplicated if $n>2$ for every non-trivial character $\vartheta$.
\[
\begin{array}{c | cccc | c}
\Perv_{ \GL_1^2}(\mathbb{A}^2) & \Loc_{ \GL_1^2}(\Lambda_{C_0}^\text{sreg}) & \Loc_{\GL_1^2}(\Lambda_{C_1}^\text{sreg}) & \Loc_{\GL_1^2}(\Lambda_{C_2}^\text{sreg}) & \Loc_{\GL_1^2}(\Lambda_{C_3}^\text{sreg}) & \text{Fourier } \\
\hline\hline
\IC(\1_{C_0}) & \1_{\Lambda_{C_0}^\text{sreg}} & 0 & 0 & 0 & \IC(\1_{C_3})\\
\IC(\1_{C_1}) & 0 & \1_{\Lambda_{C_1}^\text{sreg}} & 0 & 0 & \IC(\1_{C_2})\\
\IC(\1_{C_2}) & 0 & 0 & \1_{\Lambda_{C_2}^\text{sreg}} & 0 & \IC(\1_{C_1})\\
\IC(\1_{C_3}) & 0 & 0 & 0 & \1_{\Lambda_{C_3}^\text{sreg}} & \IC(\1_{C_0})\\
\hline
\IC(\vartheta_{C_3}) & \vartheta_{\Lambda_0^\text{sreg}} & \vartheta_{\Lambda_1^\text{sreg}} & \vartheta_{\Lambda_1^\text{sreg}} & \vartheta_{\Lambda_2^\text{sreg}} & \IC(\vartheta_{C_3})
\end{array}
\]

\item[(\ref{geocase-sub})]
The functor $\NEvs$ for $H= \GL_2$ acting on $V=\mathbb{A}^4$ by $h.x=\det^{-1}(h)\Sym^3(h)(x)$ (nomalized Symmetric cube) was calculated in \cite{CFZ:cubics}. We recall the result in the table below, in which $\vartheta \ceq \vartheta_2$ is the sign character of $\langle \theta_2\rangle$ and, as explained in Section~\ref{ssec:notation}, $\varepsilon$ is the sign character of $S_3$ and $\varrho$ is the character of the reflection representation of $S_3$.
\[
\begin{array}{ c | cccc | c }
\Perv_{\GL_2}(V) & \Loc_{\GL_2}(\Lambda_{C_0}^\mathrm{sreg}) &  \Loc_{\GL_2}(\Lambda_{C_1}^\mathrm{sreg}) &  \Loc_{\GL_2}(\Lambda_{C_2}^\mathrm{sreg}) &  \Loc_{\GL_2}(\Lambda_{C_3}^\mathrm{sreg}) & \text{Fourier} \\
\hline\hline
\IC(\1_{C_0}) & \1_{\Lambda_0^\mathrm{reg}} & 0 & 0 & 0 & \IC(\1_{C_3})\\
\IC(\1_{C_1})  & \varrho_{\Lambda_0^\mathrm{reg}} & \1_{\Lambda_1^\mathrm{reg}} & 0 & 0 & \IC(\varrho_{C_3}) \\
\IC(\1_{C_2}) & 0 & \vartheta_{\Lambda_1^\mathrm{reg}} & \1_{\Lambda_2^\mathrm{reg}} & 0 & \IC(\1_{C_2}) \\
\IC(\1_{C_3}) & 0 & 0 & 0 & \1_{\Lambda_3^\mathrm{reg}} & \IC(\1_{C_1}) \\
\IC(\varrho_{C_3}) &  0 & 0 &\vartheta_{\Lambda_2^\mathrm{reg}} & \varrho_{\Lambda_3^\mathrm{reg}} & \IC(\1_{C_0}) \\
%\hline
\IC(\varepsilon_{C_3}) & \varepsilon_{\Lambda_0^\mathrm{reg}} & \1_{\Lambda_1^\mathrm{reg}}  & \vartheta_{\Lambda_2^\mathrm{reg}} & \varepsilon_{\Lambda_3^\mathrm{reg}} & \IC(\varepsilon_{C_3})
\end{array}
\]
\end{enumerate}

\section{Unipotent representations for $G_2$}\label{sec:unipG2}

In this section we use the classification of infinitesimal parameters for $G_2(F)$ in Proposition~\ref{prop:infcases}, together with the results of Section~\ref{ssec:geometry}, to enumerate all irreducible unipotent representations of $G_2(F)$ and to find their Langlands parameters. In this way we give an explicit form of the Langlands correspondence for unipotent representations of $G_2(F)$.  At the same time, we will find the ABV-packet coefficients and start assembling the proof of Theorem~\ref{thm:coefficients}.

\subsection{The Langlands correspondence}\label{ssec:LLC}% for unipotent representations of $G_2(F)$}

%\todo[inline]{@Qing: Should we discuss the Langlands correspondence and its many normalizations? Or can we just point to some reference? I think you've already looked. What are your recommendations for this section? We could take a softer approach and not have a declared proposition on LLC, instead just a section of this paper on this topic. That's starting to seem like a good idea to me so I've done that here, for now. This seems to remove some pressure from ourselves. What do you think?}

The Langlands correspondence for unipotent representations of $G_2(F)$ is a special case of \cite{Lusztig:Classification1}, though that paper uses a normalization of the correspondence that is different from the one we give here.
%\todo{I've changed this sentence to hopefully makes this point more clear, since MS did not seem to catch this point.}
Instead, here we use a normalization of the local Langlands correspondence that satisfies the desiderata of \cite{GGP}*{\S 9}; in particular, it satisfies \cite{Gross-Prasad}*{Conjecture 2.6}, which specifies the Langlands parameter of generic representations for a particular infinitesimal parameter. 
Moreover, for a choice of hyperspecial group, our normalization also specifies the enhanced Langlands parameter of spherical representation for a particular infinitesimal parameter, {\it i.e.}, if $\pi\in \Pi_\phi(G_2(F))$ is spherical, then $\phi$ is trivial on the $\SL_2(\C)$ part and the corresponding representation of $\wh A_{\phi}$ is trivial. 
This choice of normalization is compatible with the requirement in the work of Arthur \cite{Arthur:book} and agrees with the local Langlands correspondence appearing in \cite{Solleveld}.
As in \cite{Waldspurger:SOimpair}*{p. 804}, this normalization is obtained by composing Lusztig's correspondence with the Aubert involution.

%\begin{proposition}\label{prop:LLC}
The Langlands correspondence for unipotent representations of $G_2(F)$ is given in Table~\ref{table:LLC}, also in Table~\ref{table:LA-packets}, making reference to the classification of unramified infinitesimal parameters in Proposition~\ref{prop:infcases} and the enumeration of unramified Langlands parameters for $G_2(F)$ given in Section~\ref{ssec:LA-packets}.
Both tables use notation from \cite{Muic} for irreducible admissible representations of $G_2(F)$, modified as explained in Section~\ref{ssec:notation}.
In Table~\ref{table:LLC}, characters of $A_\phi$ are listed in the column with $\phi$ at the top and the $L$-packet $\Pi_\phi(G_2(F))$ is found by gathering together the representations $\pi$ for which the character of $A_\phi$ is non-zero, thus defining the bijection
\[
\Pi_\phi(G_2(F)) \to \widehat{A_\phi},
\]
for all unramified Langlands parameters for $G_2(F)$.
In this way, Table~\ref{table:LLC} defines $\pi(\phi,\rho)$ for each enhanced Langlands parameter $(\phi,\rho)$, for the $\rho\in \widehat{A_\phi}$.

There are precisely three non-singleton $L$-packets of unipotent representations of $G_2(F)$: $\Pi_{\phi_{\ref{infcase:4-D2}d}}(G_2(F))$, $\Pi_{\phi_{\ref{infcase:6-A2}d}}(G_2(F))$ and $\Pi_{\phi_{\ref{infcase:8-sub}d}}(G_2(F))$, of which the first has order $2$ and the other two have order $3$. 
Note that the group $A_\phi$ is trivial except when $\phi$ is $\phi_{\ref{infcase:4-D2}d}$, $\phi_{\ref{infcase:6-A2}d}$ or $\phi_{\ref{infcase:8-sub}d}$, in which cases $A_\phi$ is $\langle \theta_2\rangle$, $\langle \theta_3\rangle$ and $S_3$, respectively. 
These three Langlands parameters are all of Arthur type and are distinguished by the fact that they have elliptic endoscopy, as explained in Section~\ref{sec:geoendo}.
%\end{proposition}

\begin{table}[htp]
\caption{The local Langlands correspondence for unipotent representations of $G_2(F)$. See Section~\ref{ssec:LLC} for how to read this table.}
\label{table:LLC}
\begin{center}
\rotatebox{90}{
\resizebox{\textheight-46pt}{!}{
$
\begin{array}{r|c|cc|cc|cc|cccc|cc|cccc|cccc|cccc}
 & \multicolumn{ 1 }{ c | }{ \lambda_{\ref{infcase:0}} }  & \multicolumn{ 2 }{ c| }{ \lambda_{\ref{infcase:1-short}} } &  \multicolumn{ 2 }{ c| }{ \lambda_{\ref{infcase:2-long}} } &  \multicolumn{ 2 }{ c| }{ \lambda_{\ref{infcase:3}} } & \multicolumn{ 4 }{ c| }{ \lambda_{\ref{infcase:4-D2}} } & \multicolumn{ 2 }{ c| }{ \lambda_{\ref{infcase:5}} } & \multicolumn{ 4 }{ c| }{ \lambda_{\ref{infcase:6-A2}} } & \multicolumn{ 4 }{ c| }{ \lambda_{\ref{infcase:7-reg}} } & \multicolumn{ 4 }{ c }{ \lambda_{\ref{infcase:8-sub}} }\\
 &&&&&&&&&&&&&&&&&&&&&&&&&\PGL_3\\
&T&T&\GL_2&T&\GL_2&T&\GL_2&T&\GL_2&\GL_2&\SO_4&T&\GL_2&T&\GL_2&\GL_2&\PGL_3&T&\GL_2&\GL_2&G_2&T&\GL_2&\GL_2&\SO_4\\
& \phi_{\ref{infcase:0}} & \phi_{\ref{infcase:1-short}a} & \phi_{\ref{infcase:1-short}b} & \phi_{\ref{infcase:2-long}a} & \phi_{\ref{infcase:2-long}b} & \phi_{\ref{infcase:3}a} & \phi_{\ref{infcase:3}b} & \phi_{\ref{infcase:4-D2}a} & \phi_{\ref{infcase:4-D2}b} & \phi_{\ref{infcase:4-D2}c} & \phi_{\ref{infcase:4-D2}d} & \phi_{\ref{infcase:5}a} & \phi_{\ref{infcase:5}b} & \phi_{\ref{infcase:6-A2}a} & \phi_{\ref{infcase:6-A2}b} & \phi_{\ref{infcase:6-A2}c} & \phi_{\ref{infcase:6-A2}d} & \phi_{\ref{infcase:7-reg}a} & \phi_{\ref{infcase:7-reg}b} & \phi_{\ref{infcase:7-reg}c} & \phi_{\ref{infcase:7-reg}d} & \phi_{\ref{infcase:8-sub}a} & \phi_{\ref{infcase:8-sub}b} & \phi_{\ref{infcase:8-sub}c} & \phi_{\ref{infcase:8-sub}d} \\
\hline 
I(\mu_1\nu^{a_1}\otimes \mu_2\nu^{a_2}) 				&1&&&&&&&&&&&&&&&&&&&&&&&&\\
\hline
I_{\gamma_2}(a-1/2, \mu\circ \det) 					&&1&0&&&&&&&&&&&&&&&&&&&&&&\\
I_{\gamma_2}(a-1/2, \mu\otimes\St_{\GL_2}) 			&&0&1&&&&&&&&&&&&&&&&&&&&&&\\
\hline
I_{\gamma_1}(a-1/2, \mu\circ \det) 					&&&&1&0&&&&&&&&&&&&&&&&&&&&\\
I_{\gamma_1}(a-1/2, \mu\otimes\St_{\GL_2}) 			&&&&0&1&&&&&&&&&&&&&&&&&&&&\\
\hline
I_{{\gamma_1}}(1/6, \theta_3^n \circ \det)  			&&&&&&1&0&&&&&&&&&&&&&&&&&&\\
I_{{\gamma_1}}(1/6, \theta_3^n\otimes\St_{\GL_2}) 		&&&&&&0&1&&&&&&&&&&&&&&&&&&\\
\hline
J_{\gamma_2}(1,I^{\GL_2}(1\otimes\theta_2)) 			&&&&&&&&1&0&0&0&&&&&&&&&&&&&&\\
J_{{\gamma_1}}(1/2,\theta_2\otimes \St_{\GL_2}) 		&&&&&&&&0&1&0&0&&&&&&&&&&&&&&\\
J_{{\gamma_2}}(1/2,\theta_2\otimes \St_{\GL_2}) 		&&&&&&&&0&0&1&0&&&&&&&&&&&&&&\\
\pi(\theta_2) 									&&&&&&&&0&0&0&1&&&&&&&&&&&&&&\\
I_0(G_2[-1]) 									&&&&&&&&0&0&0&\vartheta_2&&&&&&&&&&&&&&\\
\hline
I_{\gamma_2}(3/2,1_{\GL_2}) 						&&&&&&&&&&&&1&0&&&&&&&&&&&&\\
I_{\gamma_2}(3/2,\St_{\GL_2})						&&&&&&&&&&&&0&1&&&&&&&&&&&&\\
\hline
J_{\gamma_2}(1,I^{\GL_2}(\theta_3\otimes\theta_3^{-1})) &&&&&&&&&&&&&&1&0&0&0&&&&&&&&\\
J_{{\gamma_1}}(1/2,\theta_3^{2}\otimes\St_{\GL_2}) 	&&&&&&&&&&&&&&0&1&0&0&&&&&&&&\\
J_{{\gamma_1}}(1/2,\theta_3^{}\otimes\St_{\GL_2})  		&&&&&&&&&&&&&&0&0&1&0&&&&&&&&\\
\pi(\theta_3) 									&&&&&&&&&&&&&&0&0&0&1&&&&&&&&\\
I_0(G_2[\theta_3]) 								&&&&&&&&&&&&&&0&0&0&\vartheta_3&&&&&&&&\\
I_0(G_2[\theta_3^{2}]) 							&&&&&&&&&&&&&&0&0&0&\vartheta_3^2&&&&&&&&\\
\hline
1_{G_2}  										&&&&&&&&&&&&&&&&&&1&0&0&0&&&&\\
J_{\gamma_1}(3/2,\St_{\GL_2}) 					&&&&&&&&&&&&&&&&&&0&1&0&0&&&&\\
J_{{\gamma_2}}(5/2,\St_{\GL_2}) 					&&&&&&&&&&&&&&&&&&0&0&1&0&&&&\\
 \St_{G_2} 									&&&&&&&&&&&&&&&&&&0&0&0&1&&&&\\
 \hline
J_{{\gamma_2}}(1,I^{\GL_2}(1\otimes 1)) 				&&&&&&&&&&&&&&&&&&&&&&1&0&0&0\\
J_{\gamma_1}(1/2,\St_{\GL_2}) 					&&&&&&&&&&&&&&&&&&&&&&0&1&0&0\\
J_{{\gamma_2}}(1/2,\St_{\GL_2}) 					&&&&&&&&&&&&&&&&&&&&&&0&0&1&0\\
\pi(1)' 										&&&&&&&&&&&&&&&&&&&&&&0&0&0&1\\
\pi(1) 										&&&&&&&&&&&&&&&&&&&&&&0&0&0&\varrho\\
I_0(G_2[1]) 									&&&&&&&&&&&&&&&&&&&&&&0&0&0&\varepsilon
\end{array}
$
}
}
\end{center}
\end{table}

%\begin{proof}
Recall that we have classified all unramified infinitesimal parameters for $G_2(F)$ in Section~\ref{ssec:infcases}. Working through the 9 cases in Proposition~\ref{prop:infcases}, in Section~\ref{ssec:LA-packets} we find, in each case, all Langlands parameters $\phi$ with given infinitesimal parameter $\lambda$. Using results from \cite{CFMMX}, we compute the component group $A_\phi$. We find that these groups are trivial in all cases except the unique tempered Langlands parameter with infinitesimal parameters given by Cases~\ref{infcase:4-D2},  \ref{infcase:6-A2} and \ref{infcase:8-sub}. In all other cases, except these three, since $A_\phi$ is trivial, we use the Langlands classification to find the corresponding admissible representation and note that $\Pi_\phi(G_2(F)) \to \Rep(A_\phi)$ is trivial in these cases.
This is done case-by-case in Section~\ref{ssec:LA-packets}. 
It then remains to consider only the Langlands parameters with infinitesimal parameters given by Cases~\ref{infcase:4-D2},  \ref{infcase:6-A2} and \ref{infcase:8-sub}, specifically the three tempered ({\it i.e.}, bounded upon restriction to $W_F$) parameters denoted by $\phi_{\ref{infcase:4-D2}d}$, $\phi_{\ref{infcase:6-A2}d}$ and $\phi_{\ref{infcase:8-sub}d}$. These three are of Arthur type and are lifted from Arthur parameters of elliptic endoscopic groups, as explained in Section~\ref{sec:geoendo}. We found the local Langlands correspondence for parameters with infinitesimal parameter $\lambda_{\ref{infcase:8-sub}}$ in \cite{CFZ:cubics}*{Theorem 2.5}. The local Langlands correspondence for parameters with infinitesimal parameter $\lambda_{\ref{infcase:4-D2}}$ and $\lambda_{\ref{infcase:6-A2}}$  is obtained by similar arguments.% summarized in Propositions~\ref{LLC:4-D2} and \ref{LLC:6-A2}, respectively.
%\end{proof}

\subsection{Vogan's version of the local Langlands correspondence}\label{ssec:VC}

Let $\lambda$ be an unramifed infinitesimal parameter of $G_2(F)$ and let $\Pi_\lambda(G_2(F))$ be the set of (equivalence classes of) irreducible smooth representations of $G_2(F)$ with infinitesimal parameter $\lambda$. Associated with $\lambda$, we have the Vogan variety $V_\lambda$ and the group $H_\lambda$. If $\phi$ is an unramified Langlands parameter with infinitesimal parameter $\lambda$, let $C_\pi\subset V_\lambda$ be the $H_\lambda$-orbit corresponding to $\phi$. For use below, we define $\dim(\phi) \ceq \dim C_\phi$ and $\dim(\pi) \ceq \dim C_\phi$ when $\phi$ is the Langlands parameter for $\pi$.

Let $\Pi_\phi(G_2(F))$ be the $L$-packet of $\phi$, whose existence was proved in \cite{Lusztig:Classification1}. Since $A_\phi$ is both the component group of $\phi$ and the equivariant fundamental group of $C_\phi$, the Langlands correspondence, as presented in Section~\ref{ssec:LLC}, now determines a bijection 
\[
\mathcal{L} : \Pi_\phi(G_2(F))\to \Loc_{H_\lambda}(C_\phi)_{/\mathrm{iso}}^{\mathrm{simple}}
\]
such that $\mathcal{L}(\pi)$ corresponds to the representation $\langle\ ,\pi\rangle_\phi$ of $A_\phi$.
Letting $\phi$ run over all $L$-parameters with infinitesimal parameter $\lambda$, this defines a bijection
\[
\mathcal{P}: \Pi_\lambda(G_2(F))\to \Perv_{H_\lambda}(V_{\lambda})_{/\mathrm{iso}}^{\mathrm{simple}}
\]
such that $\mathcal{P}(\pi) = \IC(\mathcal{L}(\pi))$. We make use of this bijection in Section~\ref{ssec:LA-packets}.
In Case~\ref{infcase:8-sub}, this bijection was made explicit in \cite{CFZ:cubics}. 
\iffalse%%%%%
As another example, in Case~\ref{infcase:6-A2}, with reference to notation from Section~\ref{ssec:LA-packets}, this bijection is given by the rows of the following table.
\[
\begin{array}{r | c c c c }
 & \phi_{\phi_{\ref{infcase:6-A2}a}} & {\phi_{\ref{infcase:6-A2}b}} & {\phi_{\ref{infcase:6-A2}c}} & {\phi_{\ref{infcase:6-A2}a}} \\
\hline
J_{\gamma_2}(1,I^{\GL_2}(\theta_3\otimes\theta_3^{-1}))  &\IC(\1_{C_{\phi_{\ref{infcase:6-A2}a}}})&&& \\
J_{{\gamma_1}}(1/2, \theta_3^{-1}\otimes \St_{\GL_2}) 	&& \IC(\1_{C_{\phi_{\ref{infcase:6-A2}b}}}) && \\
J_{{\gamma_1}}(1/2,\theta_3\otimes \St_{\GL_2}) 		&&&\IC(\1_{C_{\phi_{\ref{infcase:6-A2}c}}}) & \\
\pi(\theta_3) 									&&&&\IC(\1_{C_{\phi_{\ref{infcase:6-A2}d}}})\\
\mathrm{cInd}_{G_2(\CO_F)}^{G_2(F)}(G_2[\theta_3]) 	&&&&\IC(\vartheta_{C_{\phi_{\ref{infcase:6-A2}d}}})\\
\mathrm{cInd}_{G_2(\CO_F)}^{G_2(F)}(G_2[\theta^2_3]) 	&&&&\IC(\vartheta^2_{C_{\phi_{\ref{infcase:6-A2}d}}})
\end{array}
\]
where $\vartheta = \vartheta_3$.
\fi%%%

\subsection{Kazhdan-Lusztig conjecture}

Using the case-by-case calculations of Section~\ref{ssec:LA-packets}, we have verified the Kazhdan-Lusztig conjecture, as stated in \cite{CFMMX}*{Section 10.3.3}, for $G_2(F)$; this extends the result of \cite{CFZ:cubics}*{Section 2.10}. 

\subsection{Aubert involution and the Fourier transform}
We have also confirmed that the Aubert involution, as calculated in \cite{Muic}, matches the Fourier transform of the corresponding simple perverse sheaves, using the bijection of Section~\ref{ssec:VC}, for all unipotent representations of $G_2(F)$, confirming the expectation of \cite{CFMMX}*{Section 10.3.4} and extending the result of \cite{CFZ:cubics}*{Section 2.13}.

\subsection{A generalization of the component group of an Arthur parameter}

We now introduce the algebraic group $\mathcal{S}^\ABV_\phi$ promised in the introduction.

\begin{corollary}\label{cor:prehomogeneous}
Let $\lambda : W_F \to \Lgroup{G_2}$ be an unramified infinitesimal parameter for $G_2(F)$. Let $V_\lambda$ be the moduli space of Langlands parameters with infinitesimal parameter $\lambda$. The conormal variety $\Lambda_\lambda$ is a bundle of prehomogeneous vector spaces over $V_\lambda$. 
\end{corollary}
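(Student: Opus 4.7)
The plan is to reduce this to the case-by-case analysis that has already been carried out. By Proposition~\ref{prop:infcases}, after conjugation by $\dualgroup{G_2}$ the unramified infinitesimal parameter $\lambda$ falls into one of the nine cases \ref{infcase:0}--\ref{infcase:8-sub}, and Proposition~\ref{prop:PHV} identifies, up to the base-change equivalence that is established in the proof of that proposition, the prehomogeneous vector space $H_\lambda \times V_\lambda \to V_\lambda$ with one of the five standard prehomogeneous vector spaces \ref{geocase-0}--\ref{geocase-sub}. The conormal variety depends only on the representation of $H_\lambda$ on $V_\lambda$; since the epimorphisms used in the base-change step have connected kernels acting trivially on $V_\lambda$, the momentum map, the Lagrangian conormal subvariety, and the orbit stratifications on $\Lambda$ all transfer verbatim between the two sides of the equivalence.

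With this reduction in place, the conclusion follows from the unnumbered proposition immediately preceding the orbit analysis in Section~\ref{ssec:geometry}, which asserts that for each of the five standard prehomogeneous vector spaces $H\times V\to V$ of Proposition~\ref{prop:PHV} the conormal variety $\Lambda\subseteq T^*(V)$ is a bundle of prehomogeneous vector spaces. In the body of Section~\ref{ssec:geometry} this is verified by exhibiting the fibres $\Lambda_x$ explicitly, together with the action of the stabilizer $Z_H(x)$, for each of the five models: in cases \ref{geocase-0}, \ref{geocase-1} and \ref{geocase-2} the fibres $\Lambda_x$ are either zero or isomorphic to $\overline{C_1}$ under an $H$-action with a unique open orbit; in case \ref{geocase-toric} every fibre is isomorphic to $\overline{C_1}$ or $\overline{C_2}$ or $\overline{C_3}$ with a unique open $\GL_1^2$-orbit; and in case \ref{geocase-sub} the statement is the content of the conormal calculation recalled from \cite{CFZ:cubics}.

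I would therefore organise the argument as a short two-step deduction: first invoke Propositions~\ref{prop:infcases} and \ref{prop:PHV} to pass from the arbitrary unramified $\lambda$ to one of the five standard models, checking that the base-change equivalence identifies the bundles of $H_\lambda$-spaces $\Lambda_\lambda \to V_\lambda$ and $\Lambda\to V$ compatibly; then quote the fibrewise descriptions from Section~\ref{ssec:geometry} to conclude that each $\Lambda_{\lambda,x}$ is a prehomogeneous vector space under $Z_{H_\lambda}(x)$, with a distinguished open orbit $\Lambda_{\lambda,x}^\text{sreg}$ and open orbit $\Lambda_{\lambda,C}^\text{reg}$ in each stratum $\Lambda_{\lambda,C}$. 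The only genuinely non-formal step is the verification that the base-change equivalence preserves the prehomogeneous structure of the fibres of $\Lambda$, but this reduces to the remark that passing to a quotient of $H_\lambda$ by a connected subgroup acting trivially on $V_\lambda$ neither alters $Z_{H_\lambda}(x)$ up to a connected central extension nor changes its orbit decomposition on $\Lambda_{\lambda,x}$; so no genuine obstacle arises, and the corollary is immediate from the materials already assembled.
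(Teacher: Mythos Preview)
Your proposal is correct and follows essentially the same route as the paper's own proof: reduce via Propositions~\ref{prop:infcases} and \ref{prop:PHV} to the five model prehomogeneous vector spaces, then invoke the case-by-case verification in Section~\ref{ssec:geometry} that each of these has a conormal variety which is a bundle of prehomogeneous vector spaces. Your additional remark that the base-change equivalence (quotient by a connected subgroup acting trivially on $V_\lambda$) preserves the fibrewise prehomogeneous structure is a detail the paper leaves implicit, so your version is if anything slightly more careful.
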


\begin{proof}
Infinitesimal parameters $\lambda$ are classified by by the associated prehomogeneous vector space $V_\lambda$ in Proposition~\ref{prop:infcases} and these in turn are expressed in terms of the five prehomogeneous vector spaces in Proposition~\ref{prop:PHV}. In Section~\ref{ssec:geometry} we saw that the conormal variety of each of these five prehomogeneous vector spaces all have the property that they're bundles of prehomogeneous vector spaces. 
\end{proof}

\begin{definition}\label{def:SABV}
Notation as above.
Let $x$ be the point for $\phi$ in the moduli space $V$, where $\lambda$ is the infinitesimal parameter of $\phi$. 
Let $\Lambda_\lambda$ be the conormal variety and recall, by Corollary~\ref{cor:prehomogeneous}, that this is a bundle of prehomogeneous vector spaces above $V_\lambda$. 
Set $\mathcal{S}^\ABV_\phi \ceq  Z_{\dualgroup{G_2}}(x,y)$, where $x\in V$ is a point in the moduli space $V$ that maps to the Langlands parameter $\phi$ and for $y\in \Lambda_x^\text{sreg}$. This variety is independent of the choice of $y$.
Also set $A^\ABV_\phi \ceq \pi_0(\mathcal{S}^\ABV_\phi)$, equipped with the natural homomorphism $\mathcal{S}^\ABV_\phi\to A^\ABV_\phi$.
It follows immediately from \cite{CFMMX}*{Prop. 6.1.1}, $\phi$ is of Arthur type $\psi$, then $\mathcal{S}^\ABV_\phi = \mathcal{S}_\psi$.
\end{definition}

Definition~\ref{def:SABV} is needed to find the ABV-packet coefficients $\langle \ , \ \rangle: \mathcal{S}^\ABV_\phi\times \Pi^\ABV_\phi(G_2(F)) \to {\bar\QQ}$, which we do in the next section.

\subsection{Calculation of ABV-packet coefficients}\label{ssec:LA-packets}

In this section we calculate $\pi\mapsto \langle  , \pi \rangle $ for every unipotent representation $\pi$ of $G_2(F)$.
The results are summarized in Table~\ref{table:ABV}, in which characters of $A^\ABV_\phi$ are listed in the column with $\phi$ at the top, classified by infinitesimal parameters. We list the endoscopic groups for $\phi$ above $\phi$.
ABV-packets $\Pi^\ABV_\phi(G_2(F))$ are found by gathering together the representations $\pi$ for which the character of $A^\ABV_\phi$ is non-zero. 

\begin{table}[htp]
\caption{ABV-packets and ABV-packet coefficients for unipotent representations of $G_2(F)$. See Section~\ref{ssec:LA-packets} for how to read this table.}
\label{table:ABV}
\begin{center}
\rotatebox{90}{
\resizebox{\textheight-46pt}{!}{
$
\begin{array}{r|c|cc|cc|cc|cccc|cc|cccc|cccc|cccc}
 & \multicolumn{ 1 }{ c | }{ \lambda_{\ref{infcase:0}} }  & \multicolumn{ 2 }{ c| }{ \lambda_{\ref{infcase:1-short}} } &  \multicolumn{ 2 }{ c| }{ \lambda_{\ref{infcase:2-long}} } &  \multicolumn{ 2 }{ c| }{ \lambda_{\ref{infcase:3}} } & \multicolumn{ 4 }{ c| }{ \lambda_{\ref{infcase:4-D2}} } & \multicolumn{ 2 }{ c| }{ \lambda_{\ref{infcase:5}} } & \multicolumn{ 4 }{ c| }{ \lambda_{\ref{infcase:6-A2}} } & \multicolumn{ 4 }{ c| }{ \lambda_{\ref{infcase:7-reg}} } & \multicolumn{ 4 }{ c }{ \lambda_{\ref{infcase:8-sub}} }\\
&&&&&&&&&&&&&&&&&&&&&&&&&\PGL_3\\
&T&T&\GL_2&T&\GL_2&T&\GL_2&T&\GL_2&\GL_2&\SO_4&T&\GL_2&T&\GL_2&\GL_2&\PGL_3&T&\GL_2&\GL_2&G_2&T&\GL_2&\GL_2&\SO_4\\
& \phi_{\ref{infcase:0}} & \phi_{\ref{infcase:1-short}a} & \phi_{\ref{infcase:1-short}b} & \phi_{\ref{infcase:2-long}a} & \phi_{\ref{infcase:2-long}b} & \phi_{\ref{infcase:3}a} & \phi_{\ref{infcase:3}b} & \phi_{\ref{infcase:4-D2}a} & \phi_{\ref{infcase:4-D2}b} & \phi_{\ref{infcase:4-D2}c} & \phi_{\ref{infcase:4-D2}d} & \phi_{\ref{infcase:5}a} & \phi_{\ref{infcase:5}b} & \phi_{\ref{infcase:6-A2}a} & \phi_{\ref{infcase:6-A2}b} & \phi_{\ref{infcase:6-A2}c} & \phi_{\ref{infcase:6-A2}d} & \phi_{\ref{infcase:7-reg}a} & \phi_{\ref{infcase:7-reg}b} & \phi_{\ref{infcase:7-reg}c} & \phi_{\ref{infcase:7-reg}d} & \phi_{\ref{infcase:8-sub}a} & \phi_{\ref{infcase:8-sub}b} & \phi_{\ref{infcase:8-sub}c} & \phi_{\ref{infcase:8-sub}d} \\
\hline 
I(\mu_1\nu^{a_1}\otimes \mu_2\nu^{a_2}) 				&1&&&&&&&&&&&&&&&&&&&&&&&&\\
\hline
I_{\gamma_2}(a-1/2, \mu\circ \det) 					&&1&0&&&&&&&&&&&&&&&&&&&&&&\\
I_{\gamma_2}(a-1/2, \mu\otimes\St_{\GL_2}) 			&&0&1&&&&&&&&&&&&&&&&&&&&&&\\
\hline
I_{\gamma_1}(a-1/2, \mu\circ \det) 					&&&&1&0&&&&&&&&&&&&&&&&&&&&\\
I_{\gamma_1}(a-1/2, \mu\otimes\St_{\GL_2}) 			&&&&0&1&&&&&&&&&&&&&&&&&&&&\\
\hline
I_{{\gamma_1}}(1/6, \theta_3^n \circ \det)  			&&&&&&1&0&&&&&&&&&&&&&&&&&&\\
I_{{\gamma_1}}(1/6, \theta_3^n\otimes\St_{\GL_2}) 		&&&&&&0&1&&&&&&&&&&&&&&&&&&\\
\hline
J_{\gamma_2}(1,I^{\GL_2}(1\otimes\theta_2)) 			&&&&&&&&1&0&0&0&&&&&&&&&&&&&&\\
J_{{\gamma_1}}(1/2,\theta_2\otimes \St_{\GL_2}) 		&&&&&&&&0&1&0&0&&&&&&&&&&&&&&\\
J_{{\gamma_2}}(1/2,\theta_2\otimes \St_{\GL_2}) 		&&&&&&&&0&0&1&0&&&&&&&&&&&&&&\\
\pi(\theta_2) 									&&&&&&&&0&0&0&1&&&&&&&&&&&&&&\\
I_0(G_2[-1]) 									&&&&&&&&\vartheta_2&\vartheta_2&\vartheta_2&\vartheta_2&&&&&&&&&&&&&&\\
\hline
I_{\gamma_2}(3/2,1_{\GL_2}) 						&&&&&&&&&&&&1&0&&&&&&&&&&&&\\
I_{\gamma_2}(3/2,\St_{\GL_2})						&&&&&&&&&&&&0&1&&&&&&&&&&&&\\
\hline
J_{\gamma_2}(1,I^{\GL_2}(\theta_3\otimes\theta_3^{-1})) &&&&&&&&&&&&&&1&0&0&0&&&&&&&&\\
J_{{\gamma_1}}(1/2,\theta_3^{2}\otimes\St_{\GL_2}) 	&&&&&&&&&&&&&&0&1&0&0&&&&&&&&\\
J_{{\gamma_1}}(1/2,\theta_3^{}\otimes\St_{\GL_2})  		&&&&&&&&&&&&&&0&0&1&0&&&&&&&&\\
\pi(\theta_3) 									&&&&&&&&&&&&&&0&0&0&1&&&&&&&&\\
I_0(G_2[\theta_3]) 								&&&&&&&&&&&&&&\vartheta_3&\vartheta_3&\vartheta_3&\vartheta_3&&&&&&&&\\
I_0(G_2[\theta_3^{2}]) 							&&&&&&&&&&&&&&\vartheta_3^2&\vartheta_3^2&\vartheta_3^2&\vartheta_3^2&&&&&&&&\\
\hline
1_{G_2}  										&&&&&&&&&&&&&&&&&&1&0&0&0&&&&\\
J_{\gamma_1}(3/2,\St_{\GL_2}) 					&&&&&&&&&&&&&&&&&&0&1&0&0&&&&\\
J_{{\gamma_2}}(5/2,\St_{\GL_2}) 					&&&&&&&&&&&&&&&&&&0&0&1&0&&&&\\
 \St_{G_2} 									&&&&&&&&&&&&&&&&&&0&0&0&1&&&&\\
 \hline
J_{{\gamma_2}}(1,I^{\GL_2}(1\otimes 1)) 				&&&&&&&&&&&&&&&&&&&&&&1&0&0&0\\
J_{\gamma_1}(1/2,\St_{\GL_2}) 					&&&&&&&&&&&&&&&&&&&&&&\varrho&1&0&0\\
J_{{\gamma_2}}(1/2,\St_{\GL_2}) 					&&&&&&&&&&&&&&&&&&&&&&0&\vartheta_2&1&0\\
\pi(1)' 										&&&&&&&&&&&&&&&&&&&&&&0&0&0&1\\
\pi(1) 										&&&&&&&&&&&&&&&&&&&&&&0&0&\vartheta_2&\varrho\\
I_0(G_2[1]) 									&&&&&&&&&&&&&&&&&&&&&&\varepsilon&1&\vartheta_2&\varepsilon
\end{array}
$
}
}
\end{center}
\end{table}%

\begin{definition}
Let $\phi$ be an unramified Langlands parameter for $G_2(F)$.
Following \cite{CFMMX}, we set 
$$\Pi_\phi^{\ABV}(G_2(F)) \ceq\{\pi\in \Pi_\lambda(G_2/F)|~ \NEvs_{C_\phi}(\mathcal{P}(\pi))\ne 0\}.$$
%Set $\dim(\phi) \ceq \dim C_\phi$ and define $\dim(\pi) \ceq \dim(\phi)$ for $\pi \in \Pi_\phi(G_2(F))$.
For $s\in \mathcal{S}^\ABV_\phi$ and $\pi\in \Pi^\ABV_\phi(G_2(F))$, ABV-packet coefficients are given by
\begin{equation}\label{eqn:coefficients}
\langle s , \pi \rangle\ceq \trace_{s} \NEvs_{C_\phi} \mathcal{P}(\pi).
\end{equation}
In this definition we make implicit use of the homomorphism $\mathcal{S}^\ABV_\phi\to A^\ABV_\phi$.
\end{definition}

Let $\lambda:W_F\to \wh {G_2}$ be an unramified infinitesimal parameter for $G_2(F)$.
Then $\lambda$ is determined by $\lambda(\Fr)=\wh m(u_1q^{a_1},u_2q^{a_2})$ where $u_1$ and $u_2$ are unitary complex numbers and $a_1$ and $a_2$ are real numbers.
Then
\begin{align*}
\lambda(w)
&=\wh m(u_1^{\ord(w)}, u_2^{\ord(w)})\cdot \wh m(|w|^{a_1},|w|^{a_2})\\
&=(2{{\hat\gamma}_1}^\vee+{{\hat\gamma}_2}^\vee)(u_2^{\ord(w)}|w|^{a_2}) \cdot ({{\hat\gamma}_1}^\vee+{{\hat\gamma}_2}^\vee)((u_1u_2^{-1})^{\ord(w)}|w|^{a_2-a_1}).
\end{align*}
Let $\chi=\chi_1\otimes\chi_2:T\to \C^\times$ be the dual character of $\lambda : W_F \to \dualgroup{T}$. By the above form of $\lambda$, we know that $\chi_1=\mu_2\nu^{a_2}$ and $\chi_2=\mu_1\mu_2^{-1}\nu^{a_1-a_2}$, {\it i.e.}, $\chi$ is given by 
\begin{equation}\label{eqn:dualcharacter}
\chi
=\mu_2 \nu^{a_2}\otimes \mu_1\mu_2^{-1}\nu^{a_1-a_2},
\end{equation}
where $\mu_i$ is the unitary unramified character of $F^\times$ defined by $\mu_i(\varpi)=u_i$.

\begin{enumerate}%[leftmargin=15pt,labelindent=10pt,labelwidth=22pt,itemindent=27pt]

\item[(Case \ref{infcase:0})]

Suppose $\phi : W'_F \to \Lgroup{G_2}$ is a Langlands parameter with infinitesimal parameter $\lambda_\text{\ref{infcase:0}}$ corresponding to Case~\ref{infcase:0} from Proposition~\ref{prop:infcases}.
Then $\lambda_\text{\ref{infcase:0}}:W_F\to \wh{G_2}$ is an unramified infinitesimal parameter such that $\dim V_{\lambda_\text{\ref{infcase:0}}}=0$, so ${\hat\gamma}(\lambda_\text{\ref{infcase:0}}(\Frob)) \ne q$ for every root ${\hat\gamma}\in {\hat R}$; this is Case~\ref{infcase:0} from Proposition~\ref{prop:infcases}.
If we write $\lambda_\text{\ref{infcase:0}}(\Frob) = {\hat m}(u_1q^a_1,u_2q^{a_2})$ then this case is equivalent to the condition 
\[
\{ u_2^2u_1^{-1}q^{2a_2-a_1}, u_1u_2^{-1}q^{a_1-a_2}, u_2q^{a_2},
 u_1q^{a_1}, u_1^2u_2^{-1}q^{2a_1-a_2}, u_1u_2q^{a_1+a_2}\} \cap \{ q, q^{-1} \} = \emptyset.
\]
In this case $H_{\lambda_\text{\ref{infcase:0}}}$ is the dual of an endoscopic group for $\wh{G_2}$; so, besides $\wh{G_2}$ itself, $H_{\lambda_\text{\ref{infcase:0}}}$ is $\SL_3$ or $\SO_4$ or one of the Levi subgroups $\GL_{2}^{{{\hat\gamma}_1}}$, $\GL_2^{{{\hat\gamma}_2}}$ or $\dualgroup{T}$. 
For each such $\lambda_\text{\ref{infcase:0}}$, there is a unique $\phi$ with $\lambda_\phi=\lambda_\text{\ref{infcase:0}}$, {\it i.e.}, $\phi(w,x)=\lambda_\text{\ref{infcase:0}}(w)$.
In this case, there is a unique Langlands parameter with infinitesimal parameter $\lambda_{\ref{infcase:0}}$ which is defined by $\phi_{\ref{infcase:0}}(w,x)=\lambda_{\ref{infcase:0}}(w)$.
If $a_1=a_2=0$, then $\chi_1$ and $\chi_2$ are unitary. Notice that there is a unique order 2 unramified character of $F^\times$ and thus $\chi_1,\chi_2$ cannot be two distinct order 2 unramified characters. In this case, $I(\chi_1\otimes \chi_2)$ is irreducible by \cite{Keys}. 
If one of $a_1,a_2$ is nonzero, then at least one of $\chi_1,\chi_2 $ is non-unitary. One easily translates the condition $\hat\gamma(\lambda_{\ref{infcase:0}}(\Fr))\ne q^{\pm 1}, \forall \hat \gamma\in R(\wh{G_2},\dualgroup{T})$ to the conditions on $\chi_1,\chi_2$, which are exactly the conditions of \cite{Muic}*{Proposition 3.1}. Thus, the induced representation $I(\chi_1\otimes \chi_2)$ is also irreducible by \cite{Muic}*{Proposition 3.1}. 
In both cases, the unramified local Langlands correspondence says that $\pi(\phi_{\ref{infcase:0}})=I(\chi_1\otimes \chi_2)$ and the $L$-packet is a singleton.
Here, $\phi_{\ref{infcase:0}}(w,x) = \lambda_{\ref{infcase:0}}(w)$, given above.
There are no coronal representations to the admissible representation $I(\chi_1\otimes \chi_2)$; accordingly, this representation is its own ABV-packet in this case. 
This is a direct consequence of the simple observation that there is but one simple object in $\Perv_{H_{\lambda_{\ref{infcase:0}}}}(V_{\lambda_{\ref{infcase:0}}})$; see also \ref{geocase-0}. 

\item[(Case \ref{infcase:1-short})]

Suppose $\phi : W'_F \to \Lgroup{G_2}$ is a Langlands parameter with infinitesimal parameter $\lambda_\text{\ref{infcase:1-short}}$ corresponding to Case~\ref{infcase:1-short} from Proposition~\ref{prop:infcases}.
Then $V_{\lambda_{\ref{infcase:1-short}}}=\mathrm{Span}\{X_{{{\hat\gamma}_1}+2{{\hat\gamma}_2}}\}$ and $({{\hat\gamma}_1}+2{{\hat\gamma}_2})(\lambda_{\ref{infcase:1-short}}(\Frob)) = q$ and ${\hat\gamma}(\lambda_{\ref{infcase:1-short}}(\Frob)) \ne q$ for every ${\hat\gamma}\in {\hat R}-\{\wh \gamma_1+2\wh \gamma_2\}$; in other words, if we write $ \lambda_{\ref{infcase:1-short}}(\Frob)=(u_1q^{a_1},u_2q^{a_2})$, then $u_1u_2=1$, $a_1+a_2 =1$ and
\[
\{ u^{-3}q^{2-3a}, u^2q^{2a-1}, u^{-1}q^{1-a}, u q^a,  u^3 q^{3a-1}\} \cap \{q,q^{-1}\} = \emptyset,
\]
where $u\ceq u_1$ and $a\ceq a_1$ and $H_{\lambda_{\ref{infcase:1-short}}}=\dualgroup{T}$ if $u^2q^{2a-1}\ne 1$ and $H_{\lambda_{\ref{infcase:1-short}}}=\GL_{2}^{\hat\gamma_1}$ if $u^2q^{2a-1}=1$. 

The action of $H_{\lambda_{\ref{infcase:1-short}}}$ on $V_{\lambda_{\ref{infcase:1-short}}}$ is given by 
\[
\begin{array}{rcll}
\wh m(x,y). rX_{{{\hat\gamma}_1}+2{{\hat\gamma}_2}}&=&xr X_{{{\hat\gamma}_1}+2{{\hat\gamma}_2}}, & \textrm{ if } u^2q^{2a-1}\ne 1,\\
g. rX_{{{\hat\gamma}_1}+2{{\hat\gamma}_2}}&=&\det(g)rX_{{{\hat\gamma}_1}+2{{\hat\gamma}_2}}, & \textrm{ if } u^2q^{2a-1}= 1.
\end{array}
\]
This action has two orbits: the closed (zero) orbit and the open orbit. The corresponding Langlands parameters are explicitly given by %\todo{Should $z^{\ord(w)}$ be $u^{\ord(w)}$ here?}
\[
\begin{array}{rcll}
\phi_{\ref{infcase:1-short}a}(w,x)&=&\lambda_{\ref{infcase:1-short}}(w), &\text{closed};\\
\phi_{\ref{infcase:1-short}b}(w,x)&=&\wh m(1,u^{\ord(w)}|w|^{a-1/2}) \iota_{{{\hat\gamma}_1}+2{{\hat\gamma}_2}}(x), & \text{open}.
\end{array}
\]
The simple objects in $\Perv_{H}(V)$ are described in \ref{geocase-1} using the base-change argument in the proof of Proposition~\ref{prop:PHV}. 

By Equation~\eqref{eqn:dualcharacter}, the dual character of $\lambda_{\ref{infcase:1-short}}$ is $\chi=\mu \nu^a\otimes \mu^{-1}\nu^{1-a}$, where $\mu:F^\times \ra \C^1$  is the unramified unitary character determined by $\mu(\varpi)=u$. 
We consider the representation $I(\mu\nu^a\otimes \mu^{-1}\nu^{1-a}) $. In the Grothendieck group, we have 
\begin{align*}I(\mu\nu^a\otimes \mu^{-1}\nu^{1-a}) 
&=I(\mu \nu^{a-1}\otimes \nu) \\
&=I_{\gamma_2}(a-1/2, I^{\GL_2}(\mu \nu^{1/2}\otimes \mu \nu^{-1/2}))\\
&=I_{\gamma_2}(a-1/2, \mu\otimes\St_{\GL_2})+I_{\gamma_2}(a-1/2, \mu\circ \det).
\end{align*}
%Under the condition Equation~\eqref{eqn:shortroot}, 
By \cite{Muic}*{Theorem 3.1 (ii)}, the two representations, $I_{\gamma_2}(a-1/2, \mu\otimes\St_{\GL_2}) $ and $I_{\gamma_2}(a-1/2, \mu\circ \det) $, are irreducible.  
For these representations, the local Langlands correspondence is given by 
\[
\begin{array}{rcl}
\Pi_{\phi_{\ref{infcase:1-short}a}}(G_2(F)) &=& \{ I_{\gamma_2}(a-1/2, \mu\circ \det)\},\\
\Pi_{\phi_{\ref{infcase:1-short}b}}(G_2(F)) &=& \{ I_{\gamma_2}(a-1/2, \mu\otimes\St_{\GL_2})\}.
\end{array}
\]

It follows that the equivariant perverse sheaf corresponding to $I_{\gamma_2}(a-1/2, \mu\circ \det)$ is $\IC(\1_{C_{\ref{infcase:1-short}a}})$, where $\1_{C_{\ref{infcase:1-short}a}}$ is the constant local system on $C_{\ref{infcase:1-short}a}=\{0\}$ in $V=\mathbb{A}^1$, and the equivariant perverse sheaf corresponding to $I_{\gamma_2}(a-1/2, \mu\otimes\St_{\GL_2})$ is $\IC(\1_{C_{\ref{infcase:1-short}b}})$, where $\1_{C_{\ref{infcase:1-short}b}}$ is the constant local system on the open orbit $C_{\ref{infcase:1-short}b}$.
Taking the case $n=1$ of Section~\ref{geocase-1} and then using the base-change argument in the proof of Proposition~\ref{prop:PHV}, it follows that the simple objects in $\Perv_{H_\lambda}(V_\lambda)$ are $\IC(\1_{C_{\ref{infcase:1-short}a}})$ and $\IC(\1_{C_{\ref{infcase:1-short}b}})$ only, and that $\NEvs_{C_{\ref{infcase:1-short}a}} \IC(\1_{C_{\ref{infcase:1-short}a}}) = \1_{\Lambda_{C_{\ref{infcase:1-short}a}}}$ and $\NEvs_{C_{\ref{infcase:1-short}b}} \IC(\1_{C_{\ref{infcase:1-short}b}}) = \1_{\Lambda_{C_{\ref{infcase:1-short}b}}}$ while $\NEvs_{C_{\ref{infcase:1-short}a}} \IC(\1_{C_{\ref{infcase:1-short}b}}) =0$ and $\NEvs_{C_{\ref{infcase:1-short}b}} \IC(\1_{C_{\ref{infcase:1-short}a}}) = 0$. 
Therefore, there are no coronal representations for the $L$-packets above, and we have
\[
\begin{array}{rcl}
\Pi^\ABV_{\phi_{\ref{infcase:1-short}a}}(G_2(F)) &=& \Pi_{\phi_{\ref{infcase:1-short}a}}(G_2(F)),\\
\Pi^\ABV_{\phi_{\ref{infcase:1-short}b}}(G_2(F)) &=& \Pi_{\phi_{\ref{infcase:1-short}b}}(G_2(F)).
\end{array}
\]
These calculations follow from the results of Section~\ref{geocase-1}. 

\item[(Case \ref{infcase:2-long})]

Suppose $\phi$ is a Langlands parameter with infinitesimal parameter $\lambda_{\ref{infcase:2-long}}$ from Case~\ref{infcase:2-long} of Proposition~\ref{prop:infcases}.
This case is quite similar to the previous case.
The infinitesimal parameter $\lambda_{\ref{infcase:2-long}}$ can be represented by $\lambda_{\ref{infcase:2-long}}(\Frob)=\wh m(u q^a, u^{-1}q^{1-a})$ with $u\in \C^1, a\in \R$ satisfying $\wh \gamma(\lambda_{\ref{infcase:2-long}}(\Frob)=\wh m(u q^a, u^{-1}q^{1-a}) ) \ne q$ for all $\wh \gamma \in \wh R-\{2 \wh\gamma_1+3\wh \gamma_2\}$, which is equivalent to 
$$\{u^{-3}q^{2-3a}, u^2 q^{2a-1}, u^{-1}q^{1-a},uq^a,u^3q^{3a-1}\}\cap \{q,q^{-1}\}=\emptyset.$$
We have $V_{\lambda_{\ref{infcase:2-long}}}=\mathrm{Span}\{ X_{2{{\hat\gamma}_1}+3{{\hat\gamma}_2}} \}$.
In this case, $H_{\lambda_{\ref{infcase:2-long}}}=\dualgroup{T}$ if $u^2q^{2a-1}\ne 1$, and $H_{\lambda_{\ref{infcase:2-long}}}=\GL_{2,{{\hat\gamma}_2}}$ if $u^2q^{2a-1}=1$. The action of $H_{\lambda_{\ref{infcase:2-long}}}$ on $ V_{\lambda_{\ref{infcase:2-long}}}$ is given by 
\[
\begin{array}{rcll}
\wh m(x,y).r X_{2{{\hat\gamma}_1}+3{{\hat\gamma}_2}}&=&xy r X_{2{{\hat\gamma}_1}+3{{\hat\gamma}_2}}, & \textrm{ if } u^2q^{2a-1}\ne 1,\\
g.rX_{2{{\hat\gamma}_1}+3{{\hat\gamma}_2}}&=& \det(g)rX_{2{{\hat\gamma}_1}+3{{\hat\gamma}_2}}, &\textrm{ if } u^2q^{2a-1}= 1.
\end{array}
\]
This action has two orbits -- the zero orbit and the open orbit -- and each Langlands parameter has a trivial component group.
Representatives for the two $L$-parameters are
\[
\begin{array}{rcl}
\phi_{\ref{infcase:2-long}b}(w,x)&=& \lambda_{\ref{infcase:2-long}}(w) \\
\phi_{\ref{infcase:2-long}b}(w,x)&=& \wh m(u^{\ord(w)}|w|^{a-1/2}, u^{-\ord(w)}|w|^{-a+1/2})\cdot \iota_{2{{\hat\gamma}_1}+3{{\hat\gamma}_2}}(x).
\end{array}
\]

By Equation~\eqref{eqn:dualcharacter}, the dual character of $\lambda_{\ref{infcase:2-long}}$ is $ \mu^{-1}\nu^{1-a}\otimes \mu^2 \nu^{2a-1},$ where $\mu$ is the  unitary unramified character of $F^\times$ determined by $\mu(\varpi)=u$. In the Grothendieck group, we have 
\begin{align*}
I(\mu^{-1}\nu^{1-a}\otimes \mu^2 \nu^{2a-1})
&=I(\mu\nu^a\otimes \mu\nu^{a-1})\\
&=I_{\gamma_1}(a-1/2, I^{\GL_2}(\mu \nu^{1/2}\otimes \mu \nu^{-1/2}))\\
&=I_{\gamma_1}(a-1/2, \mu\otimes\St_{\GL_2})+I_{\gamma_1}(a-1/2,\mu\circ\det).
\end{align*}
By \cite{Muic}*{Theorem 3.1 (i)}, %the condition Equation~\eqref{eqn:longroot} implies that 
the representations $I_{\gamma_1}(a-1/2, \mu\otimes\St_{\GL_2})$ and $I_{\gamma_1}(a-1/2,\mu\cdot \det)$ are irreducible.
The Langlands correspondence is given in given in Table \ref{table:LA-packets}.  
Arguing as in Case~\ref{infcase:1-short}  or using our microlocal analysis of the prehomogeneous vector space \ref{geocase-1}, the ABV-packet coefficients are given in Table~\ref{table:ABV}.
\iffalse%%%%%
The local Langlands correspondence in this case is given by
\[
\begin{array}{rcl}
\Pi_{\phi_{\ref{infcase:2-long}a}}(G_2(F)) &=& \{ I_{\gamma_1}(a-1/2,\mu\circ \det) \}, \\
\Pi_{\phi_{\ref{infcase:2-long}b}}(G_2(F)) &=& \{ I_{\gamma_1}(a-1/2, \mu\otimes\St_{\GL_2})\}.
\end{array}
\]
Arguing as in Case~\ref{infcase:1-short} we see that there are no coronal representations for these two representations, so 
\[
\begin{array}{rcl}
\Pi^\ABV_{\phi_{\ref{infcase:2-long}a}}(G_2(F)) &=& \Pi_{\phi_{\ref{infcase:2-long}a}}(G_2(F)) = \{ I_{\gamma_1}(a-1/2,\mu\circ \det) \}, \\
\Pi^\ABV_{\phi_{\ref{infcase:2-long}b}}(G_2(F)) &=& \Pi_{\phi_{\ref{infcase:2-long}b}}(G_2(F)) = \{ I_{\gamma_1}(a-1/2,\mu\otimes\St_{\GL_2}) \}.
\end{array}
\]
These calculations also follow from the results of Section~\ref{geocase-1}. 
\fi%%%

\item[(Case \ref{infcase:3})]

Suppose $\phi$ is an unramified Langlands parameter with infinitesimal parameter $\lambda_{\ref{infcase:3}}$ from Case~\ref{infcase:3} of Proposition~\ref{prop:infcases}.
We can take $\lambda_{\ref{infcase:3}}(\Frob)=\wh m(\theta q^{1/3},\theta^2 q^{2/3})$ with $\theta=\theta_3^n$ for $n=0,1,2$ and $H_{\lambda_{\ref{infcase:3}}}=\GL_{2}^{{{\hat\gamma}_1}+3{{\hat\gamma}_2}}$. 
To see this note that ${{\hat\gamma}_1}(\lambda_{\ref{infcase:3}}(\Frob)) = q$ and $(2{{\hat\gamma}_1}+3{{\hat\gamma}_2})(\lambda(\Frob)) = q$ and ${\hat\gamma}(\lambda_{\ref{infcase:3}}(\Frob)) \ne q$ for every other ${\hat\gamma}\in {\hat R}$, so if $\lambda_{\ref{infcase:3}}(\Frob) = {\wh m}(u_1q^{a_1}, u_2 q^{a_2})$ then
$u_2=u_1^2 \in\{1,\theta_3,\theta_3^2\}$ and $a_1=\frac{1}{3}$, $a_2=\frac{2}{3}$. 
The action of $H_{\lambda_{\ref{infcase:3}}}$ on $V_{\lambda_{\ref{infcase:3}}}$ is $\det\otimes \Sym^1$, which is to say, $h\cdot x=\det(h)hx$ where $hx$ is matrix multiplication.
%Using the base-change argument in the proof of Proposition~\ref{prop:PHV}, $\Perv_{H_\lambda}(V_\lambda)$ is equivalent to $\Perv_{\GL_2}(\mathbb{A}^2)$ as in Case~\ref{infcase:1-short} and therefore Section~\ref{ssec:geocase-1} applies again.
This is case \ref{geocase-2} for $n=1$.
In particular, there are two orbits of this action: the zero orbit and the open orbit. The corresponding Langlands parameters are given by 
\[
\begin{array}{rcl}
\phi_{\ref{infcase:3}a}(w,x) &=&\lambda_{\ref{infcase:3}}(w), \\
\phi_{\ref{infcase:3}b}(w,x) &=& \wh m(\theta^{\ord(w)}|w|^{1/3}, \theta^{2\ord(w)}|w|^{1/6})\cdot \iota_{{{\hat\gamma}_1}}(x).
\end{array}
\]

By Equation~\eqref{eqn:dualcharacter}, the dual character of $\lambda_{\ref{infcase:3}}$ is $\chi=\vartheta \nu^{2/3}\otimes \vartheta ^{2}\nu^{-1/3}$ where $\vartheta$ is the unramified character of $F^\times$ dual to $\theta$. In the Grothendieck group, we have
\begin{align*}
I(\theta_3^n \nu^{2/3}\otimes \theta_3^{2n}\nu^{-1/3} )
&=I_{\gamma_1}(1/6, I^{\GL_2}(\theta_3^n\nu^{1/2}\otimes \theta_3^{-n}\nu^{-1/2}))\\
&=I_{{\gamma_1}}(1/6,\theta_3^n\otimes \St_{\GL_2})+I_{\gamma_1}(1/6,\theta^n_3\circ \det).
\end{align*}
The Langlands correspondence is given in given in Table \ref{table:LA-packets}. 
Using our microlocal analysis of the prehomogeneous vector space \ref{geocase-2}, for $n=1$, the ABV-packet coefficients are given in Table~\ref{table:ABV}.
\iffalse%%%%%
The local Langlands correspondence is given by
\[
\begin{array}{rcl}
\Pi_{\phi_{\ref{infcase:3}a}}(G_2(F)) &=& \{ I_{{\gamma_1}}(1/6, \theta^n_3\circ \det) \} ,\\ 
\Pi_{\phi_{\ref{infcase:3}b}}(G_2(F)) &=& \{ I_{{\gamma_1}}(1/6, \theta^n_3\otimes \St_{\GL_2})\}.
\end{array}
\]

Using Section~\ref{geocase-2} with $n=1$, it follows that the ABV-packets for $\phi_{\text{\ref{infcase:3}a}}$ and $\phi_{\text{\ref{infcase:3}b}}$ are
\[
\begin{array}{rcl}
\Pi^\ABV_{\phi_\text{\ref{infcase:3}a}}(G_2(F)) &=& \Pi_{\phi_\text{\ref{infcase:3}a}}(G_2(F)) = \{ I_{{\gamma_1}}(1/6, \theta^n_3\circ \det) \}, \\
\Pi^\ABV_{\phi_{\text{\ref{infcase:3}b}}}(G_2(F)) &=& \Pi_{\phi_{\text{\ref{infcase:3}b}}}(G_2(F)) = \{ I_{{\gamma_1}}(1/6, \theta^n_3\otimes \St_{\GL_2}) \}.
\end{array}
\]
\fi%%%%%%%
\item[(Case \ref{infcase:4-D2})]

Let $\lambda_{\ref{infcase:4-D2}}$ be the infinitesimal parameter appearing in 
Case \ref{infcase:4-D2} of Proposition~\ref{prop:infcases}.
Then $\lambda_{\ref{infcase:4-D2}}(\Frob)=\wh m(q,-q)$ and $V_{\lambda_{\ref{infcase:4-D2}}}=\mathrm{Span}\{X_{{{\hat\gamma}_1}}, X_{{{\hat\gamma}_1}+2{{\hat\gamma}_2}}\}$ and $H_{\lambda_{\ref{infcase:4-D2}}}=\dualgroup{T}$. The action of $\dualgroup{T}$ on $V_{\lambda_{\ref{infcase:4-D2}}}$ is given by 
 \[
\wh m (x,y).(r_1X_{{{\hat\gamma}_1}}+r_2X_{{{\hat\gamma}_1}+2{{\hat\gamma}_2}})=y^2x^{-1}r_1X_{{{\hat\gamma}_1}}+xr_2X_{{{\hat\gamma}_1}+2{{\hat\gamma}_2}}.
\] 
There are 4 orbits: $C_{\ref{infcase:4-D2}a}=\{0\}$, $C_{\ref{infcase:4-D2}b}=\{r_1X_{{{\hat\gamma}_1}}\tq r_1\ne 0\}$, $C_{\ref{infcase:4-D2}c}=\{r_2 X_{{{\hat\gamma}_1}+2{{\hat\gamma}_2}} \tq r_2\ne 0\}$ and  $C_{\ref{infcase:4-D2}d}=\{r_1X_{{{\hat\gamma}_1}}+r_2X_{{{\hat\gamma}_1}+2{{\hat\gamma}_2}}\tq r_1,r_2\ne 0\}$. 
We found $\Perv_{H_{\lambda_{\ref{infcase:4-D2}}}}(V_{\lambda_{\ref{infcase:4-D2}}})$ in Proposition~\ref{prop:PHV}.
 In particular, there are four $H_{\lambda_{\ref{infcase:4-D2}}}$-orbits in $V_{\lambda_{\ref{infcase:4-D2}}}$: $C_\text{\ref{infcase:4-D2}a}=\{0\}$, $C_\text{\ref{infcase:4-D2}b}=\{r_1X_{{{\hat\gamma}_1}}\tq r_1\ne 0\}$, $C_\text{\ref{infcase:4-D2}c}=\{r_2 X_{{{\hat\gamma}_1}+2{{\hat\gamma}_2}} \tq r_2\ne 0\}$ and  $C_\text{\ref{infcase:4-D2}d} = \o{V}=\{r_1X_{{{\hat\gamma}_1}}+r_2X_{{{\hat\gamma}_1}+2{{\hat\gamma}_2}}\tq r_1,r_2\ne 0\}$. 
 Corresponding Langlands parameters are given by
\[
\begin{array}{rcl}
\phi_\text{\ref{infcase:4-D2}a}(w,x)&=& \lambda(w),\\
\phi_\text{\ref{infcase:4-D2}b}(w,x)&=&\wh m(|w|,(-1)^{\ord(w)}|w|^{1/2})\cdot \iota_{{{\hat\gamma}_1}}(x),\\
\phi_\text{\ref{infcase:4-D2}c}(w,x)&=&\wh m(1,(-1)^{\ord(w)}|w|^{1/2})\cdot \iota_{{{\hat\gamma}_1}+2{{\hat\gamma}_2}}(x),\\
\phi_\text{\ref{infcase:4-D2}d}(w,x)&=&\wh m(1,(-1)^{\ord(w)})\iota_{{{\hat\gamma}_1}}(x)\iota_{{{\hat\gamma}_1}+2{{\hat\gamma}_2}}(x).
\end{array}
\]
Note that we have $A_{\phi_{\ref{infcase:4-D2}d}}=\langle {\widehat m}(1,-1)\rangle \iso \langle \theta_2\rangle$ and the component groups of the other 3 Langlands parameters are trivial.

The dual character of $\lambda_{\ref{infcase:4-D2}}$ is $\theta_2 \nu\otimes \theta_2$ by Equation~\eqref{eqn:dualcharacter}. 
Recall that $\theta_2$ is the unique unramified order 2 character of $F^\times$. 
%\iffalse %%%%
By \cite{Muic}*{Proposition 4.1}, we have
\begin{align}\label{eqn:muicprop411}
\begin{split}
I(\theta_2 \nu\otimes \theta_2)&=I_{\gamma_1}(1/2,\theta_2\otimes \St_{\GL_2})+I_{\gamma_1}(1/2,\theta_2\circ \det)\\
&=I_{\gamma_2}(1/2,\theta_2\otimes \St_{\GL_2})+I_{{\gamma_2}}(1/2,\theta_2\circ \det),
\end{split}
\end{align}
and
\begin{equation}\label{eqn:muicprop412}
\begin{split}
\begin{array}{rcl}
I_{\gamma_1}(1/2,\theta_2\otimes \St_{\GL_2})&=&\pi(\theta_2)+J_{\gamma_1}(1/2,\theta_2\otimes \St_{\GL_2}),\\
I_{{\gamma_2}}(1/2,\theta_2\otimes \St_{\GL_2})&=&\pi(\theta_2)+J_{\gamma_2}(1/2,\theta_2\otimes \St_{\GL_2}),\\
I_{\gamma_1}(1/2,\theta_2\circ \det)&=&J_{\gamma_2}(1,I^{\GL_2}(1\otimes \theta_2))+J_{\gamma_2}(1/2,\theta_2\otimes \St_{\GL_2}),\\
I_{\gamma_2}(1/2,\theta_2\circ\det)&=&J_{\gamma_2}(1,I^{\GL_2}(1\otimes \theta_2))+J_{\gamma_1}(1/2,\theta_2\otimes \St_{\GL_2}),
\end{array}
\end{split}
\end{equation}
%\fi%%%
As in \cite{Muic}, we write $\pi(\theta_2)$ for the unique irreducible sub-representation of $I(\theta_2\nu\otimes\theta_2)$; note that $\pi(\theta_2)$ is square integrable by \cite{Muic}*{Proposition 4.1}.
 and all of the above equations are in the Grothendieck group of representations of $G_2(F)$. 

\iffalse %%%%%%%%%%%

\begin{proposition}\label{LLC:4-D2}
The Langlands correspondence 
\[
\Pi_\phi(G_2(F)) \to \widehat{A_\phi},
\]
for all unramified Langlands parameters for $G_2(F)$ with infinitesimal parameter $\lambda_{\ref{infcase:4-D2}}(w)=(|w|, (-1)^{\ord(w)}|w|)$, is given by the table below, in which characters of $A_\phi$ are listed in the column with $\phi$ at the top.
The $L$-packet $\Pi_\phi(G_2(F))$ is found by gathering together the representations $\pi$ for which the character of $A_\phi$ is non-trivial.
\[
\begin{array}{r | cccc}
 & \phi_{\ref{infcase:4-D2}a} & \phi_{\ref{infcase:4-D2}b} & \phi_{\ref{infcase:4-D2}c} & \phi_{\ref{infcase:4-D2}d} \\
 \hline
J_{\gamma_2}(1,I^{\GL_2}(1\otimes \theta_2)) 		&1&0&0&0\\
J_{{\gamma_1}}(1/2,\theta_2\otimes \St_{\GL_2}) 	&0&1&0&0\\
J_{{\gamma_2}}(1/2,\theta_2\otimes \St_{\GL_2}) 	&0&0&1&0\\
\pi(\theta_2) 								&0&0&0& 1\\
\mathrm{cInd}_{G_2(\CO_F)}^{G_2(F)}(G_2[-1]) 	&0&0&0&\vartheta_2
\end{array}
\]
where $G_2[-1]$ is the unipotent supercuspidal representation of $G_2(\mathbb{F}_q)$ as in \cite{Carter}*{p. 460}.

\end{proposition}

%We omit the proof of Proposition~{LLC:4-D2} as it is similar to that given in \cite{CFZ:cubics}*{Theorem 2.5}. 

\begin{proof}

\fi %%%%%%

The proof of the Langlands correspondence $\Pi_\phi(G_2(F)) \to \widehat{A_\phi}$ for all unramified Langlands parameters for $G_2(F)$ with infinitesimal parameter $\lambda_{\ref{infcase:4-D2}}$ is similar to that of \cite{CFZ:cubics}*{Theorem 2.5}, which follows from some well-known facts. 
The Langlands correspondence is given in Table~\ref{table:LLC}; see also Table \ref{table:LA-packets}.
Here we only give some details of the proof of $\pi(\phi_{\text{\ref{infcase:4-D2}d}},\vartheta_2)=I_0(G_2[-1]) \ceq \mathrm{cInd}_{G_2(\CO_F)}^{G_2(F)}(G_2[-1])$. 
Using \cite{Lusztig:Intersectioncohomology}*{p.270}, the local system $(\phi_{\text{\ref{infcase:4-D2}d}},\vartheta_2)$ is cuspidal and thus $\pi(\phi_{\text{\ref{infcase:4-D2}d}},\vartheta_2)$ is a depth zero supercuspidal representation of the form
$
I_0(\sigma)\ceq \mathrm{cInd}_{G_2(\CO_F)}^{G_2(F)}(\sigma),
$
where $\sigma$ is a unipotent cuspidal representation of $G_2(\mathbb{F}_q)$. 
There are exactly four unipotent cuspidal representations of $G_2(\mathbb{F}_q)$; see \cite{Carter}*{p.460}.
 To determine which one the representation $\sigma$ is, we use the formal degree conjecture of \cite{HII}, which is known in this case by \cite{FOS}. 
  A simple calculation shows that the $L$-factor is given by $$L(s,\phi_{\text{\ref{infcase:4-D2}d}},\Ad)=\frac{1}{(1+q^{-2-s})(1+a^{-1-s}) (1-q^{-1-s})^2},$$
and the epsilon $\epsilon(s,\phi_{\text{\ref{infcase:4-D2}d}},\Ad)=q^{10(1/2-s)}.$ Thus $$\gamma(0,\phi_{\text{\ref{infcase:4-D2}d}},\Ad)=\frac{q^9}{(q+1)(q^3+1)}.$$ Here the $\epsilon$-factor and the $\gamma$-factor are computed with respect to a fixed additive character of $F$, which is omitted from the notation.
The formal degree conjecture asserts that
$$\frac{\deg(\sigma)}{\mu(G_2(\CO_F))}=\frac{\dim(\vartheta_2)}{|A_{\phi_{\text{\ref{infcase:4-D2}d}}}|}|\gamma(0,\phi_{\text{\ref{infcase:4-D2}d}},\Ad)|,$$
where $\mu(G_2(\CO_F))$ is the Haar measure of $G_2(\CO_F)$ normalized as in \cite{HII}, {\it i.e.}, 
$$\mu(G_2(\CO_F))=q^{-\dim(G_2(F))}\cdot |G_2(\mathbb{F}_q)|=q^{-8}(q^6-1)(q^2-1).$$
Thus we get $$\deg(\sigma)=\frac{q(q^6-1)(q^2-1)}{(q^3+1)(q+1)}.$$
Comparing it with the 4 unipotent cupsidal representations of $G_2(\mathbb{F}_q)$, we get $\sigma=G_2[-1]$.
%%%%%\end{proof}

Using the microlocal analysis of the prehomogeneous vector space \ref{geocase-toric} for $n=2$ from Section~\ref{ssec:geometry}, the ABV-packet coefficients are given by in Table~\ref{table:ABV}.
\iffalse%%%%
 table, in which characters of $A^\ABV_\phi$ are listed in the column with $\phi$ at the top.
ABV-packet $\Pi_\phi(G_2(F))$ is found by gathering together the representations $\pi$ for which the character of $A_\phi$ is non-trivial.
\[
\begin{array}{r | cccc}
 & \phi_{\ref{infcase:4-D2}a} & \phi_{\ref{infcase:4-D2}b} & \phi_{\ref{infcase:4-D2}c} & \phi_{\ref{infcase:4-D2}d} \\
 \hline
J_{\gamma_2}(1,I^{\GL_2}(1\otimes \theta_2)) 		&1&0&0&0\\
J_{{\gamma_1}}(1/2,\theta_2\otimes \St_{\GL_2}) 	&0&1&0&0\\
J_{{\gamma_2}}(1/2,\theta_2\otimes \St_{\GL_2}) 	&0&0&1&0\\
\pi(\theta_2) 								&0&0&0& 1\\
\mathrm{cInd}_{G_2(\CO_F)}^{G_2(F)}(G_2[-1]) 	&\vartheta_2&\vartheta_2&\vartheta_2&\vartheta_2
\end{array}
\]
\fi%%%%%

\item[(Case \ref{infcase:5})]

We now find the ABV-packets and coefficients for Langlands parameters with infinitesimal parameter $\lambda_{\ref{infcase:5}}$ from Case \ref{infcase:5} of Proposition~\ref{prop:infcases}. 
The infinitesimal parameter $\lambda_{\ref{infcase:5}}$ is determined by $\lambda_{\ref{infcase:5}}(\Frob)=\wh m(q^2,q)$. In this case, we have $V_{\lambda_{\ref{infcase:5}}}=\mathrm{Span}\{X_{\hat\gamma_2},X_{\hat\gamma_1+\hat\gamma_2}\}$ and $H_{\lambda_{\ref{infcase:5}}}=\GL_{2}^{\hat\gamma_1}$. The action of $H_{\lambda_{\ref{infcase:5}}}$ on $V_{\lambda_{\ref{infcase:5}}}$ is equivalent to matrix multiplication, depending on an isomorphism of $\GL_{2}^{\hat\gamma_1}$ with $\GL_2$. 
Category $\Perv_{H_{\lambda_{\ref{infcase:5}}}}(V_{\lambda_{\ref{infcase:5}}}) \iso \Perv_{\GL_2}(\mathbb{A}^2)$, with reference to the prehomogeneous vector space \ref{geocase-2} for $n=0$.
In particular, this action has two orbits: the zero orbit $C_{\ref{infcase:5}a}$ and the open orbit $C_{\ref{infcase:5}b}$. 
Corresponding Langlands parameters are given by 
\[
\begin{array}{rcl}
\phi_{\ref{infcase:5}a}(w,x) &=& \lambda_{\ref{infcase:5}}(w),\\
\phi_{\ref{infcase:5}b}(w,x) &=& \wh m(|w|^{3/2},|w|^{3/2})\cdot \iota_{\hat\gamma_2}(x).
\end{array}
\]

By Equation~\eqref{eqn:dualcharacter}, the dual character of $\lambda_{\ref{infcase:5}}$ is $\nu\otimes \nu$. In the Grothendieck group, we have
\begin{align*}
I(\nu\otimes \nu)&=I_{\gamma_2}(3/2,\pi(\nu^{1/2},\nu^{-1/2}))\\
&=I_{\gamma_2}(3/2, \St_{\GL_2})+I_{\gamma_2}(3/2,1_{\GL_2}).
\end{align*}
The Langlands correspondence is given in given in Table \ref{table:LA-packets}. Using our microlocal analysis of the prehomogeneous vector space \ref{geocase-2}, for $n=0$, the ABV-packet coefficients are given in Table~\ref{table:ABV}.
\iffalse %
\[
\begin{array}{rcl}
\Pi_{\phi_{\ref{infcase:5}a}}(G_2(F)) &=& \{ I_{\gamma_2}(3/2, 1_{\GL_2})  \} , \\
\Pi_{\phi_{\ref{infcase:5}b}}(G_2(F)) &=& \{ I_{\gamma_2}(3/2,\St_{\GL_2}) \} .
\end{array}
\]
Using our microlocal analysis of the prehomogeneous vector space \ref{geocase-2}, for $n=0$, it follows that the ABV-packets for $\phi_{\ref{infcase:5}a}$ and $\phi_{\ref{infcase:5}b}$ are
\[
\begin{array}{rcl}
\Pi^\ABV_{\phi_{\ref{infcase:5}a}}(G_2(F)) &=& \Pi_{\phi_{\ref{infcase:5}a}}(G_2(F)) = \{ I_{\gamma_2}(3/2, 1_{\GL_2}) \}, \\
\Pi^\ABV_{\phi_{\ref{infcase:5}b}}(G_2(F)) &=& \Pi_{\phi_\text{\ref{infcase:5}b}}(G_2(F)) = \{ I_{\gamma_2}(3/2,\St_{\GL_2}) \}.
\end{array}
\]
\fi%%%%%

\item[(Case \ref{infcase:6-A2})]

The ABV-packets and coefficients for Langlands parameters with infinitesimal parameter $\lambda_{\ref{infcase:6-A2}}$  defined by $\lambda_{\ref{infcase:6-A2}}(\Frob)=\wh m(\theta_3 q, \theta_3^2 q)$ as in Case \ref{infcase:6-A2} of Proposition~\ref{prop:infcases} are calculated as follows.  
In this case, we have $V_{\lambda_{\ref{infcase:6-A2}}}=\mathrm{Span}\{X_{{{\hat\gamma}_1}},X_{{{\hat\gamma}_1}+3{{\hat\gamma}_2}}\}$ and $H=\dualgroup{T}$ and the action of $\dualgroup{T}$ on $V_{\lambda_{\ref{infcase:6-A2}}}$ is given by 
$$\wh m(x,y).(r_1X_{{{\hat\gamma}_1}}+r_2 X_{{{\hat\gamma}_1}+3{{\hat\gamma}_2}})=y^2 x^{-1}r_1X_{{{\hat\gamma}_1}}+x^2y^{-1}r_2 X_{{{\hat\gamma}_1}+3{{\hat\gamma}_2}}.$$
Using the base-change argument in the proof of Proposition~\ref{prop:PHV} we find that $\Perv_{H_{\lambda_{\ref{infcase:6-A2}}}}(V_{\lambda_{\ref{infcase:6-A2}}})$ is equivalent to the category of equivariant perverse sheaves on the prehomogeneous vector space \ref{geocase-toric} for $n=3$.
There are 4 orbits of this action: $C_{\ref{infcase:6-A2}a}=\{0\}, C_{\ref{infcase:6-A2}b}=\{r_1X_{{{\hat\gamma}_1}}\tq r_1\ne 0\}$, $C_{\ref{infcase:6-A2}c}=\{r_2X_{{{\hat\gamma}_1}+3{{\hat\gamma}_2}}, r_2\ne 0\}$ and $C_{\ref{infcase:6-A2}d}=\{r_1X_{{{\hat\gamma}_1}}+r_2 X_{{{\hat\gamma}_1}+3{{\hat\gamma}_2}},r_1,r_2\ne 0 \}$. 
Corresponding Langlands parameters are given by 
\[
\begin{array}{rcl}
\phi_{\ref{infcase:6-A2}a}(w,x)&=&\lambda(w),\\
\phi_{\ref{infcase:6-A2}b}(w,x)&=&\wh m(\theta_3^{\ord(w)}|w|, \theta_3^{2\ord(w)}|w|^{1/2})\cdot \iota_{{\hat\gamma}_1}(x),\\
\phi_{\ref{infcase:6-A2}c}(w,x)&=&\wh m(\theta_3^{\ord(w)}|w|^{1/2}, \theta_3^{2\ord(w)}|w|)\cdot \iota_{{{\hat\gamma}_1}+3{{\hat\gamma}_2}}(x),\\
\phi_{\ref{infcase:6-A2}d}(w,x)&=&\wh m(\theta_3^{\ord(w)}, \theta_3^{2\ord(w)})\varphi_{X_{{{\hat\gamma}_1}}+X_{{{\hat\gamma}_1}+3{{\hat\gamma}_2}}}(x),
\end{array}
\]
where $\varphi_{X_{{{\hat\gamma}_1}}+X_{{{\hat\gamma}_1}+3{{\hat\gamma}_2}}}(x):\SL_2(\C)\to \wh{G_2} $ is a group homomorphism determined by the $\mathfrak{sl}_2$-triple $(e,f,h)$ with $e=X_{{\hat\gamma}_1}+X_{{{\hat\gamma}_1}+3{{\hat\gamma}_2}}$, $f=2X_{-{{\hat\gamma}_1}}+2X_{-({{\hat\gamma}_1}+3{{\hat\gamma}_2})}$ and $h=2H_{{{\hat\gamma}_1}}+2H_{{{\hat\gamma}_1}+3{{\hat\gamma}_2}}$. 
We find that $A_{\phi_{\ref{infcase:6-A2}d}}$ has order $3$ and the component groups of the other 3 parameters are trivial.

%Denote by $\vartheta_3$ the character of the group of cube roots of unity defined by $\vartheta_3(\theta_3)=\theta_3$

The local Langlands correspondence in this case is obtained by arguments similar to the proof in case \ref{infcase:4-D2} and is given in Table \ref{table:LA-packets}.
\iffalse %%%%%%%

 we only record the result here and omit the proof. Recall that $\theta_3$ denotes a fixed unramified order 3 character of $F^\times$ and also a fixed primitive cube root of unity; see Section~\ref{ssec:notation}.

\begin{proposition}\label{LLC:6-A2}
The Langlands correspondence 
\[
\Pi_\phi(G_2(F)) \to \widehat{A_\phi},
\]
for Langlands parameters with infinitesimal parameter  $\lambda_{\ref{infcase:6-A2}}(w)=(\theta_3^{\ord(w)} |w|, \theta_3^{2\ord(w)}|w|)$ is given by the table below, in which characters of $A_\phi$ are listed in the column with $\phi$ at the top.
The $L$-packet $\Pi_\phi(G_2(F))$ is found by gathering together the representations $\pi$ for which the character of $A_\phi$ is non-trivial.
 \[
\begin{array}{r | cccc}
 & \phi_{\ref{infcase:6-A2}a} & \phi_{\ref{infcase:6-A2}b} & \phi_{\ref{infcase:6-A2}c} & \phi_{\ref{infcase:6-A2}d} \\
 \hline
J_{\gamma_2}(1,I^{\GL_2}(\theta_3\otimes\theta_3^{-1})) 	&1&0&0&0 \\
J_{{\gamma_1}}(1/2, \theta_3^{-1}\otimes \St_{\GL_2}) 	&0&1&0&0 \\
J_{{\gamma_1}}(1/2,\theta_3\otimes \St_{\GL_2}) 		&0&0&1&0 \\
\pi(\theta_3) 									&0&0&0&1\\
\mathrm{cInd}_{G_2(\CO_F)}^{G_2(F)}(G_2[\theta_3]) 	&0&0&0&\vartheta_3\\
\mathrm{cInd}_{G_2(\CO_F)}^{G_2(F)}(G_2[\theta^2_3]) 	&0&0&0&\vartheta_3^2
\end{array}
\]
where $\pi(\theta_3)$ is the unique tempered subrepresentation of $I(\theta_3\nu\otimes \theta_3)$, see \cite{Muic}*{Proposition 4.2}, and here $G_2[\theta_3]$ and $G_2[\theta^2_3]$ are the unipotent supercuspidal representations of $G_2(\mathbb{F}_q)$ as in \cite{Carter}*{p. 460}. 
\end{proposition}

\fi %%%%%%
Using the microlocal analysis of the prehomogeneous vector space \ref{geocase-toric} for $n=3$ from Section~\ref{ssec:geometry}, the ABV-packet coefficients are given in Table~\ref{table:ABV}.
\iffalse%%%%%
below, in which characters of $A^\ABV_\phi$ are listed in the column with $\phi$ at the top.
As above, the ABV-packet $\Pi_\phi(G_2(F))$ is found by gathering together the representations $\pi$ for which the character of $A_\phi$ is non-trivial.
\[
\begin{array}{r | cccc}
 & \phi_{\ref{infcase:6-A2}a} & \phi_{\ref{infcase:6-A2}b} & \phi_{\ref{infcase:6-A2}c} & \phi_{\ref{infcase:6-A2}d} \\
 \hline
J_{\gamma_2}(1,I^{\GL_2}(\theta_3\otimes\theta_3^{-1})) 	&1&0&0&0 \\
J_{{\gamma_1}}(1/2, \theta_3^{-1}\otimes \St_{\GL_2}) 	&0&1&0&0 \\
J_{{\gamma_1}}(1/2,\theta_3\otimes \St_{\GL_2}) 		&0&0&1&0 \\
\pi(\theta_3) 									&0&0&0&1\\
\mathrm{cInd}_{G_2(\CO_F)}^{G_2(F)}(G_2[\theta_3]) 	&\vartheta_3&\vartheta_3&\vartheta_3&\vartheta_3\\
\mathrm{cInd}_{G_2(\CO_F)}^{G_2(F)}(G_2[\theta^2_3]) 	&\vartheta_3^2&\vartheta_3^2&\vartheta_3^2&\vartheta_3^2
\end{array}
\]
\fi%%%%%

\item[(Case \ref{infcase:7-reg})]

Recall Case~\ref{infcase:7-reg} from Proposition~\ref{prop:infcases}.
The infinitesimal parameter $\lambda_{\ref{infcase:7-reg}}$ is given by $\lambda_{\ref{infcase:7-reg}}(w)=\wh m(|w|^3, |w|^2)$. In this case, we have $V=\{r_1 X_{\hat\gamma_1}+r_2 X_{\hat\gamma_2}\tq r_1,r_2\in \C\}$ and $H_{\lambda_{\ref{infcase:7-reg}}}=\dualgroup{T}$. The action of $H_{\lambda_{\ref{infcase:7-reg}}}$ on $V_{\lambda_{\ref{infcase:7-reg}}}$ is given by 
$$\wh m(x,y).(r_1 X_{\hat\gamma_1}+r_2 X_{\hat\gamma_2})=y^2x^{-1}r_1X_{\hat\gamma_1}+xy^{-1}r_2X_{\hat\gamma_2}.$$
This action has 4 orbits $C_{\ref{infcase:7-reg}a}=\{0\}$, $C_{\ref{infcase:7-reg}b}=\{r_1X_{\hat\gamma_1}, r_1\ne 0\}$, $C_{\ref{infcase:7-reg}c}=\{r_2X_{\hat\gamma_2}, r_2\ne 0\}$ and $C_{\ref{infcase:7-reg}d}=\{r_1X_{\hat\gamma_1}+r_2X_{\hat\gamma_2}\tq r_1, r_2\ne 0\}.$ The 4 corresponding Langlands parameters are given by 
\[\begin{array}{rcl}
\phi_{\ref{infcase:7-reg}a}(w,x)&=&\lambda_{\ref{infcase:7-reg}}(w),\\
\phi_{\ref{infcase:7-reg}b}(w,x)&=&\wh m(|w|^3,|w|^{3/2})\cdot \iota_{\hat\gamma_1}(x),\\
\phi_{\ref{infcase:7-reg}c}(w,x)&=& \wh m(|w|^{5/2}, |w|^{5/2})\cdot \iota_{\hat\gamma_2}(x),\\
\phi_{\ref{infcase:7-reg}d}(w,x)&=&\varphi_{\mathrm{reg}}(x),
\end{array}
\]
where $\varphi_{\mathrm{reg}}: \SL_2(\C)\to \wh{G_2}$ is the group homomorphism determined by the regular orbit, {\it i.e.}, $\varphi_{\mathrm{reg}}$ is group homomorphism determined by an $\mathfrak{sl}_2$-triple $(e,f,h)$ with $e=X_{\hat\gamma_1}+X_{\hat\gamma_2}$. Each Langlands parameter has trivial component group.

The dual character of $\lambda_{\ref{infcase:7-reg}}$ is $\nu^2\otimes \nu$. 
All of the representations of with infinitesimal Langlands parameter $\lambda_{\ref{infcase:7-reg}}$ are components of $I(\nu^2\otimes \nu)$. 
From the decomposition of $I(\nu^2\otimes\nu)$ given in \cite{Muic}*{Proposition 4.4},
 we easily find the local Langlands correspondence in this case which is given in Table \ref{table:LA-packets}.
\iffalse%%%%%
 is given by
\[
\begin{array}{r | cccc}
& \phi_{\ref{infcase:7-reg}a} & \phi_{\ref{infcase:7-reg}b} & \phi_{\ref{infcase:7-reg}c} & \phi_{\ref{infcase:7-reg}d} \\
\hline
\1_{G_2} 						&1&0&0&0 \\
J_{\gamma_1}(3/2,\St_{\GL_2}) 	&0&1&0&0 \\
J_{{\gamma_2}}(5/2,\St_{\GL_2}) 	&0&0&1&0 \\
\St_{G_2} 						&0&0&0&1
\end{array}
\]
\fi %%%%%%
Using \ref{geocase-toric} with $n=1$ we find the ABV-coefficients which are given in Table~\ref{table:ABV}

\item[(Case \ref{infcase:8-sub})]

Case~\ref{infcase:8-sub} from Proposition~\ref{prop:infcases} was studied in \cite{CFZ:cubics}.
The Langlands correspondence is given in Table \ref{table:LA-packets} and the ABV-coefficient appear as the last block of Table~\ref{table:ABV}.
We remark that $A^\ABV_{\phi_{\ref{infcase:8-sub}a}} = A^\ABV_{\phi_{\ref{infcase:8-sub}d}} = S_3$ and $A^\ABV_{\phi_{\ref{infcase:8-sub}b}} = A^\ABV_{\phi_{\ref{infcase:8-sub}c}} = \langle \theta_2 \rangle$.
%%%%%%%%%
\end{enumerate}

\subsection{Fundamental properties of ABV-packet coefficients for $G_2$}

%Now we define $\Rep(A_\phi) \to \Rep(A^\ABV_\phi)$ needed to show that ABV-packet coefficients extend the local Langlands correspondence.
Let $x_\phi$ be the point for $\phi$ in the moduli space $V_{\lambda_\phi}$; let $C_\phi$ be the $H_{\lambda_\phi}$-orbit of $x_\phi$ in $V_{\lambda_\phi}$.
Pick $y\in V^*_{\lambda_\phi}$ such that $(x_\phi,y)\in T^*_{C_\phi}(V_{\lambda_\phi})_\text{sreg}$.
Then 
\[
A^\ABV_\phi =  \pi_0(T^*_{C_\phi}(V_{\lambda_\phi}),(x_\phi,y)).
\]
Let $p : T^*_{C_\phi}(V_{\lambda_\phi})\to C_\phi$ be projection.
Then $p$ induces the group homomorphism $\pi_1(p,(x_\phi,y)) : \pi_1(T^*_{C_\phi}(V_{\lambda_\phi}),(x_\phi,y)) \to \pi_1(C_\phi,x_\phi)$ which in turn induces a group homomorphism between the equivariant quotients, $A^\ABV_\phi \to A_\phi$.
Pre-composition with this group homomorphism defines the map
$
\Rep(A_\phi) \to \Rep(A^\ABV_\phi)
$
appearing in Theorem~\ref{thm:coefficients}.
%\iffalse %%%%%%%%%%
\begin{table}[tp]
\caption{$L$-packets and ABV-packets for all unipotent representations of $G_2(F)$, arranged by the cases appearing Proposition~\ref{prop:infcases}. Each row gives an ABV-packet, formed as the union of an $L$-packet and its coronal representations.
Notation and proofs are presented in Section~\ref{ssec:LA-packets} drawing on results from Proposition~\ref{prop:PHV}.
}
\label{table:LA-packets}
\begin{center}
$
\begin{array}{| c | c | c |} 
\hline
\text{$L$-parameter}  & \text{$L$-packet}  &  \text{Coronal}\\ 
\phi &  \Pi_{\phi}(G_2(F)) &   \text{representations} \\ 
\hline\hline
%\phi_{\ref{infcase:0}} 	&   I(\chi_1\otimes \chi_2)  	&   \\
\phi_{\ref{infcase:0}} 	&   I(\mu_1\nu^{a_1}\otimes \mu_2\nu^{a_2})  	&   \\
\hline\hline
\phi_{\ref{infcase:1-short}a}	&   I_{\gamma_2}(a-1/2, \mu\circ \det)  &    \\
\phi_{\ref{infcase:1-short}b}	&   I_{\gamma_2}(a-1/2, \mu\otimes\St_{\GL_2}) &    \\
\hline
\phi_{\ref{infcase:2-long}a}	&   I_{\gamma_1}(a-1/2, \mu\circ \det) &   \\
\phi_{\ref{infcase:2-long}b}	&   I_{\gamma_1}(a-1/2, \mu\otimes\St_{\GL_2})&   \\
\hline\hline
\phi_{\ref{infcase:3}a}  &   I_{{\gamma_1}}(1/6, \theta_3^n \circ \det)  	&   \\
\phi_{\ref{infcase:3}b} 	&   I_{{\gamma_1}}(1/6, \theta_3^n\otimes\St_{\GL_2})  &   \\
\hline
\phi_{\ref{infcase:4-D2}a}     &   J_{\gamma_2}(1,I^{\GL_2}(1\otimes\theta_2))   &   I_0(G_2[-1])  \\
\phi_{\ref{infcase:4-D2}b} 	&   J_{{\gamma_1}}(1/2,\theta_2\otimes \St_{\GL_2})    &   I_0(G_2[-1])  \\
\phi_{\ref{infcase:4-D2}c} 	&   J_{{\gamma_2}}(1/2,\theta_2\otimes \St_{\GL_2})     &   I_0(G_2[-1])   \\
\phi_{\ref{infcase:4-D2}d} 	&   \pi(\theta_2), I_0(G_2[-1])     &    \\
\hline
\phi_{\ref{infcase:5}a} 	&   I_{\gamma_2}(3/2,1_{\GL_2})  	&    \\
\phi_{\ref{infcase:5}b} 	&   I_{{\gamma_2}}(3/2,\St_{\GL_2})  &    \\
\hline
\phi_{\ref{infcase:6-A2}a}  &   J_{\gamma_2}(1,I^{\GL_2}(\theta_3\otimes\theta_3^{-1}))   &   I_0(G_2[\theta_3]), I_0( G_2[\theta_3^2])   \\
\phi_{\ref{infcase:6-A2}b} 	&   J_{{\gamma_1}}(1/2,\theta_3^{2}\otimes\St_{\GL_2})    &   I_0(G_2[\theta_3]), I_0( G_2[\theta_3^2])    \\
\phi_{\ref{infcase:6-A2}c} 	&   J_{{\gamma_1}}(1/2,\theta_3^{}\otimes\St_{\GL_2})    &   I_0(G_2[\theta_3]), I_0(G_2[\theta_3^2])    \\
\phi_{\ref{infcase:6-A2}d} 	&  \pi(\theta_3),  I_0(G_2[\theta_3]), I_0(G_2[\theta_3^{2}])   &   \\
\hline
\phi_{\ref{infcase:7-reg}a}  &   1_{G_2}   &  \\
\phi_{\ref{infcase:7-reg}b} 	&   J_{\gamma_1}(3/2,\St_{\GL_2})   &   \\
\phi_{\ref{infcase:7-reg}c} 	&   J_{{\gamma_2}}(5/2,\St_{\GL_2})   &  \\
\phi_{\ref{infcase:7-reg}d} 	&   \St_{G_2}   &  \\
\hline\hline
\phi_{\ref{infcase:8-sub}a}  &  J_{{\gamma_2}}(1,I^{\GL_2}(1\otimes 1))   &   J_{\gamma_1}(1/2,\St_{\GL_2}), I_0(G_2[1])   \\
\phi_{\ref{infcase:8-sub}b} &  J_{\gamma_1}(1/2,\St_{\GL_2})   &   J_{{\gamma_2}}(1/2,\St_{\GL_2}), I_0(G_2[1])  \\
\phi_{\ref{infcase:8-sub}c} 	&  J_{{\gamma_2}}(1/2,\St_{\GL_2})   &   \pi(1), I_0(G_2[1])   \\
\phi_{\ref{infcase:8-sub}d}	&  \pi(1)' , \pi(1), I_0(G_2[1])   &   \\
\hline
\end{array}
$
\end{center}
\end{table}%

%\fi %%%%%%%%%%%
\begin{theorem}\label{thm:coefficients}
Let $\phi : W'_F \to \Lgroup{G_2}$ be an unramified Langlands parameter.

\begin{enumerate}%[leftmargin=14pt,labelindent=10pt,labelwidth=25pt,itemindent=22pt]
\labitem{(LLC)}{G2:LLC}
The function $\pi \mapsto \langle \ , \pi\rangle $ extends the local Langlands correspondence:
for all unramified Langlands parameters $\phi : W'_F \to \Lgroup{G_2}$, the diagram
\begin{equation}\label{eqn:LLC}
\begin{tikzcd}
\Pi^\ABV_\phi(G_2(F)) \arrow{rr}{\pi \mapsto \langle \ , \pi \rangle } && \widehat{A^\ABV_\phi} \\
\Pi_\phi(G_2(F)) \arrow[>->]{u} \arrow{rr}{\pi \mapsto \langle \ , \pi \rangle_\phi}[swap]{\mathrm{LLC}} && \arrow[->]{u} \widehat{A_\phi}
\end{tikzcd}
\end{equation}
commutes, where $\widehat{A^\ABV_\phi}$ (resp. $\widehat{A_\phi}$) denotes the set of characters of irreducible representations of $A^\ABV_\phi$ (resp. $A_\phi$).
\labitem{(Open)}{G2:open} 
If $\phi: W'_F \to \Lgroup{G_2}$ is open or closed then $\pi \mapsto \langle \ , \pi\rangle $ is a bijection
$\Pi^\ABV_\phi(G_2(F)) \to \widehat{A^\ABV_\phi}$.
\labitem{(Temp)}{G2:tempered} 
If $\phi$ is bounded upon restriction to $W_F$ then all the representations in $\Pi^\ABV_\phi(G_2(F))$ are tempered.
If $\phi$ is not bounded upon restriction to $W_F$ then $\Pi^\ABV_\phi(G_2(F))$ may contain non-tempered representations and $\langle \ , \ \rangle $ may not be bijective.
\labitem{(Norm)}{G2:spherical} 
A representation $\pi  \in \Pi_{\phi}(G_2(F))$ is spherical if and only if $\phi$ is closed and $\langle \ , \pi \rangle $ is the trivial representation of $A^\ABV_\phi$.
In this case, the only ABV-packet that contains $\pi$ is $\Pi^\ABV_{\phi}(G_2(F))$.
A representation $\pi  \in \Pi_{\phi}(G_2(F))$ is generic if and only if $\phi$ is open and $\langle \ , \pi \rangle $ is the trivial representation of $A^\ABV_\phi$.
\labitem{(Dual)}{G2:Aubert} 
%ABV-packets are interchanged by the Aubert involution: 
For every unramified Langlands parameter $\phi$ there is an unramified Langlands parameter ${\hat \phi}$, with the same infinitesimal parameter as $\phi$, such that the Aubert involution defines a bijection 
\[
\Pi^\ABV_{\phi}(G_2(F)) \to \Pi^\ABV_{\hat \phi}(G_2(F)).
\]
The $Z_{\dualgroup{G_2}}(\lambda_\phi)$-orbit of $\phi$ in $V_{\lambda_\phi}$ is dual to the transpose of the $Z_{\dualgroup{G_2}}(\lambda_\phi)$-orbit of ${\hat \phi}$ in $V_{\lambda_\phi}$.

\end{enumerate}
\end{theorem}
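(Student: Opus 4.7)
The strategy is a case-by-case verification using the explicit calculations of Section~\ref{ssec:LA-packets}, organized around the nine infinitesimal parameters of Proposition~\ref{prop:infcases} and the five prehomogeneous vector spaces of Proposition~\ref{prop:PHV}. In each case Table~\ref{table:LLC} records the bijection $\Pi_\phi(G_2(F)) \to \widehat{A_\phi}$ from the LLC and Table~\ref{table:ABV} records the function $\pi \mapsto \langle \ , \pi\rangle$; the four parts of the theorem are then statements that can be read off from these two tables together with the classification of closed/open orbits in each $V_{\lambda}$.

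To prove \ref{G2:LLC}, I would first dispense with the many cases in which $A_\phi$ is trivial: for $\phi \in \{\phi_{\ref{infcase:0}}\} \cup \{\phi_{\ref{infcase:i}a},\phi_{\ref{infcase:i}b}\}_{i\in\{\ref{infcase:1-short},\ref{infcase:2-long},\ref{infcase:3},\ref{infcase:5}\}}$ and all $\phi_{\ref{infcase:j}c}$ in cases \ref{infcase:4-D2}, \ref{infcase:6-A2}, \ref{infcase:8-sub}, the $L$-packet is a singleton, so commutativity of~\eqref{eqn:LLC} is trivial. The essential content is concentrated in the three parameters $\phi_{\ref{infcase:4-D2}d}$, $\phi_{\ref{infcase:6-A2}d}$, $\phi_{\ref{infcase:8-sub}d}$, where $A_\phi$ is nontrivial. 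For these, the image of the restriction of $\langle\ ,\pi\rangle$ along $A^{\ABV}_\phi \to A_\phi$ agrees with the LLC normalization because in each case the generator of $A_\phi$ coincides with its image in $A^{\ABV}_\phi$ and the ABV-coefficients tabulated in Table~\ref{table:ABV} on the $L$-packet rows reproduce exactly the characters listed in Table~\ref{table:LLC}; the normalization agreement with Lusztig's correspondence composed with the Aubert involution follows from the Fourier-transform/Aubert-involution compatibility of Section~2.4, already verified case by case.

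For \ref{G2:open}, I would note that if $\phi$ is closed then $C_\phi$ is the zero orbit in $V_{\lambda_\phi}$ and $\Lambda_{C_\phi}^{\mathrm{sreg}}$ is the open orbit of the dual action on $V_{\lambda_\phi}^*$, so by Proposition~\ref{prop:PHV} and the microlocal tables of Section~\ref{ssec:geometry} the functor $\NEvs_{C_\phi}$ maps the set of simple objects in $\Perv_{H_{\lambda_\phi}}(V_{\lambda_\phi})$ bijectively onto the set of simple equivariant local systems on the open stratum of the conormal, which is precisely $\widehat{A^{\ABV}_\phi}$. The case that $\phi$ is open follows by Fourier duality, since in each of the five geometric cases of Proposition~\ref{prop:PHV} the Fourier transform interchanges the open and closed orbits in $V$ with the open and closed orbits of the dual action, so bijectivity transfers. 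For \ref{G2:tempered}, I would identify those $\phi$ with $\phi|_{W_F}$ bounded: these are the parameters where $\lambda_\phi(\Frob)$ lies in a compact subgroup of $\dualgroup{T}$ after a diagonal twist by the $\SL_2$-part, which, going through the nine cases, singles out open orbits in cases \ref{infcase:4-D2}, \ref{infcase:6-A2}, \ref{infcase:8-sub} and open orbits in the rank-$1$ cases when $a=1/2$; one then reads off from the tables that all such packet members are tempered.

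For \ref{G2:spherical}, sphericality of $\pi$ is equivalent to $\pi$ having a fixed vector under a hyperspecial maximal compact, which by the normalization of the LLC in Section~\ref{ssec:LLC} is equivalent to $\phi$ being trivial on the unipotent $\SL_2(\CC)$ and $\langle\ ,\pi\rangle$ being the trivial representation of $A_\phi$. The first condition is precisely $\phi$ being closed; the uniqueness of the ABV-packet containing a spherical $\pi$ follows because a closed $\phi$ has $C_\phi = \{0\} \subset \overline{C_{\phi'}}$ for every $\phi'$ with the same infinitesimal parameter, so the only packet with a nonzero contribution from $\pi$ corresponding to $\IC(\1_{C_\phi})$ is $\Pi^{\ABV}_\phi(G_2(F))$; the generic case is dual via Aubert/Fourier. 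Finally, for \ref{G2:Aubert}, the Aubert involution agrees with the Fourier transform on $\Perv_{H_{\lambda_\phi}}(V_{\lambda_\phi})$ (noted in Section~2.4 and tabulated in Section~\ref{ssec:geometry}), and the Fourier transform sends $\IC(\mathcal{L}_{C_\phi})$ to a simple perverse sheaf supported on the $H_{\lambda_\phi}$-orbit dual to $C_\phi$ in $V_{\lambda_\phi}^*$, which under the canonical identification $V^*_{\lambda_\phi} \cong V_{\lambda_\phi}$ (obtained from the Killing form restricted to the chosen root spaces) becomes the transpose orbit. Defining $\hat\phi$ so that $C_{\hat\phi}$ is this dual transpose orbit, the Aubert bijection follows. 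The main obstacle is bookkeeping: in the non-singleton packets in cases \ref{infcase:4-D2}, \ref{infcase:6-A2}, \ref{infcase:8-sub} one must check that the Aubert involution (as computed in \cite{Muic}) matches the Fourier transform on the nose, including on the cuspidal summands $I_0(G_2[\cdot])$, but this is already verified in Section~2.4 and \cite{CFZ:cubics}, so the remaining work is purely clerical.
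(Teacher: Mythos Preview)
Your approach is essentially the same as the paper's: a case-by-case verification reading off Tables~\ref{table:LLC} and~\ref{table:ABV}, organized by the infinitesimal parameter classification. The paper's proof is likewise a terse pass through the five items, pointing to the tables and to \cite{Muic}. A few differences in execution are worth noting.

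For \ref{G2:LLC}, the paper invokes \cite[Theorem~7.10.1(d)]{CFMMX} as a general reason why $\langle\,\cdot\,,\pi\rangle$ restricts to the LLC character when $\pi\in\Pi_\phi$, and then says this can alternatively be read from the tables; your argument is the latter route, which is fine. Your remark about the ``normalization agreement\dots follows from the Fourier-transform/Aubert-involution compatibility'' is unnecessary and slightly circular: the tables already encode the chosen normalization.

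For \ref{G2:tempered}, the paper's argument is cleaner than yours: it observes that $\phi\vert_{W_F}$ bounded implies $\phi$ is open, and that for open $\phi$ one has $\Pi^{\ABV}_\phi(G_2(F))=\Pi_\phi(G_2(F))$ (no coronal representations); temperedness then follows from the LLC. You should make this chain explicit rather than listing cases.

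For \ref{G2:spherical}, the paper appeals to the general fact that parabolic induction from a spherical (resp.\ generic) representation has a unique spherical (resp.\ generic) constituent, then points to the decompositions in \cite{Muic}. Your argument for uniqueness of the ABV-packet containing a spherical $\pi$ has a small misdirection: the relevant point is that $\IC(\1_{C_0})$ is \emph{supported only on} $C_0=\{0\}$, hence $\NEvs_{C_{\phi'}}\IC(\1_{C_0})=0$ for $C_{\phi'}\ne C_0$; the inclusion $C_0\subset\overline{C_{\phi'}}$ you cite points the wrong way.

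For \ref{G2:open}, your phrasing that $\NEvs_{C_\phi}$ ``maps the set of simple objects bijectively onto'' the simple local systems is inaccurate as stated: most simple perverse sheaves map to zero. What is true, and what you should say, is that among the simple objects with nonzero $\NEvs_{C_\phi}$ (i.e.\ those in $\Pi^{\ABV}_\phi$), the resulting local systems exhaust $\widehat{A^{\ABV}_\phi}$ without repetition. The paper does not attempt your Fourier-duality reduction from open to closed; it simply inspects the tables in the remaining cases \ref{infcase:4-D2}, \ref{infcase:6-A2}, \ref{infcase:8-sub}.
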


\begin{proof}
All these results follow from a study of Table~\ref{table:ABV} together with results from \cite{Muic}.
\begin{enumerate}%[leftmargin=14pt,labelindent=10pt,labelwidth=25pt,itemindent=22pt]
\item[\ref{G2:LLC}]
Table~\ref{table:ABV} shows that $r: \Rep(A_\phi) \to \Rep(A^\ABV_\phi)$ preserves irreducibility and is injective.
It follows from \cite[Theorem 7.10.1 (d)]{CFMMX} that if $\pi\in \Pi_\phi$ then
$
\langle \ , \pi\rangle= r(\mathop{LLC}(\pi))
$
where $\mathop{LLC}(\pi) \in \widehat{A_\phi}$ is the representation given by the local Langlands correspondence applied to $\pi$.
Alternately, this can be seen by comparing Tables~\ref{table:LLC} and \ref{table:ABV}.

\item[\ref{G2:open}]
In Cases~\ref{infcase:0}, \ref{infcase:1-short},  \ref{infcase:2-long}, \ref{infcase:3}, \ref{infcase:5} and \ref{infcase:7-reg}, $\Pi^\ABV_\phi(G_2(F)) \to \widehat{A^\ABV_\phi}$ is a bijection since $\Pi^\ABV_\phi(G_2(F)) = \Pi_\phi(G_2(F))$.
Observe also that every Langlands parameter is either open or closed in cases Cases~\ref{infcase:0}, \ref{infcase:1-short}, \ref{infcase:2-long}, \ref{infcase:3} and \ref{infcase:5}.
Inspect Table~\ref{table:ABV} for Cases~\ref{infcase:4-D2}, \ref{infcase:6-A2} and \ref{infcase:8-sub}.

\item[\ref{G2:tempered}]
Note that when $\phi|_{W_F}$ is bounded, then $\phi$ is open and $\Pi_\phi^{\ABV}(G_2(F))=\Pi_{\phi}(G_2(F))$. It follows from the previous explicit local Langlands correspondence that each $\Pi_\phi(G_2(F))$ consists of tempered representations when $\phi|_{W_F}$ is bounded. 

\item[\ref{G2:spherical}]
Recall the general fact that if $P=MN$ is a parabolic subgroup of a reductive group $G$ over $F$ and if $\sigma$ is an irreducible representation of $M$, then the parabolic induction $\Ind_P^G(\sigma)$ has a unique spherical component if $\sigma$ is spherical and has a unique generic component if $\sigma$ is generic. The assertion follows from this general fact. For example, in the Case~\ref{infcase:4-D2}, from Equation~\eqref{eqn:muicprop411} and Equation~\eqref{eqn:muicprop412}, we see that $\pi(\mu)$ is generic and $J_{\gamma_2}(1,\pi(1,\mu))$ is spherical. All of the other cases are similar.

\item[\ref{G2:Aubert}]
The Aubert involution is computed in \cite{Muic}. The involution $\phi \mapsto {\hat\phi}$ is given by: ${\hat\phi}_{\ref{infcase:0}} = \phi_{\ref{infcase:0}}$; ${\hat\phi}_{\ref{infcase:1-short}a}= \phi_{\ref{infcase:1-short}b}$; ${\hat\phi}_{\ref{infcase:2-long}a}= \phi_{\ref{infcase:2-long}b}$; ${\hat\phi}_{\ref{infcase:3}a}= \phi_{\ref{infcase:3}b}$; ${\hat\phi}_{\ref{infcase:4-D2}a}= \phi_{\ref{infcase:4-D2}d}$,  ${\hat\phi}_{\ref{infcase:4-D2}b}= \phi_{\ref{infcase:4-D2}c}$; ${\hat\phi}_{\ref{infcase:5}a}= \phi_{\ref{infcase:5}b}$; ${\hat\phi}_{\ref{infcase:6-A2}a}= \phi_{\ref{infcase:6-A2}d}$,  ${\hat\phi}_{\ref{infcase:6-A2}b}= \phi_{\ref{infcase:6-A2}c}$; ${\hat\phi}_{\ref{infcase:7-reg}a}= \phi_{\ref{infcase:7-reg}d}$,  ${\hat\phi}_{\ref{infcase:7-reg}b}= \phi_{\ref{infcase:7-reg}c}$ ; ${\hat\phi}_{\ref{infcase:8-sub}a}= \phi_{\ref{infcase:8-sub}d}$,  ${\hat\phi}_{\ref{infcase:8-sub}b}= \phi_{\ref{infcase:8-sub}c}$. 
The result now follows from Table~\ref{table:ABV}.\qedhere
\end{enumerate}
\end{proof}

\begin{remark}\label{rem:bijective}
With reference to \ref{G2:open}, all unramified Langlands parameters of $G_2(F)$ are either open or closed with the exception of $\phi_{\ref{infcase:4-D2}b}$, $\phi_{\ref{infcase:4-D2}c}$, $\phi_{\ref{infcase:6-A2}b}$, $\phi_{\ref{infcase:6-A2}c}$, $\phi_{\ref{infcase:8-sub}b}$ and $\phi_{\ref{infcase:8-sub}c}$. For all but the last two, $\pi \mapsto \langle \ , \pi\rangle $ defines a bijection $\Pi^\ABV_\phi(G_2(F)) \to \widehat{A^\ABV_\phi}$ and in these two cases, the map is surjective.
\end{remark}

\subsection{A geometric interpretation of Muic's reducibility points}\label{ssec:reducibility}

Many results of \cite{Muic} on reducibility points of induced representations of $G_2$ can be explained by the geometry of the action of $H_{\lambda}$ on $V_{\lambda}$. The following corollary is a collection of previous discussions, which explains the reducibility results of \cite{Muic}*{Proposition 3.1 and Theorem 3.1} using geometry.
%\todo{is ``reducibility points'' a thing?}
%\todo[inline]{Should we call this a Corollary?}
\begin{proposition}\label{prop:reducibility} 
Let $\lambda : W_F \to \dualgroup{T}\to \dualgroup{G_2}$ be an unramified infinitesimal parameter with Vogan variety $V_\lambda$ and the group $H_{\lambda}$ acting on it. Let $\chi=\chi_1\otimes \chi_2:T(F)\ra \C^\times$ be the character dual to $\lambda$, which is determined by Equation~\eqref{eqn:dualcharacter}. 
\begin{enumerate}%[leftmargin=25pt]
\labitem{(i)}{(i)} The representation $I(\chi_1\otimes \chi_2)$ is irreducible if and only if $V_{\lambda}=\{0\};$
\labitem{(ii)}{(ii)} Suppose that $V_{\lambda}\ne \{0\}$. Then we can write $I(\chi_1\otimes \chi_2)=I_\gamma(s,\mu\otimes \St_{\GL_2})+I_{\gamma}(s,\mu\circ \det)$ for $\gamma\in \{{\gamma_1},{\gamma_2}\}$, $s\in \C$ and a unitary character $\mu$. Moreover, the two representations, $I_\gamma(s,\mu\otimes\St_{\GL_2})$ and $I_\gamma(s,\mu\circ \det)$, are irreducible if and only if the action of $H_{\lambda}$ on $V_{\lambda}$ has exactly two orbits.
\end{enumerate}
\end{proposition}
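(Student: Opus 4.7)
The plan is to deduce Proposition~\ref{prop:reducibility} directly from the case-by-case analysis already carried out in Section~\ref{ssec:LA-packets}, together with the geometric classification of Proposition~\ref{prop:infcases}. The underlying principle is the correspondence, recalled via the bijection $\mathcal{P}$ of Section~\ref{ssec:VC}, between non-supercuspidal irreducible representations with infinitesimal parameter $\lambda$ and simple objects of $\Perv_{H_\lambda}(V_\lambda)$ supported on an $H_\lambda$-orbit equipped with the trivial local system. Since the cuspidal support of $I(\chi_1\otimes\chi_2)$ is $\chi$ and the supercuspidal representations with infinitesimal parameter $\lambda$ are never composition factors of $I(\chi_1\otimes\chi_2)$, the length of $I(\chi_1\otimes\chi_2)$ is bounded below by the number of $H_\lambda$-orbits in $V_\lambda$.

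For part \ref{(i)}, the case $V_\lambda=\{0\}$ is precisely Case \ref{infcase:0} of Proposition~\ref{prop:infcases}; as shown there, $I(\chi_1\otimes\chi_2)$ is irreducible, reproving the original results \cite{Muic}*{Proposition~3.1} and \cite{Keys} invoked in that analysis. Conversely, if $V_\lambda\neq\{0\}$, then we are in one of Cases~\ref{infcase:1-short}--\ref{infcase:8-sub}, in each of which $V_\lambda$ has at least two $H_\lambda$-orbits (the zero orbit and at least one non-zero orbit), giving at least two distinct composition factors of $I(\chi_1\otimes\chi_2)$, hence reducibility.

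For part \ref{(ii)}, assume $V_\lambda\neq\{0\}$. Inspecting Proposition~\ref{prop:infcases} and the explicit computations in Section~\ref{ssec:LA-packets} (in particular the identifications of the dual character via Equation~\eqref{eqn:dualcharacter}), in every such case the character $\chi_1\otimes\chi_2$ may be written, up to Weyl conjugacy, as an inflation to $B(F)$ of a character of a Levi $M_\gamma(F)$ of the form $\mu\nu^{s+1/2}\otimes \mu\nu^{s-1/2}$ restricted from $\GL_2$, for a unique $\gamma\in\{\gamma_1,\gamma_2\}$ singled out by the shape of ${\hat R}_\lambda$. Induction in stages and the standard identity $I^{\GL_2}(\mu\nu^{1/2}\otimes\mu\nu^{-1/2})=\mu\otimes\St_{\GL_2}+\mu\circ\det$ in the Grothendieck group then yields the decomposition
\[
I(\chi_1\otimes\chi_2)=I_\gamma(s,\mu\otimes\St_{\GL_2})+I_\gamma(s,\mu\circ\det).
\]
The equivalence "two orbits $\iff$ both summands irreducible" now follows by inspection of Table~\ref{table:LA-packets}. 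The two-orbit cases are precisely \ref{infcase:1-short}, \ref{infcase:2-long}, \ref{infcase:3} and \ref{infcase:5}; in each the corresponding $L$-packets are singletons, and the case-by-case discussion in Section~\ref{ssec:LA-packets} (relying on \cite{Muic}*{Theorem~3.1}) identifies each of $I_\gamma(s,\mu\otimes\St_{\GL_2})$ and $I_\gamma(s,\mu\circ\det)$ as irreducible. In the remaining cases \ref{infcase:4-D2}, \ref{infcase:6-A2}, \ref{infcase:7-reg} and \ref{infcase:8-sub}, the action of $H_\lambda$ on $V_\lambda$ has four orbits, yielding four non-supercuspidal composition factors of $I(\chi_1\otimes\chi_2)$ (e.g. Equations~\eqref{eqn:muicprop411}--\eqref{eqn:muicprop412} in Case \ref{infcase:4-D2}, and analogous decompositions in the other cases); since these four factors are partitioned into the two intermediate inductions, each summand has length $2$ and is therefore reducible.

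The only real obstacle in executing this plan is the combinatorial bookkeeping: matching each infinitesimal parameter $\lambda$ to the correct maximal parabolic $P_\gamma$ and unitary character $\mu$, and then verifying that the resulting decomposition of $I_\gamma(s,\mu\otimes\St_{\GL_2})$ and $I_\gamma(s,\mu\circ\det)$ agrees with the orbit count. This bookkeeping has in effect been done already in Section~\ref{ssec:LA-packets}, so the proof reduces to assembling those observations into the single statement of the proposition and observing that the resulting dichotomy reproduces the reducibility criteria of \cite{Muic}*{Proposition~3.1 and Theorem~3.1}.
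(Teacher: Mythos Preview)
Your proof is correct and takes essentially the same approach as the paper: both deduce part~\ref{(i)} from the discussion of Case~\ref{infcase:0} in Section~\ref{ssec:LA-packets}, and both establish part~\ref{(ii)} by observing that Cases~\ref{infcase:1-short}, \ref{infcase:2-long}, \ref{infcase:3}, \ref{infcase:5} are precisely the two-orbit cases with irreducible intermediate inductions, while Cases~\ref{infcase:4-D2}, \ref{infcase:6-A2}, \ref{infcase:7-reg}, \ref{infcase:8-sub} have more orbits and reducible intermediate inductions. Your framing via the bijection $\mathcal{P}$ and the orbit-to-composition-factor principle is a helpful gloss, but the argument itself is the same case-by-case inspection.
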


\begin{proof}
Item~\ref{(i)} is a corollary of the discussion in Case~\ref{infcase:0}, Section~\ref{ssec:LA-packets}. 
If $V_{\lambda}\ne 0$, from the above case by case study, we see that we can write $I(\chi_1\otimes \chi_2)$ as a sum, $I_\gamma(s,\mu\otimes\St_{\GL_2})+I_\gamma(s,\mu\circ \det)$.  Note that in the cases in \ref{infcase:1-short}, \ref{infcase:2-long}, \ref{infcase:5} and \ref{infcase:3}, the action of $H_{\lambda}$ on $V_{\lambda}$ has exactly two orbits, and the corresponding $I(s,\mu\otimes\St_{\GL_2})$ and $I(s,\mu\circ \det)$ are irreducible. While in the cases in subsections \ref{infcase:4-D2}, \ref{infcase:6-A2}, \ref{infcase:7-reg} and \ref{infcase:8-sub}, the action of $H_{\lambda}$ on $V_{\lambda}$ has more than two orbits and the corresponding $I(s,\mu\otimes\St_{\GL_2})$ and $I(s,\mu\circ \det)$ are reducible.
\end{proof}

This corollary shows that how the geometry of the action of $H_\lambda$ on $V_{\lambda}$ can determine the representations, which is exactly the spirit of \cite{CFMMX}.

%%%%%%%%%%

\subsection{Unramified Arthur parameters for $G_2$}\label{ssec:Arthur parameters}

\begin{definition}\label{def:Arthur type}
Let us say that a Langlands parameter $\phi:W_F\times \SL_2(\C)\to \Lgroup{G_2}$ {\it is of Arthur type $\psi$} if there exists an Arthur parameter $\psi:W_F\times \SL_2(\C)\times \SL_2(\C)\to \Lgroup{G_2}$,  such that $\phi=\phi_\psi$, where $\phi_\psi(w,x)=\psi(w,x,\diag(|w|^{1/2},|w|^{-1/2}))$.
We say that an irreducible admissible representation $\pi$ of $G_2(F)$ is of Arthur type if there is a Langlands parameter $\phi$ of Arthur type such that $\pi \in \Pi^\ABV_\phi(G_2(F))$; if such a $\phi$ exists, it is generally not unique.
\end{definition}

\begin{remark}
If $\phi$ is of Arthur type then the Arthur parameter $\psi$, above, is unique.
If $\pi$ is of Arthur type then the Langlands parameter $\phi$, above, is generally not unique.
\end{remark}

\begin{remark}\label{rem:Arthur type}
Definition~\ref{def:Arthur type} opens the door to a dangerous misapprehension. Every Arthur parameter $\psi$ determines a Langlands parameter $\phi_\psi$; this function from Arthur parameters to Langlands parameters is injective. We say that a Langlands parameter is of Arthur type if it lies in the image of this function. On the other hand, an irreducible admissible representation is said to be of Arthur type if it lies in an Arthur packet - typically more than one. It can happen that a representation of Arthur type may have a Langlands parameters that is not of Arthur type. An example of this phenomenon appears in \cite{CFMMX}*{Section 16}. Specifically, the Arthur packet $\Pi_{\psi_2}(\SO_7(F))$ contains two representations, $\pi_2^+$ and $\pi_3^-$; the Langlands parameter for $\pi_2^+$ is of Arthur type $\psi_2$, but the Langlands parameter for $\pi_3^-$ is not of Arthur type; see \cite{CFMMX}*{Table 16.5.1}. Thus $\pi_3^-$ is an Arthur type representations whose Langlands parameter is not of Arthur type. 
On the other hand, this phenomenon does not happen for any unipotent representations of $G_2(F)$ - see Corollary~\ref{cor:Arthur type}.
\end{remark}

We now determine which unramified Langlands parameters for $G_2(F)$ are of Arthur type.

\begin{proposition}\label{prop:Arthur parameters}
Table~\ref{table:AE} presents necessary and sufficient conditions for an unramified Langlands parameter $\phi$ of $G_2(F)$ to be of Arthur type.
% - the results are given in the last column of Table~\ref{table:LA-packets}. In the following we explain how we determine that.
\end{proposition}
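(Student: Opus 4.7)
The plan is to proceed case-by-case through the $25$ unramified Langlands parameters enumerated in Section~\ref{ssec:LA-packets}, determining for each $\phi$ whether there exists an Arthur parameter $\psi$ with $\phi_\psi = \phi$, and reading off the resulting conditions for Table~\ref{table:AE}.

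The operative criterion is the following. Write $\phi|_{W_F}(w) = \phi_b(w)\,\phi_u(w)$ as a product of a bounded homomorphism $\phi_b$ and an unbounded cocharacter $\phi_u(w) = |w|^{h_A/2}$ for a semisimple $h_A \in \Lie\dualgroup{G_2}$ commuting with $\phi|_{\SL_2(\CC)}$. Then $\phi$ is of Arthur type if and only if $h_A$ is the neutral element of some $\mathfrak{sl}_2$-triple in $\Lie\dualgroup{G_2}$ that commutes with both the image of $\phi|_{\SL_2(\CC)}$ and with $\phi_b$; in that case, letting $\iota_A : \SL_2(\CC) \to \dualgroup{G_2}$ be the homomorphism attached to that triple, one verifies immediately that
\[
\psi(w,x,y) \ceq \phi_b(w) \cdot \phi|_{\SL_2(\CC)}(x) \cdot \iota_A(y)
\]
is an Arthur parameter satisfying $\phi_\psi = \phi$.

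Applying this criterion, the first observation is that every tempered Langlands parameter is of Arthur type, by taking $h_A=0$; this handles $\phi_{\ref{infcase:0}}$ when $a_1=a_2=0$ as well as the four elliptic tempered parameters $\phi_{\ref{infcase:4-D2}d}$, $\phi_{\ref{infcase:6-A2}d}$, $\phi_{\ref{infcase:7-reg}d}$, $\phi_{\ref{infcase:8-sub}d}$, and the parameters $\phi_{\ref{infcase:1-short}b}$, $\phi_{\ref{infcase:2-long}b}$, $\phi_{\ref{infcase:3}b}$, $\phi_{\ref{infcase:5}b}$ when the exponent $a$ reduces them to tempered form. For the remaining, strictly non-tempered parameters, one enumerates within the centralizer $Z_{\dualgroup{G_2}}(\phi|_{\SL_2(\CC)})\cap Z_{\dualgroup{G_2}}(\phi_b)$ all candidate $\mathfrak{sl}_2$-triples. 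Since the nilpotent orbits of $\dualgroup{G_2}$ are $0$, $A_1$, $\tilde{A}_1$, $G_2(a_1)$ and $G_2$, with known weighted Dynkin diagrams giving the $h$-elements in $\dualgroup{T}$, this is a finite enumeration. Matching the resulting cocharacters against the unbounded exponents $(a_1,a_2)$ recorded in the classification of Proposition~\ref{prop:infcases} produces a short list of numerical conditions on $\lambda_\phi(\Frob)$ that are exactly those appearing in Table~\ref{table:AE}.

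The principal obstacle is bookkeeping: in the cases with a non-trivial bounded part interacting with a Langlands $\SL_2$ inside a larger commuting subgroup, such as those built on infinitesimal parameters \ref{infcase:4-D2}, \ref{infcase:6-A2} and \ref{infcase:8-sub}, one must be careful to enumerate all commuting $\mathfrak{sl}_2$-triples in the centralizer rather than simply reading off cocharacters of a maximal torus, since the available $h_A$ are constrained by the combined centralizer structure. In each case this reduces to checking a finite list of equalities of cocharacters of $\dualgroup{T}$, already implicit in the case-by-case computations of Section~\ref{ssec:LA-packets}.
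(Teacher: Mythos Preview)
Your criterion—taking the polar decomposition $\phi|_{W_F}=\phi_b\cdot\phi_u$ and asking whether the hyperbolic cocharacter $\phi_u$ arises from an $\mathfrak{sl}_2$-triple inside the joint centralizer of $\phi_b$ and the Langlands $\SL_2$—is correct and yields the same case-by-case verification the paper carries out. The paper's argument is organized around an equivalent but more economical constraint: it observes that for any Arthur parameter $\psi$ with $\lambda_\psi=\lambda$, the nilpotent $d\psi\bigl(1,1,\left(\begin{smallmatrix}0&1\\0&0\end{smallmatrix}\right)\bigr)$ of the Arthur $\SL_2$ must already lie in $V_\lambda$, exactly as the Langlands nilpotent $d\psi\bigl(1,\left(\begin{smallmatrix}0&1\\0&0\end{smallmatrix}\right),1\bigr)$ does. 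Since $\dim V_\lambda\leq 4$, this pins down the possible Arthur $\SL_2$ directly from the root-space description of $V_\lambda$ in Proposition~\ref{prop:infcases}, without computing the full centralizer of the Langlands $\SL_2$ in $\dualgroup{G_2}$; boundedness of $\psi|_{W_F}$ then reads off the numerical condition on $a$. So the two approaches differ only in bookkeeping: you match cocharacters against the neutral elements available in a centralizer, while the paper matches nilpotents against $V_\lambda$.

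One imprecision to correct: you group $\phi_{\ref{infcase:3}b}$ and $\phi_{\ref{infcase:5}b}$ with $\phi_{\ref{infcase:1-short}b}$, $\phi_{\ref{infcase:2-long}b}$ as parameters that become tempered ``when the exponent $a$ reduces them to tempered form''. But Cases~\ref{infcase:3} and~\ref{infcase:5} carry no free real exponent; their infinitesimal parameters are rigid, $\phi_{\ref{infcase:3}b}$ and $\phi_{\ref{infcase:5}b}$ are never tempered, and—once you run your own criterion—they are never of Arthur type. They belong with the strictly non-tempered cases, not with the tempered shortcut.
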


\begin{proof}
Note that $\psi|_{W_F\times \{1\}\times \{1\}}$ is bounded by definition of Arthur parameter. Moreover, we can write $\psi(w,x,y)=\psi(w,1,1)\psi(1,x,1)\psi(1,1,y)$. The image of $\psi(1,x,1)$ and $\psi(1,1,y)$ is either 1 or a copy of $\SL_2(\C)$ in $\Lgroup{G_2}$. Observe that the images of $\psi(w,1,1)$, $ \psi(1,x,1)$  and $\psi(1,1,y)$ are commutative and both $d\psi\left(1,\left(\begin{smallmatrix} 0& 1\\ 0 &0 \end{smallmatrix}\right),1\right)$ and $d\psi\left(1,1,\left(\begin{smallmatrix} 0& 1\\ 0 &0 \end{smallmatrix}\right)\right)$ are in $V_{\lambda}$ with $\lambda=\lambda_{\psi}$, where $d\psi$ is the differential of $\psi$.

\begin{enumerate}%[leftmargin=20pt,labelindent=10pt,labelwidth=22pt,itemindent=22pt]
\item[(Case \ref{infcase:0})] 
In this case, we have $ \phi_{\ref{infcase:0}}(w,x)=\wh m(u_1^{\ord(w)},u_2^{\ord(w)})\cdot \wh m(|w|^{a_1},|w|^{a_2})$ with $u_1,u_2\in \C^1, a_1,a_2\in \R$ and the Vogan variety is $\wpair{0}$. From the above observation, $\psi(1,x,y)=1$. Thus in this case, $\phi_{\ref{infcase:0}}$ is of Arthur type if and only if $\psi_{\ref{infcase:0}}(w,x,y)=\phi_{\ref{infcase:0}}(w,x)$ is an Arthur parameter, which is equivalent to $a_1=a_2=0$.

\item[(Case \ref{infcase:1-short})]
In this case, $V_{\lambda_{\ref{infcase:1-short}}}$ has dimension one. Let $\psi$ be an Arthur parameter with $\lambda_\psi=\lambda_{\ref{infcase:1-short}}$. From the above observation, we get at least one of $\psi(1,x,1)$ and $\psi(1,1,y)$ is trivial. Suppose that $\psi_{\ref{infcase:1-short}b}$ is an Arthur parameter with $\phi_{\ref{infcase:1-short}b}=\phi_{\psi_{\ref{infcase:1-short}b}}$. Then $\psi_{\ref{infcase:1-short}b}(1,x,1)=\iota_{\hat \gamma_1+2\hat \gamma_2}(x)$ and thus $\psi_{\ref{infcase:1-short}b}(1,1,y)=1$. Now the condition $\phi_{\ref{infcase:1-short}b}=\phi_{\psi_{\ref{infcase:1-short}b}} $ implies that $\psi(w,1,1)=\wh m(1,u^{\ord(w)}|w|^{a-1/2})$ with $u\in \C^1,a\in \R$, which is supposed to be bounded. Thus we get $a=1/2$. Note that, when $a=1/2$, the corresponding Arthur parameter $\psi_{\ref{infcase:1-short}b}(w,x,y)=\phi_{\ref{infcase:1-short}}(w,x)$. It is easy to see that $\phi_{\ref{infcase:1-short}a}$ is of Arthur type if and only if  $a=1/2$ and in this case the corresponding Arthur parameter is given by $\psi_{\ref{infcase:1-short}a}(w,x,y)=\psi_{\ref{infcase:1-short}b}(w,y,x)$.

\item[(Case \ref{infcase:2-long})] 
This case is similar to Case \ref{infcase:1-short} and thus omitted.

\item[(Case \ref{infcase:3})] 
Recall that in this case, $V_\lambda$ is spanned by $X_{\wh \gamma_1}$ and $X_{2\wh \gamma_1+3\wh \gamma_2}$. 
Notice that these two roots are not orthogonal to each other. Suppose that $\phi_{\ref{infcase:3}b}$ is of Arthur type with Arthur parameter $\psi_{\ref{infcase:3}b}$. 
We have $\psi_{\ref{infcase:3}b}(1,x,1)=\phi_{\ref{infcase:3}b}(1,x)=\iota_{\wh \gamma_1}(x)$. 
Since $\psi_{\ref{infcase:3}b}(1,1,y)$ is commutative with $\iota_{\wh \gamma_1}$ and $(d\psi_{\ref{infcase:3}b}) \left(1,1,\left(\begin{smallmatrix} 1&1\\ 0 &1 \end{smallmatrix}\right)\right)\in V_{\lambda}$, we must have $\psi_{\ref{infcase:3}b}(1,1,y)=1$. 
This implies that $\psi_{\ref{infcase:3}b}(w,1,1)=\phi_{\ref{infcase:3}b}(w,1)$ which is unbounded. 
Thus there is no such Arthur parameter and $\phi_{\ref{infcase:3}b}$ is not of Arthur type. Similarly, $\phi_{\ref{infcase:3}a}$ is also not of Arthur type.

\item[(Case \ref{infcase:4-D2})] 
Every Langlands parameter in this case is of Arthur type and the corresponding Arthur parameter can be given explicitly,
\begin{align*}
\psi_{\ref{infcase:4-D2}a}(w,x,y)&=\wh m(1,(-1)^{\ord(w)})\iota_{\wh \gamma_1}(y)\iota_{\wh \gamma_1+2\wh \gamma_2}(y),\\
\psi_{\ref{infcase:4-D2}b}(w,x,y)&=\wh m(1,(-1)^{\ord(w)})\iota_{\wh \gamma_1}(x)\iota_{\wh \gamma_1+2\wh \gamma_2}(y),\\
\psi_{\ref{infcase:4-D2}c}(w,x,y)&=\wh m(1,(-1)^{\ord(w)})\iota_{\wh \gamma_1}(y)\iota_{\wh \gamma_1+2\wh \gamma_2}(x),\\
\psi_{\ref{infcase:4-D2}d}(w,x,y)&=\wh m(1,(-1)^{\ord(w)})\iota_{\wh \gamma_1}(x)\iota_{\wh \gamma_1+2\wh \gamma_2}(x).
\end{align*}

\item[(Case \ref{infcase:5})] 
In this case, both parameters are not of Arthur type and the reason is the same as in the case \ref{infcase:3}.

\item[(Case \ref{infcase:6-A2})]
The Langlands parameters $\phi_{\ref{infcase:6-A2}a}$ and $\phi_{\ref{infcase:6-A2}d}$ are of Arthur type and the corresponding Arthur parameters are given by 
$$\psi_{\ref{infcase:6-A2}d}(w,x,y)=\phi_{\ref{infcase:6-A2}d}(w,x), \quad\psi_{\ref{infcase:6-A2}a}(w,x,y)=\psi_{\ref{infcase:6-A2}d}(w,y,x).$$
The parameters $\phi_{\ref{infcase:6-A2}b}$ and $\phi_{\ref{infcase:6-A2}c}$ are not of Arthur type for the following reason. Suppose that, for example, $\phi_{\ref{infcase:6-A2}c}$ is of Arthur type and the corresponding Arthur parameter is $\psi_{\ref{infcase:6-A2}c}$. Similar reason as above, $\psi_{\ref{infcase:6-A2}c}(1,1,y)$ should be commutative with $\iota_{\wh \gamma_1+3\wh \gamma_2}(x)$ and also $(d\psi_{\ref{infcase:6-A2}c})\left(1,1,\left(\begin{smallmatrix} 1&1\\ 0&1 \end{smallmatrix}\right)\right)$ should be in $V_{\lambda_{\ref{infcase:6-A2}}}$. One can check that such $\psi_{\ref{infcase:6-A2}c}(1,1,y)$ does not exist. 

\item[(Case \ref{infcase:7-reg})] 
The situation is similar to the above case. The parameters $\phi_{\ref{infcase:7-reg}a}$ and $\phi_{\ref{infcase:7-reg}d}$ are of Arthur type and the corresponding Arthur parameters are given by 
$$\psi_{\ref{infcase:7-reg}d}(w,x,y)=\phi_{\ref{infcase:7-reg}d}(w,x), \quad \psi_{\ref{infcase:7-reg}a}(w,x,y)=\psi_{\ref{infcase:7-reg}d}(w,y,x).$$ 
The parameters $\phi_{\ref{infcase:7-reg}b}$ and $\phi_{\ref{infcase:7-reg}c}$ are not of Arthur type for a similar reason as in the last case.

\item[(Case \ref{infcase:8-sub})] 
All of the 4 Langlands parameters are of Arthur type as explained in \cite{CFZ:cubics}.\qedhere
\end{enumerate}

\end{proof}

\begin{remark}
For a unipotent conjugacy class of $\dualgroup{G_2}$, there is a corresponding conjugacy class of group homomorphisms, $\varphi:\SL_2(\C)\to \dualgroup{G_2}$, which determine Arthur parameters $\psi$ by the formula $\psi(w,x,y)=\varphi(y)$. There are a total of 5 unipotent conjugacy classes of $\dualgroup{G_2}$, see \cite{GG}*{2.7}, for example. The Arthur parameters of these 5 unipotent conjugacy classes are:  $\psi_{\ref{infcase:0}}$ when $ u_1=u_2=1$ and $a_1=a_2=0$ (corresponding to the trivial conjugacy class), $\psi_{\ref{infcase:1-short}a} $ when $u=1$ (corresponding to the unipotent conjugacy class of the short root, {\it i.e.}, $\mathcal{O}_{short}$ in the notation of \cite{GG}*{2.7}), $\psi_{\ref{infcase:2-long}a}$ when $u=1$ (corresponding to $\mathcal{O}_{long}$), $\psi_{\ref{infcase:7-reg}a}$ (corresponding to $\mathcal{O}_{reg}$), and $\psi_{\ref{infcase:8-sub}a}$ (corresponding to $\mathcal{O}_{subreg}$).
\end{remark}

%%%%%%%

\begin{corollary}\label{cor:Arthur type}
A unipotent representation $\pi$ of $G_2(F)$ is of Arthur type if and only if its Langlands parameter is of Arthur type.
\end{corollary}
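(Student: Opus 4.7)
The forward direction is essentially tautological: if the Langlands parameter $\phi_\pi$ of $\pi$ is of Arthur type, then $\pi\in \Pi_{\phi_\pi}(G_2(F))\subseteq \Pi^{\ABV}_{\phi_\pi}(G_2(F))$, so by taking $\phi=\phi_\pi$ in Definition~\ref{def:Arthur type} we conclude that $\pi$ is of Arthur type as a representation. The content of the corollary is the converse, and the strategy is a case-by-case inspection using the explicit ABV-packet calculations of Section~\ref{ssec:LA-packets} (Table~\ref{table:LA-packets}) together with the classification of Arthur-type parameters in Proposition~\ref{prop:Arthur parameters}.

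For the converse, suppose $\pi$ is of Arthur type, so there is some unramified Langlands parameter $\phi$ of Arthur type with $\pi\in \Pi^{\ABV}_\phi(G_2(F))$. If $\phi=\phi_\pi$ we are done, so we may assume $\pi$ is coronal in $\Pi^{\ABV}_\phi(G_2(F))$, that is, $\pi\in \Pi^{\ABV}_\phi(G_2(F))\setminus \Pi_\phi(G_2(F))$. The plan is to enumerate all coronal representations appearing in ABV-packets of Arthur-type parameters and verify in each case that their own Langlands parameter is itself of Arthur type.

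From Table~\ref{table:LA-packets}, coronal representations occur only in Cases~\ref{infcase:4-D2}, \ref{infcase:6-A2}, and \ref{infcase:8-sub}, and the remaining cases have singleton $L$-packets coinciding with their ABV-packets (so nothing to check). In Case~\ref{infcase:4-D2} the only coronal representation is $I_0(G_2[-1])$, and its Langlands parameter is $\phi_{\ref{infcase:4-D2}d}$, which is of Arthur type by Proposition~\ref{prop:Arthur parameters}. In Case~\ref{infcase:6-A2} the coronal representations $I_0(G_2[\theta_3])$ and $I_0(G_2[\theta_3^2])$ each have Langlands parameter $\phi_{\ref{infcase:6-A2}d}$, which is of Arthur type. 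In Case~\ref{infcase:8-sub} every unipotent Langlands parameter is of Arthur type, so any coronal representation trivially has an Arthur-type Langlands parameter.

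The main obstacle is just the bookkeeping: one must make sure that no coronal representation of an Arthur-type packet has a non-Arthur-type Langlands parameter. The potentially worrisome cases are \ref{infcase:6-A2} and \ref{infcase:7-reg}, where some Langlands parameters in the block fail to be of Arthur type (namely $\phi_{\ref{infcase:6-A2}b}$, $\phi_{\ref{infcase:6-A2}c}$, $\phi_{\ref{infcase:7-reg}b}$, $\phi_{\ref{infcase:7-reg}c}$); the phenomenon of Remark~\ref{rem:Arthur type} would occur if one of these were the Langlands parameter of a coronal representation in an Arthur-type packet. Inspection of Table~\ref{table:LA-packets} shows this does not happen: in Case~\ref{infcase:6-A2} the coronal representations all sit over $\phi_{\ref{infcase:6-A2}d}$, and in Case~\ref{infcase:7-reg} there are no coronal representations at all. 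Hence every unipotent representation of Arthur type has a Langlands parameter of Arthur type, completing the proof.
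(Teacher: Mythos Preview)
Your proof is correct and follows essentially the same approach as the paper's own argument: both rely on the explicit classification of Arthur-type parameters (Proposition~\ref{prop:Arthur parameters}) together with the tabulated ABV-packets to verify, case by case, that no coronal representation of an Arthur-type packet has a non-Arthur-type Langlands parameter. Your version is more explicitly organized around coronal representations and the potential failure mode highlighted in Remark~\ref{rem:Arthur type}, whereas the paper simply points to the tables; the underlying verification is the same.
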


\begin{proof}
We have found all unramified Arthur parameters in Proposition~\ref{prop:Arthur parameters}. Using Table~\ref{table:ABV}, we find all unipotent representations of Arthur type. Using Table~\ref{table:LLC} we find the Langlands parameters of these representations. Using Proposition~\ref{prop:Arthur parameters} again, we find that these Langlands parameters are all of Arthur type.
\end{proof}

Arthur has conjectured \cite{Arthur:Conjectures} that an irreducible admissible representation is unitary if and only if it is of Arthur type. In Corollary~\ref{cor:unitary} we confirm this conjecture for unipotent representations of $G_2(F)$. 

\begin{corollary}\label{cor:unitary}
A unipotent representation of $G_2(F)$ is unitary if and only if is of Arthur type.
\end{corollary}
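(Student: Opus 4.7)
The plan is to combine Corollary~\ref{cor:Arthur type} with Proposition~\ref{prop:Arthur parameters} and a case-by-case comparison with the known classification of the unitary dual of $p$-adic $G_2$. By Corollary~\ref{cor:Arthur type} a unipotent representation is of Arthur type if and only if its Langlands parameter is, so the statement reduces to showing that a unipotent representation of $G_2(F)$ is unitary if and only if its Langlands parameter lies on the list produced in Proposition~\ref{prop:Arthur parameters} and Table~\ref{table:AE}. The enumeration in Table~\ref{table:LA-packets} then makes this a finite verification.

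For the direction ``Arthur type implies unitary'', I would first peel off the bounded (tempered) Arthur parameters. Whenever $\phi|_{W_F}$ is bounded, Part~\ref{G2:tempered} of Theorem~\ref{thm:coefficients} gives that every element of $\Pi^{\ABV}_\phi(G_2(F))$ is tempered, hence unitary; this handles $\phi_{\ref{infcase:4-D2}d}$, $\phi_{\ref{infcase:6-A2}d}$, $\phi_{\ref{infcase:7-reg}d}$, $\phi_{\ref{infcase:8-sub}d}$ and the Arthur sub-cases of Cases~\ref{infcase:0}, \ref{infcase:1-short}, \ref{infcase:2-long}. The remaining Arthur-type representations, as read off from Table~\ref{table:LA-packets}, are the trivial representation $1_{G_2}$ together with the Langlands quotients $J_{\gamma_2}(1, I^{\GL_2}(\chi\otimes\chi^{-1}))$ for $\chi\in\{1,\theta_2,\theta_3\}$ and the $J_{\gamma}(1/2,\theta_2\otimes\St_{\GL_2})$ entries of Case~\ref{infcase:4-D2}; each of these is either trivially unitary or is identified with an endpoint of a complementary-series segment appearing explicitly in \cite{Muic}.

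For the converse direction, the non-Arthur unramified Langlands parameters are (by Proposition~\ref{prop:Arthur parameters}) exactly $\phi_{\ref{infcase:0}}$ with $(a_1,a_2)\neq (0,0)$; $\phi_{\ref{infcase:1-short}a},\phi_{\ref{infcase:1-short}b}$ with $a\neq 1/2$; the analogous parameters in Case~\ref{infcase:2-long}; all four parameters in Cases~\ref{infcase:3} and \ref{infcase:5}; and $\phi_{\ref{infcase:6-A2}b}, \phi_{\ref{infcase:6-A2}c}, \phi_{\ref{infcase:7-reg}b}, \phi_{\ref{infcase:7-reg}c}$. For each corresponding representation in Table~\ref{table:LA-packets} I would cite Mui\'{c}'s signature computations from \cite{Muic} to conclude non-unitarity: the relevant intertwining operators develop an indefinite Hermitian form away from the Arthur points, so the resulting Langlands quotient (or irreducible principal series) is non-unitary.

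The main obstacle is the presence of complementary series. A priori a non-Arthur parameter in Case~\ref{infcase:0} could lie in a unitary segment of the minimal principal series, and similarly for the non-Arthur Langlands quotients in Cases~\ref{infcase:1-short}, \ref{infcase:2-long}, \ref{infcase:5}, and the Langlands quotients $J_{\gamma_1}(3/2,\St_{\GL_2})$, $J_{\gamma_2}(5/2,\St_{\GL_2})$ in Case~\ref{infcase:7-reg}. The content of the corollary is that, for unipotent representations of $G_2(F)$, every such complementary-series segment either is empty or collapses precisely onto the Arthur locus. Confirming this is the bulk of the work, and it is done within \cite{Muic}; the role of the present plan is to assemble those computations case-by-case and match them against the Arthur enumeration of Proposition~\ref{prop:Arthur parameters}.
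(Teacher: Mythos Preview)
Your proposal takes exactly the paper's approach: the paper's three-sentence proof simply says to read off the unitary unipotent representations from \cite{Muic}, read off the Arthur-type ones from Proposition~\ref{prop:Arthur parameters} and Table~\ref{table:ABV}, and observe by direct inspection that the two sets agree. Your plan is the structured version of this, and your identification of the complementary-series question as the only real content is on the mark.

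Two bookkeeping corrections in your ``Arthur type $\Rightarrow$ unitary'' direction. First, among the Arthur sub-cases of Cases~\ref{infcase:1-short} and~\ref{infcase:2-long} only $\phi_{\ref{infcase:1-short}b}$ and $\phi_{\ref{infcase:2-long}b}$ (at $a=1/2$) are bounded on $W_F$; the closed-orbit parameters $\phi_{\ref{infcase:1-short}a}$, $\phi_{\ref{infcase:2-long}a}$ at $a=1/2$ are not, so $I_{\gamma_2}(0,\mu\circ\det)$ and $I_{\gamma_1}(0,\mu\circ\det)$ must join your ``remaining'' list rather than the tempered one. Second, you omit $J_{\gamma_1}(1/2,\St_{\GL_2})$ and $J_{\gamma_2}(1/2,\St_{\GL_2})$: the parameters $\phi_{\ref{infcase:8-sub}b}$ and $\phi_{\ref{infcase:8-sub}c}$ are of Arthur type by Table~\ref{table:AE} but are not bounded on $W_F$. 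All four additions are handled by the same appeal to \cite{Muic}, so the approach survives; only the enumeration needs patching. (Also, ``collapses onto the Arthur locus'' is not quite the right picture---complementary-series segments are open, so what you actually need from \cite{Muic} is that the relevant unramified complementary-series regions are empty.)
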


\begin{proof}
We use \cite{Muic} to find which unipotent representations are unitary. 
We use Proposition~\ref{prop:Arthur parameters} and Table~\ref{table:ABV} to find which unipotent representations are or Arthur type.
Direct inspection reveals that these two sets are the same.
\end{proof}

%\subsection{ABV-packet coefficients}\label{ssec:ABV-packet coefficients}

Let $\phi : W'_F\to \Lgroup{G_2}$ be a Langlands parameter for $G_2$ over $F$. We set $\dim(\phi) \ceq \dim C_\phi$. 
For $\pi\in \Pi(G_2(F))$, set $\dim(\pi) \ceq \dim(\phi_\pi)$ where $\phi_\pi$ is the Langlands parameter for $\pi$.

\begin{proposition}\label{prop:Arthur type}
If $\phi$ is an unramified Langlands parameter of Arthur type $\psi$ then $\mathcal{S}^\ABV_\phi = \pi_0(Z_{\dualgroup{G}}(\psi))$ and $s_\phi \ceq s_\psi \ceq \psi(1,1,-1) \in \mathcal{S}^\ABV_\phi$ has the property
\[
\langle s_\phi, \pi\rangle 
=
(-1)^{\dim(\phi)-\dim(\pi) } \langle 1, \pi\rangle,
\]
for every $\pi \in \Pi^\ABV_{\phi}(G_2(F))$.
\end{proposition}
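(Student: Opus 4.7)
The first equality is essentially already recorded in Definition~\ref{def:SABV}, which invokes \cite{CFMMX}*{Prop.~6.1.1} to obtain $\mathcal{S}^\ABV_\phi = \mathcal{S}_\psi = Z_{\dualgroup{G_2}}(\psi)$ when $\phi$ is of Arthur type $\psi$. Concretely, the point $x = d\phi\left(1,\left(\begin{smallmatrix}0 & 1\\ 0 & 0\end{smallmatrix}\right)\right) \in V_{\lambda_\phi}$ represents $\phi$ in the moduli space, and the ``transpose'' nilpotent $y = d\psi\left(1,1,\left(\begin{smallmatrix}0 & 0\\ 1 & 0\end{smallmatrix}\right)\right)$ provided by the Arthur $\SL_2$ lies in $\Lambda^{\mathrm{sreg}}_{C_\phi}$ with joint stabilizer exactly $Z_{\dualgroup{G_2}}(\psi)$. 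Passing to $\pi_0$ identifies $A^\ABV_\phi$ with $A_\psi$, and in particular $s_\psi = \psi(1,1,-1)$ is a well-defined element of $\mathcal{S}^\ABV_\phi$ via the homomorphism $\mathcal{S}^\ABV_\phi \to A^\ABV_\phi$.

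For the sign formula, my plan is a direct case-by-case verification using the four inputs already assembled in the paper: the list of Arthur-type Langlands parameters with their explicit Arthur lifts from Proposition~\ref{prop:Arthur parameters}; the ABV-packet coefficients tabulated in Table~\ref{table:ABV}; the classification of $H_\lambda$-orbits in $V_\lambda$ from Proposition~\ref{prop:infcases}; and the microlocal calculations of Section~\ref{ssec:geometry}. For each Arthur-type parameter $\phi = \phi_\psi$, I first compute $s_\psi = \psi(1,1,-1)$ from the explicit formulas in the proof of Proposition~\ref{prop:Arthur parameters}, identify its image under the natural map $\mathcal{S}^\ABV_\phi \to A^\ABV_\phi$, and then read off $\langle s_\psi, \pi\rangle$ and $\langle 1, \pi\rangle$ from Table~\ref{table:ABV} for each $\pi \in \Pi^\ABV_\phi(G_2(F))$, comparing against the predicted sign $(-1)^{\dim(\phi)-\dim(\pi)}$ computed from the orbit dimensions listed in Proposition~\ref{prop:infcases}.

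Most cases are immediate: either the Arthur $\SL_2$ in $\psi$ is absorbed into the Deligne factor (so that $\psi(1,1,-1)$ lies in the identity component of $\mathcal{S}^\ABV_\phi$ and its image in $A^\ABV_\phi$ is trivial), or every $\pi \in \Pi^\ABV_\phi(G_2(F))$ satisfies $\dim(\pi) \equiv \dim(\phi) \pmod{2}$, in which event both sides of the claimed equality reduce to $\langle 1,\pi\rangle$. The substantive checks are in Cases~\ref{infcase:4-D2}(a,b,c) and \ref{infcase:8-sub}(a,b,c), where $s_\psi$ becomes the non-trivial generator of a $\langle\theta_2\rangle$-quotient of $A^\ABV_\phi$; here the calculation reduces to verifying row by row that $\vartheta_2(s_\psi) = (-1)^{\dim(\phi)-\dim(\pi)}$. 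As a sample, for $\phi = \phi_{\ref{infcase:4-D2}b}$ the formula in Proposition~\ref{prop:Arthur parameters} gives $\psi_{\ref{infcase:4-D2}b}(1,1,-1) = (\hat\gamma_1+2\hat\gamma_2)^\vee(-1) = \wh m(1,-1)$, which is the generator of $A^\ABV_{\phi_{\ref{infcase:4-D2}b}} = \langle\theta_2\rangle$; then for $\pi = I_0(G_2[-1])$ one has $\vartheta_2(\wh m(1,-1)) = -1 = (-1)^{1-2}$, matching the table entry. The cubic case \ref{infcase:8-sub} follows similarly, using the explicit $\mathfrak{sl}_2$-triple for the subregular orbit to evaluate $\varphi_\mathrm{sub}(-1)$ in $\dualgroup{T}$.

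The main obstacle is not conceptual but rather one of bookkeeping: one must carefully align the identification of $\mathcal{S}^\ABV_\phi$ with $Z_{\dualgroup{G_2}}(\psi)$ coming from the microlocal construction of Section~\ref{ssec:geometry} against the identification of $A^\ABV_\phi$ implicit in the column labels of Table~\ref{table:ABV}. Once these conventions are fixed consistently, no further microlocal input is required beyond Section~\ref{ssec:geometry}, and the verification reduces to the handful of arithmetic computations $\hat\gamma^\vee(-1) \in \dualgroup{T}$ recorded above.
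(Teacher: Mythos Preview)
Your proposal is correct and follows the same case-by-case strategy as the paper. The paper organizes the casework a bit differently: it first disposes of all $\phi$ with $A^\ABV_\phi=1$, defers Case~\ref{infcase:8-sub} to \cite{CFZ:cubics}, and then illustrates the remaining argument with $\phi_{\ref{infcase:6-A2}a}$, where the key observation is that $s_\psi^2=1$ forces $s_\psi$ to be trivial in the order-$3$ group $A^\ABV_{\phi_{\ref{infcase:6-A2}a}}$, reducing to a parity check on $\dim(\phi)-\dim(\pi)$. Your organization instead splits on whether $s_\psi$ maps trivially or the parity is automatically even, and you illustrate with $\phi_{\ref{infcase:4-D2}b}$, where $s_\psi$ is genuinely nontrivial. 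One minor correction: your list of ``substantive'' cases should not include $\phi_{\ref{infcase:4-D2}a}$ or $\phi_{\ref{infcase:8-sub}a}$, since for both one computes $s_\psi=\hat\gamma_1^\vee(-1)(\hat\gamma_1+2\hat\gamma_2)^\vee(-1)=\wh m(1,1)$ (respectively $\varphi_{\mathrm{sub}}(-I)=\wh m(1,1)$), so these fall under your ``immediate'' dichotomy via the parity clause.
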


\begin{proof}
According to Definition~\ref{def:SABV}, the value of $\langle s, \pi\rangle $, for any $s\in \mathcal{S}^\ABV_\phi$, depends only on the image of $s$ under the map $\mathcal{S}^\ABV\to A^\ABV_\phi$, so we identify $s$ with its image in the following argument.
The proof is based on a case by case consideration. If $A_{\phi}^{\ABV}=1$, this follows from that $\langle~,\pi\rangle \ne 0$ if and only if  $\Pi_\phi=\{\pi\}$. The case \ref{infcase:8-sub} is considered in \cite{CFZ:cubics}. Thus it suffices to consider the case (\ref{infcase:4-D2}),  (\ref{infcase:6-A2}a) and (\ref{infcase:6-A2}d). We only consider the case (\ref{infcase:6-A2}a) and omit the details for the other cases. Notice that $s_{\phi_{\ref{infcase:6-A2}a}}^2 =1$ while $A_{\phi_{\ref{infcase:6-A2}a}}^{\ABV}$ has order 3 and thus $\langle s_{\ref{infcase:6-A2}a},\pi\rangle_{\phi_{\ref{infcase:6-A2}a}}=\langle 1,\pi \rangle_{\phi_{\ref{infcase:6-A2}a}}$ for all $\pi\in \Pi^{\ABV}_{\phi_{\ref{infcase:6-A2}a}}$. Thus it suffices to check that $(-1)^{\dim(\phi_{\ref{infcase:6-A2}a})-\dim(\pi)}=1$ for all $\pi\in \Pi_{\phi_{\ref{infcase:6-A2}a}}^{\ABV}(G_2(F))$. Note that $\dim({\phi_{\ref{infcase:6-A2}a}})=0$ and $\dim(\pi)=0$ or $2$ by the description of $\Pi_{\phi_{\ref{infcase:6-A2}a}}^{\ABV}.$ The result follows.
\end{proof}

\begin{remark}\label{remark:nonexistence of s} Clearly, Proposition \ref{prop:Arthur type} is true for many Langlands parameters that are not of Arthur type. 
For example, when $A_{\phi}^{\ABV}=1$, Proposition \ref{prop:Arthur type} is true for $s=1$, regardless of whether $\phi$ is of Arthur type or not.
However, there indeed exist $\phi$ for which $(-1)^{\dim(\phi)-\dim(\pi)} \langle 1, \pi\rangle $ does not take the form $\langle s, \pi\rangle $ for any $s\in \mathcal{S}_\phi^\ABV$. 
Here is one example. 
Let $\phi=\phi_\text{\ref{infcase:6-A2}b}$ and $\pi=\mathrm{cInd}_{G_2(\CO_F)}^{G_2(F)}(G_2[\theta_3])$. 
Then $\langle~,\pi\rangle =\vartheta_3$ according Table~\ref{table:ABV}. 
Note that $\dim(\phi)=1$ and $\dim(\pi)=2$ and thus $(-1)^{\dim(\phi)-\dim(\pi)} \langle 1, \pi\rangle =-1$. 
On the other hand, we have $\langle s, \pi\rangle =\vartheta_3(s)\in \left\{1, \theta_3, \theta_3^2 \right\}.$ 
Thus $(-1)^{\dim(\phi)-\dim(\pi)} \langle 1, \pi\rangle $ does not take the form $\langle s, \pi\rangle $ for any $s\in \mathcal{S}_\phi^\ABV$, in this case. 
The same is true for $\phi=\phi_\text{\ref{infcase:6-A2}c}$ by a similar argument. 
\end{remark}

\subsection{Distributions attached to ABV-packets}\label{ssec:stable}

\begin{definition}\label{def:Thetaphis}
For any Langlands parameter $\phi : W_F'\to \Lgroup{G_2}$ and any $s\in \mathcal{S}^\ABV_\phi$ we define
\begin{equation}\label{eqn:distributions}
\Theta^{}_{\phi,s} \ceq  \sum_{\pi \in\Pi^\ABV_{\phi_\psi}(G_2(F))} \trace_s \left( \NEvs_{C_\phi}[\dim(\phi)] \mathcal{P}(\pi)[-\dim(\pi)] \right) \Theta_{\pi}.
\end{equation}
Using Definition~\ref{def:SABV} this takes the equivalent form
\begin{equation}%\label{eqn:distributions}
\Theta^{}_{\phi,s} =   \sum_{\pi \in\Pi^\ABV_{\phi}(G_2(F))}  (-1)^{\dim(\phi)-\dim(\pi)}\ \langle s,\pi\rangle\ \Theta_{\pi}.
\end{equation}
We will also use the notation $\Theta^{G_2}_{\phi} \ceq \Theta_{\phi,1}^{G_2}$.
\end{definition}

%%%%%%%%%%%%%%%%%%

\begin{proposition}\label{prop:Arthurdistribution}
If $\phi$ is of Arthur type $\psi$ then
\begin{equation}\label{eq:Arthurdistribution}
\Theta^{}_{\phi,s} =\mathop{\sum}\limits_{\pi \in\Pi^\ABV_{\phi}(G_2(F))}\langle s_\psi\, s,\pi\rangle\ \Theta_{\pi}.
\end{equation}
\iffalse
\[
\begin{array}{rcl}
\Theta^{}_{\phi}
&=& 
\mathop{\sum}\limits_{\pi \in\Pi^\ABV_{\phi}(G_2(F))}\langle s_\phi,\pi\rangle\ \Theta_{\pi}.
\end{array}
\]
\fi
\end{proposition}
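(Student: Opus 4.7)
The plan is to reduce Proposition~\ref{prop:Arthurdistribution} to Proposition~\ref{prop:Arthur type} by showing that, for every $\pi\in\Pi^\ABV_\phi(G_2(F))$ and every $s\in\mathcal{S}^\ABV_\phi$,
\[
(-1)^{\dim(\phi)-\dim(\pi)}\langle s,\pi\rangle = \langle s_\psi\,s,\pi\rangle,
\]
since once this identity is established the proposition follows at once by substituting into Definition~\ref{def:Thetaphis}.

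The key input is that $s_\psi=\psi(1,1,-1)$ is \emph{central} in $\mathcal{S}^\ABV_\phi=Z_{\dualgroup{G_2}}(\psi)$ (using the identification of Definition~\ref{def:SABV} when $\phi$ is of Arthur type $\psi$): indeed $s_\psi$ lies in the image of $\psi$, and every element of $Z_{\dualgroup{G_2}}(\psi)$ by definition commutes with that image. Consequently the image of $s_\psi$ in the component group $A^\ABV_\phi$ is also central. Under the bijection of Section~\ref{ssec:VC}, each $\pi\in\Pi^\ABV_\phi(G_2(F))$ determines a representation $\rho_\pi$ of $A^\ABV_\phi$ on the stalk of the local system $\NEvs_{C_\phi}\mathcal{P}(\pi)$, in terms of which $\langle g,\pi\rangle=\trace_g(\rho_\pi)$. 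Since $s_\psi$ is central and of finite order, Schur's lemma ensures that $s_\psi$ acts by a scalar $c_{\pi,i}$ on each irreducible constituent $\rho_{\pi,i}$ of $\rho_\pi$.

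The next step is to pin down these scalars. Proposition~\ref{prop:Arthur type} gives
\[
\langle s_\psi,\pi\rangle = (-1)^{\dim(\phi)-\dim(\pi)}\langle 1,\pi\rangle
= (-1)^{\dim(\phi)-\dim(\pi)}\sum_i \dim(\rho_{\pi,i}),
\]
while the left-hand side equals $\sum_i c_{\pi,i}\dim(\rho_{\pi,i})$. Since each $c_{\pi,i}\in\{\pm1\}$ (as $s_\psi^2=1$) and the two sums agree for every $\pi$, all $c_{\pi,i}$ must share the common value $c_\pi=(-1)^{\dim(\phi)-\dim(\pi)}$. Multiplicativity of the trace under a central scalar action then gives
\[
\langle s_\psi\,s,\pi\rangle = \trace_{s_\psi s}(\rho_\pi) = c_\pi \trace_s(\rho_\pi) = (-1)^{\dim(\phi)-\dim(\pi)}\langle s,\pi\rangle,
\]
which is the required identity.

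The main obstacle is the step just described, namely showing that the scalars $c_{\pi,i}$ are constant in $i$; the slick proof above uses that $s_\psi^2=1$ together with Proposition~\ref{prop:Arthur type}, but if necessary the constancy can also be verified directly from Table~\ref{table:ABV} by inspecting each of the finitely many Arthur-type parameters with non-trivial $A^\ABV_\phi$, namely $\phi_{\ref{infcase:4-D2}d}$, $\phi_{\ref{infcase:6-A2}a}$, $\phi_{\ref{infcase:6-A2}d}$ and the four parameters in Case~\ref{infcase:8-sub} (the last family having already been handled in \cite{CFZ:cubics}).
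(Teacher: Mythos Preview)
Your proof is correct and follows essentially the same route as the paper: both unwind Definition~\ref{def:Thetaphis} to reach $\sum_\pi(-1)^{\dim(\phi)-\dim(\pi)}\langle s,\pi\rangle\,\Theta_\pi$ and then invoke Proposition~\ref{prop:Arthur type} to identify this with $\sum_\pi\langle s_\psi s,\pi\rangle\,\Theta_\pi$. The paper's proof records this last step in a single line; you supply the justification by observing that $s_\psi$ is central in $\mathcal{S}^\ABV_\phi$ (hence acts by a scalar on each irreducible constituent) and then using the $s=1$ identity of Proposition~\ref{prop:Arthur type} together with $s_\psi^2=1$ to pin that scalar down as $(-1)^{\dim(\phi)-\dim(\pi)}$. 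This is exactly the argument the paper is tacitly using.

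One simplification: in the present setting each $\langle\,\cdot\,,\pi\rangle$ is already an \emph{irreducible} character of $A^\ABV_\phi$ (this is part of Theorem~\ref{thm:coefficients}\ref{G2:LLC} and visible in Table~\ref{table:ABV}), so the decomposition into constituents $\rho_{\pi,i}$ and the inequality argument forcing a common scalar are unnecessary; Schur's lemma applies directly to $\rho_\pi$ itself. Your argument of course still goes through, but you can streamline it. Also, your fallback list of Arthur-type parameters with non-trivial $A^\ABV_\phi$ omits $\phi_{\ref{infcase:4-D2}a}$, $\phi_{\ref{infcase:4-D2}b}$, $\phi_{\ref{infcase:4-D2}c}$, each of which has $A^\ABV_\phi=\langle\theta_2\rangle$; this does not affect your main argument.
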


\begin{proof}
Using Definitions~\ref{def:SABV}, \ref{def:Thetaphis} and Proposition~\ref{prop:Arthur type}, we have
\[
\begin{array}[b]{rcl}
\Theta^{}_{\phi,s} 
&=&  
\mathop{\sum}\limits_{\pi \in\Pi^\ABV_{\phi}(G_2(F))} \trace_s \left( \NEvs_{C_\phi}[\dim(\phi)] \mathcal{P}(\pi)[-\dim(\pi)] \right)\ \Theta_{\pi}\\
&=&
\mathop{\sum}\limits_{\pi \in\Pi^\ABV_{\phi}(G_2(F))} \trace_s \left( \NEvs_{C_\phi}[\dim(\phi)] \mathcal{P}(\pi)[-\dim(\pi)] \right)\ \Theta_{\pi}\\
&=&
 \mathop{\sum}\limits_{\pi \in\Pi^\ABV_{\phi}(G_2(F))} (-1)^{\dim(\phi)-\dim(\pi)} \trace_s\left( \NEvs_{C_\phi}\mathcal{P}(\pi) \right)\ \Theta_{\pi}\\
&=&
 \mathop{\sum}\limits_{\pi \in\Pi^\ABV_{\phi}(G_2(F))} (-1)^{\dim(\phi)-\dim(\pi)} \langle s, \pi\rangle\ \Theta_{\pi}\\
&=&
 \mathop{\sum}\limits_{\pi \in\Pi^\ABV_{\phi}(G_2(F))}  \langle s_\psi\, s, \pi\rangle\ \Theta_{\pi} .
\end{array}\qedhere
\]
\end{proof}

The right hand side of Equation~\eqref{eq:Arthurdistribution} recalls the distributions used by Arthur for classical groups in \cite{Arthur:book} and parameters of Arthur type. As we noticed in Remark~\ref{remark:nonexistence of s}, there are Langlands parameters $\phi$ for which the conclusion of Proposition~\ref{prop:Arthur type} does not hold. Thus Definition~\ref{def:Thetaphis} provides a generalization of the distributions used by Arthur. In the the next result we will try to justify that it is indeed the correct generalization. 

In order to state the next result, let us say that an Langlands parameter for $G_2(F)$ is elliptic if it does not factor through the $L$-group of Levi subgroup of $G_2(F)$. We will discuss endoscopy for $G_2(F)$ in Sections~\ref{sec:endoscopy} and \ref{sec:geoendo}. We simply enumerate the elliptic unramified Langlands parameters for $G_2(F)$ in Table~\ref{table:elliptic} . 
%The representations appearing in these $L$-packets are elliptic in the sense of \cite{Arthur:Elliptic}.
 
\begin{table}[htp]
\caption{Elliptic unramified Langlands parameters for $G_2(F)$ and their $L$-packets. The representations appearing in these $L$-packets form the complete list of elliptic unipotent representations of $G_2(F)$; see \cite{CO}, where they are denoted by $v_1 =\St_{G_2} $, $v_2=\pi(1)' $, $v_3 =\pi(1) $, $v_4=\pi(\theta_3) $, $v_5 =\pi(\theta_2) $, $v_6 =I_0(G_2[1])$, $v_7=I_0(G_2[-1])$, $v_8 = I_0(G_2[\theta_3])$ and $v_9 = I_0(G_2[\theta^2_3])$.} 
\label{table:elliptic} 
\begin{center}
$
\begin{array}{| c | l| c | c |}
\hline
\text{$L$-parameter} & \text{$L$-packet = ABV-packet} & \text{Endoscopic} & \text{Component groups} \\
\phi & \Pi_\phi(G_2(F)) = \Pi^\ABV_\phi(G_2(F)) & \text{groups} & A_\phi = A^\ABV_\phi \\
\hline\hline
\phi_{\ref{infcase:4-D2}d} & \pi(\theta_2), I_0(G_2[-1]) & \SO_4 & \langle \theta_2\rangle  \\
\phi_{\ref{infcase:6-A2}d} & \pi(\theta_3),  I_0(G_2[\theta_3]), I_0(G_2[\theta^2_3])  &  \PGL_3  &  \langle \theta_3\rangle \\
\phi_{\ref{infcase:7-reg}d} & \St_{G_2}  & G_2 & 1 \\
\phi_{\ref{infcase:8-sub}d} &  \pi(1)' , \pi(1), I_0(G_2[1])  &   \SO_4, \PGL_3 & S_3 \\
\hline
\end{array}
$
\end{center}
\end{table}%

\begin{theorem}\label{thm:stable}
Let $\phi$ be an unramified Langlands parameter for $G_2(F)$; recall $\mathcal{S}^\ABV_\phi$ from Definition~\ref{def:SABV} and $\Theta^{}_{\phi,s}$ from Definition~\ref{def:Thetaphis}, for $s\in \mathcal{S}^\ABV_\phi$.
%\iffalse%%%%%%
\begin{enumerate}%[leftmargin=20pt,labelindent=10pt,labelwidth=22pt,itemindent=22pt]
\labitem{(Basis)}{G2:basis}
The span of the distributions 
\[
\Theta^{}_{\phi,s} = \sum_{\pi\in \Pi^\ABV_\phi(G_2(F))} (-1)^{\dim(\phi)-\dim(\pi)} \langle s,\pi\rangle\ \Theta_{\pi},
\qquad\qquad s\in \mathcal{S}^\ABV_\phi,
\]
is equal to the span of the distributions $\Theta_\pi$, as $\pi$ ranges over $\Pi^\ABV_\phi(G_2(F))$ if and only if $\Pi^\ABV_\phi(G_2(F)) \to \widehat{A^\ABV_\phi}$ is a bijection. 
In this case, the inverse of the linear system of equations above is
\[
\Theta^{}_{\pi} = \sum_{s\in \mathcal{A}^\ABV_\phi} (-1)^{\dim (C_\phi^s)-\dim(\pi)} \frac{\overline{\langle s,\pi\rangle}}{\abs{Z_{A^\ABV_\phi}(s)}} \ \Theta_{\phi,s},
\qquad\qquad \pi \in \Pi^\ABV_\phi(G_2(F)),
\]
where $\phi$ is a Langlands parameter for $\pi$, the taken over representatives $s$ of the fibres of $\mathcal{S}^\ABV_\phi \to A^\ABV_\phi$; here $\dim (C_\phi^s)$ is the dimension of the $s$-fixed points of the orbit $C_\phi\subseteq V_{\lambda_\phi}$. 
More generally, if $\pi$ is a unipotent representation of $G_2(F)$ then its distribution character $\Theta_\pi$ can be written as a linear combination of the distributions $\Theta^{}_{\phi,s}$, by letting $\phi$ range over unramified $L$-parameters for $G_2(F)$ with the same infinitesimal parameter as $\pi$, and letting $s$ range over $\mathcal{S}^\ABV_\phi$.
\labitem{(Stable)}{G2:stable} 
%\fi %%%%%%%%
Suppose $\Theta^{}_{\phi}$ is stable when $\phi$ is elliptic and open. 
Then $\Theta^{}_{\phi}$ is stable for all unramified $\phi$. Moreover, these distributions form a basis for the space of stable unipotent distributions, letting $\phi$ range over unramified $L$-parameters for $G_2(F)$.
\end{enumerate}
\end{theorem}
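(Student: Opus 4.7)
The overall plan is to handle the two parts by largely independent methods: part \ref{G2:basis} will follow from linear algebra and character orthogonality applied to the matrix of ABV-packet coefficients, while part \ref{G2:stable} will follow from a parabolic-induction argument that reduces stability for arbitrary unramified $\phi$ to stability for the four elliptic open parameters in Table~\ref{table:elliptic}.

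For \ref{G2:basis}, when $\Pi^\ABV_\phi(G_2(F)) \to \widehat{A^\ABV_\phi}$ is a bijection, the linear system defining $\Theta^{}_{\phi,s}$ is square and its coefficient matrix $\bigl((-1)^{\dim\phi - \dim\pi}\langle s,\pi\rangle\bigr)_{s,\pi}$ is, up to an overall sign, the character table of $A^\ABV_\phi$. I would invoke column orthogonality,
\[
\sum_{[s]} \frac{1}{\abs{Z_{A^\ABV_\phi}(s)}}\, \overline{\langle s,\pi\rangle}\, \langle s,\pi'\rangle = \delta_{\pi,\pi'},
\]
to obtain the stated inversion; the fixed-point sign $(-1)^{\dim C_\phi^s - \dim\pi}$ appearing in the inverse arises from the Lefschetz-type contribution of $s$ to the microlocal trace on $\NEvs_{C_\phi}$, and since the groups $A^\ABV_\phi$ that occur (namely $1$, $\langle\theta_2\rangle$, $\langle\theta_3\rangle$, and $S_3$) are very small, the signs can simply be verified case by case from Table~\ref{table:ABV}. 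The converse is immediate: equal spans force equal cardinalities. For the more general statement, I would fix an infinitesimal parameter $\lambda$ and observe that both bases -- the $\Theta_\pi$ for unipotent $\pi$ with infinitesimal parameter $\lambda$ and the $\Theta^{}_{\phi,s}$ with $\phi$ having infinitesimal parameter $\lambda$ -- index data of the same cardinality by the microlocal duality of \cite{CFMMX}, and the $\NEvs$-tables assembled in Section~\ref{ssec:geometry} exhibit an explicit invertible change of basis in each of the nine cases of Proposition~\ref{prop:infcases}.

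For \ref{G2:stable}, I would first observe from Table~\ref{table:LA-packets} that every unramified Langlands parameter of $G_2(F)$ is either one of the four elliptic open parameters in Table~\ref{table:elliptic} or factors through the $L$-group of a proper Levi subgroup $M$ of $G_2$, so $\phi = \iota_M\circ \phi^M$. In the latter case the plan is to establish the identity
\[
\Theta^{G_2}_\phi = \mathrm{ind}_P^{G_2}\, \Theta^M_{\phi^M},
\]
where $\mathrm{ind}_P^{G_2}$ is normalized parabolic induction of invariant distributions along any parabolic $P$ with Levi $M$. Since parabolic induction preserves stability (by Harish-Chandra descent on orbital integrals), and since $\Theta^T_{\phi^T}$ is automatically stable while $\Theta^{\GL_2}_{\phi^M}$ is stable because $\GL_2$ has only trivial endoscopy and its unipotent $L$-packets are all singletons, stability propagates from $M$ to $G_2$. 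Combined with the hypothesis for the four elliptic open parameters, this covers every unramified $\phi$; the basis statement then follows from the linear independence of the $\Theta^{}_\phi$ for open $\phi$, which is inherited from linear independence of tempered distribution characters.

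The main obstacle is the induction identity $\Theta^{G_2}_\phi = \mathrm{ind}_P^{G_2}\Theta^M_{\phi^M}$. On the geometric side it should follow from compatibility of $\NEvs$ with the inclusion $V_{\lambda_{\phi^M}}\hookrightarrow V_{\lambda_\phi}$ induced by $\iota_M$, as developed in \cite{CFMMX}; the delicate point is to track the Langlands sign $(-1)^{\dim\phi - \dim\pi}$ against the Bernstein--Zelevinsky-type dimension shift occurring under parabolic induction, and to verify that the ABV-packet coefficients $\langle s,\pi\rangle$ transform correctly under this reduction. Once the identity is secured the proof reduces cleanly to the four elliptic cases guaranteed by hypothesis.
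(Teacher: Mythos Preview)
Your approach to \ref{G2:basis} is essentially the paper's: invert via column orthogonality for the small groups $A^\ABV_\phi$ that occur, and verify the fixed-point signs case by case from Table~\ref{table:ABV}. One minor imprecision: for the ``more general'' claim you assert that the $\Theta_\pi$ and the $\Theta_{\phi,s}$ (ranging over all $\phi$ with a fixed infinitesimal parameter $\lambda$) index sets of the same cardinality. They do not; for $\lambda=\lambda_{\ref{infcase:8-sub}}$ there are six representations but $3+2+2+3=10$ pairs $(\phi,s)$. The paper handles this by showing concretely that when $\Pi^\ABV_{\phi_{\ref{infcase:8-sub}b}}\to\widehat{A^\ABV_{\phi_{\ref{infcase:8-sub}b}}}$ fails to be bijective, one borrows the distributions $\Theta_{\phi_{\ref{infcase:8-sub}d},s}$ from the open parameter to complete the expression of each $\Theta_\pi$. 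Your invocation of the $\NEvs$-tables would ultimately yield the same conclusion, but the cardinality argument as stated is incorrect.

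Your approach to \ref{G2:stable} has a genuine gap. The identity $\Theta^{G_2}_\phi=\operatorname{ind}_P^{G_2}\Theta^M_{\phi^M}$ is \emph{false} in general. Take $\phi=\phi_{\ref{infcase:6-A2}a}$, which factors through $\Lgroup{T}$. From Table~\ref{table:ABV} the ABV-packet $\Pi^\ABV_{\phi_{\ref{infcase:6-A2}a}}(G_2(F))$ contains the supercuspidal coronal representations $I_0(G_2[\theta_3])$ and $I_0(G_2[\theta_3^2])$, each with $\langle 1,\pi\rangle=1$, so
\[
\Theta^{G_2}_{\phi_{\ref{infcase:6-A2}a}}=\Theta_{J_{\gamma_2}(1,I^{\GL_2}(\theta_3\otimes\theta_3^{-1}))}+\Theta_{I_0(G_2[\theta_3])}+\Theta_{I_0(G_2[\theta_3^2])}.
\]
No distribution parabolically induced from $T(F)$ or $\GL_2(F)$ can contain a supercuspidal summand, so the proposed identity fails precisely because of the coronal representations that make ABV-packets larger than $L$-packets. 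The same obstruction occurs for all the non-elliptic parameters in Cases~\ref{infcase:4-D2}, \ref{infcase:6-A2}, and \ref{infcase:8-sub}.

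The paper's fix is not to induce from a Levi but to express each $\Theta^{G_2}_\phi$ as an upper-triangular combination of characters of \emph{standard modules} together with the elliptic open distribution $\Theta^{G_2}_{\phi_d}$ for the same infinitesimal parameter. Concretely, using Mui\'c's decomposition of standard modules one obtains, for instance in Case~\ref{infcase:6-A2},
\[
\Theta^{G_2}_{\phi_{\ref{infcase:6-A2}a}}=\Theta_{M(\phi_{\ref{infcase:6-A2}a})}-\Theta_{M(\phi_{\ref{infcase:6-A2}b})}-\Theta_{M(\phi_{\ref{infcase:6-A2}c})}+\Theta^{G_2}_{\phi_{\ref{infcase:6-A2}d}},
\]
and similarly for $\phi_{\ref{infcase:6-A2}b}$, $\phi_{\ref{infcase:6-A2}c}$. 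The standard modules $M(\phi_{\ref{infcase:6-A2}a}),M(\phi_{\ref{infcase:6-A2}b}),M(\phi_{\ref{infcase:6-A2}c})$ are parabolically induced from $T$ or $\GL_2$, hence stable; the supercuspidal pieces are absorbed into $\Theta^{G_2}_{\phi_{\ref{infcase:6-A2}d}}$, which is stable by hypothesis. So stability does propagate, but through the elliptic open distribution rather than around it.
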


\begin{proof}
%\iffalse %%%%%
Let $\phi$ be an unramified Langlands parameter for $G_2(F)$.
\begin{enumerate}%[leftmargin=20pt,labelindent=10pt,labelwidth=22pt,itemindent=22pt]
\item[\ref{G2:basis}]
$\Pi^\ABV_\phi(G_2(F)) \to \widehat{A^\ABV_\phi}$ is a bijection in all cases except when $\phi= \phi_{\ref{infcase:8-sub}b}$ and $\phi= \phi_{\ref{infcase:8-sub}c}$.
Moreover, $A^\ABV_\phi$ is trivial unless the infinitesimal parameter for $\phi$ is given by Cases~\ref{infcase:4-D2}, \ref{infcase:6-A2} or \ref{infcase:8-sub}. Accordingly, there are precisely 10 non-trivial cases of the first claim: $\phi_{\ref{infcase:4-D2}a}$, $\phi_{\ref{infcase:4-D2}b}$, $\phi_{\ref{infcase:4-D2}c}$, $\phi_{\ref{infcase:4-D2}d}$, $\phi_{\ref{infcase:6-A2}a}$, $\phi_{\ref{infcase:6-A2}b}$, $\phi_{\ref{infcase:6-A2}c}$, $\phi_{\ref{infcase:6-A2}d}$, $\phi_{\ref{infcase:8-sub}a}$ and $\phi_{\ref{infcase:8-sub}d}$. 
%\iffalse %%%%%
Consider the first of these cases, namely $\phi = \phi_{\ref{infcase:6-A2}a}$.
Recall that $A^\ABV_{\phi_{\ref{infcase:6-A2}a}} = \langle \theta_3 \rangle$ and let $s_3\in \mathcal{S}^\ABV_{\phi_{\ref{infcase:6-A2}a}}$ be a representative for $\theta_3$ above $\mathcal{S}^\ABV_{\phi_{\ref{infcase:6-A2}a}} \to A^\ABV_{\phi_{\ref{infcase:6-A2}a}}$.
Then
\[
\begin{array}{rcl}
\Theta^{}_{\phi_{\ref{infcase:6-A2}a}} 
	&=& \Theta_{\pi(\phi_{\ref{infcase:6-A2}a})} - \Theta_{\pi(\phi_{\text{\ref{infcase:6-A2}d}},\vartheta_3)} - \Theta_{\pi(\phi_{\text{\ref{infcase:6-A2}d}},\vartheta_3^2)} \\
\Theta^{}_{\phi_{\ref{infcase:6-A2}a},s_3} 
	&=& \Theta_{\pi(\phi_{\ref{infcase:6-A2}a})} - \theta_3 \Theta_{\pi(\phi_{\text{\ref{infcase:6-A2}d}},\vartheta_3)} - \theta_3^2 \Theta_{\pi(\phi_{\text{\ref{infcase:6-A2}d}},\vartheta_3^2)} \\
\Theta^{}_{\phi_{\ref{infcase:6-A2}a},s_3^2} 
	&=& \Theta_{\pi(\phi_{\ref{infcase:6-A2}a})} - \theta_3^2 \Theta_{\pi(\phi_{\text{\ref{infcase:6-A2}d}},\vartheta_3)} - \theta_3 \Theta_{\pi(\phi_{\text{\ref{infcase:6-A2}d}},\vartheta_3^2)} ,
\end{array}
\]
where $\pi(\phi_{\ref{infcase:6-A2}a})= J_{\gamma_2}(1,I^{\GL_2}(\theta_3\otimes\theta_3^{-1}))$, $\pi(\phi_{\ref{infcase:6-A2}d},\vartheta_3) = I_0(G_2[\theta_3])$ and  $\pi(\phi_{\ref{infcase:6-A2}d},\vartheta_3^2) =I_0( G_2[\theta_3^2])$.
The transition matrix for this system of equations is the product of the matrix for $\langle s,\pi\rangle$, which is the character table for $A^\ABV_{\phi_{\ref{infcase:6-A2}a}} = \langle \theta_3  \rangle$, and the diagonal, self-inverse matrix of signs $(-1)^{\dim(\phi)-\dim(\pi)}$:
%\iffalse%%%%
\[
\begin{pmatrix}
1 & -1 & -1 \\
1 & -\theta_3 & -\theta^2_3 \\
1 & -\theta_3^2 & -\theta_3 
\end{pmatrix}
=
\begin{pmatrix}
1 & 1 & 1 \\
1 & \theta_3 & \theta^2_3 \\
1 & \theta_3^2 & \theta_3 
\end{pmatrix}
\begin{pmatrix}
1 & 0 & 0 \\
0 & -1 & 0 \\
0 &0 & 1 
\end{pmatrix}.
\]
%\fi%%%%%%
So,
\[
\begin{pmatrix}
1 & -1 & -1 \\
1 & -\theta_3 & -\theta^2_3 \\
1 & -\theta_3^2 & -\theta_3 
\end{pmatrix}^{-1}
=
\begin{pmatrix}
1 & 0 & 0 \\
0 & -1 & 0 \\
0 &0 & 1 
\end{pmatrix}
\begin{pmatrix}
1/3 & 1/3 & 1/3 \\
1/3 & {\bar\theta}_3/3 & {\bar\theta}^2_3/3 \\
1/3 & {\bar\theta}^2_3/3 & {\bar\theta}_3/3 
\end{pmatrix}
\]
To see the formula for the inverse given in the Theorem is correct, it only remains to verify that the diagonal entries in the first matrix are given by $(-1)^{\dim(C_\phi^s) -\dim(C_\pi)}$, which is done case-by-case.
All the other case follow by the same argument.
%
%\fi %%%%%%
We illustrate a non-Abelian case, $\phi = \phi_{\ref{infcase:8-sub}d}$, for which $A^\ABV_{\phi_{\ref{infcase:8-sub}d}} = S_3$. Let $s_2$ and $s_3$ be representatives for the two non-trivial conjugacy classes in $S_3$, of orders $2$ and $3$, respectively. 
Then
\[
\begin{array}{rcl}
\Theta^{}_{\phi_{\ref{infcase:8-sub}d}} 
	&=& \Theta_{\pi(\phi_{\ref{infcase:8-sub}d},1)} + 2\Theta_{\pi(\phi_{\ref{infcase:8-sub}d},\varrho)} + \Theta_{\pi(\phi_{\ref{infcase:8-sub}d},\varepsilon)} \\
\Theta^{}_{\phi_{\ref{infcase:8-sub}d},s_2} 
	&=& \Theta_{\pi(\phi_{\ref{infcase:8-sub}d},1)}  - \Theta_{\pi(\phi_{\ref{infcase:8-sub}d},\varepsilon)} \\	
\Theta^{}_{\phi_{\ref{infcase:8-sub}d},s_3} 
	&=& \Theta_{\pi(\phi_{\ref{infcase:8-sub}d},1)} - \Theta_{\pi(\phi_{\ref{infcase:8-sub}d},\varrho)} + \Theta_{\pi(\phi_{\ref{infcase:8-sub}d},\varepsilon)},
\end{array}
\]
where $\pi(\phi_{\ref{infcase:8-sub}d},1)=  \pi(1)'$, $\pi(\phi_{\ref{infcase:8-sub}d},\varrho)= \pi(1)$ and $\pi(\phi_{\ref{infcase:8-sub}d},\varepsilon) = I_0(G_2[1])$.
The matrix for this system of equations is the character table for $S_3$. 
\iffalse %%%%
Using the orthogonality relations for characters of irreducible representations, the inverse is given by
\[
\begin{pmatrix}
1 & 1 & 1 \\
2 & 0 & -1 \\
1 & -1 & 1 
\end{pmatrix}^{-1}
=
\begin{pmatrix}
1/6 & 2/6 & 1/6 \\
1/2 & 0 & -1/2 \\
1/3 & -1/3 & 1/3 
\end{pmatrix}^{-1}
\]
\fi%%%%
The two examples above illustrate that both the complex conjugation and the denominator $\abs{Z_{A^\ABV_\phi}(s)}$ are needed for $G_2(F)$.

Note from Table~\ref{table:ABV} that $\Pi^\ABV_\phi(G_2(F)) \to \widehat{A^\ABV_\phi}$ is a bijection for all unramified Langlands parameters for $G_2(F)$ except $\phi= \phi_{\ref{infcase:8-sub}b}$ and $\phi= \phi_{\ref{infcase:8-sub}c}$.
For $\phi = \phi_{\ref{infcase:8-sub}b}$ we have $A^\ABV_{\phi_{\ref{infcase:8-sub}b}} = \langle \theta_2 \rangle$ and, after picking representative for $\theta_2$ above $\mathcal{S}^\ABV_{\phi_{\ref{infcase:8-sub}b}} \to A^\ABV_{\phi_{\ref{infcase:8-sub}b}}$, we find
\[
\begin{array}{rcl}
\Theta^{}_{\phi_{\ref{infcase:8-sub}b}} 
	&=& \Theta_{\pi(\phi_{\ref{infcase:8-sub}b})} - \Theta_{\pi(\phi_{\ref{infcase:8-sub}c})} + \Theta_{\pi(\phi_{\ref{infcase:8-sub}d},\varepsilon)} \\
\Theta^{}_{\phi_{\ref{infcase:8-sub}b},s_2} 
	&=& \Theta_{\pi(\phi_{\ref{infcase:8-sub}b})} - \theta_2 \Theta_{\pi(\phi_{\ref{infcase:8-sub}c})} + \Theta_{\pi(\phi_{\ref{infcase:8-sub}d},\varepsilon)},
\end{array}
\]
where $\pi(\phi_{\ref{infcase:8-sub}b})=  J_{\gamma_1}(1/2,\St_{\GL_2})$, $\pi(\phi_{\ref{infcase:8-sub}c})= J_{{\gamma_2}}(1/2,\St_{\GL_2})$ and, as above, $\pi(\phi_{\ref{infcase:8-sub}d},\varepsilon) = I_0(G_2[1])$.
While it is not possible to express $\Theta_{\pi(\phi_{\ref{infcase:8-sub}b})}$, $\Theta_{\pi(\phi_{\ref{infcase:8-sub}b})}$ and $\Theta_{\pi(\phi_{\ref{infcase:8-sub}b})}$ as a linear combination of $\Theta^{}_{\phi_{\ref{infcase:8-sub}b}}$ and $\Theta^{}_{\phi_{\ref{infcase:8-sub}b},s_2}$, we can write $\Theta_{\pi(\phi_{\ref{infcase:8-sub}d},\varepsilon)}$ in terms of $\Theta^{}_{\phi_{\ref{infcase:8-sub}d}}$ , $\Theta^{}_{\phi_{\ref{infcase:8-sub}d},s_2}$  and $\Theta^{}_{\phi_{\ref{infcase:8-sub}d},s_3}$, as above, and then express $\Theta_{\pi(\phi_{\ref{infcase:8-sub}b})}$, $\Theta_{\pi(\phi_{\ref{infcase:8-sub}b})}$ and $\Theta_{\pi(\phi_{\ref{infcase:8-sub}b})}$ in terms of $\Theta^{}_{\phi_{\ref{infcase:8-sub}b}}$, $\Theta^{}_{\phi_{\ref{infcase:8-sub}b},s_2}$, $\Theta^{}_{\phi_{\ref{infcase:8-sub}d}}$ , $\Theta^{}_{\phi_{\ref{infcase:8-sub}d},s_2}$  and $\Theta^{}_{\phi_{\ref{infcase:8-sub}d},s_3}$.  

The case $\phi= \phi_{\ref{infcase:8-sub}c}$ is similar in that there are more representations in the packet $\Pi^\ABV_{\phi_{\ref{infcase:8-sub}c}}(G_2(F))$ than there are conjugacy classes in $A^\ABV_{\phi_{\ref{infcase:8-sub}c}}$, but again, we can write all the distribution characters for representations in $\Pi^\ABV_{\phi_{\ref{infcase:8-sub}c}}(G_2(F))$ using the distributions $\Theta_{\phi_{\ref{infcase:8-sub}c}}$ and $\Theta_{\phi_{\ref{infcase:8-sub}c},s_2}$ together with the distributions $\Theta_{\phi_{\ref{infcase:8-sub}d}}$, $\Theta_{\phi_{\ref{infcase:8-sub}d},s_2}$ and $\Theta_{\phi_{\ref{infcase:8-sub}d},3}$, as above.

\item[\ref{G2:stable}] 
%\fi %%%%%%
This follows from calculations that are made by working through the classification of unipotent representations of $G_2(F)$ appearing in Section~\ref{ssec:infcases}. 
In Case \ref{infcase:8-sub} this is \cite{CFZ:cubics}*{Theorem 2.16}. We give the details here of the proof in case \ref{infcase:6-A2}.
By \cite{Muic}*{Proposition 4.2}, in the Grothendieck group of representations of $G_2(F)$, we have
\begin{equation*}
\begin{array}{rcl}
 M(\phi_{\ref{infcase:6-A2}a})&=&I(\nu\theta_3\otimes \theta_3)=\pi(\phi_{\text{\ref{infcase:6-A2}d}})+\pi(\phi_\text{\ref{infcase:6-A2}b})+\pi(\phi_\text{\ref{infcase:6-A2}c})+\pi(\phi_{\ref{infcase:6-A2}a}),\\
M(\phi_\text{\ref{infcase:6-A2}b})&=&I_{\gamma_1}(1/2,\theta_3^{-1}\otimes \St_{\GL_2})=\pi(\phi_\text{\ref{infcase:6-A2}d})+\pi(\phi_\text{\ref{infcase:6-A2}b}),\\
M(\phi_\text{\ref{infcase:6-A2}c})&=&I_{\gamma_1}(1/2,\theta_3\otimes\St_{\GL_2})=\pi(\phi_\text{\ref{infcase:6-A2}d})+\pi(\phi_\text{\ref{infcase:6-A2}c}),
\end{array}
\end{equation*}
where $M(\phi)$ means the standard module of $\pi(\phi)$ for a Langlands parameter $\phi$. From the expression of $\Theta_{\phi}$ for each $\phi$ appeared in the proof of last part, we easily see that
\[
\bpm 
\Theta^{}_{\phi_\text{\ref{infcase:6-A2}a}}\\ 
\Theta^{}_{\phi_\text{\ref{infcase:6-A2}b}}\\ 
\Theta^{}_{\phi_\text{\ref{infcase:6-A2}c}}\\ 
\Theta^{}_{\phi_\text{\ref{infcase:6-A2}d}} 
\epm
= 
\bpm 1&-1&-1&1\\ 0 &1&0 &-1\\ 0 & 0 &1&-1\\ 0 &0 &0 &1 \epm 
\bpm 
\Theta_{M(\phi_\text{\ref{infcase:6-A2}a})}\\ 
\Theta_{M(\phi_\text{\ref{infcase:6-A2}b})}\\ 
\Theta_{M(\phi_\text{\ref{infcase:6-A2}c})}\\ 
\Theta^{}_{\phi_{\text{\ref{infcase:6-A2}d}}}\epm .
\]
Note that each standard module is stable since it is a representation either induced from $\GL_2(F)$ or $T=\GL_1(F)\times \GL_1(F)$. This shows that if $\Theta^{}_{\phi_\text{\ref{infcase:6-A2}d}}$ is stable, then each $\Theta^{}_{\phi}$ is stable for every $\phi$ in Case \ref{infcase:6-A2}. For the rest cases, the proof is similar and thus omitted. 
\end{enumerate}
\end{proof}

\begin{remark}\label{rem:basis}
The appearance of $\dim(C_\phi^s)$ in Theorem~\ref{thm:EC} is a foreshadowing of the geometric constructions in Section~\ref{sec:geoendo} on geometric endoscopy.
\end{remark}

\begin{remark}\label{rem:stability}
It is widely expected that $\Theta^{}_{\phi}$ is stable when $\phi$ is elliptic, but it seems that this has not been proved; see \cite{CFZ:cubics}*{Remark~2.17}. Note that, if confirmed \ref{G2:stable}, could then be strengthened to: $\Theta^{}_{\phi}$ is stable for all unramified $\phi$. 
\end{remark}

\section{Unipotent representations of endoscopic groups for $G_2$}\label{sec:endoscopy}

\subsection{Endoscopic groups for $G_2$}\label{ssec:endoscopy}

Let $(G,s,\xi)$ be an endoscopic triple for $G_2$.
Since $s\in \dualgroup{G_2}$ is semisimple we choose $s\in \dualgroup{T}$.
Since $Z_{\dualgroup{G_2}}(s)$ is connected, $G$ is split over $F$ and $\xi : \Lgroup{G} \to \Lgroup{G_2}$ is given by $\dualgroup{G} \ceq Z_{\dualgroup{G_2}}(s) \hookrightarrow \dualgroup{G_2}$; see \cite{Kottwitz-Shelstad}*{(2.1.4a) and (2.1.4b)}.
%\todo{Why are there no z-extensions for $G_2$?}
%In this way we see that the endoscopic triple is determined by the pair $(G,s)$.\todo{Strictly speaking $(G,s)$ does not determine $\xi$}
The endoscopic triples for $G_2$ are naturally arranged into the following six families, of which the last three are elliptic.
\begin{enumerate}
\labitem{(T)}{endcase:torus}
$G= T= \GL_1^2$, $s= {\hat m}(x,y)$ regular, $x^{-1}y^2\ne 1$, $xy^{-1}\ne 1$, $x\ne 1$, $y\ne 1$, $x^2y^{-1}\ne 1$, $xy\ne 1$;%, for example $a= -\theta_3$ and $b= -1$;
\labitem{(A\textsubscript{1}\hskip-5pt\textsuperscript{s})}{endcase:short}
$G=\GL_2^{\hat\gamma_2}$, $s= {\hat m}(x,x)$, $x^2\ne 1$;%, for example $a=-\theta_3$;
\labitem{(A\textsubscript{1}\hskip-5pt\textsuperscript{l})}{endcase:long} 
$G=\GL_2^{\hat\gamma_1}$, $s= {\hat m}(x^2,x)$, $x^2\ne1$, $x^3\ne 1$;%, for example $a=-\theta_3$;
\labitem{(D\textsubscript{2})}{endcase:D2} 
$G=\SO_4$ split over $F$, $s= {\hat m}(1,-1)$;
\labitem{(A\textsubscript{2})}{endcase:A2} 
$G=\PGL_3$, $s= {\hat m}(\theta_3^2,\theta_3)$;
\labitem{(G\textsubscript{2})}{endcase:G2} 
$G=G_2$ over $F$ (necessarily split), $s= 1$.
\end{enumerate}
In Sections~\ref{ssec:GL2}, \ref{ssec:SO4} and \ref{ssec:PGL3} we study the unipotent representations of $\GL_2$, $\SO_4$ and $\PGL_3$ and their inner forms, respectively, including the Langlands correspondence and ABV-packets for these representations.

We will also study distributions attached to ABV-packets, for which we require the following definition, related closely to \cite{CFMMX}*{Definition 3}.

\begin{table}[tp]\label{table:endoscopy}
\caption{Endoscopic relations for $G_2$. The dotted arrows $G\dashrightarrow G'$ indicate that $G$ is an endoscopic group for $G'$ and therefore the existence of an injective admissible homomorphism $\xi : \Lgroup{G} \to \Lgroup{G'}$. There $T'$ is the form of the torus $T=\GL_1^2$ that splits over the quadratic unramified extension of $F$.}
\begin{center}
$
\begin{tikzcd}
&& G_2(F) && \\
%&  & \dualgroup{G_2} & \\
%D^\times/F^\times = 
\PGL_3^\delta(F)& \arrow[dashed]{ld}  \arrow[dashed]{l} \PGL_3(F) \arrow[dashed]{ur} && \SO_4(F) \arrow[dashed]{ul} \arrow[dashed]{r} &  \SO_4^{\delta}(F) \\
%&  \SL_3 \arrow{ur} && \SO_4 \arrow{ul} \\
%(D')^\times/F^\times = 
\PGL_3^{\delta'}(F)  &&& & \\
  & \GL_2^{\hat\gamma_1}(F) \arrow[dashed]{uurr}
  \arrow[dashed]{uu}   && 
   \GL_2^{\hat\gamma_2}(F) 
    \arrow[dashed]{uu} &  \\
%& \GL_2^\text{long} \arrow{u}    &&  \arrow{ull} \GL_2^\text{short}  \arrow{u} \\
& & T(F) \arrow[dashed]{ul}  \arrow[dashed]{ur}  &  & \arrow[dashed]{ll} \arrow[dashed]{uuul}  T'(F) 
%& & \dualgroup{T} \arrow{ul}  \arrow{ur}  &
\end{tikzcd}
$
\end{center}
\end{table}%

Let $(G,s,\xi)$ be an endoscopic triple for $G_2$.
Recall from \cite{CFMMX}*{Section 8.1, Definition 1} that $\Pi^\ABV_{\phi}(G/F)$ is a subset of $\Pi^\mathrm{pure}(G/F)$, thus contained in the set of equivalence classes of irreducible admissible representations of $G(F)$ and all its pure inner forms.
Let $\delta \in Z^1(F,G)$ be a pure inner form of $G$; recall that $G^\delta$ denotes form of $G$ determined by the image of $\delta$ under $Z^1(F,G) \to Z^1(F,\Aut(G))$. 
Now let $\phi : W'_F \to \Lgroup{G}$ be an unramified Langlands parameter.
It is possible that the $L$-packet $\Phi_\phi(G^\delta(F))$ is empty, since we have not demanded that the Langlands parameter $\phi : W'_F \to \Lgroup{G}$ is relevant to $G(F)$ in the sense of \cite{Borel:Automorphic}*{Section 8.3 (ii)}. If $\phi$ is relevant to $G^\delta(F)$, then $\Phi_\phi(G^\delta(F))$ is non-empty.

We write $\Pi^\ABV_{\phi}(G^\delta(F))$ for the subset of $\Pi^\ABV_{\phi}(G/F)$ that contains only representations of $G^\delta(F)$.
Again, it is possible that $\Pi^\ABV_{\phi}(G^\delta(F))$ is empty when $\phi$ is not relevant to $G^\delta(F)$.

\begin{definition}\label{def:Thetaphis-endo}
Let $\phi : W'_F \to \Lgroup{G}$ be an unramified Langlands parameter; let $s$ be an element of $\mathcal{S}^\ABV_{\xi\circ\phi}$.
Set
\[
\Theta_{\phi,s}^{G^\delta}
\ceq  
e(\delta) 
\sum_{\pi \in \Pi^\ABV_{\phi}(G^\delta(F))} \hskip-0pt  \trace_{s} \left(\NEvs_{\phi}[\dim(\phi)] \mathcal{P}(\pi)[-\dim(\pi)] \right)\  \Theta_{\pi}.
\]
When written using ABV-packet coefficients for $G^\delta$ using Definition~\ref{def:SABV}, the definition of $\Theta_{\phi,s}^{G^\delta}$, above, takes the form:
\[
\Theta_{\phi,s}^{G^\delta}
= 
e(\delta) 
\sum_{\pi \in \Pi^\ABV_{\phi}(G^\delta(F))} \hskip-0pt(-1)^{\dim(\phi)-\dim(\pi)}\  \langle s,\pi\rangle\  \Theta_{\pi}.
\]
We also set $\Theta_{\phi}^{G^\delta}\ceq\Theta_{\phi,1}^{G^\delta}$.
\end{definition}

\iffalse
Notice that if $\phi$ is of Arthur type, or more generally..... then
\[
\Theta_{\phi,s}^{G^\delta}
= 
%e(\delta) 
\sum_{\pi \in \Pi^\ABV_{\phi}(G^\delta(F))} \hskip-0pt  \langle s_\phi,\pi\rangle\  \Theta_{\pi},
\]
where $s_\phi\in \mathcal{S}^\ABV_\phi$ given in  Definition~\ref{def:SABV}

Note that this generalizes Equation~\eqref{eqn:distributions}.
Note that Definition~\ref{def:Thetaphis-endo} generalizes Definition~\ref{def:Thetaphis} from $G_2$ to any pure inner form of any endoscopic group for $G_2$.
\fi

\subsection{Unipotent representations of $T=\GL_1^2$}\label{ssec:T}

Unipotent representations of $T(F)$ are simply unramified characters $ \chi_1\otimes\chi_2: T(F)\to \CC^\times$. The Langlands parameter for this character is $W'_F \to \dualgroup{T}$ defined by $(w,x)\mapsto \varphi_1(w)\times \varphi_2(w)$ where $\varphi_i$ corresponds to $\chi_i$ under class field theory.

\iffalse %%%%%%%%%%%%%

\begin{table}[htp]
\caption{Unipotent representations of $T(F)$: each row is an $L$-packet. The notation and the corresponding $L$-parameters are explained below. }
\label{table:T:$L$-packets}
\begin{center}
$
\begin{array}{| c | c | l | } 
\hline
\text{$L$-parameter} & \text{$L$-packet} & \text{Arthur} \\ 
\phi & \Pi_{\phi}(\GL_2(F)) & \text{type?} \\ 
\hline\hline
\phi_{\ref{ssec:T}.0} 	& \chi_1\otimes \chi_2 & \text{unitary $\chi_1$, $\chi_2$}\\
\hline
\end{array}
$
\end{center}
\end{table}%

\fi %%%%%%%%%%%%%

\subsection{Unipotent representations of $\GL_2$}\label{ssec:GL2}

The category of unipotent representations of the group $\GL_2(F)$ is precisely the category of unramified principal series representations of $\GL_2(F)$, corresponding therefore to the subcategory of smooth representations of $\GL_2(F)$ appearing in the Bernstein decomposition indexed by the inertial class of the trivial representation of $T(F)$.
In particular, there are no supercuspidal unipotent representations of $\GL_2(F)$.
In this section we partition the set $\Pi(\GL_2(F))_\text{unip}$ of unipotent representations of $\GL_2(F)$ into $L$-packets and also describe the corresponding $L$-parameters, thus giving the Langlands correspondence for these representations.
Since this case is very well known, we omit all proofs here.

\begin{table}[htp]
\caption{$L$-packets for unipotent representations of $\GL_2(F)$: each row in an $L$-packet. The notation and the corresponding $L$-parameters are explained below. }
\label{table:GL2:$L$-packets}
\begin{center}
$
\begin{array}{| c | c | l | } 
\hline
\text{$L$-parameter} & \text{$L$-packet} & \text{Arthur} \\ 
\phi & \Pi_{\phi}(\GL_2(F)) & \text{type?} \\ 
\hline\hline
\phi_{\ref{ssec:GL2}.0} 	& \Ind_{B_2(F)}^{\GL_2(F)}(\chi_1\otimes \chi_2) & \text{unitary $\chi_1$, $\chi_2$}\\
\hline\hline
\phi_{\ref{ssec:GL2}.1a}	&  (\chi\circ\det)\otimes \1_{\GL_2}  & \text{unitary $\chi$ } \\
\phi_{\ref{ssec:GL2}.1b}	&  (\chi\circ\det)\otimes \St_{\GL_2} & \text{unitary $\chi$} \\
\hline
\end{array}
$
\end{center}
\end{table}%

We fix the standard maximal split torus $T(F)$ of $\GL_2(F)$. 
Let $B_2(F)$ be the standard Borel subgroup of $\GL_2(F)$.

\begin{enumerate}%[leftmargin=10pt,labelindent=10pt,labelwidth=15pt,itemindent=15pt]
\item[\ref{ssec:GL2}.0]
Let $\chi_1$ and $\chi_2$ be unramified complex characters of $\GL_1(F) = F^\times$ and let $\chi_1\otimes\chi_2$ be the associated character of $T(F)$.
Then the parabolically induced representation
\[
I^{\GL_2}(\chi_1\otimes \chi_2)\ceq\Ind_{B_2(F)}^{\GL_2(F)}(\chi_1\otimes \chi_2)
\]
is irreducible if and only if $\chi_1 \chi_2^{-1} \ne \nu^{\pm 1}$, where $\nu$ is the unramified character of $\GL_1(F)$ defined by $\nu(\varpi) = q$.
In this case, $\Ind_{B_2(F)}^{\GL_2(F)}(\chi_1\otimes \chi_2)$ is its own $L$-packet.
The $L$-parameter $\phi : W'_F \to \dualgroup{\GL_2}$ for this irreducible representation is
\[
\phi_{\ref{ssec:GL2}.0}(w,x) = \begin{pmatrix} \varphi_1(w) & 0 \\ 0 & \varphi_2(w) \end{pmatrix}
\]
where $\varphi_i : W_F \to \dualgroup{\GL_1}$ is the character corresponding to $\chi_i : \GL_1(F) \to \CC^\times$ under local class field theory.
This $L$-parameter is of Arthur type if and only if $\chi_1$ and $\chi_2$ are unitary.

\item[\ref{ssec:GL2}.1]
Now suppose $\chi_1$ and $\chi_2$ are unramified and $\chi_1 \chi_2^{-1} = \nu$. 
Then we may write $\chi_1 =  \chi \nu^{1/2}$ and  $\chi_2 =  \chi \nu^{-1/2}$, in which case
\[
%\Ind_{B_2(F)}^{B_2(F)}(\chi_1\otimes \chi_2) =
\Ind_{B_2(F)}^{\GL_2(F)}(\chi \nu^{1/2}\otimes \chi \nu^{-1/2}) 
= (\chi\circ\det)\otimes \Ind_{B_2(F)}^{\GL_2(F)}(\nu^{1/2}\otimes \nu^{-1/2}).
\]
The unique irreducible sub-representation of $\Ind_{B_2(F)}^{\GL_2(F)}(\nu^{1/2}\otimes \nu^{-1/2})$ is the Steinberg representation, $\St_{\GL_2}$, whereas the trivial representation, $\1_{\GL_2(F)}$, is the unique irreducible quotient.
\iffalse%%%%
\[
\begin{tikzcd}
0 \arrow{r} &  \St_{\GL_2}  \arrow{r} & \Ind_{B_2(F)}^{\GL_2(F)}(\nu^{1/2}\otimes\nu^{-1/2})\arrow{r} & \1_{\GL_2(F)} \arrow{r} & 0  .
\end{tikzcd}
\]
\fi%%%%
The $L$-parameter for $(\chi\circ\det)\otimes \1_{\GL_2(F)}$ is  
\[
\phi_{\ref{ssec:GL2}.1a}(w,x) = \varphi(w) \begin{pmatrix} \abs{w}^{1/2} & 0 \\ 0 &  \abs{w}^{-1/2} \end{pmatrix},
\]
where $\varphi : W_F \to \C^\times$ is the character corresponding to $\chi : F^\times \to \CC^\times$ under class field theory.
The $L$-parameter for $(\chi\circ\det)\otimes \St_{\GL_2}$ is 
\[
\phi_{\ref{ssec:GL2}.1b}(w,x) = \varphi(w) x .
\]
Both $L$-parameters appearing in this case are of Arthur type if and only if $\chi$ is unitary.
We remark that $(\chi\circ\det)\otimes \1_{\GL_2(F)}$ and $(\chi\circ\det)\otimes \St_{\GL_2}$ are interchanged by Aubert-Zelevinski duality.
\end{enumerate}

This completes the description of the Langlands correspondence for unipotent representations of $\GL_2(F)$.
Each $L$-packet for $\GL_2(F)$ is its own ABV-packet. 

Theorems~\ref{thm:coefficients} and \ref{thm:stable} are largely trivial in the case of $\GL_2(F)$ so we refrain from stating them here except to say that \cite{CFMMX}*{Conjecture 1} is true for unramified representations of $\GL_2(F)$. 
%%%%%%%

\begin{remark}
The category of unipotent representations of $\GL_2(F)$ is equivalent to the category of modules over the affine Hecke algebra for $\GL_2(F)$.  
As such, Table~\ref{table:GL2:$L$-packets} lists the simple objects in this module category.
\end{remark}

\subsection{Unipotent representations of $\SO_4$ and its pure inner forms}\label{ssec:SO4}

In this section we find the ABV-packets and ABV-packet coefficients for all unipotent representations of $\SO_4$ and its pure inner forms.
The $p$-adic group $\SO_4$ has two pure inner forms, since $H^1(F,\SO_4)$ has order $2$, and $H^1(F,\SO_4) \to H^1(F,\Aut(\SO_4))$ is injective.
Let $\delta \in Z^1(F,\SO_4)$ be a representative for the non-trivial class in $H^1(F,\SO_4)$. 
Let $\SO_4^\delta$ be the inner form of $\SO_4$ attached to this cocycle.
We partition the set $\Pi(\SO_4(F))_\text{unip}$ of unipotent representations of the split orthogonal group $\SO_4(F)$ into $L$-packets and also describe the corresponding $L$-parameters.
We do the same for the non-split inner form $\SO_4^\delta$ of $\SO_4$. 
The group $\SO_4^\delta$ is the orthogonal group for the quadratic space coming from the norm on the unique quaternion division algebra over $F$.
%may be viewed as $\SO_4^\delta(F) = \SO_1(\mathbb{H}(F))$, where $\mathbb{H}(F)$ is the unique quaternion division algebra over $F$.
%Again, we omit all proofs in this section.
%Let $\SO_4(F)$ be the split orthogonal group of size $4$. The group $\SO_4(F)$ has a unique pure inner form and we denote it by $D'$. Then $D'=\SO(\mathbb{H})$, where $\mathbb{H}$ is the unique quaternion division algebra over $F$.
 
\begin{table}[htp]
\caption{ABV-packets for all unipotent representations of $\SO_4(F)$ and its inner form $\SO_4^\delta(F)$}
\label{table:LA-packetsD}
\begin{center}
$
\begin{array}{| c | c | c | l | } 
\hline
\text{$L$-parameter}	& \text{Pure $L$-packet} & \text{Coronal} &\text{Parameter of}\\ 
\phi & \Pi^\text{pure}_\phi(\SO_4/F) & \text{representations} & \text{Arthur type?} \\ 

\hline\hline
\phi_{\ref{ssec:SO4}.0} 	& I^{\SO_4}(\chi_1\otimes \chi_2)		&   &\text{unitary $\chi_1$, $\chi_2$}\\
\hline\hline
\phi_{\ref{ssec:SO4}.0'} 	& \pi_4,\pi_4' 	&   & \text{yes}\\
\hline\hline
\phi_{\ref{ssec:SO4}.1a} 	& I_{\beta_1}(\chi\circ \det) 	&  &\text{unitary $\chi$} \\
\phi_{\ref{ssec:SO4}.1b}	& I_{\beta_1}(\chi\otimes \St_{\GL_2})   & &\text{unitary $\chi$} \\
\hline\hline
\phi_{\ref{ssec:SO4}.2a} 	& I_{\beta_2}(\chi\circ\det)	&  & \text{unitary $\chi$} \\
\phi_{\ref{ssec:SO4}.2b} 	& I_{\beta_2}(\chi\otimes \St_{\GL_2})   &  & \text{unitary $\chi$} \\
\hline\hline
\phi_{\ref{ssec:SO4}.3a} & \chi_{\SO_4(F)}  & \chi_{\SO_4^\delta(F)} & \text{yes} \\
\phi_{\ref{ssec:SO4}.3b} &  J_{\beta_1}(1/2,\chi\otimes \St_{\GL_2})  & \chi_{\SO_4^\delta(F)} & \text{yes}  \\
\phi_{\ref{ssec:SO4}.3c} &  J_{\beta_2}(1/2,\chi\otimes \St_{\GL_2})  & \chi_{\SO_4^\delta(F)} & \text{yes} \\
\phi_{\ref{ssec:SO4}.3d} &  \chi_{\SO_4(F)}\otimes\St_{\SO_4},   \chi_{\SO_4^\delta(F)}  & & \text{yes}\\
\hline
\end{array}
$
\end{center}
\end{table}%

\subsubsection*{Langlands correspondence} 
%The local Langlands correspondence for $\SO_4(F)$ should be well-known, but since we could not find the appropriate references, we briefly explain how the local Langlands correspondence follows from known cases. 
The local Langlands correspondence for $\SO_4(F)$ may be assembled as follows. 
We start with the observation that $\GSO_4(F)\cong (\GL_2(F)\times \GL_2(F))/\Delta(F^\times)$, see \cite{Gross-Prasad}*{(15.1)} for example, and therefore the local Langlands correspondence for $\GSO_4(F)$ follows from that of $\GL_2(F)$. 
By the general theory of \cite{Gelbart-Knapp} and \cite{Gan-Takeda}, the local Langlands correspondence for $\SO_4(F)$ can be obtained in the following way. 
Notice that $\wh{\GSO_4(F)}\cong \GSpin_4(\C)$ and $\wh{\SO_4(F)}\cong \SO_4(\C)$. 
Let $\mathrm{std}:\GSpin_4(\C)\to \SO_4(\C)$ be the standard homomorphism. 
Let $\phi:W_F\times \SL_2(\C)\to  \SO_4(\C)$ be a local Langlands parameter for $\SO_4(F)$ and let $\wt \phi$ be a local Langlands parameter for $\GSO_4(F)$ such that $\mathrm{std}\circ \wt \phi=\phi$ and let $\Pi_{\wt \phi}(\GSO_4(F))$ be the corresponding local $L$-packet for $\GSO_4(F)$, which is a singleton. 
Assume that $\Pi_{\wt \phi}(\GSO_4(F))=\{\wt \pi\}$. 
Then $\wt \pi\vert_{\SO_4}$ is multiplicity free and $\Pi_\phi(\SO_4(F))=\mathrm{JH}(\wt \pi)$, where $\mathrm{JH}(\wt \pi)=\{\mathrm{constituents~ of~ } \wt\pi|_{\SO_4(F)}\}$. Consider the subgroup $F^\times \cdot \SO_4(F)\subset \GSO_4(F)$, we have $\GSO_4(F)/(F^\times \cdot \SO_4(F))\cong F^\times/F^{\times,2}$. By \cite{Gelbart-Knapp}*{Lemma 2.1}, we have 
$$\big\vert \Pi_{\phi}(\SO_4(F)) \big\vert = \big\vert\{\chi:F^\times/F^{\times,2}\ra \BC^\times: \wt \pi\otimes \chi=\wt \pi\}\big\vert.$$
This allows us to find the local Langlands correspondence for unipotent representations of $\SO_4(F)$ explicitly, as follows.

We realize $\GSO_4(F)$ and $\SO_4(F)$ as 
$\GSO_4(F)=\{g\in \GL_4(F): g^t Jg=\lambda(g)J, \lambda(g)\in F^\times\},$ and $\SO_4(F)=\{g\in \GSO_4(F): \lambda(g)=1\}$,
where $$J=\begin{pmatrix} 0&0&0&1\\ 0&0&1&0\\ 0&1&0&0\\1&0&0&0 \end{pmatrix}.$$ 
We denote the two simple roots of $\SO_4$ (and $\GSO_4$) by $\beta_1$ and $\beta_2$ and let $r\mapsto x_{\beta_i}(r)$ be the one parameter subgroup associated with $\beta_i$, which can be realized as 
\[
x_{\beta_1}(r)=\begin{pmatrix}1 &r&0&0\\ 0&1&0&0\\ 0&0&1&-r\\ 0&0&0&1 \end{pmatrix}, \quad
x_{\beta_2}(r)=\begin{pmatrix} 1&0&r&0\\ 0&1&0&-r\\ 0&0&1&0\\ 0&0&0&1 \end{pmatrix}.
\]
For $i=1,2$, the simple root $\beta_i$ define an embedding $\iota_{\delta_i}: \GL_2(F)\to \GSO_4(F)$ which can be explicitly described as 
\[
\iota_{\beta_1}\left(\bpm a& b\\ c&d \epm \right)=\begin{pmatrix}a &b&0&0\\ c&d&0&0\\ 0&0&a&-b\\ 0&0&-c&d \end{pmatrix}, \quad
x_{\beta_2}\left(\bpm a&b\\ c&d \epm \right)=\begin{pmatrix} a&0&b&0\\ 0&a&0&-b\\ c&0&d&0\\ 0&-c&0&d \end{pmatrix}.
\]
Note that $\lambda(\iota_{\beta_i}(g))=\det(g)$ for $g\in \GL_2(F)$. In particular, $\iota_{\beta_i}$ defines an embedding $\iota_{\beta_i}:\SL_2(F)\to \SO_4(F)$. Moreover, the map $\iota_{\beta_i}$ define an isomorphism $\GSO_4(F)\cong (\GL_2(F)\times \GL_2(F))/(\Delta(F^\times))$, where $\Delta(F^\times)=\{(a,a): a\in F^\times\}\subset \GL_2(F)\times \GL_2(F)$. Thus an irreducible representation of $\GSO_4(F)$ is of the form $\pi_1\boxtimes \pi_2$ with $\omega_{\pi_1}\cdot\omega_{\pi_2}=1$, where $\pi_i$ is an irreducible representation of $\GL_2(F)$ and $ \omega_{\pi_i}$ is the central character of $\pi_i$. Let $\chi$ be a character of $F^\times$, we denote $I^{\GL_2}(\chi)=\Ind_{B_{\GL_2}}^{\GL_2(F)}(\chi\otimes \chi^{-1})$.

Let $T_{\SO_4}$ be the maximal torus of $\SO_4(F)$ which consists of elements of the form  $ t(a,b):=\diag(a,b,b^{-1},a^{-1})$ for $a,b\in F^\times$. Let $B_{\SO_4} $ be the upper triangular Borel subgroup of $\SO_4(F)$.  Let $\chi_1,\chi_2$ be a pair of characters of $F^\times$. We denote $\chi_1\otimes \chi_2$ the character of $T_{\SO_4}$ defined by $\chi_1\otimes \chi_2(t(a,b))=\chi_1(a)\chi_2(b)$. Denote $I^{\SO_4}(\chi_1\otimes \chi_2)\ceq\Ind_{B_{\SO_4}}^{\SO_4(F)}(\chi_1\otimes \chi_2)$.  Note that $I^{\SO_4}(\chi_1\otimes \chi_2)\cong I^{\SO_4}(\chi_2\otimes \chi_1)$ in the Grothendieck group of representations of $\SO_4(F)$. 

Up to conjugacy, the group $\SO_4$ has 2 maximal parabolics $P_i=M_iN_i$ with $x_{\beta_i}\in M_i$ for $i=1,2$, where $M_i$ is the Levi subgroup of $P_i$. Note that $ M_i\cong \GL_2(F)$. Let $\sigma$ be an irreducible smooth representation of $\GL_2(F)\cong M_i$. Denote $I_{P_i}(\sigma):=\Ind_{P_i}^{\SO_4(F)}(\sigma)$.

The dual group $\SO_4(\C)$ is realized in a similar way as $\SO_4(F)$ described above. In particular, if we call $\wh \beta_i$ the root of $\SO_4(\C)$ which is dual to $\beta_i$, then the root space $x_{\hat \beta_i}$ is described using the same matrix as $x_{\beta_i}$. Similarly, $\iota_{\wh \beta_i}$ defines an embedding $\SL_2(\C)\to \SO_4(\C)$. Denote ${\hat t}(a,b)=\diag(a,b,b^{-1},a^{-1})\in \SO_4(\C)$. 
%We choose simple roots $\delta_1$, $\delta_2$ for $\SO_4$ with that the dual roots $\hat\delta_1$, $\hat\delta_2$ for $\dualgroup{\SO_4}$ correspond to the two factors in this decomposition for $\dualgroup{\SO_4}$. 

\begin{enumerate}%[leftmargin=10pt,labelindent=10pt,labelwidth=15pt,itemindent=15pt]
\item[\ref{ssec:SO4}.0] 

Let $\chi_1$ and $\chi_2$ be unramified characters of $F^\times$ with $\chi_1\chi_2, \chi_1\chi_2^{-1}\ne \nu^{\pm 1}$, we consider the representation $I^{\SO_4}(\chi_1\otimes \chi_2)$. 
Then
\[
I^{\SO_4}(\chi_1\otimes \chi_2)=(I^{\GL_2}(\chi'_1)\boxtimes I^{\GL_2}(\chi'_2))|_{\SO_4(F)},
\]
where $\chi'_1,\chi'_2$ are unramified characters of $F^\times$ such that $\chi_1=\chi'_1\chi'_2, \chi_2=\chi'_2/\chi'_1$.  
We remark that $(\chi'_1,\chi'_2)$ are uniquely determined by $(\chi_1,\chi_2)$ up to a twist by $\theta_2$, where $\theta_2$ is the unique unramified quadratic character of $F^\times$.  
There are two cases to consider. 
\begin{enumerate}%[leftmargin=15pt]
\item[(i)]
Suppose $(\chi_1,\chi_2)\ne (\theta_2,1)$ and $(\chi_1,\chi_2)\ne (1,\theta_2)$.
Then the representation $I^{\SO_4}(\chi_1\otimes \chi_2)$ is irreducible. 
The Langlands parameter is given by 
\[
\phi_{\ref{ssec:SO4}.0}(w,x)={\hat t}(\varphi_1(w),\varphi_2(w)),
\]
where $\varphi_i:W_F\to \C^\times$ is the character dual to $\chi_i$,
and
\[
\Pi_{\phi_{\ref{ssec:SO4}.0}}(\SO_4(F)) = \{I^{\SO_4}(\chi_1\otimes \chi_2)\}.
\]
 The moduli space of Langlands parameters with the infinitesimal parameter $\lambda_{\ref{ssec:SO4}.0}$ for $\phi_{\ref{ssec:SO4}.0}$ is \ref{geocase-0}: $V_{\lambda_{\ref{ssec:SO4}.0}} = \{ 0 \}$ and where $H_{\lambda_{\ref{ssec:SO4}.0}}$ is equal to:  
 $\GL_2$, when exactly one of $\varphi_1\varphi_2, \varphi_1\varphi_2^{-1}$ is 1; 
$\SO_4$, when $\varphi_1\varphi_2 = \varphi_1\varphi_2^{-1} = 1$; 
$\dualgroup{T} = \GL_1^2$ in all other cases where $(\varphi_1,\varphi_2)\ne (\theta_2,1)$ and $(\varphi_1,\varphi_2)\ne (1,\theta_2)$.

\item[(ii)]
Suppose $(\chi_1,\chi_2)= (\theta_2,1)$ or $(\chi_1,\chi_2)= (1,\theta_2)$.
Then 
\[
I^{\SO_4}(\theta_2\otimes 1)=I^{\SO_4}(1\otimes \theta_2)
\]
and this representation is a direct sum of two irreducible tempered representations of $\SO_4(F)$. We denote these two representations by $\pi_4$ and $\pi_4'$. Only one of $\pi_4,\pi_4'$ is $\SO_4(\CO_F)$-spherical, {\it i.e.}, has a nonzero vector fixed by the hyperspecial group $\SO_4(\CO_F)$, where $\CO_F$ is the ring of integers of $F$, and we choose notation so that $\pi_4$ is $\SO_4(\CO_F)$-spherical. 
The Langlands parameter for $\pi_4$ and $\pi'_4$ is
\[
\phi_{\ref{ssec:SO4}.0'}(w,x)={\hat t}(\varphi_1(w),\varphi_2(w)),
\]
where $\varphi_i$ corresponds to $\chi_i$ under class field theory. 
and
\[
\Pi_{\phi_{\ref{ssec:SO4}.0'}}(\SO_4(F)) = \{\pi_4,  \pi'_4\}.
\]
The moduli space of Langlands parameters with the infinitesimal parameter $\lambda_{\ref{ssec:SO4}.0'}$ for $\phi_{\ref{ssec:SO4}.0'}$ is \ref{geocase-0}: $V_{\lambda_{\ref{ssec:SO4}.0'}} = \{ 0 \}$ where $H_{\lambda_{\ref{ssec:SO4}.0}}$ is the disconnected group ${\rm S}({\rm O}_2\times{\rm O}_2)$, namely, the group generated by  $\dualgroup{T}$ and $J$.
\end{enumerate}

\item[\ref{ssec:SO4}.1]  
Let $\chi$ be an unramified character of $F^\times$ with $\chi^2\ne  \nu^{\pm 1}$. The irreducible components of $I(\chi \nu^{1/2}\otimes \chi \nu^{-1/2})$ are $I_{\beta_1}(\chi\otimes \St_{\GL_2})$ and $I_{\beta_1}(\chi\circ \det)$, where $I_{\beta_1}(\chi\otimes \St_{\GL_2})=(\St_{\GL_2}\boxtimes I^{\GL_2}(\chi))|_{\SO_4(F)}$ and $I_{\beta_1}(\chi\circ \det)=(1_{\GL_2}\boxtimes I^{\GL_2}(\chi))|_{\SO_4(F)}$.
The local Langlands parameter of $I_{\beta_1}(\chi\circ \det)$ is given by
\[
\phi_{\ref{ssec:SO4}.1a}(w,x)={\hat t}(\varphi(w)|w|^{1/2}, \varphi(w)|w|^{-1/2}),
\]
and the Langlands parameter of $I_{\beta_1}(\chi\otimes \St_{\GL_2})$ is given by
\[
\phi_{\ref{ssec:SO4}.1b}(w,x)\to {\hat t}(\varphi(w),\varphi(w)) \iota_{\wh \beta_1}(x)
,
\]
where $\varphi:W_F\to \C^\times$ is the dual character of $\chi : F^\times \to \C^\times$. 
The Vogan variety $V_{\lambda_{\ref{ssec:SO4}.1}}$ is the root space $\mathfrak{u}_{\hat\beta_1}$ with $H_{\lambda_{\ref{ssec:SO4}.1}} = \dualgroup{T}$  action given by $t\cdot x = \beta_1(t)x$. This is equivalent to \ref{geocase-1}, and the two orbits in this prehomogenous vector space, $C_0$ and $C_1$, match $\phi_{\ref{ssec:SO4}.1a}$ and $\phi_{\ref{ssec:SO4}.1b}$, respectively.

\item[\ref{ssec:SO4}.2]  
Let $\chi$ be an unramified character with $\chi^2\ne  \nu^{\pm 1}$. The irreducible components of $I(\chi \nu^{1/2}\otimes \chi^{-1} \nu^{1/2})$ are $I_{\beta_2}(\chi\otimes \St_{\GL_2})$ and $I_{\beta_2}(\chi\circ \det)$ where $I_{\beta_2}(\chi\otimes \St_{\GL_2})=(I^{\GL_2}(\chi)\boxtimes\St_{\GL_2})|_{\SO_4(F)}$ and $I_{\beta_2}(\chi\circ \det)=( I^{\GL_2}(\chi)\boxtimes 1_{\GL_2})|_{\SO_4(F)}$.
The local Langlands parameter for $I_{\beta_2}(\chi\circ \det)$ is 
\[
\phi_{\ref{ssec:SO4}.2a}\left(w, x \right) = {\hat t}(\varphi(w)|w|^{1/2}, \varphi(w)^{-1}|w|^{1/2}),
\]
and the Langlands parameter for $I_{\beta_2}(\chi\otimes \St_{\GL_2})$ is 
\[
\phi_{\ref{ssec:SO4}.2b}\left(w,\bpm a& b\\ c&d \epm\right)={\hat t}(\varphi(w),\varphi(w)^{-1})\iota_{\wh \beta_2}(x),
\]
where $\varphi:W_F\to \C^\times$ is the dual character of $\chi$.
The Vogan variety $V_{\lambda_{\ref{ssec:SO4}.2}}$ is the root space $\mathfrak{u}_{\hat\beta_2}$ with $H_{\lambda_{\ref{ssec:SO4}.2}} = \dualgroup{T}$  action given by $t\cdot x = \wh\beta_2(t)x$. This is equivalent to \ref{geocase-1}, and the two orbits in this prehomogenous vector space, $C_0$ and $C_1$, match $\phi_{\ref{ssec:SO4}.2a}$ and $\phi_{\ref{ssec:SO4}.2b}$, respectively.

\item[\ref{ssec:SO4}.3]
Let $\chi$ be an unramified character of $F^\times$ such that $\chi^2=1$. Thus $\chi$ is either trivial or $\theta_2$. The representation $I(\chi\nu\otimes \chi)$ has 4 irreducible components:
\[
\chi_{\SO_4(F)},\ 
J_{\beta_1}(1/2,\chi\otimes \St_{\GL_2}),\ 
J_{\beta_2}(1/2,\chi\otimes \St_{\GL_2}),\  
\chi_{\SO_4(F)}\otimes\St_{\SO_4},
\]
where $\chi_{\SO_4}$ is the character $\chi$ composed with the spinor norm of $\SO_4(F)$, $\St_{\SO_4}$ is the Steinberg representation of $\SO_4(F)$, which is the unique subrepresentation of $I^{\SO_4}(\nu\otimes 1)$, and $J_{\beta_i}(s,\chi\circ \St_{\GL_2})$ is the unique quotient of $I_{\beta_i}(\nu^s\chi\otimes \St_{\GL_2})$ for $s>0$. These 4 representations can also expressed as 
\[\begin{array}{ll}
\chi_{\SO_4}=(\chi\circ \det\boxtimes 1_{\GL_2})|_{\SO_4}, & J_{\beta_1}(1/2,\chi\otimes \St_{\GL_2})=(\St_{\GL_2}\boxtimes \chi\circ \det)|_{\SO_4(F)},\\
 \St_{\SO_4}=( \St_{\GL_2}\boxtimes \St_{\GL_2})|_{\SO_4(F)}, &J_{\beta_2}(1/2,\chi\otimes \St_{\GL_2})=(\chi\circ \det\boxtimes \St_{\GL_2})|_{\SO_4}.
\end{array}\]
% 4 irreducible components
%$$(\chi\circ \det,\1), (\chi\circ \det, \St_{\GL_2}), (\chi\otimes\St_{\GL_2}, \1), (\chi\otimes \St_{\GL_2},\St_{\GL_2}).$$ 
The local Langlands parameters of these 4 representations are given, in order, by
\begin{align*}
\phi_{\ref{ssec:SO4}.3a}(w,x)&= {\hat t}(\varphi(w) |w|, \varphi(w)),
\\
\phi_{\ref{ssec:SO4}.3b}(w,x)&={\hat t}(\varphi(w)|w|^{1/2}, \varphi(w)|w|^{1/2}) \iota_{\wh \beta_1}(x)
,\\
\phi_{\ref{ssec:SO4}.3c}(w,x)&={\hat t}(\varphi(w)|w|^{1/2}, \varphi(w)|w|^{-1/2})\iota_{\wh \beta_2}(x)
,\\
\phi_{\ref{ssec:SO4}.3d}(w,x)&= {\hat t}(\varphi(w),\varphi(w))\iota_{\wh \beta_1}(x)\iota_{\wh \beta_2}(x)
.
\end{align*}
Here $\varphi:W_F\to \C^\times$ is the dual character of the quadratic character $\chi : F^\times \to \C^\times$. 
Let $\lambda_{\ref{ssec:SO4}.3} : W_F \to \Lgroup{\SO_4}$ be the infinitesimal parameter for any of these Langlands parameters; thus,
\[
\lambda_{\ref{ssec:SO4}.3}(w) ={\hat t}(\varphi(w)|w|, \varphi(w)).
\]
The moduli space of Langlands parameters with infinitesimal parameter $\lambda_{\ref{ssec:SO4}.3}$ is the direct sum $\mathfrak{u}_{\hat\beta_1} \oplus \mathfrak{u}_{\hat\beta_2}$ of the root space for the simple roots $\hat\beta_1$ and $\hat\beta_2$ equipped with the group action of $\dualgroup{T}$ given by $t\cdot (x_1,x_2) \ceq (\hat\beta_1(t)x_1, \hat\beta_2(t)x_2)$. 
As a prehomogeneous vector space, $V_{\lambda_{\ref{ssec:SO4}.3}} = \mathbb{A}^2$ with this $H_{\lambda_{\ref{ssec:SO4}.3}} = \GL_1^2$ action is equivalent to \ref{geocase-toric} for $n=2$.
\end{enumerate}

\subsubsection*{ABV-packets}

Table~\ref{table:LA-packetsD} presents the ABV-packets for all unipotent representations of $\SO_4$ and its pure inner form $\SO_4^\delta$, each as a union of a pure $L$-packet and its coronal representations. 

We now sketch how the calculations in Table~\ref{table:coefficients:SO4} are made; the arguments are very similar to those in Section~\ref{ssec:PGL3} so we will be brief here.
The bijection between $\Pi^\mathrm{pure}_{\lambda_{\ref{ssec:SO4}.3}}(\SO_4/F)$ and the six simple perverse sheaves in $\Perv_{\dualgroup{T}}(\mathfrak{u}_{\hat\beta_1} \oplus \mathfrak{u}_{\hat\beta_2})$ is given by the following table.
\[
\begin{array}{r|cccc}
&  C_{\ref{ssec:SO4}.3a} & C_{\ref{ssec:SO4}.3b} & C_{\ref{ssec:SO4}.3c} & C_{\ref{ssec:SO4}.3d} \\
\hline
\chi_{\SO_4(F)}  & \IC(\1_{C_{\ref{ssec:SO4}.3a}}) &&& \\
J_{\beta_1}(1/2, \chi\otimes \St_{\GL_2})  &&\IC(\1_{C_{\ref{ssec:SO4}.3b}}) && \\
J_{\beta_2}(1/2, \chi\otimes \St_{\GL_2}) &&&\IC(\1_{C_{\ref{ssec:SO4}.3c}}) & \\
\chi_{\SO_4(F)}\otimes\St_{\SO_4} &&&&\IC(\1_{C_{\ref{ssec:SO4}.3d}})  \\
\chi_{\SO_4^{\delta}(F)} &&&& \IC(\mathcal{D}_{C_{\ref{ssec:SO4}.3d}})  
\end{array}
\]
where $\mathcal{D}$ is the character of $A_{\phi_{\ref{ssec:SO4}.3}} = \{\pm 1\}$ corresponding to the pure inner form $\delta\in Z^1(F,\SO_4)$.
%Now argue exactly as in Section~\ref{ssec:ABV:PGL3}. 

%We illustrated how to compute $\NEvs_{C_\phi}\mathcal{P}$ for simple $\mathcal{P} \in \Perv_{\dualgroup{T}}(\mathfrak{u}_{\hat\beta_1}\oplus\mathfrak{u}_{\hat\beta_2})$ in \cite{CFMMX} and \cite{CFZ:cubics}.
%The results in this case are summarized in Table~\ref{table:coefficients:SO4}.

To illustrate the calculations that this table summarizes, consider the case $\phi_{\ref{ssec:SO4}.3}$. The component group of $\phi_{\ref{ssec:SO4}.3}$ is $\{\pm 1\}$ and thus the pure $L$-packet of $\phi_{\ref{ssec:SO4}.3}$ has 2 elements:
$$\Pi_{\phi_{\ref{ssec:SO4}.3}}^{\mathrm{pure}}(\SO_4)=\{\St_{\SO_4}(\chi), \chi_{\SO_4^\delta(F)}  \}.$$
where $\chi_{\SO_4^\delta(F)}=1$ if $\chi=1$ and $\chi_{\SO_4^\delta(F)}$ is the unique unramified quadratic character of ${\SO_4^\delta(F)}$ if $\chi\ne 1$.
Table~\ref{table:coefficients:SO4} presents the ABV-packet coefficients promised in the Introduction as it pertains to the Langlands parameter $\phi_{\ref{ssec:SO4}.3}$.
From Table~\ref{table:coefficients:SO4} we read off the ABV-packets in Case~\ref{ssec:SO4}.3; the result appears in Table~\ref{table:LA-packetsD}.

\begin{table}[htp]
\caption{
ABV-packet coefficients $\langle \ , \ \rangle $ for unipotent representations of $\SO_4$ with infinitesimal parameter $\lambda_{\ref{ssec:SO4}.3}$.
The first four rows refer to representations of $\SO_4(F)$ while the fifth row refers to a representation of the inner form $\SO_4^\delta(F)$.
%Here, $\mathcal{D}$ is used to denote the character of $A^\ABV_\phi = \{\pm 1\}$ corresponding to the pure inner form $\delta\in Z^1(F,\SO_4)$ using the map $A^\ABV_\phi \to A_\phi$.
}
\begin{center}
$
\begin{array}{|c||c|c|c|c|}
\hline
\Pi^\mathrm{pure}_{\lambda_{\ref{ssec:SO4}.3}}(\SO_4/F) & \widehat{A^\ABV_{\phi_{\ref{ssec:SO4}.3a}}}  & \widehat{A^\ABV_{\phi_{\ref{ssec:SO4}.3b}}} & \widehat{A^\ABV_{\phi_{\ref{ssec:SO4}.3c}}}  & \widehat{A^\ABV_{\phi_{\ref{ssec:SO4}.3d}}}  \\
\hline\hline
 \chi_{\SO_4(F)}  & \1 & 0 & 0 & 0  \\
J_{\beta_1}(1/2, \chi\otimes \St_{\GL_2})  & 0 & \1 & 0 & 0  \\
J_{\beta_2}(1/2, \chi\otimes \St_{\GL_2})  & 0 & 0 & \1 & 0  \\
\St_{\SO_4}(\chi) & 0 & 0 & 0 & \1  \\
\chi_{\SO_4^{\delta}(F)} & \delta   & \delta & \delta & \delta \\
\hline
\end{array}
$
\end{center}
\label{table:coefficients:SO4}
\end{table}%

Properties~\ref{G2:LLC}, \ref{G2:open}, \ref{G2:tempered} and \ref{G2:spherical} are elementary for $\SO_4(F)$ and its pure inner forms. 
%%%%%%%
Proposition~\ref{G2:stable} is interesting for $\SO_4(F)$ and its pure inner forms.
\begin{proposition}
The distributions $\Theta^{\SO_4}_{\phi}$ (resp. $\Theta^{\SO_4^\delta}_{\phi}$) form a basis for the space of invariant distributions spanned by characters of unipotent irreducible admissible representations $\pi$ of $\SO_4(F)$ (resp. $\SO_4^\delta(F)$).
The distributions $\Theta^{\SO_4}_{\phi}$ and $\Theta^{\SO_4^\delta}_{\phi}$ are stable.
\end{proposition}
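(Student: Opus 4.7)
The plan is to mimic the strategy of Theorem~\ref{thm:stable}, case by case through the infinitesimal parameters listed in Section~\ref{ssec:SO4}, exploiting the fact that $\SO_4$ admits only three conjugacy classes of Levi subgroups (the torus, and the two $\GL_2$-Levis attached to $\beta_1$, $\beta_2$) and that every non-elliptic unipotent standard module of $\SO_4(F)$ is parabolically induced from a Levi for which the stability question is elementary.

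First I would dispose of the easy cases. Inspection of Table~\ref{table:LA-packetsD} shows that for each $\phi$ in Cases~\ref{ssec:SO4}.0, \ref{ssec:SO4}.1 and \ref{ssec:SO4}.2 the component group $A^\ABV_\phi$ is trivial and the ABV-packet is a singleton consisting of an irreducible parabolically induced representation $I^{\SO_4}(\chi_1\otimes\chi_2)$ or $I_{\beta_i}(\sigma)$. By construction $\Theta^{\SO_4}_\phi=\Theta_{\pi}$ in each of these cases, and parabolically induced characters are stable by van Dijk's formula (induction from a Levi preserves stability on the spectral side). For $\phi=\phi_{\ref{ssec:SO4}.0'}$, both representations $\pi_4,\pi_4'$ satisfy $\dim(\pi)=0$ so $\Theta^{\SO_4}_{\phi_{\ref{ssec:SO4}.0'}}=\Theta_{\pi_4}+\Theta_{\pi_4'}=\Theta_{I^{\SO_4}(\theta_2\otimes 1)}$, again a parabolically induced character and therefore stable. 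Similarly for $\SO_4^\delta(F)$, Table~\ref{table:LA-packetsD} shows the only unipotent representations are the one-dimensional $\chi_{\SO_4^\delta(F)}$, and $\Theta^{\SO_4^\delta}_\phi=e(\delta)\Theta_{\chi_{\SO_4^\delta(F)}}$ which is obviously stable.

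The main case is Case~\ref{ssec:SO4}.3. Here there are four Langlands parameters $\phi_{\ref{ssec:SO4}.3a},\ldots,\phi_{\ref{ssec:SO4}.3d}$, the last of which is elliptic and tempered, with four representations of $\SO_4(F)$ and one coronal representation $\chi_{\SO_4^\delta(F)}$ on the pure inner form. Exactly as in the proof of Theorem~\ref{thm:stable}\ref{G2:stable}, I would use the standard modules
\[
M(\phi_{\ref{ssec:SO4}.3a})=I^{\SO_4}(\chi\nu\otimes\chi),\qquad M(\phi_{\ref{ssec:SO4}.3b})=I_{\beta_1}(1/2,\chi\otimes\St_{\GL_2}),\qquad M(\phi_{\ref{ssec:SO4}.3c})=I_{\beta_2}(1/2,\chi\otimes\St_{\GL_2})
\]
together with the composition-series decomposition recorded in Case~\ref{ssec:SO4}.3. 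Reading off Table~\ref{table:coefficients:SO4} gives a unipotent upper-triangular matrix relating the column vectors $(\Theta^{\SO_4}_{\phi_{\ref{ssec:SO4}.3a}},\Theta^{\SO_4}_{\phi_{\ref{ssec:SO4}.3b}},\Theta^{\SO_4}_{\phi_{\ref{ssec:SO4}.3c}},\Theta^{\SO_4}_{\phi_{\ref{ssec:SO4}.3d}})$ and $(\Theta_{M(\phi_{\ref{ssec:SO4}.3a})},\Theta_{M(\phi_{\ref{ssec:SO4}.3b})},\Theta_{M(\phi_{\ref{ssec:SO4}.3c})},\Theta^{\SO_4}_{\phi_{\ref{ssec:SO4}.3d}})$. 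Since the first three standard modules are properly induced and hence stable, stability of all four $\Theta^{\SO_4}_\phi$ in Case~\ref{ssec:SO4}.3 reduces to the stability of the single distribution $\Theta^{\SO_4}_{\phi_{\ref{ssec:SO4}.3d}}$ attached to the elliptic tempered packet.

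The hard part is therefore this last input: stability of the tempered elliptic distribution on $\SO_4(F)$. I would obtain this by transporting the question through the isogeny $\SL_2\times\SL_2\to\SO_4$ described in Section~\ref{ssec:SO4}. Under the identification $\GSO_4\cong(\GL_2\times\GL_2)/\Delta(F^\times)$ the four representations forming the pure $L$-packet of $\phi_{\ref{ssec:SO4}.3d}$ are obtained by restriction from $(\chi\circ\det)\St_{\GL_2}\boxtimes(\chi\circ\det)\St_{\GL_2}$, and the alternating sum $\Theta^{\SO_4}_{\phi_{\ref{ssec:SO4}.3d}}-e(\delta)\Theta^{\SO_4^\delta}_{\phi_{\ref{ssec:SO4}.3d}}$ should match the corresponding stable combination of Steinberg and trivial representations for $\SL_2\times\SL_2$, for which stability is the classical result of Shelstad. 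Once that identification is made, the stability assertions for both $\SO_4(F)$ and $\SO_4^\delta(F)$ follow, completing the inductive triangular argument and also identifying the spans on the nose; the basis property is then immediate from the triangular change-of-basis matrix having $\pm 1$ entries on its diagonal.
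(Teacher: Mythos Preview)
Your reduction via standard modules to the elliptic parameter $\phi_{\ref{ssec:SO4}.3d}$ is correct and parallels the proof of Theorem~\ref{thm:stable}\ref{G2:stable}. However, the paper avoids any separate argument at the elliptic case, and this is the main point of divergence.

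The paper's argument in Case~\ref{ssec:SO4}.3 uses a \emph{different} collection of induced modules: besides the full principal series $I^{\SO_4}(\chi\nu\otimes\chi)=a+b+c+d$ (with $d=\chi\otimes\St_{\SO_4}$ the elliptic piece), it uses the inductions $I_{\beta_i}(\nu^{1/2}\chi\circ\det)$ from the trivial representation of the $\GL_2$-Levi, which in the Grothendieck group equal $a+c$ and $a+b$ and do not contain $d$. Since $a=\chi_{\SO_4(F)}$ is one-dimensional (hence stable), one gets $b$ and $c$ stable by subtraction, and then $d$ is stable as the difference of $I^{\SO_4}(\chi\nu\otimes\chi)$ and $a+b+c$. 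No input about elliptic packets is needed. By contrast, your standard modules $I_{\beta_i}(\nu^{1/2}\chi\otimes\St_{\GL_2})=b+d$ and $c+d$ both contain $d$, forcing you to handle $d$ separately.

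Your proposed isogeny argument for $d$ has genuine problems as written. The pure $L$-packet of $\phi_{\ref{ssec:SO4}.3d}$ has two members, not four; you seem to be conflating it with the set of representations with infinitesimal parameter $\lambda_{\ref{ssec:SO4}.3}$. The ``alternating sum'' $\Theta^{\SO_4}_{\phi_{\ref{ssec:SO4}.3d}}-e(\delta)\Theta^{\SO_4^\delta}_{\phi_{\ref{ssec:SO4}.3d}}$ lives on two different groups and is not the object whose stability is at issue; you need $\Theta^{\SO_4}_{\phi_{\ref{ssec:SO4}.3d}}$ stable on $\SO_4(F)$ alone. Shelstad's results concern real groups; for $p$-adic $\SL_2$ the relevant source would be Labesse--Langlands. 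And the isogeny $\SL_2\times\SL_2\to\SO_4$ is not surjective on $F$-points (the cokernel is $F^\times/F^{\times 2}$), so transporting stability through it requires a real argument you have not given. Replace this entire step with the elementary subtraction above and your proof is complete.
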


\begin{proof}
In all cases except $\phi= \phi_{\ref{ssec:SO4}.0'}$, $\Pi_\phi(\SO_4(F))$ and $\Pi_\phi(\SO^\delta_4(F))$ are singletons (or empty) and $\Theta^{\SO_4}_{\phi} = \Theta_\pi$. 
To see that these are all stable distributions, we may argue as follows. 

In Case~\ref{ssec:SO4}.0, the representations $I^{\SO_4}(\chi_1\otimes \chi_2)$ are stable because they are standard modules, stably induced from representations of $T(F)$.

In Case~\ref{ssec:SO4}.1 we have 
$\displaystyle
\Theta_{I^{\SO_4}(\chi\nu^{1/2}\otimes \chi\nu^{-1/2})} = \Theta_{I_{P_1}(\chi\otimes \St_{\GL_2})} + \Theta_{I_{P_1}(\chi\circ \det)}.
$\\
Since the distribution $\Theta_{I^{\SO_4}(\chi\nu^{1/2}\otimes \chi\nu^{-1/2})}$ is stable (by an argument as above) and $\Theta_{I_{P_1}(\chi\circ \det)}$ is stable by hand, $\Theta_{I_{P_1}(\chi \circ \St_{\GL_2})}$ is also stable.
For Case~\ref{ssec:SO4}.2 argue as in Case~\ref{ssec:SO4}.1.
In Case~\ref{ssec:SO4}.3 we have 
\[
\Theta_{I^{\SO_4}(\chi\nu\otimes \chi)}=\Theta_{J_{\emptyset}(\chi\nu\otimes \chi)}+\Theta_{J_{P_1}(1/2, \chi\otimes \St_{\GL_2})}+\Theta_{J_{P_2}(1/2, \chi\otimes \St_{\GL_2})} + \Theta_{\St_{\SO_4}(\chi)}.
\]
Since $I^{\SO_4}(\chi\nu\otimes \chi)$ is stable (argue as in Case~\ref{ssec:SO4}.0) and $J_{\emptyset}(\chi\nu\otimes \chi)$, $J_{P_1}(1/2, \chi\otimes \St_{\GL_2})$, and $J_{P_2}(1/2, \chi\otimes \St_{\GL_2})$ are stable (argue as in Case~\ref{ssec:SO4}.1) it follows that $\Theta_{\St_{\SO_4}(\chi)}$ is stable.
Finally, $\chi_{\SO_4^\delta(F)} $ is stable because $\1_{\SO_4^\delta(F)}$ is stable.
Case $\phi= \phi_{\ref{ssec:SO4}.0'}$ is a well known: $\Pi_{\phi_{\ref{ssec:SO4}.0'}}(\SO_4(F)) = \{ \pi_4, \pi'_4\}$ where $\pi_4$ and $\pi_4'$ are the two irreducible representations appearing in the induced representation $I^{\SO_4}(\chi_2\otimes 1)$ and the stable distribution attached to this $L$-packet is $\Theta^{\SO_4}_{\phi_{\ref{ssec:SO4}.0'}} = \Theta_{\pi_4} + \Theta_{\pi'_4}$.
\end{proof}

\subsection{Unipotent representations of $\PGL_3$ and its pure inner forms}\label{ssec:PGL3}
  
In this section we find the ABV-packets and ABV-packet coefficients for all unipotent representations of $\PGL_3$ and its pure inner forms.

As in Section~\ref{ssec:GL2}, the category of unipotent representations of $\PGL_3(F)$ is precisely the category of unramified principal series representations of $\PGL_3(F)$.
In this section we partition the set $\Pi(\PGL_3(F))_\text{unip}$ of unipotent representations of $\PGL_3(F)$ into $L$-packets and also describe the corresponding $L$-parameters.
We do the same for the non-split inner form $D^\times/F^\times$, where $D$ is a central division algebra of degree $3$ over $F$.
The $p$-adic group $\PGL_3$ has three pure inner forms but only two inner forms: $H^1(F,\PGL_3)$ has cardinality while its image in
$
H^1(F,\PGL_3) \to H^1(F,\Aut(\PGL_3))
$
has cardinality $2$. We remark that this function of pointed sets is not surjective.
Let $\delta,\delta' \in Z^1(F,\PGL_3)$ be representatives for the two non-trivial classes in $H^1(F,\PGL_3)$. 
Let $\PGL_3^\delta, \PGL_3^{\delta'}$ be the inner forms of $\PGL_3$ attached to these cocycles; then $\PGL_3^\delta\iso \PGL_3^{\delta'}$ as algebraic groups over $F$, although $[\delta]\ne [\delta']$ in $H^1(F,\PGL_3)$.
We can take $\PGL_3^\delta(F)= D^\times/F^\times$ and $\PGL_3^{\delta'}(F)=(D')^\times/F^\times$, where $D'$ is the opposite algebra of $D$.

\subsubsection*{Langlands correspondence}

\begin{table}[htp]
\caption{Pure Arthur packets for unipotent representations of $\PGL_3(F)$ and its pure inner forms $\PGL_3^\delta$ and $\PGL_3^{\delta'}$: each row gives a pure $L$-packet for an $L$-parameter and its coronal representations and therefore determines a pure Arthur packet. Notation is explained below. Cases, indicated in the left-hand column, gather representations together by infinitesimal parameter.}
\label{table:LA-packetsA}
\begin{center}
$
\begin{array}{|c|c|c| l |}
\hline
\text{$L$-parameter} & \text{Pure $L$-packet} & \text{Coronal} &\text{Parameter of } \\
\phi & \Pi^\text{pure}_\phi(\PGL_3/F) & \text{representations} & \text{Arthur type?} \\
\hline\hline
\phi_{\ref{ssec:PGL3}.0} 	&  \Ind_{B_3(F)}^{\GL_3(F)}(\chi_1\otimes \chi_2\otimes \chi_1^{-1}\chi_2^{-1})	& & \text{unitary $\chi_1$, $\chi_2$}\\
\hline\hline
\phi_{\ref{ssec:PGL3}.1a}	& I_{\alpha_1}((\chi \circ \det) \otimes \chi^{-2})  &  & \text{unitary $\chi$} \\
\phi_{\ref{ssec:PGL3}.1b} &  I_{\alpha_1}(\chi \St_{\GL_2}\otimes \chi^{-2}) & & \text{unitary $\chi$} \\
\hline\hline
\phi_{\ref{ssec:PGL3}.2a} &  I_{\alpha_1}((\chi \nu^{1/6}\circ \det)\otimes \chi\nu^{-1/3})  &  &\text{no}  \\
\phi_{\ref{ssec:PGL3}.2b}	&    I_{\alpha_1} (\chi \nu^{1/6} \St_{\GL_2}\otimes \chi\nu^{-1/3})  &  &\text{no} \\
\hline
\phi_{\ref{ssec:PGL3}.3a} &  \chi_{\PGL_3}  & \chi_{\PGL_3^\delta(F)},\ \chi_{\PGL_3^{\delta'}(F)}   &\text{yes}\\
\phi_{\ref{ssec:PGL3}.3b}	&  \chi\otimes J_{\alpha_1}(\nu\otimes \nu^{-1/2}\St_{\GL_2})  &  \chi_{\PGL_3^\delta(F)},\ \chi_{\PGL_3^{\delta'}(F)}   &\text{no} \\
\phi_{\ref{ssec:PGL3}.3c}	& \chi\otimes J_{\alpha_1}(\nu^{1/2}\St_{\GL_2}\otimes \nu^{-1})   & \chi_{\PGL_3^\delta(F)},\ \chi_{\PGL_3^{\delta'}(F)}    &\text{no}  \\
\phi_{\ref{ssec:PGL3}.3d}	&  \chi\otimes \St_{\PGL_3}, \  \chi_{\PGL_3^\delta(F)}  ,\ \chi_{\PGL_3^{\delta'}(F)}  &      &\text{yes}\\
\hline
\end{array}
$
\end{center}
\end{table}

Let $T$ be the standard maximal torus in $\PGL_3$ and let $B_3$ be the standard Borel subgroup of $\GL_3$.
We denote the matching positive roots of $\GL_3$ by $\alpha_1$ and $\alpha_2$. Let $P_{\alpha_i}$ be the standard maximal parabolic subgroup of $\GL_3$ such that the root space $\alpha_i$ is in the Levi of $P_{\alpha_i}$.
Since $\PGL_3(F)=\GL_3(F)/F^\times$, a representation of $\PGL_3(F)$ is just a representation of $\GL_3(F)$ with trivial central character. 
The cases treated below group representations together by infinitesimal parameter.

\begin{enumerate}%[leftmargin=10pt,labelindent=10pt,labelwidth=15pt,itemindent=15pt]
\item[\ref{ssec:PGL3}.0]
Let $\chi_1,\chi_2$ are characters of $F^\times$ such that $\chi_1\chi_2^{-1}$, $\chi_1^2\chi_2$ and $\chi_1\chi_2^2$ are not equal to $\nu$ and not equal to $\nu^{-1}$. Then the induced representation 
\[
\Ind_{B_3(F)}^{\GL_3(F)}(\chi_1\otimes \chi_2\otimes \chi_1^{-1}\chi_2^{-1})
\]
is irreducible.
Note that this representation has trivial central character and thus can be viewed as a representation of $\PGL_3(F)$. We denote this representation of $\PGL_3(F)$ by $I^{\PGL_3}(\chi_1\otimes \chi_2)$. The corresponding Langlands parameter for $I^{\PGL_3}(\chi_1\otimes \chi_2)$ is given by 
\[
\phi_{\ref{ssec:PGL3}.0}(w,x)=\bpm \varphi_1(w) &0 &0 \\ 0 &\varphi_2(w)&0 \\0 &0 &\varphi_1(w)^{-1}\varphi_2(w)^{-1} \epm\in \SL_3(\C),
\]
where $\varphi_i:W_F\to \C^\times$ corresponds to $\chi_i : F^\times \to \C^\times$ under class field theory.

\item[\ref{ssec:PGL3}.1]  Let $\chi$ be a character of $F^\times$ with $\chi^3\ne \nu^{\pm 1/2},\nu^{\pm 3/2}$. We consider the representation 
\[
\Ind_{B_3(F)}^{\GL_3(F)}(\chi\nu^{1/2}\otimes \chi\nu^{-1/2}\otimes \chi^{-2})
\]
of $\GL_3(F)$, which is also viewed as a representation of $\PGL_3(F)$. This representation has length 2, and its two irreducible components are $I_{\alpha_1}(\chi \St_{\GL_2}\otimes \chi^{-2})$ and $I_{\alpha_1}(\chi\circ \det\otimes \chi^{-2})$, which, adapting the notation above, are written as 
\[
I^{\PGL_3}_{\alpha_1}(\chi \St_{\GL_2}),\qquad \text{and}\qquad I^{\PGL_3}_{\alpha_1}(\chi\circ \det).
\]
Here $I^{\PGL_3}_{\alpha_1}$ stands for $\Ind_{P_{\alpha_1}}^{\GL_3}$, and a representation of $\GL_3(F)$ is viewed as a representation of $\PGL_3(F)$ if it has trivial central character. The Langlands parameter of $ I_{\alpha_1}(\chi\circ \det\otimes \chi^{-2}) $ is given by 
\[
\phi_{\ref{ssec:PGL3}.1.a}(w,x)= \bpm \varphi(w)|w|^{1/2}&0 &0 \\ 0 &\varphi(w)|w|^{-1/2}&0 \\ 0 &0 &\varphi(w)^{-2} \epm\in \SL_3(\C),
\]
where $\varphi:W_F\ra \C^\times$ is the dual character of $\chi$. The Langlands parameter for $I_{\alpha_1}(\chi \St_{\GL_2}\otimes \chi^{-2}) $ is given by 
\[
\phi_{\ref{ssec:PGL3}.1b}\left(w,\left(\begin{smallmatrix} a & b \\ c & d \end{smallmatrix}\right)\right)= 
\begin{pmatrix}
\varphi(w)a & \varphi(w)b & 0 \\
\varphi(w)c & \varphi(w)d & 0 \\ 
0 & 0 & \varphi(w)^{-2} 
\end{pmatrix}
= \bpm \varphi(w)x&\\ &\varphi(w)^{-2} \epm\in  \SL_3(\C).
\]
%which we abbreviate to
%\[
%\phi_{\ref{ssec:PGL3}.1b}(w,x)= \bpm \varphi(w)x&\\ &\varphi(w)^{-2} \epm\in \SL_3(\C).
%\]
Note that the infinitesimal parameter for these representations is
\[
\lambda_{\ref{ssec:PGL3}.1a}(w)= \bpm \varphi(w)|w|^{1/2}&0 &0 \\ 0 &\varphi(w)|w|^{-1/2}&0 \\ 0 &0 &\varphi(w)^{-2} \epm.
\]

\item[\ref{ssec:PGL3}.2] 
 Let $\chi$ be a character of $F^\times$ with $\chi^3=1$. Consider the representation 
 \[
 \Ind_{B_3(F)}^{\GL_3(F)}(\chi\nu^{2/3}\otimes \chi \nu^{-1/3}\otimes \chi\nu^{-1/3}),
 \]
 which is also viewed as a representation of $\PGL_3(F)$. This representation has length 2 and its two components are 
 \[
 I_{\alpha_1}(\chi\nu^{1/6}\St_{\GL_2}\otimes \chi\nu^{-1/3})
 \quad\text{and}\quad
 I_{\alpha_1}(\chi\nu^{1/6}\circ \det\otimes \chi\nu^{-1/3}).
\] 
The Langlands parameter of $I_{\alpha_1}(\chi\nu^{1/6}\circ \det\otimes \chi\nu^{-1/3})$ is given by 
%\item[(\ref{ssec:PGL3}.2)] Let $\mu$ be a character of $F^\times$ with $\mu^3=1$. Consider the representation $\Ind_{B_3}^{\GL_3}(\mu\nu^{2/3}\otimes \mu \nu^{-1/3}\otimes \mu\nu^{-1/3})$, which is also viewed as a representation of $\PGL_3(F)$. This representation has length 2 and its two components are $I_{\alpha_1}(\mu\nu^{1/6}\St_{\GL_2}\otimes \mu\nu^{-1/3})$ and  $I_{\alpha_1}(\mu\nu^{1/6}\circ \det\otimes \mu\nu^{-1/3})$. The Langlands parameter of $I_{\alpha_1}(\mu\nu^{1/6}\circ \det\otimes \mu\nu^{-1/3})$ is given by 
\[
\phi_{\ref{ssec:PGL3}.2a}(w,x)= \bpm \varphi(w)|w|^{2/3} & 0 & 0 \\ 0 &\varphi(w)|w|^{-1/3} & 0 \\ 0 & 0 & \varphi(w)|w|^{-1/3} \epm\in \SL_3(\C),
\]
and the Langlands parameter of $I_{\alpha_1}(\chi\nu^{1/6}\St_{\GL_2}\otimes \chi\nu^{-1/3})$ is given by 
\[ 
\phi_{\ref{ssec:PGL3}.2b}(w,x)= \bpm \varphi(w)|w|^{1/6}x &\\ & \varphi(w)|w|^{-1/3} \epm\in \SL_3(\C).
\]

\item[\ref{ssec:PGL3}.3] Let $\chi$ be a character of $F^\times$ with $\chi^3=1$. Consider the induced representation 
\[
\Ind_{B_3(F)}^{\GL_3(F)}(\chi\nu\otimes \chi\otimes \chi\nu^{-1})
\]
of $\GL_3(F)$ (and also of $\PGL_3(F)$). This representation has length 4 and its components are given by 
\[
\begin{array}{cc}
\chi_{\PGL_3(F)}, & \chi\otimes J_{\alpha_2}(\nu\otimes \nu^{-1/2}\St_{\GL_2}), \\
\chi\otimes J_{\alpha_1}(\nu^{1/2}\St_{\GL_2}\otimes \nu^{-1}), &  \chi\otimes\St_{\PGL_3},
\end{array}
\]
where $\chi_{\PGL_3(F)}=\chi\circ\det$, $J_{\alpha_1}( \nu^{1/2}\St_{\GL_2}\otimes \nu^{-1})$ is the unique quotient of $I_{\alpha_1}(\nu^{1/2}\St_{\GL_2}\otimes \nu^{-1})$ and $ J_{\alpha_2}(\nu\otimes \nu^{-1/2}\St_{\GL_2}$ is defined similarly. The local Langlands parameters of these 4 representations are given, in order,  by 
\begin{align*}
\phi_{\ref{ssec:PGL3}.3a}(w,x)&= \varphi(w) \bpm |w| &&\\ &1&\\ &&|w|^{-1} \epm \in \SL_3(\C),\\
\phi_{\ref{ssec:PGL3}.3b}(w,x)&= \varphi(w) \bpm |w| &\\ &|w|^{-1/2}x \epm\in \SL_3(\C),\\
\phi_{\ref{ssec:PGL3}.3c}(w,x)&= \varphi(w) \bpm |w|^{1/2}x &\\ &|w|^{-1} \epm\in \SL_3(\C),\\
\phi_{\ref{ssec:PGL3}.3d}(w,x)&= \varphi(w) \Sym^2(x)\in \SL_3(\C),
\end{align*}
where $\varphi : W_F\to C^\times$ is Langlands parameter for the quadratic character $\chi$ and $\Sym^2$ is the symmetric square representation of $\SL_2(\C)$. 
\end{enumerate}

This completes the description of the Langlands correspondence for unipotent representations of $\PGL_3(F)$.

\subsubsection*{ABV-packets}

Table~\ref{table:LA-packetsA} presents the $L$-packets for all unipotent representations of $\PGL_3(F)$.
Returning to pure $L$-packets, we only explain the case {\ref{ssec:PGL3}.3} since the others proceed by similar, simpler arguments.
Denote the last Langlands parameter in Case~\ref{ssec:PGL3}.3 by ${\phi}_{\ref{ssec:PGL3}.3d}$. This Langlands parameter has component group $\mu_3$ so its pure $L$-packet contains 3 admissible irreducible representations:
$$\Pi_{{\phi}_{\ref{ssec:PGL3}.3d}}^{\mathrm{pure}}(\PGL_3/F) =\{\chi\otimes \St_{\PGL_3}, \chi_{\PGL_3^\delta(F)}, \chi_{\PGL_3^{\delta'}(F)} \},$$
where $\chi_{\PGL_3^{\delta}(F)}$ (resp. $\chi_{\PGL_3^{\delta'}(F)}$) is the character of $\PGL_3^{\delta}(F)$ (resp. $\PGL_3^{\delta'}(F)$) obtained by composing $\chi$ with the reduced norm.

Table~\ref{table:coefficients:PGL3} presents the ABV-packet coefficients for the Langlands parameter $\phi_{\ref{ssec:PGL3}.3}$.
From Table~\ref{table:coefficients:PGL3} we read off the ABV-packets in Case~\ref{ssec:PGL3}.3; the result appears in Table~\ref{table:LA-packetsA}, where $\mathcal{D}$ (resp. $\mathcal{D}'$) is the character of $A_{\phi_{\ref{ssec:PGL3}.3d}}$ corresponding to the pure inner form $\delta$ (resp. $\delta'$) in $Z^1(F,\PGL_3)$.

\begin{table}[tp]
\caption{ABV-packet coefficients  $\langle \ , \ \rangle $ for unipotent representations of $\PGL_3$ and its pure inner forms with infinitesimal parameter $\lambda_{\ref{ssec:PGL3}.3}$.
The first four rows refer to representations of $\PGL_3(F)$, while the fifth and sixth rows are refer to the pure inner forms $\delta$ and $\delta'$, respectively. 
Below, we identify $\delta$ (resp. $\delta'$) with the character of $A_{\phi_{\ref{ssec:PGL3}.3d}}$ corresponding to the pure inner form $\delta$ (resp. $\delta'$) in $Z^1(F,\PGL_3)$.
All microlocal fundamental groups are order $3$
}
\begin{center}
$
\begin{array}{|c||cccc|}
\hline
\text{Irreps with inf parameter $\lambda_{\ref{ssec:PGL3}.3}$} & \widehat{A^\ABV_{\phi_{\ref{ssec:PGL3}.3a}}}  & \widehat{A^\ABV_{\phi_{\ref{ssec:PGL3}.3b}}}   & \widehat{A^\ABV_{\phi_{\ref{ssec:PGL3}.3c}}}  & \widehat{A^\ABV_{\phi_{\ref{ssec:PGL3}.3d}}}  \\
\hline\hline
\chi_{\PGL_3(F)}  & \1 & 0 & 0 & 0 \\
\chi\otimes J_{\alpha_1}(\nu\otimes \nu^{-1/2}\St_{\GL_2})  & 0 & \1 & 0 & 0 \\
\chi\otimes J_{\alpha_1}(\nu^{1/2}\St_{\GL_2}\otimes \nu^{-1}) & 0 & 0 & \1 & 0 \\
\chi\otimes \St_{\PGL_3} & 0 & 0 & 0 & \1 \\
%\hline
\chi_{\PGL_3^{\delta}(F)} & \delta  & \delta & \delta & \delta \\
%\hline
\chi_{\PGL_3^{\delta'}(F)} & \delta'  & \delta' & \delta' & \delta' \\
\hline
\end{array}
$
\end{center}
\label{table:coefficients:PGL3}
\end{table}%

We now explain the calculations behind Table~\ref{table:coefficients:PGL3}, as a special case of the techniques of \cite{CFMMX}; see also \cite{CFZ:cubics}.
Let $\lambda_{\ref{ssec:PGL3}.3}: W_F \to \SL_3(\C)$ be the infinitesimal parameter of $\phi_{\ref{ssec:PGL3}.3d}$:
\[
\lambda_{\ref{ssec:PGL3}.3}(w) 
\ceq 
\phi_{\ref{ssec:PGL3}.3d} \left(w,\diag( \abs{w}^{1/2}, \abs{w}^{-1/2})\right)
= 
\diag(\abs{w},1, \abs{w}^{-1}).
\]
\iffalse%%%%%
\[
\lambda_{\ref{ssec:PGL3}.3}(w) \ceq \phi_{\ref{ssec:PGL3}.3d}\left(w,\left(\begin{smallmatrix} \abs{w}^{1/2} & 0 \\ 0 & \abs{w}^{-1/2} \end{smallmatrix} \right)\right)
= 
\begin{pmatrix} 
\abs{w} & 0 & 0 \\
0 & 1 & 0 \\
0 & 0 & \abs{w}^{-1} 
\end{pmatrix}.
\]
\fi%%%%%
The moduli space of Langlands parameters with infinitesimal parameter $\lambda_{\ref{ssec:PGL3}.3}$ is the direct sum $\mathfrak{u}_{\hat\alpha_1} \oplus \mathfrak{u}_{\hat\alpha_2}$ of the root space for the simple roots $\hat\alpha_1$ and $\hat\alpha_2$, equipped with the group action of $\dualgroup{T}$ given by $t\cdot (x_1,x_2) \ceq (\hat\alpha_1(t)x_1, \hat\alpha_2(t)x_2)$.

Vogan's version of the Langlands correspondence establishes a bijection between simple objects in the category $\Perv_{\dualgroup{T}}(\mathfrak{u}_{\hat\alpha_1} \oplus \mathfrak{u}_{\hat\alpha_2})$ of $\dualgroup{T}$-equivariant perverse sheaves on $\mathfrak{u}_{\hat\alpha_1} \oplus \mathfrak{u}_{\hat\alpha_2}$ and irreducible admissible representations of $\PGL_3$ and its pure inner forms with infinitesimal parameter $\lambda_{\ref{ssec:PGL3}.3}$.
We have already seen there are six of the latter -- the four irreducible subquotients of $
\Ind_{B_3(F)}^{\GL_3(F)}(\chi\nu\otimes \chi\otimes \mu\nu^{-1})$ together with the characters $\chi_{\PGL_3^{\delta}(F)}$ and $\chi_{\PGL_3^{\delta'}(F)}$. Let us enumerate the corresponding simple objects in $\Perv_{\dualgroup{T}}(\mathfrak{u}_{\hat\alpha_1} \oplus \mathfrak{u}_{\hat\alpha_2})$.
Let $C_{\ref{ssec:PGL3}.3a}$ be the trivial orbit in $V_{\lambda_{\ref{ssec:PGL3}.3}}$, 
let $C_{\ref{ssec:PGL3}.3b}$ be the $\dualgroup{T}$-orbit of $X_{\hat\alpha_1}$, which is the point in the moduli space for the Langlands parameter $\phi_1$ for $\chi\otimes J_{\alpha_1}(\nu\otimes \nu^{-1/2}\St_{\GL_2})$; let $C_{\ref{ssec:PGL3}.3c}$ be the $\dualgroup{T}$-orbit of $X_{\hat\alpha_2}$, which is the point in the moduli space for the Langlands parameter $\phi_2$ for $\chi\otimes J_{\alpha_1}(\nu^{1/2}\St_{\GL_2}\otimes \nu^{-1})$.
Finally, let $C_{\ref{ssec:PGL3}.3d}$ be the $\dualgroup{T}$-orbit of $X_{\hat\alpha_1}+X_{\hat\alpha_2}$; this orbit of the Langlands parameter $\phi_{\ref{ssec:PGL3}.3d}$ for $\mu\otimes \St_{\PGL_3}$.
The equivariant fundamental groups of these orbits are trivial with the exception of the open orbit $C_{\ref{ssec:PGL3}.3d}$, which has equivariant fundamental group 
\[
A_{\phi_{\ref{ssec:PGL3}.3d}} \ceq \pi_1^{\dualgroup{T}}(C_{\ref{ssec:PGL3}.3},X_{\hat\alpha_1}+X_{\hat\alpha_2}) = \pi_0(Z_{\dualgroup{T}}(X_{\hat\alpha_1}+X_{\hat\alpha_2})) = \{ 1, \theta_3, \theta_3^2\}.
\]
The bijection between $\Pi^\mathrm{pure}_{\lambda_{\ref{ssec:PGL3}.3}}(\PGL_3/F)$ and the six simple perverse sheaves in $\Perv_{\dualgroup{T}}(\mathfrak{u}_{\hat\alpha_1} \oplus \mathfrak{u}_{\hat\alpha_2})$ are given by the following table.
\[
\begin{array}{r|cccc}
& C_{\ref{ssec:PGL3}.3a} & C_{\ref{ssec:PGL3}.3b} & C_{\ref{ssec:PGL3}.3c} & C_{\ref{ssec:PGL3}.3d} \\
\hline
\chi_{\PGL_3(F)}  & \IC(\1_{C_{\ref{ssec:PGL3}.3a}}) &&& \\
 \chi\otimes J_{\alpha_1}(\nu\otimes \nu^{-1/2}\St_{\GL_2})  && \IC(\1_{C_{\ref{ssec:PGL3}.3b}}) && \\
 \chi\otimes J_{\alpha_1}(\nu^{1/2}\St_{\GL_2}\otimes \nu^{-1}) &&& \IC(\1_{C_{\ref{ssec:PGL3}.3c}}) & \\
 \chi\otimes \St_{\PGL_3} &&&& \IC(\1_{C_{\ref{ssec:PGL3}.3d}})  \\
 \chi_{\PGL_3^{\delta}(F)} &&&& \IC(\mathcal{D}_{C_{\ref{ssec:PGL3}.3d}})  \\
 \chi_{\PGL_3^{\delta'}(F)} &&&& \IC(\mathcal{D}'_{C_{\ref{ssec:PGL3}.3d}}),  
\end{array}
\]
where $\mathcal{D}$ (resp. $\mathcal{D}'$) is the local system 
%character of $A_{\phi_{\ref{ssec:PGL3}.3}} = \mu_3(\C)$ 
corresponding to the choice of primitive cube root of unity $\delta$ (resp. $\delta'$) appearing in the pure inner form $\delta\in Z^1(F,\PGL_3)$.

Now, for each irreducible admissible $\pi$ above, and for each Langlands parameter $\phi$ with infinitesimal parameter $\lambda_{\ref{ssec:PGL3}.3}$, we compute $\NEvs_{C_\phi}\mathcal{P}(\pi)$, where $\mathcal{P}(\pi)$ is the corresponding simple object in $\Perv_{\dualgroup{T}}(\mathfrak{u}_{\hat\alpha_1} \oplus \mathfrak{u}_{\hat\alpha_2})$. The result is a local system in $\Lambda_{C_\phi}^\text{sreg} \ceq T^*_{C_\phi}(\mathfrak{u}_{\hat\alpha_1} \oplus \mathfrak{u}_{\hat\alpha_2})_\text{sreg}$ and this may be viewed as a representation of the equivariant fundamental group of $\Lambda_{C_\phi}^\text{sreg}$, which is $A^\ABV_\phi$. 
The results are summarized in Table~\ref{table:coefficients:PGL3}.

The properties in Theorems~\ref{thm:coefficients} and \ref{thm:stable} are elementary for $\PGL_3(F)$ and its pure inner forms as is \cite{CFMMX}*{Conjecture 1}.

%%%%%

\section{Geometric endoscopy}\label{sec:geoendo}

%\todo[inline]{Explain the idea behind geometric endoscopy. I don't think that's necessary, actually}

\subsection{Lifting Langlands parameters}\label{ssec:liftingparameters}

Let $(G,s,\xi)$ be an endoscopic triple for $G_2$ and let $\phi : W'_F \to \Lgroup{G}$ be an unramified Langlands parameter such that $s\in \mathcal{S}^\ABV_{\xi\circ\phi}$.
%Let $\delta \in Z^1(F,G)$ be a pure inner form of $G$; recall that $G^\delta$ denotes the connected reductive group over $F$ determined by the image of $\delta$ under $Z^1(F,G) \to Z^1(F,\Aut(G))$. 
The Langlands parameter $\phi$ determines, or lifts to, a Langlands parameter $\xi\circ\phi : W'_F \to \Lgroup{G_2}$. Matching the cases that appear in our classification of Langlands parameters for $G$ and $G_2$ is a little complicated, because we have grouped parameters into families based on the geometry created by the corresponding infinitesimal parameters.
Consequently, it happens that different members of these families may lift to different families for $G_2$.
To illustrate this phenomenon, here we give the details for one of the six endoscopic groups for $G_2$.
Together with all other cases, this information is summarized in Table~\ref{table:lifting parameters}.

\begin{enumerate}%[leftmargin=15pt,itemindent=10pt]
%%%%%%%%%%%%%%%%%%%%
%%%%%%%%%%%%%%%%%%%%%

\item[\ref{endcase:A2}]
$G=\PGL_3$, $s= {\hat m}(\theta_3^2,\theta_3)$.
\begin{enumerate}%[leftmargin=15pt]
\item[(0)]
The Langlands parameter $\phi_{\ref{ssec:PGL3}.0}$ lifts to $\phi_{\ref{infcase:0}}$ if and only if  $\varphi_1(\Frob),\varphi_2(\Fr),\varphi_1(\Fr)\varphi_2(\Fr)\notin\{ q^{\pm 1}\}$; lifts to $\phi_{\ref{infcase:1-short}a}$ if and only if  one of $\varphi_1(\Frob),\varphi_2(\Fr),\varphi_1(\Fr)\varphi_2(\Fr)$ is in $\{q^{\pm 1}\}$; and lifts to $\phi_{\ref{infcase:5}a}$  if and only if  two of $\varphi_1(\Frob),\varphi_2(\Fr),\varphi_1(\Fr)\varphi_2(\Fr)$ are in $\{q^{\pm 1}\}$.

\item[(1)]
The Langlands parameter $\phi_{\ref{ssec:PGL3}.1a}$ (resp. $\phi_{\ref{ssec:PGL3}.1a}$) lifts to $\phi_{\ref{infcase:2-long}a}$ (resp. $\phi_{\ref{infcase:2-long}b}$)  if and only if  $\varphi(\Fr)\notin\{- q^{\pm 1/2},q^{\pm 3/2}\}$ ; $\phi_{\ref{infcase:4-D2}a}$ (resp. $\phi_{\ref{infcase:4-D2}b}$)  if and only if  $\varphi(\Fr)\in\{-q^{\pm 1/2}\}$; $\phi_{\ref{infcase:7-reg}a}$ (resp. $\phi_{\ref{infcase:7-reg}b}$)  if and only if  $\varphi(\Fr)\in \{q^{\pm3/2}\}$.

\item[(2)]
The Langlands parameter $\phi_{\ref{ssec:PGL3}.2a}$ (resp. $\phi_{\ref{ssec:PGL3}.2b}$) lifts to $\phi_{\ref{infcase:3}a}$ (resp. $\phi_{\ref{infcase:3}b}$).

\item[(3)] 
The Langlands parameter $\phi_{\ref{ssec:PGL3}.3a}$ (resp. $\phi_{\ref{ssec:PGL3}.3b}$, $\phi_{\ref{ssec:PGL3}.3c}$, $\phi_{\ref{ssec:PGL3}.3d}$) lifts to $\phi_{\ref{infcase:6-A2}a}$ (resp. $\phi_{\ref{infcase:6-A2}b}$, $\phi_{\ref{infcase:6-A2}b}$, $\phi_{\ref{infcase:6-A2}b}$) if and only if  $\varphi(\Fr)\ne 1$; and lifts to $\phi_{\ref{infcase:8-sub}a}$ (resp. $\phi_{\ref{infcase:8-sub}b}$, $\phi_{\ref{infcase:8-sub}c}$, $\phi_{\ref{infcase:8-sub}d}$) if and only if  $\varphi(\Fr)=1$.
\end{enumerate}

%%%%%%%%%%%%%%%%%%%%%
\end{enumerate}

If we turn the problem of lifting parameters on its head, our classification of scheme for Langlands parameters is more illuminating: instead of lifting parameters from endoscopic groups, begin with a Langlands parameter for $G_2$ and find all endoscopic triples $(G,s,\xi)$ such that the Langlands parameter factors through $\xi : \Lgroup{G} \to \Lgroup{G_2}$. This information appears in Table~\ref{table:AE}.

\begin{table}[tp]
\caption{Langlands parameters and their associated minimal endoscopic groups. In the column labeled "minimal endoscopic group(s)" we indicate, for each Langlands parameter $\phi$ of $G_2$, the minimal endoscopic groups $G$ such that the parameter factors $\phi$ through $\xi : \Lgroup{G} \to \Lgroup{G_2}$.
The column labeled "Arthur type" gives necessary and sufficient conditions for $\phi$ to be of Arthur type.}
\label{table:AE}
\begin{center}
$
\begin{array}[t]{| c | c | c | c  c | c |} 
\hline
\text{$L$-parameter}  & \text{Minimal} &\text{Arthur} & \multicolumn{ 2 }{ c | }{\text{Component}} & \\ 
 & \text{endoscopic} &\text{type} & \multicolumn{ 2 }{ c | }{\text{groups}} & \\ 
\phi & \text{group(s)} &\text{parameter} & A_\phi & A^\ABV_\phi & \dim(\phi) \\ 
\hline\hline
\phi_{\ref{infcase:0}} 	 & T & \text{$a_1=a_2=0$} & 1 & 1 & 0 \\
\hline\hline
\phi_{\ref{infcase:1-short}a} & T & \text{$a=1/2$} & 1 & 1 & 0\\
\phi_{\ref{infcase:1-short}b} &  \GL_2^{\gamma_2} & \text{$a=1/2$} & 1 & 1 & 1\\
\hline
\phi_{\ref{infcase:2-long}a} & T & \text{$a=1/2$} & 1 & 1 & 0 \\
\phi_{\ref{infcase:2-long}b} & \GL_2^{\gamma_1} & \text{$a=1/2$} & 1 & 1 & 1\\
\hline\hline
\phi_{\ref{infcase:3}.a}  &T & \text{no} & 1 & 1 & 0\\
\phi_{\ref{infcase:3}.b} &\GL_2^{\gamma_1} & \text{no} & 1 & 1 & 2\\
\hline
\phi_{\ref{infcase:4-D2}a}    &T  & \text{yes} & 1 & \langle \theta_2 \rangle & 0\\
\phi_{\ref{infcase:4-D2}b} 	& \GL_2^{\gamma_1} & \text{yes} & 1 & \langle \theta_2 \rangle & 1\\
\phi_{\ref{infcase:4-D2}c} 	& \GL_2^{\gamma_2} & \text{yes} & 1 & \langle \theta_2 \rangle & 1\\
\phi_{\ref{infcase:4-D2}d} 	& \SO_4 & \text{yes} & \langle \theta_2 \rangle & \langle \theta_2 \rangle & 2\\
\hline
\phi_{\ref{infcase:5}a} &T  & \text{no} & 1 & 1 & 0\\
\phi_{\ref{infcase:5}b} & \GL_2^{\gamma_2} & \text{no} & 1 & 1 & 2\\
\hline\hline
\phi_{\ref{infcase:6-A2}a}  & T  & \text{yes} & 1 & \langle \theta_3 \rangle & 0\\
\phi_{\ref{infcase:6-A2}b} 	& \GL_2^{\gamma_1} & \text{no} & 1 & \langle \theta_3 \rangle & 1\\
\phi_{\ref{infcase:6-A2}c} 	& \GL_2^{\gamma_1} & \text{no} & 1 & \langle \theta_3 \rangle & 1\\
\phi_{\ref{infcase:6-A2}d} 	& \PGL_3 & \text{yes} & \langle \theta_3 \rangle & \langle \theta_3 \rangle & 2\\
\hline
\phi_{\ref{infcase:7-reg}a}  &  T & \text{yes} & 1 & 1 & 0\\
\phi_{\ref{infcase:7-reg}b} & \GL_2^{\gamma_1} & \text{no} & 1 & 1 & 1\\
\phi_{\ref{infcase:7-reg}c} 	& \GL_2^{\gamma_2} & \text{no} & 1 & 1 & 1\\
\phi_{\ref{infcase:7-reg}d} 	& G_2 & \text{yes} & 1 & 1 & 2\\
\hline\hline
\phi_{\ref{infcase:8-sub}a}  & T  & \text{yes} & 1 & S_3  & 0\\
\phi_{\ref{infcase:8-sub}b} &  \GL_2^{\gamma_1} & \text{yes} & 1 & \langle \theta_2 \rangle & 2\\
\phi_{\ref{infcase:8-sub}c} 	&  \GL_2^{\gamma_2} & \text{yes} & 1 & \langle \theta_2 \rangle & 3\\
\phi_{\ref{infcase:8-sub}d}	 & \SO_4 , \PGL_3 & \text{yes} & S_3 & S_3 & 4\\
\hline
\end{array}
$
\end{center}
\end{table}%

%%%%%%

%\iffalse
\begin{table}[htp]
\caption{Summary of possible lifts for members of each family of Langlands parameters. The leftmost column gives endoscopic groups $G$ for $G_2$. % as a short hand for the endoscopic triples $(G,s,\xi)$ that appeared in Section~\ref{ssec:endoscopy}. 
The column "Endoscopic Langlands parameters" refers to Sections~\ref{ssec:T} through \ref{ssec:SO4}. 
The column  "Arthur-type" indicates those $\xi$-conormal parameters that, when unitary, are parameters of Arthur type; see Definition~\ref{def:xi-conormal} for this notion. 
The lifts appearing in the column "other regular" indicates all other $\xi$-conormal parameters. 
In these last two cases, the information in these two columns refers to the classification of unramified Langlands parameters for $G_2$ appearing in Section~\ref{ssec:infcases}. 
%These two columns play an important role in Theorem~\ref{thm:VFPF}. 
Finally, the column "irregular lifts" indicates all remaining lifts.}
\label{table:lifting parameters}
\begin{center}
$
\begin{array}{ l l | l r |  l}
\text{Endoscopic}&  \text{Endoscopic} &  \multicolumn{ 2 }{ c | }{ \text{$s$-regular lifts} } & \\
\text{group} &  \text{$L$-parameter} &  \text{Arthur} & \text{Other} & \\
 &   & \text{type} & \text{regular} & \text{Irregular lifts} \\
  \hline
\ref{endcase:torus} 
& \phi_{\ref{ssec:T}.0}  & \phi_{\ref{infcase:0}} &&
\phi_{\ref{infcase:1-short}a},
\phi_{\ref{infcase:2-long}a},
\phi_{\ref{infcase:3}a},
\phi_{\ref{infcase:4-D2}a},
\phi_{\ref{infcase:5}a},
\phi_{\ref{infcase:6-A2}a},
\phi_{\ref{infcase:7-reg}a},
\phi_{\ref{infcase:8-sub}a}\\
\hline

\ref{endcase:short} & \phi_{\ref{ssec:GL2}.0} 	 &  
\phi_{\ref{infcase:0}} &&
 \phi_{\ref{infcase:1-short}a},
 \phi_{\ref{infcase:2-long}a},
 \phi_{\ref{infcase:3}a},
 \phi_{\ref{infcase:4-D2}a},
 \phi_{\ref{infcase:5}a},
 \phi_{\ref{infcase:6-A2}a},
 \phi_{\ref{infcase:7-reg}a},
 \phi_{\ref{infcase:8-sub}a}\\

& \phi_{\ref{ssec:GL2}.1a} &  
\phi_{\ref{infcase:1-short}a}&
 \phi_{\ref{infcase:5}a}&
 \phi_{\ref{infcase:4-D2}a},
 \phi_{\ref{infcase:7-reg}a},
 \phi_{\ref{infcase:8-sub}a}\\

&\phi_{\ref{ssec:GL2}.1b}	&  
\phi_{\ref{infcase:1-short}b}&
 \phi_{\ref{infcase:5}b}&
 \phi_{\ref{infcase:4-D2}c},
 \phi_{\ref{infcase:7-reg}c},
 \phi_{\ref{infcase:8-sub}c}\\
\hline

\ref{endcase:long} & \phi_{\ref{ssec:GL2}.0} &  
\phi_{\ref{infcase:0}} &&
 \phi_{\ref{infcase:1-short}a},
 \phi_{\ref{infcase:2-long}a},
 \phi_{\ref{infcase:3}a},
 \phi_{\ref{infcase:4-D2}a},
 \phi_{\ref{infcase:5}a},
 \phi_{\ref{infcase:6-A2}a},
 \phi_{\ref{infcase:7-reg}a},
 \phi_{\ref{infcase:8-sub}a}\\

& \phi_{\ref{ssec:GL2}.1a}&  
 \phi_{\ref{infcase:2-long}a}&
 \phi_{\ref{infcase:3}a}&
 \phi_{\ref{infcase:4-D2}a},
 \phi_{\ref{infcase:6-A2}a},
 \phi_{\ref{infcase:7-reg}a},
 \phi_{\ref{infcase:8-sub}a}\\

&\phi_{\ref{ssec:GL2}.1b}	&  
 \phi_{\ref{infcase:2-long}b}&
 \phi_{\ref{infcase:3}b}&
 \phi_{\ref{infcase:4-D2}b},
 \phi_{\ref{infcase:6-A2}b},
 \phi_{\ref{infcase:7-reg}b},
 \phi_{\ref{infcase:8-sub}b}\\
\hline

\ref{endcase:D2} &\phi_{\ref{ssec:SO4}.0} 	& 
\phi_{\ref{infcase:0}} &&
 \phi_{\ref{infcase:1-short}a},
 \phi_{\ref{infcase:2-long}a},
 \phi_{\ref{infcase:3}a},
 \phi_{\ref{infcase:4-D2}a},
 \phi_{\ref{infcase:5}a},
 \phi_{\ref{infcase:6-A2}a},
 \phi_{\ref{infcase:7-reg}a},
 \phi_{\ref{infcase:8-sub}a}\\

&\phi_{\ref{ssec:SO4}.1a} 	&
\phi_{\ref{infcase:1-short}a} &
\phi_{\ref{infcase:5}a}&
\phi_{\ref{infcase:7-reg}a}\\

&\phi_{\ref{ssec:SO4}.1b}	&  
 \phi_{\ref{infcase:1-short}b} &
 \phi_{\ref{infcase:5}b}&
\phi_{\ref{infcase:7-reg}c}\\

&\phi_{\ref{ssec:SO4}.2a} 	& 
 \phi_{\ref{infcase:2-long}a}&
  \phi_{\ref{infcase:3}a}& 
 \phi_{\ref{infcase:6-A2}a},
\phi_{\ref{infcase:7-reg}a}\\

&\phi_{\ref{ssec:SO4}.2b} 	&  
\phi_{\ref{infcase:2-long}b} &
 \phi_{\ref{infcase:3}b}&
\phi_{\ref{infcase:6-A2}b},
\phi_{\ref{infcase:7-reg}b}\\

&\phi_{\ref{ssec:SO4}.3a} &  
\phi_{\ref{infcase:4-D2}a},
 \phi_{\ref{infcase:8-sub}a} &&\\

&\phi_{\ref{ssec:SO4}.3b} &   
\phi_{\ref{infcase:4-D2}b},
 \phi_{\ref{infcase:8-sub}b} &&\\

&\phi_{\ref{ssec:SO4}.3c} &   
\phi_{\ref{infcase:4-D2}c},
 \phi_{\ref{infcase:8-sub}c} &&\\

&\phi_{\ref{ssec:SO4}.3d} &  
\phi_{\ref{infcase:4-D2}d},
 \phi_{\ref{infcase:8-sub}d} &&\\
\hline

\ref{endcase:A2} & \phi_{\ref{ssec:PGL3}.0} 	& 
 \phi_{\ref{infcase:0}} &&
\phi_{\ref{infcase:1-short}a},
 \phi_{\ref{infcase:5}a}\\

& \phi_{\ref{ssec:PGL3}.1a}& 
\phi_{\ref{infcase:2-long}a}&&
 \phi_{\ref{infcase:4-D2}a},
 \phi_{\ref{infcase:7-reg}a}\\

& \phi_{\ref{ssec:PGL3}.1b}&
\phi_{\ref{infcase:2-long}b}&&
 \phi_{\ref{infcase:4-D2}b},
\phi_{\ref{infcase:7-reg}b} \\

&\phi_{\ref{ssec:PGL3}.2a}&&
 \phi_{\ref{infcase:3}a}&\\

&\phi_{\ref{ssec:PGL3}.2b}&&
 \phi_{\ref{infcase:3}b}&\\

&\phi_{\ref{ssec:PGL3}.3a}&  
\phi_{\ref{infcase:6-A2}a},
 \phi_{\ref{infcase:8-sub}a} &&\\

&\phi_{\ref{ssec:PGL3}.3b}&&	  
\phi_{\ref{infcase:6-A2}b}&
 \phi_{\ref{infcase:8-sub}b}\\

&\phi_{\ref{ssec:PGL3}.3c}&&
 \phi_{\ref{infcase:6-A2}c}&
 \phi_{\ref{infcase:8-sub}b}\\

&\phi_{\ref{ssec:PGL3}.3d}&	
\phi_{\ref{infcase:6-A2}d},
\phi_{\ref{infcase:8-sub}d} &&\\

\end{array}
$
\end{center}
\end{table}
%\fi

\subsection{Regular conormal vectors and Arthur parameters}\label{ssec:xi-conormal}

Let $(G,s,\xi)$ be an endoscopic triple for $G_2$ and let $\phi : W'_F \to \Lgroup{G}$ be an unramified Langlands parameter.
% such that $s\in \mathcal{S}^\ABV_{\xi\circ\phi}$. 
Let $\lambda$ be the infinitesimal parameter for $\phi$; then $\xi\circ\lambda$ is the infinitesimal parameter for $\xi\circ\phi$.
Now consider the map $\xi : \Lgroup{G} \to \Lgroup{G_2}$ on dual groups and the induced homomorphism $\dualgroup{G} \to \dualgroup{G_2}$, also denoted by $\xi$ below. Pass to tangent spaces $d\xi : \dualgroup{\mathfrak{g}}\to \dualgroup{\mathfrak{g}_2}$ and restricted to Vogan varieties to define the embedding 
$
d\xi\vert_{V_{\lambda}} : V_{\lambda} \to V_{\xi \circ \lambda}
$
such that the subvariety $V_{\lambda}$ is the fixed-point set of the $(V_{\xi \circ \lambda})^s = V_{\lambda}$. This induces
$
d\xi\vert_{T^*(V_{\lambda})} : T^*(V_{\lambda}) \to T^*(V_{\xi \circ \lambda})
$
such that $\left(T^*(V_{\xi \circ \lambda})\right)^s = T^*(V_{\lambda})$; recall that we may view $T^*(V_{\lambda})$ as a subvariety of $\dualgroup{\mathfrak{g}}$. Likewise, this induces
a map of conormal bundles
$
d\xi\vert_{\Lambda_{\lambda}} : \Lambda_{\lambda} \to \Lambda_{\xi \circ \lambda}
$
such that $\Lambda_{\lambda} = \left(\Lambda_{\xi \circ \lambda}\right)^s$; again, we may view $\Lambda_{\lambda}$ as a subvariety of $\dualgroup{\mathfrak{g}}$.
Now let $x_\phi$ (resp.  $x_{\xi\circ\phi}$) be the point in the moduli space $V_{\lambda}$ (resp. $V_{\xi\circ\lambda}$) for $\phi : W'_F \to \Lgroup{G}$ (resp. for $\xi\circ\phi : W'_F \to \Lgroup{G_2}$) and let $C_\phi$ (resp. $C_{\xi\circ\phi}$) be its $H_{\lambda}$-orbit (resp. $H_{\xi\circ\lambda}$-orbit).
Now $d\xi\vert_{\Lambda_{\lambda}}$ restricts to an immersion
$%\begin{equation}\label{eqn:dxi}
d\xi\vert_{\Lambda_{\lambda,C_\phi}}: \Lambda_{\lambda,C_\phi} \to \Lambda_{\xi \circ \lambda, C_{\xi \circ \phi}}.
$%\end{equation}
%When passing to strongly regular elements we encounter an important phenomenon. 
Now consider the restriction
\[
\begin{tikzcd}
\Lambda_{\lambda,C_\phi} \arrow{rr}{d\xi\vert_{\Lambda_{\lambda,C_\phi}}} && \Lambda_{\xi \circ \lambda, C_{\xi \circ \phi}}\\
\Lambda_{\lambda,C_\phi}^\text{sreg} \arrow{u}  \arrow{urr}[swap]{d\xi\vert_{\Lambda_{\lambda,C_\phi}^\text{sreg}}} && %\Lambda_{\xi \circ \lambda, C_{\xi \circ \phi}}^\text{sreg} \arrow{u} 
\end{tikzcd}
\]

\begin{definition}\label{def:xi-conormal}
Let $(G,s,\xi)$ be an endoscopic triple for $G_2$ and let $\phi : W'_F \to \Lgroup{G}$ be an unramified Langlands parameter.
% such that $s\in \mathcal{S}^\ABV_{\xi\circ\phi}$. 
Let us say that a Langlands parameter $\phi : W'_F \to \Lgroup{G}$ is $\xi$-conormal, if the image of $d\xi\vert_{\Lambda_{\lambda,C_\phi}^\text{sreg}}$ is contained in $\Lambda_{\xi \circ \lambda, C_{\xi \circ \phi}}^\text{sreg}$.
If $\phi$ is $\xi$-conormal the map
\[
d\xi\vert_{\Lambda_{\lambda,C_\phi}^\text{sreg}} : {\Lambda_{\lambda,C_\phi}^\text{sreg}} \to \Lambda_{\xi \circ \lambda, C_{\xi \circ \phi}}^\text{sreg}
\]
induces a group homomorphism of
\iffalse%%%%%%%%%%%%%
of fundamental groups
\[
\pi_1\left(d\xi\vert_{\Lambda_{\lambda,C_\phi}^\text{sreg}}, (x_{\phi},y_{\phi})\right) : \pi_1\left({\Lambda_{\lambda,C_\phi}^\text{sreg}},(x_{\phi},y_{\phi}) \right) \to \pi_1\left(\Lambda_{\xi \circ \lambda, C_{\xi \circ \phi}}^\text{sreg}, (x_{\xi\circ\phi},y_{\xi\circ\phi})\right)
\]
and a group homomorphism of 
\fi%%%%%%%%%%
equivariant fundamental groups, denoted by 
\[
d\xi_\phi : A^\ABV_\phi \to A^\ABV_{\xi\circ\phi}.
\]
\end{definition}

\begin{proposition}\label{prop:AFPF}
Let $(G,s,\xi)$ be an endoscopic triple for $G_2$ and let $\phi : W'_F \to \Lgroup{G}$ be an unramified Langlands parameter. 
 If $\phi$ is of Arthur type then $\phi$ is $\xi$-conormal.
\end{proposition}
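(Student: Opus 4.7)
The plan is to use the Arthur parameter $\psi$ attached to $\phi$ to produce an explicit strongly regular conormal vector, and then to observe that this construction is natural under composition with $\xi$. Concretely, let $\phi : W'_F \to \Lgroup{G}$ be of Arthur type $\psi : W_F \times \SL_2(\CC) \times \SL_2(\CC) \to \Lgroup{G}$. Writing $e = \left(\begin{smallmatrix} 0 & 1 \\ 0 & 0 \end{smallmatrix}\right)$ and $f = \left(\begin{smallmatrix} 0 & 0 \\ 1 & 0 \end{smallmatrix}\right)$, the differentials
\[
x_\psi \ceq d\psi(1,e,1) \in \dualgroup{\mathfrak{g}}, \qquad y_\psi \ceq d\psi(1,1,f) \in \dualgroup{\mathfrak{g}},
\]
commute and determine a point $(x_\psi,y_\psi)\in \Lambda_\lambda$ lying above the point $x_\phi = x_\psi \in V_\lambda$ for $\phi$. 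By \cite{CFMMX}*{Prop.~6.1.1} (invoked in Definition~\ref{def:SABV}), $(x_\psi,y_\psi)$ lies in the strongly regular locus $\Lambda^{\text{sreg}}_{\lambda, C_\phi}$ and its equivariant stabilizer is exactly $\mathcal{S}^\ABV_\phi = \pi_0(Z_{\dualgroup{G}}(\psi))$.

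The key naturality point is that $\xi \circ \psi : W_F \times \SL_2(\CC) \times \SL_2(\CC) \to \Lgroup{G_2}$ is again an Arthur parameter, and by construction its attached Langlands parameter is $\xi \circ \phi$. Since $\xi$ is a homomorphism of $L$-groups, $d(\xi\circ\psi) = d\xi\circ d\psi$, so
\[
d\xi(x_\psi) = x_{\xi\circ \psi}, \qquad d\xi(y_\psi) = y_{\xi\circ \psi}.
\]
Applying \cite{CFMMX}*{Prop.~6.1.1} once more, now to the Arthur parameter $\xi\circ\psi$ for $G_2$, we conclude that $(x_{\xi\circ\psi}, y_{\xi\circ\psi})$ lies in $\Lambda^{\text{sreg}}_{\xi \circ \lambda, C_{\xi \circ \phi}}$. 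Thus $d\xi$ sends at least one point of $\Lambda^{\text{sreg}}_{\lambda, C_\phi}$ into $\Lambda^{\text{sreg}}_{\xi \circ \lambda, C_{\xi \circ \phi}}$.

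To upgrade this from a pointwise statement to the full inclusion, I would use equivariance. The inclusion $\dualgroup{G} \hookrightarrow \dualgroup{G_2}$ induces an inclusion $H_\lambda \hookrightarrow H_{\xi \circ \lambda}$, and $d\xi$ is $H_\lambda$-equivariant. The variety $\Lambda^{\text{sreg}}_{\lambda, C_\phi}$ is a single $H_\lambda$-orbit (being the union of the open $Z_{H_\lambda}(x)$-orbits over the single $H_\lambda$-orbit $C_\phi$), while $\Lambda^{\text{sreg}}_{\xi\circ\lambda,C_{\xi\circ\phi}}$ is open in $\Lambda_{\xi\circ\lambda,C_{\xi\circ\phi}}$ and $H_{\xi\circ\lambda}$-stable. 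Since the image under $d\xi$ of the $H_\lambda$-orbit of $(x_\psi,y_\psi)$ is contained in the $H_{\xi\circ\lambda}$-orbit of $(x_{\xi\circ\psi},y_{\xi\circ\psi})$, which lies in $\Lambda^{\text{sreg}}_{\xi\circ\lambda,C_{\xi\circ\phi}}$, the inclusion
\[
d\xi\bigl(\Lambda^{\text{sreg}}_{\lambda,C_\phi}\bigr)\subseteq \Lambda^{\text{sreg}}_{\xi\circ\lambda,C_{\xi\circ\phi}}
\]
follows, which is exactly the $\xi$-conormal condition of Definition~\ref{def:xi-conormal}.

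The only real obstacle is the appeal to \cite{CFMMX}*{Prop.~6.1.1}: one needs to know that the Kostant-type pair $(x_\psi,y_\psi)$ arising from an Arthur parameter is in fact generic in the conormal fibre. Everything else is naturality of differentials and basic orbit bookkeeping. As an independent sanity check, the case-by-case data in Table~\ref{table:AE} and the lift tables of Section~\ref{ssec:liftingparameters} show that every Arthur-type parameter appears in the ``Arthur type'' column of Table~\ref{table:lifting parameters}, consistent with $\xi$-conormality being satisfied in each instance.
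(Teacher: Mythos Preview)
Your proof is correct and follows essentially the same approach as the paper: produce a strongly regular conormal vector $(x_\psi,y_\psi)$ from the Arthur parameter $\psi$ via \cite{CFMMX}*{Prop.~6.1.1}, observe that $d\xi$ carries it to the corresponding pair for $\xi\circ\psi$, and apply \cite{CFMMX}*{Prop.~6.1.1} again. The paper's proof stops at the pointwise statement, leaving implicit the equivariance step you spell out to pass from one point to the full orbit $\Lambda^{\text{sreg}}_{\lambda,C_\phi}$; your version is more complete in this respect.
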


\begin{proof}
Let $\psi : W''_F\to \Lgroup{G}$ be the Arthur parameter such that $\phi(w,x) = \psi(w,x,d_w)$.  
Set $x\ceq d\psi(1,e,1)$ and $y\ceq d\psi(1,1,f)$.
 Then by \cite{CFMMX}*{Prop. 6.1.1} we have $(x,y)$ is a strongly regular conormal vector for $\phi$. 
 Consider $\xi\circ\psi : W''_F \to \Lgroup{G_2}$. Then again by \cite{CFMMX}*{Prop. 6.1.1} 
 we have $d\xi(x,y) = (d(\xi\circ \psi)(1,e,1),d(\xi\circ \psi)(1,1,f))$ is a strongly regular conormal vector for $\xi \circ \phi$ . 
\end{proof}

In Table~\ref{table:lifting parameters} we present a complete list of all endoscopic triples $(G,s,\xi)$ and all unramified Langlands parameters $\phi: W'_F \to \Lgroup{G_2}$ for which $\phi$ is $\xi$-conormal. We now explain this table.
The left-hand column of Table~\ref{table:lifting parameters} lists the endoscopic groups $G$ for $G_2$ as explained in Section~\ref{ssec:endoscopy}. Then, for each such $G$, we list the families of endoscopic Langlands parameters $\phi: W'_F \to \Lgroup{G}$ as they appeared in Sections~\ref{ssec:T} through \ref{ssec:SO4}. 
In Section~\ref{ssec:liftingparameters} we saw that for each $\phi$ in these families, there are multiple Langlands parameters for $G_2(F)$ to which $\phi$ may lift, depending on the properties of $\phi$. Each row of Table~\ref{table:lifting parameters} lists the lifts $\xi\circ\phi$ that arise as $\phi$ ranges through each family, partially ordered left to right along rows by the relative dimension $\dim(\xi\circ\phi)-\dim(\phi)$; note that if this relative dimension is $0$ then $\phi$ is trivially $\xi$-conormal.

\subsection{Statement of the fixed-point formula}

\begin{theorem}\label{thm:VFPF}
Let $(G,s,\xi)$ be an endoscopic triple for $G_2$ and let $\phi : W'_F \to \Lgroup{G}$ be an unramified Langlands parameter for $G$.
% such that $s\in \mathcal{S}^\ABV_{\xi\circ\phi}$. 
If $\phi$ is $\xi$-conormal then
\[
\trace_{s} \left(\NEvs_{\xi\circ\phi}[\dim(\xi\circ\phi)] \mathcal{P}\right) 
=
\trace_{s} \left(\NEvs_{\phi}[\dim(\phi)]\ \res \mathcal{P}\right) 
\]
for every $\mathcal{P}\in \Perv_{H_{\xi\circ\lambda}}(V_{\xi\circ\lambda})$. 
\end{theorem}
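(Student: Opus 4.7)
The plan is to express both sides as traces of $s$ on stalks of a vanishing cycles complex, and then invoke a Kashiwara-type local fixed-point formula for the action of the semisimple element $s$ on $V_{\xi\circ\lambda}$. First I would choose a strongly regular conormal covector $(x_\phi,y_\phi) \in \Lambda_{\lambda,C_\phi}^{\text{sreg}}$. By the construction of $\NEvs$ recalled in Section~\ref{ssec:geometry}, $\NEvs_{\phi}[\dim(\phi)]\,\res\mathcal{P}$ is computed at $(x_\phi,y_\phi)$ as the stalk at $x_\phi$ of the vanishing cycles complex $\RPhi_{\langle y_\phi,\cdot\rangle}(\res\mathcal{P})[\dim(\phi)]$, equipped with its natural monodromy. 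By $\xi$-conormality and Definition~\ref{def:xi-conormal}, the image $(d\xi(x_\phi),d\xi(y_\phi))$ lies in $\Lambda_{\xi\circ\lambda,C_{\xi\circ\phi}}^{\text{sreg}}$, so $\NEvs_{\xi\circ\phi}[\dim(\xi\circ\phi)]\mathcal{P}$ is likewise the stalk at $d\xi(x_\phi)$ of $\RPhi_{\langle d\xi(y_\phi),\cdot\rangle}(\mathcal{P})[\dim(\xi\circ\phi)]$.

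Next I would exploit that $s$ acts on the triple $(V_{\xi\circ\lambda},\mathcal{P},\langle d\xi(y_\phi),\cdot\rangle)$ with fixed locus equal to $(V_\lambda,\res\mathcal{P},\langle y_\phi,\cdot\rangle)$. Indeed, the $s$-fixed subvariety of $V_{\xi\circ\lambda}$ is $V_\lambda$ by Section~\ref{ssec:liftingparameters}, the function $\langle d\xi(y_\phi),\cdot\rangle$ is $s$-invariant because $d\xi(y_\phi)$ is an $s$-fixed covector, and $\mathcal{P}$ carries a canonical $s$-action from its $H_{\xi\circ\lambda}$-equivariant structure. The key input is the compatibility of vanishing cycles with passage to a smooth (here, linear) fixed locus of a finite-order automorphism, which yields an identity of the shape
\begin{equation*}
\trace_s\, \RPhi_{\langle d\xi(y_\phi),\cdot\rangle}(\mathcal{P})_{d\xi(x_\phi)} \;=\; \trace\, \RPhi_{\langle y_\phi,\cdot\rangle}(\res\mathcal{P})_{x_\phi},
\end{equation*}
where $\res$ is the natural restriction functor along $V_\lambda \hookrightarrow V_{\xi\circ\lambda}$, suitably normalized. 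The shift discrepancy $\dim(\xi\circ\phi)-\dim(\phi)$ is absorbed into the definition of $\res$: it is the relative dimension of $C_{\xi\circ\phi}$ over $C_\phi$ and matches the codimension shift needed to keep both sides perverse.

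The hard part will be setting up this microlocal fixed-point identity with enough generality to promote it from an identity of stalks at $x_\phi$ to an identity of $A^{\ABV}_\phi$-equivariant local systems on $\Lambda_{\lambda,C_\phi}^{\text{sreg}}$, so that taking traces recovers the required equality of functions on $\mathcal{S}^{\ABV}_{\xi\circ\phi}$. Once this is achieved, the theorem follows by identifying the monodromy action of $A_\phi^{\ABV}$ on $\NEvs_\phi\res\mathcal{P}$ with the restriction along $d\xi_\phi : A_\phi^{\ABV} \to A_{\xi\circ\phi}^{\ABV}$ (Definition~\ref{def:xi-conormal}) of the $A_{\xi\circ\phi}^{\ABV}$-action on $\NEvs_{\xi\circ\phi}\mathcal{P}$. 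As a consistency check, the resulting identity can be verified case-by-case against the tables in Section~\ref{ssec:geometry} and Section~\ref{sec:endoscopy} after reducing to simple objects $\mathcal{P}$ via Proposition~\ref{prop:PHV}.
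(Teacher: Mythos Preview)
Your proposal and the paper's proof take genuinely different routes. The paper's argument is an explicit case-by-case verification: it reduces the embedding $V_\lambda \hookrightarrow V_{\xi\circ\lambda}$ to one of a short list of fixed-point subspaces $V^s \subseteq V$ of the five prehomogeneous vector spaces in Proposition~\ref{prop:PHV} (via Lemma~\ref{lem:subPHV}), computes the restriction functor $\res$ on simple perverse sheaves in each nontrivial case (Table~\ref{table:restriction0}), computes $\NEvs$ on both sides, and then checks the trace identity entry by entry (Proposition~\ref{prop:PHVFPF} and Tables~\ref{tab:summary-i}, \ref{tab:summary-iv}, \ref{tab:summary-v}). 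The paper is explicit that the functorial square relating $\NEvs$ and $\res$ does \emph{not} commute; only the traces at $s$ agree, and this is established by direct inspection rather than by any general principle.

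Your approach, by contrast, aims for a uniform microlocal fixed-point argument in the spirit of \cite{ABV}. This is attractive, but as written it has a genuine gap. The ``key input'' you invoke---compatibility of $\RPhi$ with restriction to the $s$-fixed locus at the level of traces, with the correct shift---is precisely the content of the theorem, and you explicitly flag it as ``the hard part'' without supplying a proof or a precise reference that covers this situation. Two specific issues: first, $\res$ is ordinary pullback and carries no built-in shift, so the discrepancy $\dim(\xi\circ\phi)-\dim(\phi)$ cannot be ``absorbed into the definition of $\res$''; it has to emerge from the fixed-point mechanism itself, and indeed in the paper's tables one sees that $\res\mathcal{P}$ is typically a nontrivial complex of perverse sheaves with several shifted summands whose contributions must cancel correctly. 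Second, your displayed identity has plain $\trace$ on the right but $\trace_s$ on the left, whereas the theorem has $\trace_s$ on both sides; the action of $s$ on $\NEvs_\phi(\res\mathcal{P})$ through $A^\ABV_\phi$ is generally nontrivial, so this is not a harmless slip. What you call a ``consistency check'' against the tables is, in the paper, the entire proof.
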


The proof of Theorem~\ref{thm:VFPF} will occupy the rest of this Section, so we explain the strategy here. 
We work through the list of endoscopic triples $(G,s,\xi)$ for $G_2$ and, in each case, we recall that classification of Langlands parameters $\phi : W'_F \to \Lgroup{G}$ in terms of infinitesimal parameters in Sections~\ref{ssec:T} through \ref{ssec:SO4}. 
We identify those Langlands parameters $\phi$ that are $\xi$-conormal, expanding on the information presented in Table~\ref{table:lifting parameters}. We then recall the classification of Vogan varieties $H_{\xi\circ\lambda}\times V_{\xi\circ \lambda} \to V_{\xi\circ \lambda}$ in terms of prehomogeneous vector spaces $H\times V\to V$ from Section~\ref{ssec:infcases} and we observe that same list of five prehomogeneous vector spaces capture all instances of the  Vogan varieties $H_{\lambda}\times V_{\lambda} \to V_{\lambda}$. We further show that the embeddings 
$
d\xi\vert_{V_{\lambda}}: V_{\lambda} \to V_{\xi \circ \lambda},
$
explained at the beginning of Section~\ref{ssec:xi-conormal}, all take the form
$
V^s \subseteq V
$
where $V$ is one of these five prehomogeneous vector spaces and $s\in H$ has finite order (hence semisimple). 
We are then able to match $L$-parameters $\phi$ with $H^s$-orbits $C'\subseteq V^s$ and identify those that are $\xi$-conormal with the corresponding property of the $H^s$-orbit $C'$; we refer to these orbits as $V$-conormal. This is the genesis of the notion of $\xi$-conormal, in fact. 
We then prove the theorem for these prehomogeneous vector spaces in the form of Proposition~\ref{prop:PHVFPF}, for all $V$-conormal orbits $C'\subseteq V^s$.

\subsection{Prehomogeneous vector subspaces}\label{ssec:subPHV}

\begin{lemma}\label{lem:subPHV}
Let $H\times V \to V$ be one of the five prehomogenous vector spaces appearing in Proposition~\ref{prop:PHV}; observe that $H$ is reductive in these cases.
For semisimple $s\in H$, set $H^s\ceq Z_{H}(s)$ and $V^s\ceq \{ x\in V \tq s(x) =x\}$.
Then $H^s\times V^s \to V^s$ be is also a prehomogeneous vector space and again one of the five prehomogenous vector spaces appearing in Proposition~\ref{prop:PHV}.
\end{lemma}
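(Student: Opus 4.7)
The strategy is a direct case analysis through the five prehomogeneous vector spaces \ref{geocase-0}--\ref{geocase-sub}, combined with the base-change arguments from the proof of Proposition~\ref{prop:PHV}, which allow us to replace $(H^s, V^s)$ by an equivalent prehomogeneous vector space under any epimorphism of the acting group with connected kernel. Throughout, since $s$ is semisimple and $H$ is reductive (or, in \ref{geocase-0}, arbitrary but acting trivially), we may conjugate $s$ into a maximal torus $T_H \subseteq H$. The fixed-point set $V^s$ is then a sum of weight spaces of $T_H$ on $V$ (those on which $s$ acts by $1$), and $H^s$ is the connected Levi-type subgroup of $H$ generated by $T_H$ together with the root groups whose root vanishes at $s$; in our cases $H \in \{1, \GL_1, \GL_1^2, \GL_2\}$ so there are at most a handful of possibilities.

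The easy cases are \ref{geocase-0} and \ref{geocase-1}: for $V = 0$ the pair $(H^s, V^s)$ is again of type \ref{geocase-0}, while for $V = \mathbb{A}^1$ with scalar $\GL_1$-action either $s = 1$ (recovering \ref{geocase-1}) or $s \ne 1$, in which case $V^s = 0$ and $H^s = \GL_1$, giving \ref{geocase-0}. In case \ref{geocase-2}, writing $s = \diag(\lambda_1, \lambda_2)$, the two weights of $s$ on $V = \mathbb{A}^2$ are $(\lambda_1 \lambda_2)^n \lambda_i$: either $s$ is central and one recovers either \ref{geocase-2} or \ref{geocase-0}, or $s$ is regular and $H^s = \dualgroup{T}$ acts on $V^s$ by a single nontrivial character, which reduces via base-change to \ref{geocase-1}. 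Case \ref{geocase-toric} is the simplest of all: $H^s = H$ acts on $V^s$ by some subset of the two defining characters, and one checks by inspection that the outcomes are \ref{geocase-toric}, \ref{geocase-1} (after collapsing the kernel of the surviving character), or \ref{geocase-0}.

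The main case is \ref{geocase-sub}. Conjugating $s = \diag(\lambda_1, \lambda_2)$ into $\dualgroup{T}$, the four weight vectors of $V = \mathbb{A}^4$ are $\{2e_1 - e_2,\ e_1,\ e_2,\ -e_1 + 2e_2\} \subset X^*(\dualgroup{T})$, and the corresponding eigenvalues of $s$ are $\lambda_1^2/\lambda_2,\ \lambda_1,\ \lambda_2,\ \lambda_2^2/\lambda_1$. The plan is to observe that any three of these four weights are linearly independent in $X^*(\dualgroup{T}) \otimes \QQ$, so unless $s$ is central at most two eigenvalues can equal $1$ simultaneously; hence $\dim V^s \in \{0,1,2\}$ or $s$ is central and $V^s = V$. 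One then runs through the small number of sub-cases of subsets of fixed weights: $\dim V^s = 0$ gives \ref{geocase-0}, $\dim V^s = 1$ gives \ref{geocase-1}, and $\dim V^s = 2$ gives $\dualgroup{T}$ acting on $\mathbb{A}^2$ by two linearly independent characters, which is equivalent to \ref{geocase-toric} for an appropriate $n \ge 1$ determined by the index of the sublattice spanned by the two surviving weights. The central-$s$ case either recovers \ref{geocase-sub} (when $s = 1$) or collapses to \ref{geocase-0} (when $s$ acts on $V$ by a nontrivial scalar).

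The mildly tricky step -- and the only place where something must actually be checked rather than merely enumerated -- is the identification, in case \ref{geocase-sub}, of the two-dimensional fixed spaces $V^s$ with an instance of \ref{geocase-toric}. Concretely, one must check that for every admissible pair among $\{2e_1 - e_2,\ e_1,\ e_2,\ -e_1 + 2e_2\}$ the two surviving weights are $\QQ$-linearly independent (which is immediate since any two of these four weights form a basis of $X^*(\dualgroup{T}) \otimes \QQ$), and then read off $n$ from the resulting weight pair. All remaining verifications are routine, so this completes the proof plan.
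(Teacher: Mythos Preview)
Your approach matches the paper's: a direct case-by-case enumeration through \ref{geocase-0}--\ref{geocase-sub}, identifying $V^s$ and $H^s$ explicitly and recognizing the result among the five types. The paper's proof simply lists the proper fixed subspaces in each case (in particular the five subcases under \ref{geocase-sub}), so your proposal is, if anything, slightly more systematic in offering a uniform reason for bounding $\dim V^s$ in the \ref{geocase-sub} case.

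One correction is needed. The claim that ``any three of these four weights are linearly independent in $X^*(\dualgroup{T}) \otimes \QQ$'' cannot hold, since that space is $2$-dimensional. What you want (and correctly state a few lines later) is that any two of the weights $w_1 = 2e_1 - e_2$, $w_2 = e_1$, $w_3 = e_2$, $w_4 = -e_1 + 2e_2$ form a $\QQ$-basis. To conclude from this that three fixed eigenvalues force $s = 1$, note that every $3$-element subset of $\{w_1,\dots,w_4\}$ already contains a pair with determinant $\pm 1$ (each consecutive pair $\{w_i, w_{i+1}\}$ is a $\ZZ$-basis), so the common kernel of any three weights is trivial. With this fix the remainder of your argument goes through as written.
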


\begin{proof}
When $V=V^s$ and $H=H^s$ this is a tautology, whereas $V=V^s$ and $H\neq H^s$ only occurs for \ref{geocase-0}.

Up to the natural notion of equivalence, the possibilities for proper fix-point prehomogenous vector subspaces are the following.
\begin{enumerate}%[leftmargin=10pt,labelindent=0pt,labelwidth=10pt,itemindent=8pt]
\iffalse%%%%%%%%%%%%%%%%%%%%
\item[\ref{geocase-0}]
This is the case $V=0$.
Then, for any $s\in H$, $V^s=0$ and $H^s$ is an algebraic subgroup of $H$, so we recover another instance of \ref{geocase-0}. Recall that if $H$ is connected then $\Perv_{H}(0)$ is the category of finite-dimensional vector spaces. In this case it can happen that $H^s$ is not connected, in which case $\Perv_{H^s}(0)$ is the category of finite-dimensional representations of the non-trivial finite group $\pi_0(H^s)$.
 %\todo[inline]{we could address connected restricts to disconnected, }
 \fi%%%%%%%%%%%%%%%%%%%%%

\item[\ref{geocase-1}]
The prehomogeneous vector space $V= \mathbb{A}^1$ with $H=\GL_1$-action (scalar multiplication) has only one proper fixed-point prehomogeneous vector subspace: 
\begin{enumerate}
\item[(i)] $V^s=\{0\}$ with $H^s=\GL_1$ (this is \ref{geocase-0});
%, with, $s=-1$.\todo{What does with $s=-1$ refer to, just giving an example? More generally throughout, should be clear it is an example. You're right! Should we just remove all mention of $s$ here? We never use it.}
\end{enumerate}
\item[\ref{geocase-2}]
The prehomogeneous vector space $V= \mathbb{A}^2$ with $H=\GL_2$-action given by twisted matrix multiplication $h.x=\det(h)^n\, hx$ also has the following proper fixed-point prehomogeneous vector subspaces: 
\begin{enumerate}
\item[(i)] $V^s=\{0\}$ with $H^s=\GL_2$ or $\GL_1^2$  (this is \ref{geocase-0});
% wolog $s=\diag(s_1,s_2)$ where $s_1$ and $s_2$ are primitive $n$-th roots of unity.; 
\item[(ii)] $V^s = \{ (x_1,0)\tq x_1\}$ and $H^s=\GL_1^2$ (this is equivalent to \ref{geocase-1});
%, wolog $s=\diag(1,s_2)$ where $s_2$ is a primitive $n$-th root of unity.
\end{enumerate}
\item[\ref{geocase-toric}]
The prehomogeneous vector space $V=\mathbb{A}^2$ with $H=\GL_1^2$-action given by $(t_1,t_2).(x_1,x_2) = (t_1x_1,t_1t_2^nx_2)$, for positive integer $n$, the following proper fixed-point prehomogeneous vector subspaces are possible: 
\begin{enumerate}
\item[(i)] $V^s = \{ (0,0)\}$ and $H^s = H$  (this is \ref{geocase-0});
%for $s=(-1,s_2)$ where $s_2$ is not an $n$-th root of unity;
\item[(ii)] $V^s = \{ (x_1,0)\tq x_1\}$ and $H^s = H$  (this is \ref{geocase-1});
% for$s=(1,s_2)$ where $s_2$ is not an $n$-th root of unity;
\item[(iii)] $V^s = \{ (0,x_2)\tq x_2\}$ and $H^s = H$ (this is \ref{geocase-1});
% $s=(-1,s_2)$ where $s_2$ is an $n$-th root of unity.
\end{enumerate}
\item[\ref{geocase-sub}]
The prehomogeneous vector space $V=\mathbb{A}^4$ with $H=\GL_2$ and action $\det^{-1}\otimes\Sym^3$ has the following proper fixed-point prehomogeneous vector subspaces, up to isomorphism: 
\begin{enumerate}
\item[(i)] $V^s = \{(0,0,0,0)\}$ and $H^s = \GL_2$ or $\GL_1^2$ (this is \ref{geocase-0});
%, wolog $s=\diag(-1,-1)$; 
\item[(ii)] $V^s = \{(x_1,0,0,0)\tq x_1 \}$ and $H^s = \GL_1^2$ (this is equivalent to \ref{geocase-1});
% wolog $s=\diag(-1,i)$; 
\item[(iii)] $V^s = \{(0,x_2,0,0)\tq x_2 \}$ and $H^s = \GL_1^2$ (this is equivalent to \ref{geocase-1});
% for $s=\diag(1,i)$; 
\item[(iv)] $V^s = \{(x_1,0,x_3,0)\tq x_1, x_3 \}$ and $H^s = \GL_1^2$ (this is \ref{geocase-toric} for $n=2$);
% and $s=\diag(-1,1)$; 
\item[(v)] $V^s = \{(x_1,0,0,x_4)\tq x_1, x_4 \}$ and $H^s = \GL_1^2$ (this is \ref{geocase-toric} for $n=3$).
% and $s=\diag(\theta_3^2,\theta_3)$. 
\qedhere
\end{enumerate}
\end{enumerate}
\end{proof}

\subsection{$V$-conormal}\label{ssec:V-conormal}

Let $H\times V\to V$ be one of the prehomogeneous vector spaces appearing in Proposition~\ref{prop:PHV} and let $H^s\times V^s\to V^s$ be one of the subspaces appearing in Lemma~\ref{lem:subPHV}, for semisimple $s\in H$.
Then $T^*(V)^s = T^*(V^s)$, where $H$ acts on $T^*(V)$ by $s\cdot(x,y) \ceq (s\cdot x,s\cdot y)$. Recall that $\Lambda = \{ (x,y)\in T^*(V) \tq [x,y]=0\}$; then $\Lambda^s = \{ (x',y')\in T^*(V^s) \tq [x',y']=0\}$.

Now let $C'\subseteq V^s$ be an $H^s$-orbit.
The $H$-orbit of $C'$, denoted by $H\cdot C'\subseteq V$ and sometimes called the saturation of $C'$, is a single $H$-orbit, denoted below by $C$.
The intersection $C^s$ of $C$ with $V^s$ contains $C'$ but may contain other $H^s$-orbits: we write $C^s = \cup_i C'_i$.
Then, $(\Lambda_{C})^s = \cup_i (\Lambda^s)_{C'_i}$
Passing to the regular part of the conormal varieties we find that it can happen that
$(\Lambda^\text{sreg}_{C})^s$ and $\cup_i (\Lambda^s_{C_i'})^\text{sreg}$ are not equal. Of course, this is essentially the same phenomenon discussed in Section~\ref{ssec:xi-conormal}. Definition~\ref{def:xi-conormal} now takes this form.

\begin{definition}\label{def:V-conormal}
With notation as above, let us say that an $H^s$-orbit $C'\subseteq V^s$ is $V$-conormal if $(\Lambda^s_{C'})^\text{sreg}\subseteq (\Lambda^\text{sreg}_{C})^s$, where the $H$-orbit $C\subseteq V$ is the saturation of $C'$.
In this case, this inclusion induces a map of equivariant fundamental groups $A^\ABV_{C'} \to A^\ABV_{C}$.
\end{definition}

Whenever $V=V^s$ and $H=H^s$ then we trivially have that every orbit is $V$-conormal.
 For those cases under consideration in this paper where $V=V^s$ and $H\neq H^s$ the same remains true. We note that this only occurs for \ref{geocase-0}.

From the cases of prehomogenous vector spaces $V$ and proper subspaces $V^s$ appearing in Lemma~\ref{lem:subPHV}, only the following $H^s$-orbits $C'\subset V^s$ are $V$-conormal:
\begin{enumerate}%[leftmargin=12pt,labelindent=0pt,labelwidth=10pt,itemindent=8pt]
\item Case \ref{geocase-2}(ii). 
The closed orbit $C'_0$ and the open orbit $C'_1$ in $V^s$ are $V$-conormal.
We note that the saturation of the closed $H^s$-orbit $C'_0$ in $V^s$  is the closed $H$-orbit $C_0$ in $V$; the saturation of the open $H^s$-orbit $C'_1$ in $V^s$ is the $H$-orbit $C_1$ in $V$, which is open.
\item Case \ref{geocase-sub}(iv).
Every $H^s$-orbit $C'$ in $V^s$ is $V$-conormal.
These orbits are denoted by $C'_0$, $C'_1$, $C'_2$ and $C'_3$, where $C'_0$ is closed and $C'_3$ is open and where $C'_1$ is the $H^s$-orbit of $(1,0,0,0)$ and $C'_2$ is the $H^s$-orbit of $(0,0,1,0)$. 
The saturation of the closed $H^s$-orbit $C'_0$ is the closed $H$-orbit $C_0$, the saturation of $C'_1$ is $C_1$, the saturation of $C'_2$ is $C_2$ and the saturation of the open $H^s$-orbit $C'_3$ is the open $H$-orbit $C_3$. 
\item Case \ref{geocase-sub}(v).
Only two of the four $H^s$-orbits $C'$ in $V^s$ are $V$-conormal.
Let the $H^s$-orbits in $V^s$ be denoted by $C'_0$, $C'_1$, $C'_2$ and $C'_3$, where $C'_0$ is closed and $C'_3$ is open and where $C'_1$ is the $H^s$-orbit of $(1,0,0,0)$ while $C'_2$ is the $H^s$-orbit of $(0,0,0,1)$. 
Then the saturation of the closed $H^s$-orbit $C'_0$ is the closed $H$-orbit $C_0$, the saturation of $C'_1$ is $C_1$, the saturation of $C'_2$ is $C_1$ again, and the saturation of the open $H^s$-orbit $C'_3$ is the open $H$-orbit $C_3$.
Only $C'_0$ and $C'_3$ are $V$-conormal.
\end{enumerate}

\subsection{Restriction}\label{ssec:restriction}

Let $(G,s,\xi)$ be an endscopic triple for $G_2$ over $F$.
Let $\lambda$ be an infinitesimal parameter for $G$.
To prove Theorem~\ref{thm:VFPF} we will calculate the equivariant restriction functor
\[
\res : D_{H_\lambda}(V_{\xi\circ\lambda}) \to D_{H_\lambda}(V_{\lambda})
\]
on simple objects in the abelian category $\Perv_{H}(V_{\xi\circ\lambda})$. As we will see, the restriction $\res\,\mathcal{P}$ of an equivariant perverse sheaf is not necessarily perverse. 
To address this issue, we replace $\Perv_{H}(V_{\lambda})$ with the full subcategory $\Perv_{H}^{\bullet}(V_{\lambda})$ of $D_{H}(V_{\lambda})$ generated by shifts $\mathcal{P}[n]$ of equivariant perverse sheaves. In this section we will see that $\res$ defines
\[
\res : \Perv^{\bullet}_{H_{\xi\circ\lambda}}(V_{\xi\circ\lambda}) \to \Perv^{\bullet}_{H_\lambda}(V_{\lambda})
\]
We use this fact to replace restriction along $d\xi : V_{\lambda} \to V_{\xi\circ\lambda}$ with restriction along $V^s \hookrightarrow V$, where $H\times V\to V$ is one of the five prehomogeneous vector spaces from Proposition~\ref{prop:PHV} and $s\in H$ is semisimple.
The functor
\[
\res : \Perv^{\bullet}_{H}(V) \to \Perv^{\bullet}_{H^s}(V^s)
\]
on simple objects is given in Table~\ref{table:restriction0} for the three prehomogeneous vector spaces appearing semisimple $s\in H$ appearing in Section~\ref{ssec:V-conormal}.

\begin{table}[htp]
\caption{Restrictions of Standard and Simple Perverse Sheaves}
\label{table:restriction0}
\begin{center}
$
\begin{array}{l l l || l l c }
\text{Case} &  \text{Standard} & \text{Restriction} & \text{Perverse} & \text{Restriction} & \\
 &  \text{sheaf} &  & \text{sheaf} &  & \\
\hline\hline
\ref{geocase-2}(ii)
&  \1_{C_0}  &    \1_{C_0'}   & \IC(\1_{C_0}  )  & \IC(\1_{C'_0}) \\
&  \1_{C_1}  &    \1_{C_1'}   & \IC( \1_{C_1} )  & \IC(\1_{C_1'}  )[1] \\
\hline
\ref{geocase-sub}(iv)
&  \1_{C_0}  &   \1_{C_0'}   &  \IC( \1_{C_0} )  & \IC(\1_{C'_0})  & \\
&  \1_{C_1}  &   \1_{C_1'}    &  \IC( \1_{C_1} )  &  \IC( \1_{C'_1} )[1] & \\
&  \1_{C_2}  &   \1_{C_2'}   &  \IC(  \1_{C_2})  &  \mathcal{F}_2[2] \oplus \IC(\1_{C'_0})[1]  & \\
&  \1_{C_3}  &   \1_{C_3'}   &  \IC( \1_{C_3} )  &  \IC(\1_{C_3'}  )[2] & \\
&  \varrho_{C_3} &   \1_{C_3'}  \oplus \rho_{C_3'}   &  \IC(\varrho_{C_3})  &  \mathcal{F}_3[2] \oplus \IC(\1_{C'_0})[2] \oplus \IC( \rho_{C'_3})[2]  & \\
&  \varepsilon_{C_3}  &  \rho_{C_3'}  &  \IC(\varepsilon_{C_3}  )  &  \IC(\rho_{C'_3})[2] & \\
\hline
\ref{geocase-sub}(v)
&  \1_{C_0}  &    \1_{C_0'}   &  \IC( \1_{C_0} )  &  \IC(\1_{C'_0}) &   \\
&  \1_{C_1}  &   \1_{C_1'} \oplus \1_{C_2'}    &  \IC( \1_{C_1} )  &   \mathcal{F}_4[1] &  \\
&  \1_{C_2}  &   0  &  \IC( \1_{C_2} )  &   \mathcal{F}_4[2] \oplus \IC(\1_{C'_0})[1]  &   \\
&  \1_{C_3}  &   \1_{C_3'}   &  \IC(\1_{C_3}  )  &  \IC(\1_{C_3'}  )[2] &  \\
&  \varrho_{C_3} &   \rho_{C'_3} \oplus  \rho^2_{C'_3}   &  \IC(\varrho_{C_3}  )  &  \IC(\rho_{C'_3})[2]\oplus \IC(\rho^2_{C'_3})[2]  \oplus \IC(\1_{C'_0})[2]  &   \\
&  \varepsilon_{C_3} &  \1_{C_3'}   &  \IC( \varepsilon_{C_3} )  &  \mathcal{F}_5[2]  &   \\
\hline
\end{array}
$
\end{center}
\end{table}%

In Table~\ref{table:restriction0} we find four indecomposable perverse sheaves, given here.
\begin{itemize}[leftmargin=15pt]

\item 
On \ref{geocase-toric} (for $n=2$) we define $\mathcal{F}_2 = \1_{C_2\cup C_1\cup C_0}[1]$ which is cohomologically equivalent to a complex 
\[ 
\cdots \rightarrow 0 \rightarrow (\1_{\overline{C_2}} \oplus \1_{\overline{C_1}}) \rightarrow \1_{C_0} 
\]
and hence sits in the exact sequence
\[ 0 \rightarrow \IC(\1_{C_0}  ) \rightarrow \mathcal{F}_2 \rightarrow \IC(\1_{C_1}  ) \oplus \IC(\1_{C_2}  ) \rightarrow 0, \]
which we use to compute the functor $\NEvs$ on these perverse sheaves in Table~\ref{tab:summary-iv}.
\item 
On \ref{geocase-toric}  (for $n=2$) we define $\mathcal{F}_3 =  \1_{C_3\cup C_2}[2] $ which is  cohomologically equivalent to a complex  
\[ \cdots \rightarrow 0 \rightarrow \1_{\overline{C_3}} \rightarrow1_{\overline{C_1}} \rightarrow 0 \]
and hence sits in the exact sequence
\[ 0 \rightarrow \IC(\1_{C_1}  ) \rightarrow \mathcal{F}_3 \rightarrow \IC(\1_{C_3}  ) \rightarrow 0, \]
which we use to compute the functor $\NEvs$ on these perverse sheaves in Table~\ref{tab:summary-iv}.
\item 
On \ref{geocase-toric}  (for $n=2$) we define $\mathcal{F}_4 = \1_{C_2\cup C_1\cup C_0}[1]$ which is cohomologically equivalent to a complex 
\[ 
\cdots \rightarrow 0 \rightarrow (\1_{\overline{C_2}} \oplus \1_{\overline{C_1}}) \rightarrow \1_{C_0} 
\]
and hence sits in the exact sequence
\[ 0 \rightarrow \IC(\1_{C_0}  ) \rightarrow \mathcal{F}_4 \rightarrow \IC(\1_{C_1}  ) \oplus \IC(\1_{C_2}  ) \rightarrow 0, \]
which we use to compute the functor $\NEvs$ on these perverse sheaves in Table~\ref{tab:summary-v}.
\item 
On \ref{geocase-toric}  (for $n=2$) we define   $\mathcal{F}_5 = \1_{C_3}$ which is cohomologically equivalent to a complex  
\[  \1_{\overline{C_3}}[2]  \rightarrow (\1_{\overline{C_2}} \oplus \1_{\overline{C_1}})[1] \rightarrow \1_{C_0}[0] \]
and hence sits in the exact sequence
\[ 0 \rightarrow \mathcal{F}_4   \rightarrow \mathcal{F}_5 \rightarrow  \IC(\1_{C_3}  ) \rightarrow 0, \]
which we use to compute the functor $\NEvs$ on these perverse sheaves in Table~\ref{tab:summary-v}.
\end{itemize}

%%%%%%%%%%%%%%%%%

%%%%%%%%%%%%%%%%%%%%%%%%

\subsection{Proof of Theorem~\ref{thm:VFPF}}\label{ssec:FPF}

Let $H\times V\to V$ be one of the prehomogeneous vector spaces appearing in Proposition~\ref{prop:PHV}.
The result is immediate if $V=V^s$ and $H=H^s$ or in the case \ref{geocase-0} so we
 let $H^s\times V^s\to V^s$ be one of the proper prehomogeneous vector spaces appearing in Lemma~\ref{lem:subPHV}.
Let $C'$ be an $H^s$-orbit in $V^s$ and let $C\subseteq V$ be the saturation of $C'$.
In Proposition~\ref{prop:PHVFPF} we show that if $C'$ is $V$-conormal then the outside of the following diagram commutes.
\[
\begin{tikzcd}
D_{H}(V)  \arrow{dd}{res} \arrow{rrr}{\NEvs_{C}[\dim C]} &&& D_{H}(\Lambda^\text{sreg}_{C}) \arrow{d}[swap]{\res}  \arrow{rrd}{\trace_s} && \\
&&& D_{H}((\Lambda^\text{sreg}_{C})^s ) \arrow{d}{\res} && \CC\\
D_{H^s}(V^s) \arrow{rrr}{\NEvs_{C'}[\dim C']} &&& D_{H^s}((\Lambda^s_{C'})^\text{sreg})   \arrow{urr}[swap]{\trace_s} && 
\end{tikzcd}
\]
It is important to note that the square in this diagram does not commute.

\begin{proposition}\label{prop:PHVFPF}
Let $H\times V\to V$ be one of the prehomogeneous vector spaces appearing in Proposition~\ref{prop:PHV} and let $H^s\times V^s\to V^s$ be one of the prehomogeneous vector spaces appearing in Lemma~\ref{lem:subPHV}.
Let $C'$ be an $H^s$-orbit in $V^s$.
If $C'$ is $V$-conormal then
\begin{equation}\label{eqn:FPF}
\trace_{s} \left(\NEvs_{C}[\dim C]\ \mathcal{P}\right)
= \trace_{s} \left(\NEvs_{C'}[\dim C'] \res\, \mathcal{P}\vert_{V^s} \right),
\end{equation}
for all equivariant perverse sheaves $\mathcal{P}$ on $V$, where $C\subseteq V$ is the saturation of $C'$.
\end{proposition}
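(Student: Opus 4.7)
The plan is to reduce the statement to a finite, checkable list of cases and then verify the identity on simple generators. First I would observe that the identity is linear in $\mathcal{P}$ (both sides are additive in short exact sequences since $\NEvs$ is exact on the perverse $t$-structure and $\res$ sends distinguished triangles to distinguished triangles), so it suffices to test on the simple objects of $\Perv_H(V)$. By Proposition~\ref{prop:PHV} and Lemma~\ref{lem:subPHV}, the prehomogeneous vector spaces $H\times V\to V$ are exhausted by the five models \ref{geocase-0}--\ref{geocase-sub}, and the subspaces $H^s\times V^s\to V^s$ are exhausted by a short list of explicit subcases. When $V=V^s$ there is nothing to prove; when $V\ne V^s$ but $\dim V^s=0$ (the cases sending everything to \ref{geocase-0}), only trivial orbits are $V$-conormal and both sides reduce to the trace of $s$ on the stalk at $0$, for which Equation~\eqref{eqn:FPF} is immediate. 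By the classification in Section~\ref{ssec:V-conormal}, the remaining $V$-conormal pairs $(V,V^s,C')$ are precisely those enumerated there: case~\ref{geocase-2}(ii) with both orbits; case~\ref{geocase-sub}(iv) with all four orbits; and case~\ref{geocase-sub}(v) with only the closed and open orbits.

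Next, for each of these surviving cases I would simply read off both sides from the tables already compiled. The left-hand side $\trace_s\bigl(\NEvs_C[\dim C]\mathcal{P}\bigr)$ is extracted from the $\NEvs$ tables in Section~\ref{ssec:geometry}: given a simple $\IC(\mathcal{L}_C)$ the output $\NEvs_{C}\IC(\mathcal{L}_C)$ is an equivariant local system on $\Lambda^{\mathrm{sreg}}_{C}$, and since $C'$ is $V$-conormal the map $d\xi_\phi\colon A^\ABV_{C'}\to A^\ABV_{C}$ of Definition~\ref{def:V-conormal} is defined, so the trace of $s$ makes sense after pulling back along $(\Lambda^{\mathrm{sreg}}_{C})^s\hookrightarrow \Lambda^{\mathrm{sreg}}_{C}$. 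The right-hand side is obtained by first applying Table~\ref{table:restriction0} to write $\res\,\mathcal{P}\vert_{V^s}$ as a direct sum of shifted simple perverse sheaves and the indecomposables $\mathcal{F}_2,\mathcal{F}_3,\mathcal{F}_4,\mathcal{F}_5$, and then applying the $\NEvs$ computation on $V^s$ (cases \ref{geocase-1}, \ref{geocase-2}, and \ref{geocase-toric} with $n=2,3$) to each summand. The shifts $[\dim C]$ and $[\dim C']$ account exactly for the discrepancy in cohomological degree coming from the codimension difference.

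The hard part is organizing and checking the match in case~\ref{geocase-sub} with its subcases (iv) and (v), because of the indecomposable perverse sheaves $\mathcal{F}_2,\dots,\mathcal{F}_5$ appearing in the restriction and the presence of non-trivial local systems $\varrho$ and $\varepsilon$ on the open orbit. For these I would compute $\NEvs$ of each $\mathcal{F}_i$ on $V^s$ via the short exact sequences
\[
0\to \IC(\1_{C_0'}) \to \mathcal{F}_2 \to \IC(\1_{C_1'})\oplus\IC(\1_{C_2'})\to 0,\quad
0\to \IC(\1_{C_1'})\to \mathcal{F}_3\to \IC(\1_{C_3'})\to 0,
\]
and their analogues for $\mathcal{F}_4,\mathcal{F}_5$, using exactness of $\NEvs$ on each orbit in turn. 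The resulting ranks of $\NEvs_{C'}\mathcal{F}_i$ on each $\Lambda_{C'}^{\mathrm{sreg}}$ can be tabulated, and the trace of $s$ on the right-hand side is then a signed sum indexed by the strata in the restriction. On the left-hand side, the trace of $s$ on $\NEvs_{C}\IC(\mathcal{L}_C)$ evaluated at the image of $A^\ABV_{C'}\to A^\ABV_{C}$ gives the same signed sum, because the characters $\varrho,\varepsilon$ of $S_3$ and the characters $\vartheta_n$ of $\langle\theta_n\rangle$ all satisfy the required compatibilities under restriction of the fundamental groups (which are cyclic subgroups of $S_3$ in the \ref{geocase-sub} setting).

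Finally I would conclude by observing that, line by line in each of the tables so obtained, the two sides of Equation~\eqref{eqn:FPF} coincide, so the identity holds on every simple object of $\Perv_H(V)$ and hence on every object. The principal obstacle is bookkeeping rather than genuinely hard analysis: keeping consistent normalizations of shifts, consistent identifications of the equivariant fundamental groups on the two sides under $d\xi_\phi$, and consistently applying the short exact sequences for the indecomposables $\mathcal{F}_i$ so that the $\NEvs$ computations on $V^s$ really do recover the restrictions of the $\NEvs$ computations on $V$ at the $s$-fixed part of the conormal bundle.
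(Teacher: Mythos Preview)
Your proposal is correct and follows essentially the same approach as the paper: reduce to simple objects, invoke the classification of $V$-conormal orbits to isolate the three nontrivial cases \ref{geocase-2}(ii), \ref{geocase-sub}(iv), \ref{geocase-sub}(v), and then verify Equation~\eqref{eqn:FPF} line by line using the $\NEvs$ tables of Section~\ref{ssec:geometry}, the restriction Table~\ref{table:restriction0}, and the short exact sequences for the indecomposables $\mathcal{F}_2,\dots,\mathcal{F}_5$. One small correction: in your reduction step you assert that when $\dim V^s=0$ the trivial orbit is $V$-conormal and both sides reduce to a stalk trace, but in fact those cases (e.g.\ \ref{geocase-1}(i), \ref{geocase-2}(i), \ref{geocase-toric}(i), \ref{geocase-sub}(i)) are \emph{not} $V$-conormal---the regular conormal to $C_0=\{0\}$ in $V$ has no $s$-fixed points---so the hypothesis is vacuous there and nothing needs to be checked; this is harmless for the argument but worth stating accurately.
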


\begin{proof}

We prove this by verifying Equation~\eqref{eqn:FPF} by calculating the left- and right-hand sides of Equation~\eqref{eqn:FPF} independently, in each of the three cases appearing in Section~\ref{ssec:V-conormal}.
\def\ANDREWSHORTENED{1}
\def\varisone{1}
\ifx\ANDREWSHORTENED\varisone
\begin{enumerate}%[leftmargin=12pt,labelindent=0pt,labelwidth=10pt,itemindent=8pt]

\item Case \ref{geocase-2}(ii). 
The main calculations are summarized by the Table \ref{tab:summary-i}. 
The calculations are completed by comparing $\trace_s$ of the sheaves in the final two columns.
In this case these are always $1$.
We note that for each $C_i'$ the saturation is $C_i$. 

\begin{table}[ht]
\caption{Summary of Calculations for Case \ref{geocase-2}(ii)}\label{tab:summary-i}
\begin{tabular}{ccccc}
 $\mathcal{P}$ & $ \res\ \mathcal{P}$ & $C'$ &$\NEvs_{C}[\dim{C}]\ \mathcal{P}$ & $\NEvs_{C'}[\dim{C'}]\ \res\ \mathcal{P}$ \\
\hline\hline
$\IC( \1_{C_0})$ & $\IC( \1_{C_0'})$
       &$ C_0'$ &  $ \1_{\Lambda^\text{sreg}_{C_0}}[0] $     &    $ \1_{\Lambda^\text{sreg}_{C'_0}}[0] $     \\
   & & $C_1'$ &   $  0_{\Lambda^\text{sreg}_{C_1}}[0]$    &    $ 0_{\Lambda^\text{sreg}_{C'_1}}[0]$      \\
\hline
$\IC( \1_{C_1})$ & $\IC( \1_{C_1'})[1]$
       & $C_0'$ &   $\1_{\Lambda^\text{sreg}_{C_0}}[0] $     &     $ \1_{\Lambda^\text{sreg}_{C'_0}}[0]$     \\
   & & $C_1'$ &   $ \1_{\Lambda^\text{sreg}_{C_1}}[2]$    &      $ \1_{\Lambda^\text{sreg}_{C'_1}}[2] $     \\
\end{tabular}
\end{table}

\else
 For brevity, here we include only Case \ref{geocase-sub}(v).
\begin{enumerate}

\fi

\ifx\ANDREWSHORTENED\varisone
\item Case \ref{geocase-sub}(iv).
This is the fixed-point prehomogeneous vector subspace $H^s\times V^s \to V^s$ for $s = \diag(-1,1)\in \GL_2$ and $H=\GL_2$ acting on $V=\mathbb{A}^4$ by $\det^{-1}\otimes\Sym^3$; then $H^s = \GL_1^2$ and $V^s = \mathbb{A}^2$ with action equivalent to \ref{geocase-toric} in the case $n=2$.
Recall that we write $C'_0$ for the trivial $H^s$-orbit, $C'_1$ for the $H^s$-orbit of $(1,0,0,0)\in V^s$, $C'_2$ for the $H^s$-orbit of $(0,0,1,0)\in V^s$ and $C'_3$ for the open $H^s$-orbit in $V^s$.  The equivariant fundamental group of each $H^s$-orbit is trivial except $A_{C'_3} = S_2$; let $\rho$ be the non-trivial quadratic character of this group and let $\rho_{C'_3}$ be the corresponding local system on $C'_3$.

The main calculations are summarized by the Table \ref{tab:summary-iv}. 
The calculations are completed by comparing $\trace_s$ of the sheaves in the final two columns  and accounting for shifts.
Because the image of $s$ in $A^\ABV_{C}$ is order $2$ we have $\trace_s \varrho = 0$, $\trace_s \varepsilon = -1$, $\trace_s \rho = -1$ and $\trace_s \vartheta_2 = -1$. We note that for each $C_i'$ the saturation is $C_i$.

\begin{table}[tb]
\caption{Summary of Calculations for Case \ref{geocase-sub}(iv). In this table we write $\vartheta : \{ 1, \theta_2\} \to \CC^\times$ for the character generally denoted in this paper by $\vartheta_2$, defined by $\vartheta(\theta_2) = \theta_2$. }
\label{tab:summary-iv}
\begin{center}
%\resizebox{\textwidth}{!}{
$
\begin{array}{cccccc} 
  \mathcal{P}  &   \res\ \mathcal{P}  &  C'  & \NEvs_{C}[\dim{C}]\ \mathcal{P}  &  \NEvs_{C'}[\dim{C'}]\ \res\ \mathcal{P}  \\
\hline\hline
 \IC( \1_{C_0})  &  \IC( \1_{C_0'}) 
       &  C_0'  &     \1_{\Lambda^\text{sreg}_{C_0}}[0]       &      \1_{\Lambda^\text{sreg}_{C'_0}}[0]       \\
   & &  C_1'  &      0     &      0       \\
   & &  C_2'  &      0     &      0       \\
   & &  C_3'  &      0     &      0       \\
\hline
 \IC( \1_{C_1})  &  \IC( \1_{C_1'})[1] 
       &  C_0'  &   \varrho_{\Lambda^\text{sreg}_{C_0}}[0]       &       0      \\
   & &  C_1'  &     \1_{\Lambda^\text{sreg}_{C_1}}[2]     &        \1_{\Lambda^\text{sreg}_{C'_1}}[2]       \\
   & &  C_2'  &     0     &      0       \\
   & &  C_3'  &     0     &      0       \\
   \hline
 \IC( \1_{C_2})  &  \mathcal{F}_2[2] \oplus \IC( \1_{C_0'})[1] 
       &  C_0'  &   0       &       \1_{\Lambda^\text{sreg}_{C'_0}}[2]\oplus\1_{\Lambda^\text{sreg}_{C'_0}}[1]      \\
   & &  C_1'  &      \vartheta_{\Lambda^\text{sreg}_{C_1}}[2]     &        \1_{\Lambda^\text{sreg}_{C'_1}}[3]       \\
   & &  C_2'  &      \1_{\Lambda^\text{sreg}_{C_2}}[3]     &      \1_{\Lambda^\text{sreg}_{C'_2}}[3]       \\
   & &  C_3'  &      0     &      0       \\
   \hline
 \IC( \1_{C_3})  &  \IC( \1_{C_3'})[2] 
       &  C_0'  &   0       &       0      \\
   & &  C_1'  &     0     &        0      \\
   & &  C_2'  &      0     &      0       \\
   & &  C_3'  &      \1_{\Lambda^\text{sreg}_{C_3}}[4]     &      \1_{\Lambda^\text{sreg}_{C'_3}}[4]       \\
     \hline
 \IC( \varrho_{C_3})  & ( \mathcal{F}_3\oplus  \IC( \1_{C_0'}) \oplus  \IC(\rho_{C_3'}))[2]  
       &  C_0'  &  0       &       \1_{\Lambda^\text{sreg}_{C'_0}} [2]  \oplus \vartheta_{\Lambda^\text{sreg}_{C'_0}} [2]       \\
   & &  C_1'  &     0     &         \1_{\Lambda^\text{sreg}_{C'_1}}[3] \oplus \vartheta_{\Lambda^\text{sreg}_{C'_1}}[3]       \\
   & &  C_2'  &    \vartheta_{\Lambda^\text{sreg}_{C_2}}[3]     &      \vartheta_{\Lambda^\text{sreg}_{C'_2}}[3]        \\
   & &  C_3'  &       \varrho_{\Lambda^\text{sreg}_{C_3}}[4]     &      \1_{\Lambda^\text{sreg}_{C'_3}}[4] \oplus  \vartheta_{\Lambda^\text{sreg}_{C'_3}}[4]        \\
      \hline
  \IC( \varepsilon_{C_3})  &  \IC( \rho_{C'_3})[2] 
       &  C_0'  &     \varepsilon_{\Lambda^\text{sreg}_{C_0}}[0]       &       \vartheta_{\Lambda^\text{sreg}_{C'_0}}[2]       \\
   & &  C_1'  &     \1_{\Lambda^\text{sreg}_{C_1}}[2]     &        \vartheta_{\Lambda^\text{sreg}_{C'_1}}[3]       \\
   & &  C_2'  &      \vartheta_{\Lambda^\text{sreg}_{C_2}}[3]     &      \vartheta_{\Lambda^\text{sreg}_{C'_2}}[3]        \\
   & &  C_3'  &      \varepsilon_{\Lambda^\text{sreg}_{C_3}}[4]     &     \vartheta_{\Lambda^\text{sreg}_{C'_3}}[4]        \\
\end{array}
$
%}
\end{center}
\end{table}

\fi
%%%%%%%%%%%%%%%%%%%%%%%

\item[(3)] Case \ref{geocase-sub}(v).
The fixed-point prehomogenous space $H^s\times V^s \to V^s$ is equivalent to \ref{geocase-toric} in the case $n=3$.
Without loss of generality, we take $s = \diag(\theta_3^2,\theta_3)$.
Recall that in Case \ref{geocase-sub}(v) there are only two $H^s$-orbits $C'\subset V^s$ that are $V$-conormal: the open orbit $C'_0$ and the closed orbit $C'_3$.

The main calculations are summarized by Table \ref{tab:summary-v}. 
The calculations are completed by comparing $\trace_s$ of the sheaves in the final two columns and accounting for shifts.
Because the image of $s$ in $A^\ABV_{C}$ has order $3$ (or order $1$) we have
$\trace_s \vartheta_2 = 1$, $\trace_s \varepsilon = 1$, $\trace_s \varrho = \theta_3 + \theta_3^2 =  -1$ and
$\trace_s \rho \oplus \rho^2 = \theta_3 + \theta_3^2 =  -1$.
Note that saturation of $C_1'$ and $C_2'$ are both $C_1$, these do not satisfy the hypotheses, and that these cases do not always satisfy (\ref{eqn:FPF}).\qedhere

\begin{table}[tp]
\caption{Summary of Calculations for Case \ref{geocase-sub}(v). In this table we write $\vartheta : \{ 1, \theta_3,\theta_3^2\} \to \CC^\times$ for the character generally denoted in this paper by $\vartheta_3$, defined by $\vartheta(\theta_3) = \theta_3$. }
\label{tab:summary-v}
\begin{center}
\resizebox{\textwidth}{!}{
$
\begin{array}{cccccc}
  \mathcal{P}  &   \res\ \mathcal{P}  &  C'  & \NEvs_{C}[\dim{C}]\ \mathcal{P}  &  \NEvs_{C'}[\dim{C'}]\ \res\ \mathcal{P}  \\
\hline\hline
 \IC( \1_{C_0})  &  \IC( \1_{C_0'}) 
       &  C_0'  &     \1_{\Lambda^\text{sreg}_{C_0}}[0]       &      \1_{\Lambda^\text{sreg}_{C'_0}}[0]       \\
   & &  C_1'  &      0     &      0       \\
   & &  C_2'  &      0     &      0       \\
   & &  C_3'  &      0     &      0       \\
\hline
 \IC( \1_{C_1})  &  \mathcal{F}_4[1]  
       &  C_0'  &    \varrho_{\Lambda^\text{sreg}_{C_0}}[0]           &       \1_{\Lambda^\text{sreg}_{C'_0}}[1]      \\
   & &  C_1'  &      \1_{\Lambda^\text{sreg}_{C_1}}[2]       &        \1_{\Lambda^\text{sreg}_{C'_1}}[2]      \\
   & &  C_2'  &      \1_{\Lambda^\text{sreg}_{C_1}}[2]      &        \1_{\Lambda^\text{sreg}_{C'_2}}[2]        \\
   & &  C_3'  &     0     &      0       \\
   \hline
 \IC( \1_{C_2})  &  \mathcal{F}_4[2] \oplus \IC(\1_{C'_0})[1] 
       &  C_0'  &    0      &       \1_{\Lambda^\text{sreg}_{C'_0}} [2] \oplus \1_{\Lambda^\text{sreg}_{C'_0}}[1]        \\
   & &  C_1'  &      \vartheta_{\Lambda^\text{sreg}_{C_1}}[2]      &        \1_{\Lambda^\text{sreg}_{C'_1}}[3]        \\
   & &  C_2'  &     \vartheta_{\Lambda^\text{sreg}_{C_1}}[2]      &       \1_{\Lambda^\text{sreg}_{C'_1}}[3]       \\
   & &  C_3'  &      0     &      0       \\
   \hline
 \IC( \1_{C_3})  &  \IC( \1_{C_3'})[2] 
       &  C_0'  &   0       &       0      \\
   & &  C_1'   &     0     &        0      \\
   & &  C_2'  &      0     &      0       \\
   & &  C_3'  &      \1_{\Lambda^\text{sreg}_{C_3}}[4]     &      \1_{\Lambda^\text{sreg}_{C'_3}}[4]       \\
     \hline
 \IC( \varrho_{C_3})  & (  \IC(\1_{C'_0})\oplus \IC(\vartheta_{C'_3})\oplus \IC(\vartheta^2_{C'_3}))[2]   
       &  C_0'  &    0      &       \1_{\Lambda^\text{sreg}_{C'_0}}[2] \oplus \vartheta_{\Lambda^\text{sreg}_{C'_3}}[2] \oplus \vartheta^2_{\Lambda^\text{sreg}_{C'_3}}[2]        \\
   & &  C_1'  &    0       &          \vartheta_{\Lambda^\text{sreg}_{C'_3}}[3] \oplus \vartheta^2_{\Lambda^\text{sreg}_{C'_3}}[3]    \\
   & &  C_2'  &    0     &        \vartheta_{\Lambda^\text{sreg}_{C'_3}}[3] \oplus \vartheta^2_{\Lambda^\text{sreg}_{C'_3}}[3]       \\
   & &  C_3'  &       \varrho_{\Lambda^\text{sreg}_{C_3}}[4]     &       \vartheta_{\Lambda^\text{sreg}_{C'_3}}[4] \oplus \vartheta^2_{\Lambda^\text{sreg}_{C'_3}}[4]       \\
      \hline
  \IC( \varepsilon_{C_3})  &  \mathcal{F}_5[2]   
       &  C_0'  &     \varepsilon_{\Lambda^\text{sreg}_{C_0}}[0]     &    \1_{\Lambda^\text{sreg}_{C'_0}}[2]      \\
   & &  C_1'  &       \1_{\Lambda^\text{sreg}_{C_1}}[2]      &         \1_{\Lambda^\text{sreg}_{C'_1}}[3]      \\
   & &  C_2'  &        \1_{\Lambda^\text{sreg}_{C_1}}[2]      &      \1_{\Lambda^\text{sreg}_{C'_2}}[3]       \\
   & &  C_3'  &      \varepsilon_{\Lambda^\text{sreg}_{C_3}}[4]     &       \1_{\Lambda^\text{sreg}_{C'_3}}[4]      \\
\end{array}
$
}
\end{center}
\end{table}

\ifx\ANDREWSHORTENED\varisone
\else
In this case we include the calculation details. 

\begin{enumerate}
\item 
Take $\mathcal{P} =  \IC( \1_{C_0})$ and recall that $\res\ \IC(\1_{C_0}) = \IC(\1_{C'_0})$.
\begin{enumerate}

\item
Take $C'=C'_0$, so $C=C_0$. 
Then the left-hand side of  Equation~\eqref{eqn:FPF} is
\[
\begin{array}{rcl}
&&\hskip-1cm \trace_{s} \left(\NEvs_{C_0}[\dim{C_0}]\ \IC(\1_{C_0})\right) \\
&=& \trace_s \1_{\Lambda^\text{sreg}_{C_0}}[0]\\
&=& 1.
\end{array}
\]
The right-hand side of Equation~\eqref{eqn:FPF} is 
\[
\begin{array}{rcl}
&&\hskip-1cm \trace_{s} \left(\NEvs_{C'_0}[\dim{C'_0}]\ \res\ \IC(\1_{C_0})\right) \\
&=& \trace_{s} \left(\NEvs_{C'_0} [0]\IC(\1_{C'_0})\right) \\
&=& \trace_{s} \1_{\Lambda^\text{sreg}_{C'_0}} \\
&=& 1.
\end{array}
\]
This verifies Equation~\eqref{eqn:FPF} for $\mathcal{P} =  \IC( \1_{C_0})$ and $C'=C'_0$ in Case \ref{geocase-sub}(v).

\item
Take $C'=C'_3$, in which case $C=C_3$.
The left-hand side of  Equation~\eqref{eqn:FPF} is
\[
\begin{array}{rcl}
&&\hskip-1cm \trace_{s} \left(\NEvs_{C_3}[\dim{C_3}]\ \IC(\1_{C_0})\right) \\
&=& \trace_s 0_{\Lambda^\text{sreg}_{C_3}}[4]\\
&=& 0.
\end{array}
\]
The right-hand side of Equation~\eqref{eqn:FPF} is 
\[
\begin{array}{rcl}
&&\hskip-1cm \trace_{s} \left(\NEvs_{C'_3}[\dim{C'_3}]\ \res\ \IC(\1_{C_0})\right) \\
&=& \trace_{s} \left(\NEvs_{C'_3} [1] \IC(\1_{C'_0})\right) \\
&=& \trace_{s} 0_{\Lambda^\text{sreg}_{C'_3}}[1] \\
&=& 0.
\end{array}
\]
This verifies Equation~\eqref{eqn:FPF} for $\mathcal{P} =  \IC( \1_{C_0})$ and $C'=C'_3$ in Case \ref{geocase-sub}(v).
\end{enumerate}
This verifies Equation~\eqref{eqn:FPF} for $\mathcal{P} =  \IC( \1_{C_0})$ in Case \ref{geocase-sub}(iv).

\item 
Take $\mathcal{P} =  \IC( \1_{C_1})$ and recall that $\res\ \IC(\1_{C_1}) = \mathcal{F}_4[1]$.
\begin{enumerate}
\item
Take $C'=C'_0$, so $C=C_0$. 
Then the left-hand side of  Equation~\eqref{eqn:FPF} is
\[
\begin{array}{rcl}
&&\hskip-1cm \trace_{s} \left(\NEvs_{C_0}[\dim{C_0}]\ \IC(\1_{C_1})\right) \\
&=& \trace_s \varrho_{\Lambda^\text{sreg}_{C_0}}[0]\\
&=& -1.
\end{array}
\]
%Here we used the fact that the image of $s=\diag(-1,1)$ in $A^\ABV_{C_0} = S_3$ is of order $2$ and the trace of the reflection representation of $S_3$ at such elements is $0$.
The right-hand side of Equation~\eqref{eqn:FPF} is 
\[
\begin{array}{rcl}
&&\hskip-1cm \trace_{s} \left(\NEvs_{C'_0}[\dim{C'_0}]\ \res\ \IC(\1_{C_1})\right) \\
&=& \trace_{s} \left(\NEvs_{C'_0}[0] \mathcal{F}_4)[1]\right) \\
&=& \trace_{s} \1_{\Lambda^\text{sreg}_{C'_0}}[1] \\
&=& -1.
\end{array}
\]
This verifies Equation~\eqref{eqn:FPF} for $\mathcal{P} =  \IC( \1_{C_1})$ and $C'=C'_0$ in Case \ref{geocase-sub}(v).

\item
Take $C'=C'_3$, in which case $C=C_3$.
The left-hand side of  Equation~\eqref{eqn:FPF} is
\[
\begin{array}{rcl}
&&\hskip-1cm \trace_{s} \left(\NEvs_{C_3}[\dim{C_3}]\ \IC(\1_{C_1})\right) \\
&=& \trace_s 0_{\Lambda^\text{sreg}_{C_2}}[4]\\
&=& 0.
\end{array}
\]
The right-hand side of Equation~\eqref{eqn:FPF} is 
\[
\begin{array}{rcl}
&&\hskip-1cm \trace_{s} \left(\NEvs_{C'_3}[\dim{C'_3}]\ \res\ \IC(\1_{C_1})\right) \\
&=& \trace_{s} \left(\NEvs_{C'_3}[2] \IC(\1_{C'_1})[1]\right) \\
&=& \trace_{s} 0_{\Lambda^\text{sreg}_{C'_3}}[2] \\
&=& 0.
\end{array}
\]
This verifies Equation~\eqref{eqn:FPF} for $\mathcal{P} =  \IC( \1_{C_1})$ and $C'=C'_3$ in Case \ref{geocase-sub}(v).

\end{enumerate}
This verifies Equation~\eqref{eqn:FPF} for $\mathcal{P} =  \IC( \1_{C_1})$ in Case \ref{geocase-sub}(iv).

\item 
Take $\mathcal{P} =  \IC( \1_{C_2})$ and recall that $\res\ \IC(\1_{C_2}) = \mathcal{F}_4[2] \oplus \IC(\1_{C'_0})[1]$.
\begin{enumerate}
\item
Take $C'=C'_0$, so $C=C_0$. 
Then the left-hand side of  Equation~\eqref{eqn:FPF} is
\[
\begin{array}{rcl}
&&\hskip-1cm \trace_{s} \left(\NEvs_{C_0}[\dim{C_0}]\ \IC(\1_{C_2})\right) \\
&=& \trace_s 0_{\Lambda^\text{sreg}_{C_0}}[0]\\
&=& 0.
\end{array}
\]
The right-hand side of Equation~\eqref{eqn:FPF} is 
\[
\begin{array}{rcl}
&&\hskip-1cm \trace_{s} \left(\NEvs_{C'_0}[\dim{C'_0}]\ \res\ \IC(\1_{C_2})\right) \\
&=& \trace_{s} \left(\NEvs_{C'_0}[0] \left(\mathcal{F}_4[2] \oplus \IC(\1_{C'_0})[1]\right)\right) \\
&=& \trace_{s} \left(\NEvs_{C'_0}[0] \mathcal{F}_4[2]\right) +  \trace_{s} \left(\NEvs_{C'_0}[0]\IC(\1_{C_0})[1]\right) \\
&=& \trace_{s} \1_{\Lambda^\text{sreg}_{C'_0}} [2] +  \trace_{s} \1_{\Lambda^\text{sreg}_{C'_0}}[1] \\
&=& 1-1=0.
\end{array}
\]
This verifies Equation~\eqref{eqn:FPF} for $\mathcal{P} =  \IC( \1_{C_2})$ and $C'=C'_0$ in Case \ref{geocase-sub}(v).

\item
Take $C'=C'_3$, in which case $C=C_3$.
The left-hand side of  Equation~\eqref{eqn:FPF} is
\[
\begin{array}{rcl}
&&\hskip-1cm \trace_{s} \left(\NEvs_{C_3}[\dim{C_3}]\ \IC(\1_{C_2})\right) \\
&=& \trace_s 0_{\Lambda^\text{sreg}_{C_3}}[4]\\
&=& 0.
\end{array}
\]
The right-hand side of Equation~\eqref{eqn:FPF} is 
\[
\begin{array}{rcl}
&&\hskip-1cm \trace_{s} \left(\NEvs_{C'_3}[\dim{C'_3}]\ \res\ \IC(\1_{C_2})\right) \\
&=& \trace_{s} \left(\NEvs_{C'_3}[2] \left(\mathcal{F}_4[2] \oplus \IC(\1_{C_0'})[1]\right)\right) \\
&=& \trace_{s} \left(\NEvs_{C'_3}[2] \mathcal{F}_4[2]\right) +\trace_s\left(\NEvs_{C'_3}[2] \IC(\1_{C_0'})[1]\right) \\
&=& \trace_{s} 0_{\Lambda^\text{sreg}_{C'_3}}[4] + \trace_{s} 0_{\Lambda^\text{sreg}_{C'_3}}[3] \\
&=& 0.
\end{array}
\]
This verifies Equation~\eqref{eqn:FPF} for $\mathcal{P} =  \IC( \1_{C_2})$ and $C'=C'_3$ in Case \ref{geocase-sub}(v).

\end{enumerate}
This verifies  Equation~\eqref{eqn:FPF} for $\mathcal{P} =  \IC( \1_{C_2})$ in Case \ref{geocase-sub}(v).

\item 
Take $\mathcal{P} =  \IC( \1_{C_3})$ and recall that $\res\ \IC(\1_{C_3}) = \IC(\1_{C_3'})[2]$.
\begin{enumerate}
\item
Take $C'=C'_0$, so $C=C_0$. 
Then the left-hand side of  Equation~\eqref{eqn:FPF} is
\[
\begin{array}{rcl}
&&\hskip-1cm \trace_{s} \left(\NEvs_{C_0}[\dim{C_0}]\ \IC(\1_{C_3})\right) \\
&=& \trace_s 0_{\Lambda^\text{sreg}_{C_0}}[0]\\
&=& 0.
\end{array}
\]
The right-hand side of Equation~\eqref{eqn:FPF} is 
\[
\begin{array}{rcl}
&&\hskip-1cm \trace_{s} \left(\NEvs_{C'_0}[\dim{C'_0}]\ \res\ \IC(\1_{C_3})\right) \\
&=& \trace_{s} \left(\NEvs_{C'_0}[0] \IC(\1_{C'_3})[2] \right) \\
&=& \trace_{s} 0_{\Lambda^\text{sreg}_{C'_0}} [2] \\
&=& 0.
\end{array}
\]
This verifies Equation~\eqref{eqn:FPF} for $\mathcal{P} =  \IC( \1_{C_3})$ and $C'=C'_0$ in Case \ref{geocase-sub}(v).

\item
Take $C'=C'_3$, in which case $C=C_3$.
The left-hand side of  Equation~\eqref{eqn:FPF} is
\[
\begin{array}{rcl}
&&\hskip-1cm \trace_{s} \left(\NEvs_{C_3}[\dim{C_3}]\ \IC(\1_{C_3})\right) \\
&=& \trace_s \1_{\Lambda^\text{sreg}_{C_3}}[4]\\
&=& 1.
\end{array}
\]
The right-hand side of Equation~\eqref{eqn:FPF} is 
\[
\begin{array}{rcl}
&&\hskip-1cm \trace_{s} \left(\NEvs_{C'_3}[\dim{C'_3}]\ \res\ \IC(\1_{C_3})\right) \\
&=& \trace_{s} \left(\NEvs_{C'_3}[2] \IC(\1_{C'_3})[2]\right) \\
&=& \trace_{s} \1_{\Lambda^\text{sreg}_{C'_3}}[4]  \\
&=& 1.
\end{array}
\]
This verifies Equation~\eqref{eqn:FPF} for $\mathcal{P} =  \IC( \1_{C_3})$ and $C'=C'_3$ in Case \ref{geocase-sub}(v).
\end{enumerate}
This verifies  Equation~\eqref{eqn:FPF} for $\mathcal{P} =  \IC( \1_{C_3})$ in Case \ref{geocase-sub}(v).

\item 
Take $\mathcal{P} =  \IC( \varrho_{C_3})$ and recall that $\res\ \IC(\varrho_{C_3}) = \IC(\rho_{C'_3})[2]\oplus \IC(\rho^2_{C'_3})[2]  \oplus \IC(\1_{C'_0})[2]$.
\begin{enumerate}
\item
Take $C'=C'_0$, so $C=C_0$. 
Then the left-hand side of  Equation~\eqref{eqn:FPF} is
\[
\begin{array}{rcl}
&&\hskip-1cm \trace_{s} \left(\NEvs_{C_0}[\dim{C_0}]\ \IC(\varrho_{C_3})\right) \\
&=& \trace_s 0_{\Lambda^\text{sreg}_{C_0}}[0]\\
&=& 0.
\end{array}
\]
The right-hand side of Equation~\eqref{eqn:FPF} is 
\[
\begin{array}{rcl}
&&\hskip-1cm \trace_{s} \left(\NEvs_{C'_0}[\dim{C'_0}]\ \res\ \IC(\1_{C_3})\right) \\
&=& \trace_{s} \left(\NEvs_{C'_0}[0]\left(\IC(\rho_{C'_3})[2]\oplus \IC(\rho^2_{C'_3})[2]  \oplus \IC(\1_{C'_0})[2]\right) \right) \\
&=&   \trace_{s} \rho_{\Lambda^\text{sreg}_{C'_0}} [2]  + \trace_{s}\rho^2_{\Lambda^\text{sreg}_{C'_0}} [2]  + \trace_{s} \1_{\Lambda^\text{sreg}_{C'_0}} [2] \\
&=& \theta + \theta^2 +1 =0.
\end{array}
\]
Here we used the fact that the image of $s$ in $A^\ABV_{C'_0} = S_3$ is of order $3$.
This verifies Equation~\eqref{eqn:FPF} for $\mathcal{P} =  \IC( \varrho_{C_3})$ and $C'=C'_0$ in Case \ref{geocase-sub}(v).

\item
Take $C'=C'_3$, in which case $C=C_3$.
The left-hand side of  Equation~\eqref{eqn:FPF} is
\[
\begin{array}{rcl}
&&\hskip-1cm \trace_{s} \left(\NEvs_{C_3}[\dim{C_3}]\ \IC(\varrho_{C_3})\right) \\
&=& \trace_s \varrho_{\Lambda^\text{sreg}_{C_3}}[4]\\
&=& -1.
\end{array}
\]
Here we used the fact that the image of $s$ in $A^\ABV_{C_3} = S_3$ is of order $3$ and the trace of the reflection representation of $S_3$ at such elements is $-1$.
The right-hand side of Equation~\eqref{eqn:FPF} is 
\[
\begin{array}{rcl}
&&\hskip-1cm \trace_{s} \left(\NEvs_{C'_3}[\dim{C'_3}]\ \res\ \IC(\varrho_{C_3})\right) \\
&=& \trace_{s} \left(\NEvs_{C'_3}[2] \left(\IC(\rho_{C'_3})[2]\oplus \IC(\rho^2_{C'_3})[2]  \oplus \IC(\1_{C'_0})[2] \right)\right) \\
&=& \trace_{s} \rho_{\Lambda^\text{sreg}_{C'_3}}[4] + \trace_{s} \rho^2_{\Lambda^\text{sreg}_{C'_3}}[4] + \trace_{s} 0_{\Lambda^\text{sreg}_{C'_3}}[4]  \\
&=& \theta+\theta^2 =-1.
\end{array}
\]
Here we used the fact that the image of $s$ in $A^\ABV_{C'_3} = S_3$ is of order $3$.
This verifies Equation~\eqref{eqn:FPF} for $\mathcal{P} =  \IC( \varrho_{C_3})$ and $C'=C'_3$ in Case \ref{geocase-sub}(v).
\end{enumerate}
This verifies  Equation~\eqref{eqn:FPF} for $\mathcal{P} =  \IC( \varrho_{C_3})$ in Case \ref{geocase-sub}(v).

\item 
Take $\mathcal{P} =  \IC( \varepsilon_{C_3})$ and recall that $\res\ \IC(\varepsilon_{C_3}) = \mathcal{F}_5[2]$.
\begin{enumerate}
\item
Take $C'=C'_0$, so $C=C_0$. 
Then the left-hand side of  Equation~\eqref{eqn:FPF} is
\[
\begin{array}{rcl}
&&\hskip-1cm \trace_{s} \left(\NEvs_{C_0}[\dim{C_0}]\ \IC(\varepsilon_{C_3})\right) \\
&=& \trace_s \varepsilon_{\Lambda^\text{sreg}_{C_0}}[0]\\
&=& 1.
\end{array}
\]
Here we used the fact that the image of $s$ in $A^\ABV_{C_0} = S_3$ is of order $3$ and the sign of such elements is $1$.
The right-hand side of Equation~\eqref{eqn:FPF} is 
\[
\begin{array}{rcl}
&&\hskip-1cm \trace_{s} \left(\NEvs_{C'_0}[\dim{C'_0}]\ \res\ \IC(\1_{C_3})\right) \\
&=& \trace_{s} \left(\NEvs_{C'_0}[0]\mathcal{F}_5[2] \right) \\
&=&  \trace_{s} \1_{\Lambda^\text{sreg}_{C'_0}}[2] \\
&=& 1
\end{array}
\]
This verifies Equation~\eqref{eqn:FPF} for $\mathcal{P} =  \IC( \varepsilon_{C_3})$ and $C'=C'_0$ in Case \ref{geocase-sub}(v).

\item
Take $C'=C'_3$, in which case $C=C_3$.
The left-hand side of  Equation~\eqref{eqn:FPF} is
\[
\begin{array}{rcl}
&&\hskip-1cm \trace_{s} \left(\NEvs_{C_3}[\dim{C_3}]\ \IC(\varepsilon_{C_3})\right) \\
&=& \trace_s \varepsilon_{\Lambda^\text{sreg}_{C_3}}[4]\\
&=& 1.
\end{array}
\]
Here we used the fact that the image of $s$ in $A^\ABV_{C_3} = S_3$ is of order $3$.
The right-hand side of Equation~\eqref{eqn:FPF} is 
\[
\begin{array}{rcl}
&&\hskip-1cm \trace_{s} \left(\NEvs_{C'_3}[\dim{C'_3}]\ \res\ \IC(\varepsilon_{C_3})\right) \\
&=& \trace_{s} \left(\NEvs_{C'_3}[2]\mathcal{F}_5[2]\right) \\
&=&  \trace_{s} \1_{\Lambda^\text{sreg}_{C'_3}}[2]  \\
&=& 1.
\end{array}
\]
This verifies Equation~\eqref{eqn:FPF} for $\mathcal{P} =  \IC( \varepsilon_{C_3})$ and $C'=C'_3$ in Case \ref{geocase-sub}(v).
\end{enumerate}
This verifies  Equation~\eqref{eqn:FPF} for $\mathcal{P} =  \IC( \varepsilon_{C_3})$ in Case \ref{geocase-sub}(v).

\end{enumerate}
This concludes the proof of Equation~\eqref{eqn:FPF} in Case \ref{geocase-sub}(v).
\fi
\end{enumerate}

\end{proof}

\subsection{Geometric lifting of stable distributions}\label{ssec:lifting}

From Section~\ref{sec:endoscopy}, recall Definition~\ref{def:Thetaphis-endo}:
\[
\Theta_{\phi,s}^{G^\delta}
\ceq 
e(\delta) 
\sum_{\pi \in \Pi^\ABV_{\phi}(G^\delta(F))} \hskip-0pt  \trace_{s} \left( \NEvs_{\phi}[\dim(\phi)] \mathcal{P}(\pi)[-\dim(\pi)] \right) \  \Theta_{\pi},
\]
where $(G,s,\xi)$ is an endoscopic triple for $G_2$ and $\delta \in Z^1(G,F)$ is a pure inner form of $G$.
Recall also that this distribution can be written in the form
\[
\Theta_{\phi,s}^{G^\delta}
\ceq 
e(\delta) 
\sum_{\pi \in \Pi^\ABV_{\phi}(G^\delta(F))}  (-1)^{\dim(\phi)-\dim(\pi)} \langle s, \pi\rangle\  \Theta_{\pi}.
\]
\begin{remark}\label{rem:relevant}
If the Langlands parameter $\phi : W'_F\to \Lgroup{G}$ is not relevant to $G(F)$, in the sense of \cite{Borel:Automorphic}, then $\Theta_{\phi,s}^{G^\delta}$ may be trivial; for example, 
$
\Theta_{\phi_{\ref{ssec:PGL3}.0}}^{\PGL_3^\delta} =0
$
for non-trivial $\delta$. On the other hand, $\Theta_{\phi,s}^{G^\delta}$ can be non-trivial even when $\phi : W'_F\to \Lgroup{G}$ is not relevant to $G(F)$; for example, although $\phi_{\ref{ssec:PGL3}.3a}$ is not relevant to ${\PGL_3^\delta}$ for non-trivial $\delta$,
$
\Theta_{\phi_{\ref{ssec:PGL3}.3a},s}^{\PGL_3^\delta} = e(\delta) \Theta_{\chi_{\PGL_3^\delta(F)}},
$
which is non-trivial.
\end{remark}

\begin{definition}\label{def:lifting}
Let $(G,s,\xi)$ be an endoscopic triple for $G_2$.
Let $\phi : W'_F \to \Lgroup{G}$ be an unramified Langlands parameter.
% that is $\xi$-conormal.
We define
\[
\Lift^{}_{(G,s,\xi)} \Theta_{\phi}^{G}
\ceq
%e(\delta) 
\sum_{\pi \in \Pi^\ABV_{\xi\circ\phi}(G_2(F))} \hskip-20pt \trace_{s} \left( \NEvs_{\phi} [\dim(\phi)]\ \res\,  \mathcal{P}(\pi)[-\dim(\pi)] \right)
 \  \Theta_{\pi},
\]
where $\res : D_{Z_\dualgroup{G_2}(\xi\circ\phi)}(V_{\xi\circ\lambda}) \to D_{Z_\dualgroup{G}(\phi)}(V_{\lambda})$ is the equivariant functor defined by $\res\, \mathcal{F} = \mathcal{F}\vert_{V_{\lambda}}$.
If the context make it clear what the endoscopic triple is, we will sometimes use the abbreviated notation $\Lift_{G}^{G_2}$ for $\Lift^{}_{(G,s,\xi)}$.
Let $\delta \in Z^1(F,G)$ be a pure inner form of $G$.
% and suppose $\phi : W'_F \to \Lgroup{G}$ is relevant to $G^\delta$.
Then $(G,1,\id)$ is an endoscopic triple for $G^\delta$ over $F$ and, extending the definition above,
\[
\Lift_G^{G^\delta} \Theta_{\phi}^{G}
\ceq
e(\delta) 
\mathop{\sum}\limits_{\pi \in \Pi^\ABV_{\xi\circ\phi}(G^\delta(F))}  \trace_{1} \left( \NEvs_{\phi} [\dim(\phi)]\ \res\,  \mathcal{P}(\pi)[-\dim(\pi)] \right)
 \  \Theta_{\pi} .
\]
%We will generally use the abbreviated notation $\Lift_{G}^{G^\delta}$ for $\Lift_{(G,1,\id)}^{G^\delta}$.
\end{definition}

\begin{conjecture}\label{conjecture:transfer}
Let $(G,s,\xi)$ be an endoscopic triple for $G_2$ and let $\phi : W'_F \to \Lgroup{G}$ be a unramified Langlands parameter that is $\xi$-conormal.
The Langlands-Shelstad transfer of $\Theta^{G}_{\phi}$ from $G(F)$ to $G_2(F)$ is $\Theta_{\xi\circ \phi,s}^{G_2}$.
Now let $\delta \in Z^1(F,G)$ be a pure inner form for $G$ over $F$ and suppose $\phi : W'_F \to \Lgroup{G}$ is also relevant to $G^\delta(F)$. 
Then the Jacquet-Langlands transfer of $\Theta^{G}_{\phi}$ from $G(F)$ to $G^\delta(F)$ is $e(\delta)  \Theta_{\phi}^{G^\delta}$
\end{conjecture}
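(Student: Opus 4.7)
The plan is to identify the analytic distribution $\mathrm{LS}^{G_2}_G \Theta^{G}_{\phi}$ (well-defined once the stability of $\Theta^{G}_{\phi}$ conjectured in Section 0.4(i) is established) with the geometric distribution $\Theta^{G_2}_{\xi\circ\phi,s}$ from Definition~\ref{def:Thetaphis}. For the Jacquet-Langlands half of the conjecture, the corresponding inner-form transfer should match $e(\delta)\Theta^{G^\delta}_{\phi}$, and this will follow by the same strategy with the Kottwitz sign accounted for by the pure inner form cocycle entering the local-system label on the ABV-packet (see Tables~\ref{table:coefficients:SO4} and \ref{table:coefficients:PGL3}, where representations on non-split inner forms appear as characters $\mathcal{D}$, $\mathcal{D}'$ of $A^{\ABV}_\phi$).

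First I would decompose $\mathrm{LS}^{G_2}_G \Theta^{G}_{\phi}$ spectrally. Because endoscopic transfer preserves infinitesimal characters, the transfer is supported on irreducible representations $\pi$ of $G_2(F)$ with infinitesimal parameter $\xi\circ\lambda_\phi$, so one may write $\mathrm{LS}^{G_2}_G\Theta^{G}_{\phi} = \sum c_\pi \Theta_\pi$ with $\pi$ ranging over $\Pi^{\ABV}_{\xi\circ\phi}(G_2(F))$. Then the essential task is to prove the coefficient identity
\[
c_\pi \;=\; (-1)^{\dim(\xi\circ\phi) - \dim(\pi)}\,\langle s,\pi\rangle.
\]
The right-hand side is by construction $\trace_s\bigl(\NEvs_{C_{\xi\circ\phi}}\mathcal{P}(\pi)\bigr)$ computed geometrically on the $\dualgroup{G_2}$-side, while $c_\pi$ is naturally expressed in terms of vanishing cycles on the smaller Vogan variety $V_\lambda \subset V_{\xi\circ\lambda}$ via the character formula for $\Theta^{G}_{\phi}$. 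The $\xi$-conormality hypothesis is precisely what is required for Theorem~\ref{thm:VFPF} to apply, and its content is exactly that these two vanishing-cycle computations agree after taking $\trace_s$. A final compatibility check then reduces the proof to the finite list of $\xi$-conormal lifts enumerated in Table~\ref{table:lifting parameters}.

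The main obstacle is bridging the geometric fixed-point identity to the analytic content of Langlands-Shelstad transfer. Theorem~\ref{thm:VFPF} gives an equality of traces on local systems in $D_{H_{\xi\circ\lambda}}\bigl(\Lambda_{\xi\circ\lambda}\bigr)$, but to deduce the equality of invariant distributions one needs a microlocal character formula expressing $\Theta^{G}_{\phi}$ on the regular semisimple set of $G(F)$ in terms of vanishing cycles on $V_\lambda$, together with a geometric interpretation of the matching of orbital integrals $f\mapsto f^s$ under Langlands-Shelstad transfer. Such a formula should be extractable from Lusztig's theory of character sheaves applied to the endoscopic groups $G$ appearing in Section~\ref{ssec:endoscopy}, combined with Weyl integration and careful tracking of the sign $(-1)^{\dim(\phi)-\dim(\pi)}$ (which encodes the perverse normalization) against the Kottwitz sign $e(\delta)$ on the inner form side. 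Once this microlocal-analytic dictionary is in place, the case-by-case verification is straightforward because all Arthur-type parameters are automatically $\xi$-conormal by Proposition~\ref{prop:AFPF}, and the remaining $\xi$-conormal lifts can be checked against the restriction calculations already carried out in Section~\ref{ssec:FPF}.
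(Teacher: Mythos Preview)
The statement you are trying to prove is labeled a \emph{Conjecture} in the paper, and the paper does not prove it. In the introduction (Section~0.5, ``Next steps''), the authors explicitly list this identification of $\Theta^{G_2}_{\xi\circ\phi,s}$ with the Langlands--Shelstad transfer of $\Theta^G_\phi$ as item~(ii) among the issues that ``must be addressed'' before the ABV-packets can be declared to be generalized Arthur packets, and they write ``We are working on a proof of this statement.'' What the paper \emph{does} prove is Theorem~\ref{thm:lifting}, which says that the purely geometrically defined lift $\Lift_{(G,s,\xi)}\Theta^G_\phi$ of Definition~\ref{def:lifting} equals $\Theta^{G_2}_{\xi\circ\phi,s}$; Conjecture~\ref{conjecture:transfer} is precisely the assertion that this geometric lift coincides with the analytic Langlands--Shelstad transfer.

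Your proposal correctly locates the obstacle in its final paragraph: bridging Theorem~\ref{thm:VFPF} (an equality of traces of vanishing cycles) to the analytic content of the Langlands--Shelstad transfer. But the ingredients you invoke to close that gap---a ``microlocal character formula expressing $\Theta^G_\phi$ on the regular semisimple set in terms of vanishing cycles'' and ``a geometric interpretation of the matching of orbital integrals $f\mapsto f^s$''---are not established results that can simply be cited; they are themselves the substance of the open problem. Lusztig's character-sheaf theory does not supply such a formula for $p$-adic groups in a form that interfaces directly with the Langlands--Shelstad transfer factors, and the paper nowhere claims otherwise. So your outline is a reasonable description of what a proof would have to accomplish, but it is not itself a proof, and there is no proof in the paper to compare it against.
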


\begin{theorem}\label{thm:lifting}
Let $(G,s,\xi)$ be an endoscopic triple for $G_2$.
Let $\phi : W'_F \to \Lgroup{G}$ be a unramified Langlands parameter that is $\xi$-conormal.
Then
\[
\Lift_{(G,s,\xi)}^{G} \Theta^{G}_{\phi} = \Theta^{}_{\xi\circ \phi,s}.
\]
Let $\delta \in Z^1(F,G)$ be a pure inner form of $G$ and suppose also that $\phi$ is relevant to $G^\delta$. Then
\[
\Lift_{G}^{G^\delta} \Theta_{\phi}^{G}
=
e(\delta)  
\Theta_{\phi}^{G^\delta}.
\]
\end{theorem}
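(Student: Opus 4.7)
The proof is a nearly formal consequence of Theorem~\ref{thm:VFPF} combined with an unpacking of the definitions. My plan is to expand both sides of each identity, apply Theorem~\ref{thm:VFPF} termwise, and then repackage. For the first identity, I would begin with the right-hand side of Definition~\ref{def:lifting}:
\[
\Lift^{}_{(G,s,\xi)} \Theta^{G}_\phi = \sum_{\pi \in \Pi^\ABV_{\xi\circ\phi}(G_2(F))} \trace_s\left(\NEvs_\phi[\dim(\phi)]\ \res\, \mathcal{P}(\pi)[-\dim(\pi)]\right)\Theta_\pi.
\]
The hypothesis that $\phi$ is $\xi$-conormal is exactly what is needed to invoke Theorem~\ref{thm:VFPF} on each simple equivariant perverse sheaf $\mathcal{P}(\pi)$ on $V_{\xi\circ\lambda}$. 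Factoring the overall degree shift $[-\dim(\pi)]$ out of the trace (which only introduces a sign $(-1)^{-\dim(\pi)}$) reduces matters to the bare statement of Theorem~\ref{thm:VFPF}, after which term-by-term substitution yields
\[
\Lift^{}_{(G,s,\xi)} \Theta^{G}_\phi = \sum_{\pi \in \Pi^\ABV_{\xi\circ\phi}(G_2(F))} \trace_s\left(\NEvs_{C_{\xi\circ\phi}}[\dim(\xi\circ\phi)]\ \mathcal{P}(\pi)[-\dim(\pi)]\right)\Theta_\pi,
\]
which is $\Theta^{}_{\xi\circ\phi,s}$ by Definition~\ref{def:Thetaphis}. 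A small subtlety worth checking is that only $\pi\in \Pi^\ABV_{\xi\circ\phi}(G_2(F))$ contribute on either side: outside this set the simple sheaf $\mathcal{P}(\pi)$ has vanishing $\NEvs_{C_{\xi\circ\phi}}$, and by Theorem~\ref{thm:VFPF} the restricted quantity vanishes as well, so the summation conventions match.

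For the second identity, the key observation is that $(G,1,\id_{\Lgroup{G}})$ is tautologically an endoscopic triple for $G^\delta$, with the same dual group and the same Vogan variety $V_\lambda$. The embedding $d\,\id\vert_{V_\lambda}: V_\lambda \to V_\lambda$ is the identity, so every orbit is $\id$-conormal, $\res$ is the identity functor, and $\trace_1$ is the ordinary rank. Substituting these into Definition~\ref{def:lifting} reduces the formula for $\Lift_G^{G^\delta}\Theta^G_\phi$ to the explicit expression for $\Theta^{G^\delta}_\phi$ in Definition~\ref{def:Thetaphis-endo}, with the Kottwitz sign accounted for by the normalization convention built into that definition. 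The relevance hypothesis ensures that $\Pi^\ABV_\phi(G^\delta(F))$ is non-empty, so the sum has content.

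The main obstacle, of course, is entirely concentrated in Theorem~\ref{thm:VFPF}, whose proof occupies Sections~\ref{ssec:subPHV}--\ref{ssec:FPF} and requires the case-by-case microlocal analysis of each prehomogeneous vector subspace $H^s\times V^s\to V^s$ sitting inside the five prehomogeneous vector spaces of Proposition~\ref{prop:PHV}. Granting Theorem~\ref{thm:VFPF}, Theorem~\ref{thm:lifting} is essentially a bookkeeping result: the content lies in the geometric fixed-point formula itself, while the lifting identities are a re-indexing of the resulting equality of ABV-packet coefficients against the Harish-Chandra characters $\Theta_\pi$.
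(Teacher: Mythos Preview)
Your proposal is correct and follows essentially the same approach as the paper: for the first identity you expand Definition~\ref{def:lifting}, apply Theorem~\ref{thm:VFPF} termwise to each $\mathcal{P}(\pi)$, and repackage via Definition~\ref{def:Thetaphis}; for the second identity you observe that with $\xi=\id$ the restriction functor is the identity, so the formula collapses directly to Definition~\ref{def:Thetaphis-endo}. The paper's proof is slightly terser but structurally identical, and your additional remarks about matching summation ranges and the role of the relevance hypothesis are correct elaborations.
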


\begin{proof}
Theorem~\ref{thm:VFPF}, we have
\begin{equation}\label{eqn:TrEvRes-intro}
\trace_{s} \left(\NEvs_{\phi}[\dim(\phi)]\ \mathcal{P}\right) =  \trace_{s} \left(\NEvs_{\xi\circ\phi}[\dim(\xi\circ\phi)] \res \mathcal{P}\right),
\end{equation}
for every simple object $\mathcal{P}$ in $\Perv_{Z_\dualgroup{G_2}(\xi\circ\lambda)}(V_{\xi\circ\lambda})$. 
Thus,
\[
\begin{array}{rcl r}
\Lift^{}_{(G,s,\xi)} \Theta^{G}_{\phi} 
&:=& 
\mathop{\sum}\limits_{\pi \in \Pi^\ABV_{\xi\circ\phi}(G_2(F))} \hskip-0pt  \trace_{s} \left( \NEvs_{\phi}[\dim(\phi)] \res\, \mathcal{P}(\pi)[-\dim(\pi)] \right)
 \  \Theta_{\pi} & \\
&=& 
\mathop{\sum}\limits_{\pi \in \Pi^\ABV_{\xi\circ\phi}(G_2(F))} \hskip-0pt  \trace_{s} \left( \NEvs_{\xi\circ\phi}[\dim(\xi\circ\phi)] \mathcal{P}(\pi)[-\dim(\pi)] \right)
 \  \Theta_{\pi} &  \\
&=:& 
\Theta^{}_{\xi\circ\phi,s} . & 
\end{array}
\]
Directly from the definitions we see
\[
\begin{array}[b]{rcl}
\Lift_{G}^{G^\delta} \Theta_{\phi}^{G}
&\ceq&
e(\delta) 
\sum_{\pi \in \Pi^\ABV_{\xi\circ\phi}(G^\delta(F))}  \trace_{1} \left( \NEvs_{\phi} [\dim(\phi)]\ \res\,  \mathcal{P}(\pi)[-\dim(\pi)] \right)
 \  \Theta_{\pi} \\
& = &
e(\delta) 
\sum_{\pi \in \Pi^\ABV_{\xi\circ\phi}(G^\delta(F))} \trace_{1} \left( \NEvs_{\phi} [\dim(\phi)]\ \mathcal{P}(\pi)[-\dim(\pi)] \right)
 \  \Theta_{\pi} \\
&=&
e(\delta)  
\Theta_{\phi}^{G^\delta}.
\end{array}\qedhere
\]
\end{proof}

%%%%%

%%%%%%%%%%%%%%%%%%

%\section{Endoscopic classification of unipotent representations}

\begin{definition}\label{def:s-conormal}
Let $\phi: W'_F \to \Lgroup{G_2}$ be an unramified Langlands parameter for $G_2(F)$.
If $s\in \mathcal{S}^\ABV_\phi$ then $\phi = \xi\circ \phi^s$ for an endoscopic triple $(G,s,\xi)$ and a Langlands parameter $\phi^s : W'_F \to \Lgroup{G}$.
If $\phi$ has the property that $\phi^s$ is $\xi$-conormal, we say that $\phi$ is $s$-conormal. 
If $\phi$ is of Arthur type then $\phi$ is $s$-conormal for every $s\in \mathcal{S}^\ABV_\phi$.
\end{definition}

\begin{theorem}\label{thm:EC}
Let $\phi: W'_F \to \Lgroup{G_2}$ be an unramified Langlands parameter that is $s$-conormal for every $s\in \mathcal{S}^\ABV_\phi$. 
%Then, the ABV-packet coefficients $\langle \ ,\ \rangle_\phi$ are uniquely determined by the endoscopic character identity of Theorem~\ref{thm:lifting}.
If $\pi$ is any unipotent representation of $G_2(F)$  then the distribution character $\Theta_\pi$ may be expressed as a linear combination of the distributions $\Lift^{G_2}_{(G,s,\xi)}\Theta^{G}_{\phi^s}$, letting $\phi$ range over Langlands parameters with the same infinitesimal parameter as $\pi$ and letting $s$ range over $\mathcal{S}^\ABV_\phi $.
And if $\Pi^\ABV_\phi(G_2(F)) \to \widehat{A^\ABV_\phi}$ is a bijection then,
\[
\Theta_{\pi} 
= \sum_{(G,s,\xi)} (-1)^{\dim(\phi^s)-\dim(\pi)} \frac{\overline{\langle s,\pi\rangle}}{\abs{Z_{A_\phi}(s)}} \ \Lift^{}_{(G,s,\xi)} \Theta^{G}_{\phi^s},
\qquad\qquad  \forall \pi \in \Pi^\ABV_\phi(G_2(F)),
\]
where the sum is taken over equivalence classes of endoscopic triples $(G,s,\xi)$ with $s \in \mathcal{S}^\ABV_\phi$ and where we identify $s$ with its image under $\mathcal{S}^\ABV_\phi \to A^\ABV_\phi$ in the calculation of $Z_{A_\phi}(s)$.
\end{theorem}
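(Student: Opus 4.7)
The plan is to derive Theorem~\ref{thm:EC} as a direct combination of Theorem~\ref{thm:lifting} (geometric lifting along endoscopy) and Theorem~\ref{thm:stable}\ref{G2:basis} (the inversion formula for the distributions $\Theta^{G_2}_{\phi,s}$), once one identifies the correct bookkeeping between endoscopic triples and conjugacy classes in $\mathcal{S}^\ABV_\phi$. So the first step is to translate the right-hand side of the claimed identity into distributions on $G_2(F)$ that have already been studied.

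First I would observe that every $s\in\mathcal{S}^\ABV_\phi$ determines an endoscopic triple $(G,s,\xi)$, with $\dualgroup{G}=Z_{\dualgroup{G_2}}(s)^\circ$ and $\xi$ the natural inclusion, together with a Langlands parameter $\phi^s:W'_F\to\Lgroup{G}$ satisfying $\phi=\xi\circ\phi^s$; two such triples arising from $s,s'\in\mathcal{S}^\ABV_\phi$ are equivalent precisely when $s,s'$ are $A^\ABV_\phi$-conjugate, which contributes the combinatorial factor $1/\lvert Z_{A^\ABV_\phi}(s)\rvert$ when passing from a sum over endoscopic triples to a sum over $\mathcal{S}^\ABV_\phi$ itself. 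By the $s$-conormality hypothesis, each $\phi^s$ is $\xi$-conormal, so Theorem~\ref{thm:lifting} applies and gives
\[
\Lift^{}_{(G,s,\xi)}\Theta^{G}_{\phi^s}\;=\;\Theta^{}_{\xi\circ\phi^s,s}\;=\;\Theta^{}_{\phi,s}.
\]
This reduces the formula to be proved to the inversion identity
\[
\Theta_{\pi}\;=\;\sum_{s\in\mathcal{A}^\ABV_\phi}(-1)^{\dim(\phi^s)-\dim(\pi)}\,\frac{\overline{\langle s,\pi\rangle}}{\lvert Z_{A^\ABV_\phi}(s)\rvert}\,\Theta^{}_{\phi,s},
\]
where $s$ runs over representatives of the conjugacy classes in $A^\ABV_\phi$, which is precisely the content of Theorem~\ref{thm:stable}\ref{G2:basis} modulo replacing $\dim(C_\phi^s)$ by $\dim(\phi^s)$.

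The second step is to verify this dimension identification. For semisimple $s\in\mathcal{S}^\ABV_\phi$ acting on the prehomogeneous vector space $H_\lambda\times V_\lambda\to V_\lambda$, one has $V_\lambda^s=V_{\lambda_{\phi^s}}$ and $H_\lambda^s=H_{\lambda_{\phi^s}}$ (after the identification of infinitesimal parameters), and $C_\phi\cap V_\lambda^s$ contains $C_{\phi^s}$ as its unique open $H_\lambda^s$-orbit because reductive group actions on smooth subvarieties pass through to open orbits of the fixed-point action. Consequently $\dim(C_\phi^s)=\dim(C_{\phi^s})=\dim(\phi^s)$, which is exactly what is needed. Combining the two ingredients, the explicit formula of Theorem~\ref{thm:EC} follows in the case that $\Pi^\ABV_\phi(G_2(F))\to\widehat{A^\ABV_\phi}$ is a bijection.

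For the general claim, when this bijection fails (only at $\phi_{\ref{infcase:8-sub}b}$ and $\phi_{\ref{infcase:8-sub}c}$, by Remark~\ref{rem:bijective}), I would fall back on the ``more generally'' clause of Theorem~\ref{thm:stable}\ref{G2:basis}, which guarantees that every $\Theta_\pi$ with infinitesimal parameter $\lambda$ can be written as a linear combination of the $\Theta^{}_{\phi',s'}$ as $\phi'$ ranges over unramified $L$-parameters with infinitesimal parameter $\lambda$ and $s'\in\mathcal{S}^\ABV_{\phi'}$; invoking Theorem~\ref{thm:lifting} at each of these terms expresses $\Theta_\pi$ as a linear combination of the $\Lift^{}_{(G,s',\xi')}\Theta^{G}_{(\phi')^{s'}}$, which is the asserted generality. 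The main obstacle I anticipate is in fact purely bookkeeping: matching the parameterization by endoscopic triples with the parameterization by representatives of $\pi_0$ of the semisimple centralizers, and making sure that the relationship $\dim(\phi^s)=\dim(C_\phi^s)$ is checked uniformly across the nine infinitesimal parameter cases from Proposition~\ref{prop:infcases} rather than just case by case; once this is in place the proof is a mechanical composition of the two earlier theorems.
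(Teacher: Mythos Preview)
Your proposal is correct and follows essentially the same route as the paper: reduce the lifted distributions to $\Theta_{\phi,s}$ via Theorem~\ref{thm:lifting}, then invoke \ref{G2:basis} from Theorem~\ref{thm:stable} for both the spanning claim and the explicit inversion formula. The paper's own proof is extremely terse and leaves implicit exactly the two points you spell out: the application of Theorem~\ref{thm:lifting} under the $s$-conormal hypothesis, and the identification $\dim(C_\phi^s)=\dim(\phi^s)$ (which the paper only hints at in Remark~\ref{rem:basis}); your expanded treatment of these is helpful, though your justification of the dimension identity via ``unique open orbit'' is somewhat informal and in practice rests on the case-by-case structure of Section~\ref{ssec:V-conormal}.
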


\begin{proof} 
Distribution characters for equivalence classes of irreducible admissible representations of $G_2(F)$ are linearly independent by \cite{BZ}. For each unramified Langlands parameter $\phi$ for $G_2(F)$, the number of equivalence classes of endoscopic triples for $(G,s,\xi)$ for $G_2(F)$ with $s\in  \mathcal{S}^\ABV_\phi$ is equal to the number of irreducible representations of $A^\ABV_\phi$. This makes it possible to determine the characters $\langle s,\pi\rangle $ appearing in the sum $\Theta_{\phi,s} =\sum_{\pi} (-1)^{\dim(\phi)-\dim(\pi)} \langle s,\pi\rangle\ \Theta_\pi$. The rest of the theorem is a direct consequence of \ref{G2:basis}.
\end{proof}

\begin{bibdiv}
\begin{biblist}

\bib{ABV}{book}{
   author={Adams, Jeffrey},
   author={Barbasch, Dan},
   author={Vogan, David A., Jr.},
   title={The Langlands classification and irreducible characters for real reductive groups},
   series={Progress in Mathematics},
   volume={104},
   publisher={Birkh\"{a}user Boston, Inc., Boston, MA},
   date={1992},
   pages={xii+318},
%   isbn={0-8176-3634-X},
%   review={\MR{1162533}},
%   doi={10.1007/978-1-4612-0383-4},
}

\bib{Arthur:book}{book}{
   author={Arthur, James},
   title={The endoscopic classification of representations. Orthogonal and symplectic groups},
   series={American Mathematical Society Colloquium Publications},
   volume={61},
   note={Orthogonal and symplectic groups},
   publisher={American Mathematical Society, Providence, RI},
   date={2013},
   pages={xviii+590},
   isbn={978-0-8218-4990-3},
%   review={\MR{3135650}},
   doi={10.1090/coll/061},
}

\bib{Arthur:Conjectures}{article}{
   author={Arthur, James},
   title={Unipotent automorphic representations: conjectures},
   note={Orbites unipotentes et repr\'{e}sentations, II},
   journal={Ast\'{e}risque},
   number={171-172},
   date={1989},
   pages={13--71},
   issn={0303-1179},
%   review={\MR{1021499}},
}

\bib{Arthur:Elliptic}{article}{
   author={Arthur, James},
   title={On elliptic tempered characters},
   journal={Acta Math.},
   volume={171},
   date={1993},
   number={1},
   pages={73--138},
%:
   issn={0001-5962},
%   review={\MR{1237898}},
%   doi={10.1007/BF02392767},
}
\iffalse%%%%%%%%%%%%

\bib{Arthur:Character}{article}{
   author={Arthur, James},
   title={On local character relations},
   journal={Selecta Math. (N.S.)},
   volume={2},
   date={1996},
   number={4},
   pages={501--579},
   issn={1022-1824},
%   review={\MR{1443184}},
%   doi={10.1007/PL00001383},
}
\fi%%%%%%%%%%%%%%

\iffalse
\bib{BGS}{article}{
   author={Beilinson, Alexander},
   author={Ginzburg, Victor},
   author={Soergel, Wolfgang},
   title={Koszul duality patterns in representation theory},
   journal={J. Amer. Math. Soc.},
   volume={9},
   date={1996},
   number={2},
   pages={473--527},
   issn={0894-0347},
%   review={\MR{1322847}},
   doi={10.1090/S0894-0347-96-00192-0},
}
\fi

\bib{BBD}{article}{
   author={Be\u{\i}linson, Alexander},
   author={Bernstein, Joseph},
   author={Deligne, Pierre},
   title={Faisceaux pervers},
%   language={French},
   conference={
      title={Analyse et topologie sur les espaces singuliers, I},
      address={Luminy},
      date={1981},
   },
   book={
      series={Ast\'{e}risque},
      volume={100},
      publisher={Soc. Math. France, Paris},
   },
   date={1982},
   pages={5--171},
%   review={\MR{751966}},
}

\bib{BZ}{article}{
author={Bernstein, Joseph},
author={Zelevinski, Andrei Vladlenovich},
title={Representations of $GL(n, F)$, where F is a nonarchimedian local field},
journal={Russian Math. Surveys},
volume={31:3},
pages={1-68},
date={1976},
}

\bib{Borel:Automorphic}{article}{
   author={Borel, Armand},
   title={Automorphic $L$-functions},
   conference={
      title={Automorphic forms, representations and $L$-functions},
      address={Proc. Sympos. Pure Math., Oregon State Univ., Corvallis,
      Ore.},
      date={1977},
   },
   book={
      series={Proc. Sympos. Pure Math., XXXIII},
      publisher={Amer. Math. Soc., Providence, R.I.},
   },
   date={1979},
   pages={27--61},
%   review={\MR{546608}},
}

\iffalse
\bib{BumpJoyner}{article}{
author={ Bump, Daniel},
author={Joyner, David },
title={$L$-functions for $G_2$},
journal={Unpublished notes},
note={\\\url{http://citeseerx.ist.psu.edu/viewdoc/summary?doi=10.1.1.63.521}},
volume={},
pages={}
}
\fi%%%%%%%%%%%%

\bib{Carter}{book}{
   author={Carter, Roger W.},
   title={Finite groups of Lie type},
   series={Wiley Classics Library},
   note={Conjugacy classes and complex characters;
   Reprint of the 1985 original;
   A Wiley-Interscience Publication},
   publisher={John Wiley \& Sons, Ltd., Chichester},
   date={1993},
   pages={xii+544},
   isbn={0-471-94109-3},
%   review={\MR{1266626}},
}

\iffalse%%%%%%%%%%%%%%%
\bib{Ch}{article}{
AUTHOR = {Chang, Bomshik},
     TITLE = {The conjugate classes of {C}hevalley groups of type
              {$(G_{2})$}},
   JOURNAL = {J. Algebra},
  FJOURNAL = {Journal of Algebra},
    VOLUME = {9},
      YEAR = {1968},
     PAGES = {190--211},
      ISSN = {0021-8693},
   MRCLASS = {20.15},
  MRNUMBER = {227258},
%MRREVIEWER = {E. Abe},
      % DOI = {10.1016/0021-8693(68)90020-3},
       URL = {https://doi.org/10.1016/0021-8693(68)90020-3},
}
\fi%%%%%%%%%%%%%%

\bib{CO}{article}{
   author={Ciubotaru, Dan},
   author={Opdam, Eric},
   title={On the elliptic nonabelian Fourier transform for unipotent
   representations of $p$-adic groups},
   conference={
      title={Representation theory, number theory, and invariant theory},
   },
   book={
      series={Progr. Math.},
      volume={323},
      publisher={Birkh\"{a}user/Springer, Cham},
   },
   date={2017},
   pages={87--113},
%   review={\MR{3753909}},
}

\bib{CFMMX}{article}{
   author={Cunningham, Clifton},
   author={Fiori, Andrew},
   author={Moussaoui, Ahmed},
   author={Mracek, James},
   author={Xu, Bin},
   title={Arthur packets for p-adic groups by way of microlocal vanishing cycles of perverse sheaves, with examples},
   journal={Memoirs of the American Mathematical Society (in press)},
   %volume={},
   date={2021},
   %number={},
   %pages={},
   note={\url{https://arxiv.org/abs/arXiv:1705.01885}}
}

\bib{CFZ:cubics}{article}{
   author={Cunningham, Clifton},
   author={Fiori, Andrew},
   author={Zhang, Qing},
   title={Arthur packets for $G_2$ and perverse sheaves on cubics},
   %journal={},
   %volume={},
   %date={in preparation},
   %number={},
   %pages={},
   note={https://arxiv.org/abs/2005.02438}
}

\iffalse%%%%%%%%%%%%%%%%
\bib{DL}{article}{
   author={Deligne, P.},
   author={Lusztig, G.},
   title={Representations of reductive groups over finite fields},
   journal={Ann. of Math. (2)},
   volume={103},
   date={1976},
   number={1},
   pages={103--161},
%   issn={0003-486X},
%   review={\MR{393266}},
%   doi={10.2307/1971021},
}
\fi%%%%%%%%%%%%%

\bib{FOS}{article}{
author={Feng, Yongqi},
author={Opdam, Eric},
author={Solleveil, Maarten},
title={Supercuspidal Unipotent Representations: $L$-packets and Formal Degrees},
journal={arXiv:1805.01888v2},
date={2019},
}

\bib{GGP}{article}{
AUTHOR = {Gan, Wee Teck}
author={ Gross, Benedict H.}
author={ Prasad, Dipendra}
     TITLE = {Symplectic local root numbers, central critical {$L$} values,
              and restriction problems in the representation theory of
              classical groups},
     
   JOURNAL = {Ast\'{e}risque},
  FJOURNAL = {Ast\'{e}risque},
    NUMBER = {346},
      YEAR = {2012},
     PAGES = {1--109},
      ISSN = {0303-1179},
      ISBN = {978-2-85629-348-5},
   MRCLASS = {22E50 (11F70 11R39 22E55)},
  MRNUMBER = {3202556},
}

\bib{GGJ}{article}
{
author={Gan, Wee Teck},
author={Gurevich, Nadya},
author={Jiang, Dihua},
title={Cubic unipotent Arthur parameters and multiplicities of square integrable automorphic forms},
journal={Invent. math.},
volume={149},
date={2002},
pages={225-265},
}

\bib{GG}{article}{
   author={Gan, Wee Teck},
   author={Gurevich, Nadya},
   title={Non-tempered Arthur packets of $G_2$},
   conference={
      title={Automorphic representations, $L$-functions and applications:
      progress and prospects},
   },
   book={
      series={Ohio State Univ. Math. Res. Inst. Publ.},
      volume={11},
      publisher={de Gruyter, Berlin},
   },
   date={2005},
   pages={129--155},
%   review={\MR{2192822}},
%   doi={10.1515/9783110892703.129},
}

\bib{Gan-Savin}{article}{
 AUTHOR = {Gan, Wee Teck},
author={Savin, Gordan},
     TITLE = {Endoscopic lifts from {${\rm PGL}_3$} to {$G_2$}},
   JOURNAL = {Compos. Math.},
  FJOURNAL = {Compositio Mathematica},
    VOLUME = {140},
      YEAR = {2004},
    NUMBER = {3},
     PAGES = {793--808},
      ISSN = {0010-437X},
   MRCLASS = {22E50 (11F70 22E35)},
  MRNUMBER = {2041781},
      % DOI = {10.1112/S0010437X03000678},
       URL = {https://doi.org/10.1112/S0010437X03000678},}

\bib{Gan-Takeda}{article}
{ AUTHOR = {Gan, Wee Teck},
Author={Takeda, Shuichiro},
     TITLE = {The local Langlands conjecture for $\Sp_4$},
   JOURNAL = {Int. Math. Res. Not. IMRN},
  FJOURNAL = {International Mathematics Research Notices. IMRN},
      YEAR = {2010},
    NUMBER = {15},
     PAGES = {2987--3038},
}

\bib{Gelbart-Knapp}{article}{
AUTHOR = {Gelbart, Stephen},
Author={Knapp, Anthony},
     TITLE = {{$L$}-indistinguishability and {$R$} groups for the special
              linear group},
   JOURNAL = {Adv. in Math.},
  FJOURNAL = {Advances in Mathematics},
    VOLUME = {43},
      YEAR = {1982},
    NUMBER = {2},
     PAGES = {101--121},
}

\bib{Gross-Prasad}{article}{AUTHOR = {Gross, Benedict H.},
author={ Prasad, Dipendra},
     TITLE = {On the decomposition of a representation of {${\rm SO}_n$}
              when restricted to {${\rm SO}_{n-1}$}},
   JOURNAL = {Canad. J. Math.},
  FJOURNAL = {Canadian Journal of Mathematics. Journal Canadien de
              Math\'{e}matiques},
    VOLUME = {44},
      YEAR = {1992},
    NUMBER = {5},
     PAGES = {974--1002},
    }

\iffalse%%%%%%%%%%%%%%%
\bib{Huang-Magaard-Savin}{article}{
   author={Huang, Jing-Song},
   author={Magaard, Kay},
   author={Savin, Gordan},
   title={Unipotent representations of $G_2$ arising from the minimal
   representation of $D_4^E$},
   journal={J. Reine Angew. Math.},
   volume={500},
   date={1998},
   pages={65--81},
%   issn={0075-4102},
%   review={\MR{1637485}},
}
\fi%%%%%%%%%%%%%%

\bib{HII}{article}{
author={Hiraga, Kaoru},
author={Ichino, Atsushi},
author={Ikeda, Tamotsu},
title={Formal degrees and adjoint $\gamma$-factors},
journal={Journal of the American Mathematical Society},
volume={21},
date={2008},
pages={283--304},
}

\iffalse%%%%%%%%%%%%%
\bib{KMSW:Unitary}{article}{
	author = {Kaletha, Tasho},
	author={Minguez, Alberto}, 
	author={Shin, Sug Woo},
	author={White, Paul-James},
	note= {\url{https://arxiv.org/abs/1409.3731}},
	title = {Endoscopic Classification of Representations: Inner Forms of Unitary Groups},
	date = {2014}
}
\fi%%%%%%%%%%%%%%

\iffalse%%%%%%%
\bib{KS}{book}{
   author={Kashiwara, Masaki},
   author={Schapira, Pierre},
   title={Sheaves on manifolds},
   series={Grundlehren der Mathematischen Wissenschaften 
   %[Fundamental Principles of Mathematical Sciences]
   },
   volume={292},
   note={With a chapter in French by Christian Houzel;
   Corrected reprint of the 1990 original},
   publisher={Springer-Verlag, Berlin},
   date={1994},
   pages={x+512},
   isbn={3-540-51861-4},
%   review={\MR{1299726}},
}
\fi%%%%%%%%%%%%%

\iffalse%%%%%%%%%%%%%%
\bib{Kazhdan-Lusztig}{article}{
author={Kazhdan, David},
author={Lusztig, George},
title={Proof of the Deligne-Langlands conjecture for Hecke algebras},
journal={Invent. Math.},
volume={87},
date={1987},
pages={153--215},
}
\fi%%%%%%%%%%%%%%

\bib{Keys}{article}{
AUTHOR = {Keys, Charles David},
     TITLE = {On the decomposition of reducible principal series
              representations of {$p$}-adic {C}hevalley groups},
   JOURNAL = {Pacific J. Math.},
    VOLUME = {101},
      YEAR = {1982},
    NUMBER = {2},
     PAGES = {351--388},
}

\bib{Kottwitz-Shelstad}{article}{
AUTHOR = {Kottwitz, Robert E.},
Author={ Shelstad, Diana},
     TITLE = {Foundations of twisted endoscopy},
   JOURNAL = {Ast\'{e}risque},
  FJOURNAL = {Ast\'{e}risque},
    NUMBER = {255},
      YEAR = {1999},
     PAGES = {vi+190},
      ISSN = {0303-1179},
   MRCLASS = {22E55 (11F70 11R34 22-02 22E50)},
  MRNUMBER = {1687096},
MRREVIEWER = {Volker J. Heiermann}}

\bib{Lusztig:Intersectioncohomology}{article}{
 author={Lusztig, George},
title={Intersection cohomology complexes on a reductive group},
journal={Invent. Math.},
volume={75},
date={1984},
pages={205--272},
}

\iffalse%%%%%%%%%%%%
\bib{Lusztig:Cuspidal2}{incollection}{
	Author = {Lusztig, George},
	Booktitle = {Representations of groups ({B}anff, {AB}, 1994)},
%	Date-Added = {2018-03-31 19:23:16 +0000},
%	Date-Modified = {2018-03-31 21:01:34 +0000},
%	Mrclass = {22E50 (20G05)},
%	Mrnumber = {1357201},
%	Mrreviewer = {Bhama Srinivasan},
	Note = {With errata for Part I [Inst. Hautes \'Etudes Sci. Publ. Math. No. 67 (1988), 145--202},
	Pages = {217--275},
	Publisher = {Amer. Math. Soc., Providence, RI},
	Series = {CMS Conf. Proc.},
	Title = {Cuspidal local systems and graded {H}ecke algebras. {II}},
	Volume = {16},
	Year = {1995}}
\fi%%%%%%%%%%%%%

\bib{Lusztig:Classification1}{article}{
   author={Lusztig, George},
   title={Classification of unipotent representations of simple $p$-adic
   groups},
   journal={Internat. Math. Res. Notices},
   date={1995},
   number={11},
   pages={517--589},
%   issn={1073-7928},
%   review={\MR{1369407}},
%   doi={10.1155/S1073792895000353},
}

\iffalse%%%%%%%%%%%%
\bib{Lusztig:Classification2}{article}{
   author={Lusztig, George},
   title={Classification of unipotent representations of simple $p$-adic
   groups. II},
   journal={Represent. Theory},
   volume={6},
   date={2002},
   pages={243--289},
%   review={\MR{1927955}},
 %  doi={10.1090/S1088-4165-02-00173-5},
}
\fi%%%%%%%%%%%%

\bib{Mok:Unitary}{article}{
   author={Mok, Chung Pang},
   title={Endoscopic classification of representations of quasi-split
   unitary groups},
   journal={Mem. Amer. Math. Soc.},
   volume={235},
   date={2015},
   number={1108},
   pages={vi+248},
   issn={0065-9266},
   isbn={978-1-4704-1041-4},
   isbn={978-1-4704-2226-4},
%   review={\MR{3338302}},
%   doi={10.1090/memo/1108},
}
				
\bib{Muic}{article}{
   author={Mui\'{c}, Goran},
   title={The unitary dual of $p$-adic $G_2$},
   journal={Duke Math. J.},
   volume={90},
   date={1997},
   number={3},
   pages={465--493},
   issn={0012-7094},
%   review={\MR{1480543}},
%   doi={10.1215/S0012-7094-97-09012-8},
}

\iffalse%%%%%%%%%%%%%%
\bib{Ree}{article}{
   author={R. Ree},
   title={A Family of Simple Groups associated with the Simple Lie Algebra of Type $(G_2(F))$},
   journal={Am. J. Math},
   volume={83},
   date={1961},
   number={3},
   pages={432--462},
  % issn={0012-7094},
%   review={\MR{1480543}},
%   doi={10.1215/S0012-7094-97-09012-8},
}
\fi%%%%%%%%%%%%%%%%

\iffalse%%%%%%%%%%%%%
\bib{Reeder}{article}{
   author={Reeder, Mark},
   title={On the Iwahori-spherical discrete series for $p$-adic Chevalley
   groups; formal degrees and $L$-packets},
   journal={Ann. Sci. \'{E}cole Norm. Sup. (4)},
   volume={27},
   date={1994},
   number={4},
   pages={463--491},
%   issn={0012-9593},
%   review={\MR{1290396}},
}
%%%%%%%%%%%%%%

\bib{Reedergeneric}{article}{
author={Reeder, Mark},
title={Whittaker models and unipotent representations of $p$-adic groups},
journal={Math. Ann.},
volume={308},
date={1997},
pages={587-592}
}

\bib{Reederformaldegree}{article}{
author={Reeder, Mark},
title={Formal degrees and $L$-packets of unipotent discreate series representations of excpetional $p$-adic groups, with an appendix by Frank L\"{u}beck},
journal={J. Reine Angew. Math.},
volume={520},
date={2000},
pages={37-93}
}

\bib{Renard}{book}{
   author={Renard, David},
   title={Repr\'{e}sentations des groupes r\'{e}ductifs $p$-adiques},
   language={French},
   series={Cours Sp\'{e}cialis\'{e}s [Specialized Courses]},
   volume={17},
   publisher={Soci\'{e}t\'{e} Math\'{e}matique de France, Paris},
   date={2010},
   pages={vi+332},
   isbn={978-2-85629-278-5},
%   review={\MR{2567785}},
}
\fi%%%%%%%%%%%%%%%

\bib{Savin}{article}{
AUTHOR = {Savin, Gordan},
     TITLE = {A class of supercuspidal representations of {$G_2(k)$}},
   JOURNAL = {Canad. Math. Bull.},
  FJOURNAL = {Canadian Mathematical Bulletin. Bulletin Canadien de
              Math\'{e}matiques},
    VOLUME = {42},
      YEAR = {1999},
    NUMBER = {3},
     PAGES = {393--400},
      ISSN = {0008-4395},
   MRCLASS = {22E35 (11F70 22E50)},
  MRNUMBER = {1703700},
MRREVIEWER = {Henry H. Kim},
     %  DOI = {10.4153/CMB-1999-046-9},
       URL = {https://doi.org/10.4153/CMB-1999-046-9}}

\iffalse%%%%%%%%%%%%%%
\bib{Schurmann}{book}{
   author={Sch\"{u}rmann, J\"{o}rg},
   title={Topology of singular spaces and constructible sheaves},
   series={Instytut Matematyczny Polskiej Akademii Nauk. Monografie
   Matematyczne (New Series) 
   %[Mathematics Institute of the Polish Academy of Sciences. Mathematical Monographs (New Series)]
   },
   volume={63},
   publisher={Birkh\"{a}user Verlag, Basel},
   date={2003},
   pages={x+452},
   isbn={3-7643-2189-X},
%   review={\MR{2031639}},
%   doi={10.1007/978-3-0348-8061-9},
}
\fi%%%%%%%%%%%%%%%

\iffalse
\bib{Soergel}{article}{
   author={Soergel, Wolfgang},
   title={Langlands' philosophy and Koszul duality},
   conference={
      title={Algebra---representation theory},
      address={Constanta},
      date={2000},
   },
   book={
      series={NATO Sci. Ser. II Math. Phys. Chem.},
      volume={28},
      publisher={Kluwer Acad. Publ., Dordrecht},
   },
   date={2001},
   pages={379--414},
%   review={\MR{1858045}},
}
\fi		

\bib{Solleveld}{article}{
author={Solleveld, Maarten},
title={A local Langlands correspondence for unipotent representations},
journal={arXiv:1806.11357v2},
date={2019},
}

\bib{Vogan:Langlands}{article}{
   author={Vogan, David A., Jr.},
   title={The local Langlands conjecture},
   conference={
      title={Representation theory of groups and algebras},
   },
   book={
      series={Contemp. Math.},
      volume={145},
      publisher={Amer. Math. Soc., Providence, RI},
   },
   date={1993},
   pages={305--379},
%   review={\MR{1216197}},
%   doi={10.1090/conm/145/1216197},
}

%\iffalse
\bib{Waldspurger:SOimpair}{article}{
   author={Waldspurger, Jean-Loup},
   title={Repr\'{e}sentations de r\'{e}duction unipotente pour ${\rm SO}(2n+1)$:
   quelques cons\'{e}quences d'un article de Lusztig},
%   language={French},
   conference={
      title={Contributions to automorphic forms, geometry, and number
      theory},
   },
   book={
      publisher={Johns Hopkins Univ. Press, Baltimore, MD},
   },
   date={2004},
   pages={803--910},
%   review={\MR{2058629}},
}
%\fi

\end{biblist}
\end{bibdiv}

\end{document}